\documentclass[reqno,11pt]{amsart}

\usepackage[usenames,dvipsnames]{xcolor}
\usepackage{amssymb,color,hyperref,mathrsfs,stmaryrd,tikz-cd}
\usepackage{enumitem}
\usepackage{amsmath}
\usepackage{mathtools}

\usepackage[curve,matrix,arrow]{xy}

\numberwithin{equation}{section}
\newtheorem{theorem}{Theorem}
\numberwithin{theorem}{section}
\newtheorem{lemma}[theorem]{Lemma}

\newtheorem{prop}[theorem]{Proposition}

\theoremstyle{definition}

\newtheorem{hypo}[theorem]{Hypothesis}

\newtheorem{rmk}[theorem]{Remark}
\newtheorem{eg}[equation]{Example}
\newtheorem{definition}[theorem]{Definition}

\newtheorem{example}[theorem]{Example}
\newtheorem{property}{}
\numberwithin{property}{theorem}

\newcommand{\bF}{\mathbb{F}}

\newcommand{\F}{\mathcal{F}}
\newcommand{\E}{\mathcal{E}}
\newcommand{\N}{\mathcal{N}}

\newcommand{\K}{\mathcal{K}}
\newcommand{\mG}{\mathcal{G}}
\newcommand{\mD}{\mathcal{D}}

\renewcommand{\L}{\mathcal{L}}
\renewcommand{\H}{\mathcal{H}}

\newcommand{\Hom}{\operatorname{Hom}}
\newcommand{\Aut}{\operatorname{Aut}}
\newcommand{\Out}{\operatorname{Out}}
\newcommand{\Inn}{\operatorname{Inn}}
\newcommand{\End}{\operatorname{End}}
\newcommand{\Syl}{\operatorname{Syl}}

\newcommand{\m}{\mathcal}
\newcommand{\ov}{\overline}

\newcommand{\D}{\mathbf{D}}
\newcommand{\id}{\operatorname{id}}
\newcommand{\SL}{\operatorname{SL}}

\newcommand{\Sp}{\operatorname{Sp}}
\newcommand{\SU}{\operatorname{SU}}
\renewcommand{\O}{\operatorname{O}}
\newcommand{\Sym}{\operatorname{Sym}}
\newcommand{\Alt}{\operatorname{Alt}}
\newcommand{\G}{\operatorname{G}}

\newcommand{\mF}{\mathbb{F}}
\newcommand{\norm}{\trianglelefteq}
\newcommand{\inv}{^{-1}}
\newcommand{\wde}{\widetilde}
\renewcommand{\tilde}{\widetilde}
\newcommand{\fto}{\longrightarrow}
\newcommand{\hyp}{\mathfrak{hyp}}
\renewcommand{\hat}{\widehat}

\def \<{\langle }
\def \>{\rangle }

\renewcommand{\phi}{\varphi}

\newcommand{\GL}{\operatorname{GL}}

\newcommand{\One}{\operatorname{\mathbf{1}}}
\newcommand{\W}{\mathbf{W}}

\title[Large subgroups of fusion systems and localities]{Large subgroups of fusion systems and localities}

\author[E.~Henke]{Ellen Henke}
\address{Institut f{\"u}r Algebra, Fakult{\"a}t Mathematik, Technische Universit{\"a}t Dresden, 01062 Dresden, Germany}
\email{ellen.henke@tu-dresden.de}
\author[E.~Salati]{Edoardo Salati}
\address{Fachbereich Mathematik (RPTU in Kaiserslautern), RPTU Kaiserslautern-Landau, 67663 Kaiserslautern, Germany}
\email{edoardo.salati@rptu.de}

\begin{document}

\begin{abstract}
Saturated fusion systems are categories modeling properties of conjugacy of p-subgroups in finite groups. It was shown by Chermak that they correspond nicely to group-like structures called localities. In this paper we start to explore how concepts and results from a program of Meierfrankenfeld, Stellmacher and Stroth, aiming to reprove and generalize parts of the classification of the finite simple groups, translate to fusion systems and localities. Central in the program is the notion of a large $p$-subgroup. The presence of a large $p$-subgroup in a finite group turns out to be strong enough information to nearly classify the entire $p$-local structure, while also accommodating a very large class of groups of interest including many groups of Lie type in defining characteristic $p$.\\
Utilizing the group-theoretical definition of a large $p$-subgroup as a blueprint, we define large subgroups of fusion systems and localities. We then analyze how the three definitions relate to each other, showing in particular that the newly defined notions behave well under the correspondence between saturated fusion systems and localities with certain properties. We further proceed with an example of how classification results from the program of Meierfrankenfeld et.al. translate to fusion systems and localities. In more detail, we give a new characterization of the $2$-fusion system of $\Aut(\G_2(3))$ following the strategy in a paper of Meierfrankenfeld and Stroth, where the group $\Aut(\G_2(3))$ is characterized in a similar fashion.
\end{abstract}

\maketitle

\section{Introduction}

\subsection{Context}

A saturated fusion system is a category whose objects are the subgroups of a $p$-group $S$ (where $p$ is a prime) and whose morphisms are injective group homomorphisms subject to certain axioms. Every finite group $G$ leads to a saturated fusion system $\F_S(G)$ encoding the conjugacy relations between subgroups of a fixed Sylow $p$-subgroup $S$ of $G$. As all Sylow $p$-subgroups are conjugate, up to a suitable notion of isomorphism the fusion system $\F_S(G)$ depends only on $p$ and $G$ and not on the choice of the Sylow $p$-subgroup $S$. Thus, it is simply referred to as the \emph{$p$-fusion system} of $G$. Its structure is closely connected to the structure of the \emph{$p$-local subgroups} of $G$, i.e. the normalizers of non-trivial $p$-subgroups. The study of $p$-local subgroups, in particular for the prime $2$, is of major importance in the proof of the classification of the finite simple groups.

\smallskip

Saturated fusion systems which do not arise as $p$-fusion systems of finite groups are called exotic. It is of significant interest in the field to learn more about the behaviour and relative abundance of exotic fusion systems. In this context, classification results for fusion systems play a major role. At the same time, a program of Michael Aschbacher aims to simplify the proof of the classification of finite simple groups through classification results for fusion systems at the prime $2$. Aschbacher's program and the general philosphy behind it is a major motivation to study fusion systems and to develop a theory similar to the theory of finite groups. In particular, Aschbacher \cite{Aschbacher:2011} introduced components of fusion systems.

\smallskip

Another motivation to study fusion systems comes from homotopy theory. The Martino--Priddy conjecture (now a theorem proved first by Oliver \cite{Oliver2004,Oliver2006}) states that two finite groups have isomorphic $p$-fusion system if and only if the Bousfield-Kan $p$-completions of their classifying spaces are homotopy equivalent. The Martino--Priddy conjecture follows also from the existence and uniqueness of a centric linking system associated to a given saturated fusion system, which was proved first by Chermak \cite{Chermak:2013} and subsequently by Oliver \cite{Oliver:2013}, with the dependence on the classification of finite simple groups removed by Glauberman and Lynd \cite{GlaubermanLynd2016}. The existence and uniqueness of centric linking systems allows one to attach to each saturated fusion system a ``$p$-completed classifying space'', which in the case of the $p$-fusion system of a finite group $G$ coincides with the $p$-completed classifying space of $G$. This is one motivation for the interest in exotic fusion systems: They lead to new spaces with properties akin to the properties of $p$-completed classifying spaces of finite groups. 

\smallskip

Questions about the exoticity of fusion systems play also a role in the representation theory of finite groups, since every $p$-block of a finite group leads to a saturated fusion system over its defect group. The $p$-fusion system of a finite group is indeed a special case of this, as it can be seen as the fusion system of the principal block. It is conjectured that the fusion systems coming from $p$-blocks of finite groups are precisely the ones which can be obtained as $p$-fusion systems of finite groups.

\smallskip

As mentioned above, classification theorems play a major role in determining the cases in which exotic fusion systems arise. Many classification theorems, in particular many theorems within Aschbacher's program, focus on fusion systems which are simple in a certain well-defined sense. Understanding the exotic behaviour of simple fusion systems helps actually also to understand the exotic behaviour of saturated fusion systems in general. A lemma of Aschbacher \cite[Lemma~2.54]{HenkeLynd2022} combined with a relatively recent result of Broto, M{\o}ller, Oliver and Ruiz \cite[Corollary~B]{BrotoMollerOliverRuiz2023} shows that, assuming the classification of finite simple groups, a given saturated $2$-fusion system is exotic if and only if  at least one of its components is exotic, which in turn is equivalent to the exoticity of a certain simple $2$-fusion system occuring inside of this component. For odd primes, a similar but slightly more complicated reduction exists through the work of Broto, M{\o}ller, Oliver and Ruiz \cite{BrotoMollerOliverRuiz2023} and the reformulation of their work in \cite{HenkeLynd2026}.

\smallskip

For $p=2$ we feel that there is principally some hope that all simple saturated $2$-fusion systems can be classified using ideas from the classification of finite simple groups. The only known exotic simple $2$-fusion systems are the Benson--Solomon fusion systems; see \cite{LeviOliver2002,AschbacherChermak2010}. It is conjectured by Solomon that every simple $2$-fusion system is either a Benson--Solomon fusion system or the $2$-fusion system of some finite simple group. 

\smallskip

As part of the program of Michael Aschbacher, a large subclass of the simple saturated $2$-fusion systems of component type is classified. Here a $2$-fusion system is of component type if the centralizer of a (suitably chosen) involution has a component. The 
generic examples are fusion systems of finite simple groups of Lie type and Lie rank at least $2$ in odd characteristic. To get a more complete picture,  one that includes the saturated fusion systems that are not of component type, it may be possible to complement the results from Aschbacher's program for $p=2$ by translating  theorems from an ongoing program of Meierfrankenfeld, Stellmacher and Stroth to fusion systems. It is the purpose of this paper to make a first attempt at such translations. Central to our approach is the existence of a group-like structure called a \emph{locality} attached to every saturated fusion system. 

\smallskip

In the program of Meierfrankenfeld et.al. the finite groups under consideration are actually not nessarily assumed to be simple, and properties of these groups are investigated with respect to  arbitrary primes. Indeed, many almost simple groups are characterized starting with assumptions on the $p$-local subgroups for some arbitrary prime $p$. Thus, if translations of these results to fusion systems are successful, one can actually expect to get some classification results for saturated fusion systems at arbitrary primes and characterizations of many almost simple fusion systems. Assuming that the program of Meierfrankenfeld et.al. is one day complete, a successful translation would for example include some characterizations of the $p$-fusion systems of finite almost simple groups of Lie type and Lie rank at least $2$ in defining characteristic $p$. It should be pointed out though that, even if such results can be proved, a complete classification of the simple saturated fusion systems at odd primes does not appear to be within reach, in particular since it is unclear how to treat the component type case for odd primes. At the same time, even incomplete results (for even or odd primes) will at least limit the cases where simple exotic fusion systems might occur.

\smallskip

In this paper we translate some basic background results from the program of Meierfrankenfeld et.al. to saturated fusion systems and localities at arbitrary primes. To give a concrete example how this can be used to prove a classification theorem for fusion systems, we do however focus on a configuration which appears only for $2$-fusion systems. More precisely, we obtain a characterization of the $2$-fusion system of $\Aut(\G_2(3))$ by translating a paper of Meierfrankenfeld and Stroth \cite{MStr:2008} to fusion systems and localities. Our choice of the topic is somewhat arbitrary here: We simply picked a paper of relatively short length. It is investigated in the second author's PhD thesis \cite{Salati:PhD} in how far some much more central results by Meierfrankenfeld, Stellmacher and Stroth \cite{MSS} can be translated to fusion systems and localities at arbitrary primes. Journal articles on this PhD thesis and continuations of the project will use many of the background results provided in this paper.

\smallskip

Let us now summarize some more details. The program of Meierfrankenfeld, Stellmacher and Stroth aims to understand groups of local characteristic $p$ and to revise a significant part of the classification of finite simple groups by classifying groups of local characteristic $2$. Here a group $G$ is said to be \emph{of characteristic $p$} if
\[C_G(O_p(G))\leq O_p(G).\]
Moreover, $G$ is of \emph{local characteristic $p$} if every $p$-local subgroup is of characteristic $p$. The generic examples of groups of local characteristic $p$ are the groups of Lie type in defining characteristic $p$.

\smallskip

As mentioned before, when translating results to fusion systems, it is central to our approach that there is a \emph{locality} attached to every saturated fusion system. Here localities are group-like structures introduced by Chermak \cite{Chermak:2013}. Slightly more precisely, a locality consists of a ``partial group'' $\L$ (i.e. a finite set with a multivariable product defined on certain words in $\L$ subject to certain axioms), a maximal $p$-subgroup $S$ of $\L$ which can be thought of as a ``Sylow $p$-subgroup'' of $\L$, and a set $\Delta$ of subgroups of $S$ which in some sense controls the domain of the product on $\L$. There are naturally defined normalizers of elements of $\Delta$ which form finite groups and can thus be thought of as $p$-local subgroups. Usually, we want to consider localities where these normalizers are groups of characteristic $p$. Under some additional hypothesis, we call such localities \emph{linking localities} (cf. Definition~\ref{D:LinkingLoc}). It is a consequence of the existence and uniqueness of centric linking systems that there is a linking locality attached to every saturated fusion systems. A linking locality $(\L,\Delta,S)$ looks locally very much like a group in which the normalizers of elements of $\Delta$ are subgroups of characteristic $p$. Indeed, if $\Delta$ is the set of all subgroups of $S$, then the locality looks locally like a group of local characteristic $p$. The reader is referred to Section~\ref{S:Localities} for a more detailed introduction to partial groups and localities.

\smallskip

We will now give an overview over our main concepts and results. The most important definitions are introduced in Subsection~\ref{SS:IntroLargeDef} below. The results we prove based on these definitions are summarized in Subsection~\ref{SS:IntroLargeProp}. In Subsection~\ref{SS:IntroMStr} we state the classification theorem characterizing the $2$-fusion system of $\Aut(\G_2(3))$, which we obtain by translating the paper by Meierfrankenfeld and Stroth \cite{MStr:2008}.

\subsection{Large subgroups}\label{SS:IntroLargeDef}

For many parts of the program of Meierfrankenfeld et.al., the assumption that the group is of local characteristic $p$ can indeed be relaxed. Central to the major case distinction in the program is the following definition. 

\begin{definition}[{\cite[Definition~1.51]{MSS}}]\label{D:LargeGroup}
A $p$-subgroup $Q$ of a finite group $G$ is called \textbf{large} in $G$ if $C_G(Q)\leq Q$ and 
\begin{equation}\label{Q!}\tag{$Q!$}
 N_G(U)\leq N_G(Q)\mbox{ for all }1\neq U\leq Z(Q).
\end{equation}
The property \eqref{Q!} is also called the \textbf{$Q$-uniqueness property}.
\end{definition}

In fact, if $G$ is a finite simple group of Lie type in defining characteristic $p$ and $S$ is a Sylow $p$-subgroup of $G$, then $O_p(N_G(\Omega_1(Z(S))))$ is a large subgroup if and only if $\Omega_1(Z(S))$ is a root subgroup of $G$, which is the case unless $p=2$ and $G\cong \Sp_{2n}(2^m)$ ($n\geq 2$) or $G\cong \mathrm{F}_4(2^m)$, or $p=3$ and $G\cong \G_2(3^m)$; cf. \cite[p.2]{MSS}. 

\smallskip

The program of Meierfrankenfeld et.al. splits into the treatment of finite groups with a large $p$-subgroup and the treatment of groups of local characteristic $p$ which do not have a large $p$-subgroup. In the first case, local characteristic $p$ is not assumed. However, if a finite group $G$ has a large $p$-subgroup $Q$, then it turns out that every $p$-local subgroup of $G$ containing $Q$ is of characteristic $p$. In particular, every $p$-local subgroup of $G$ containing a Sylow $p$-subgroup of $G$ is of characteristic $p$.

\smallskip

We translate the concept of a large $p$-group to fusion systems and localities as follows. 

\begin{definition}\label{D:LargeFusLoc}
A \textbf{large subgroup} of a (not necessarily saturated) fusion system $\F$ over $S$ is a subgroup $Q\leq S$ with $C_S(Q)\leq Q$ such that
\begin{equation}\label{Q!F}\tag{$Q!\F$}
 N_\F(U)\subseteq N_\F(Q)\mbox{ for all }1\neq U\leq Z(Q).
\end{equation}
If $(\L,\Delta,S)$ is a locality and $Q\leq S$, then $Q$ is a \textbf{large subgroup} of $\L$ if $C_\L(Q)\subseteq Q$ and 
\begin{equation}\label{Q!L}\tag{$Q!\L$}
 N_\L(U)\subseteq N_\L(Q)\mbox{ for all }1\neq U\leq Z(Q).
\end{equation} 
\end{definition}

\subsection{General results about large subgroups of groups, fusion systems and localities}\label{SS:IntroLargeProp}

If $Q$ is a large subgroup of a group $G$, then the following properties hold:
\begin{itemize}
 \item [(1)] $O_p(N_G(Q))$ is a large subgroup of $G$.
 \item [(2)] $Q$ is weakly closed in $G$.
 \item [(3)] If $R$ is another large subgroup of $G$ such that $Q$ and $R$ lie in a common Sylow $p$-subgroup of $G$, then $QR$ is a large subgroup.
 \item [(4)] If $P$ is a non-trivial $p$-subgroup of $G$ normalized by $Q$, then $N_G(P)$ is a group of characteristic $p$.
\end{itemize}
We prove versions of the results (1)-(4) for saturated fusion systems and of the results (1)-(3) for localities (cf. Lemma~\ref{L:LargeFusMain}(a),(c), Lemma~\ref{L:LargeSatFusMain} and Lemma~\ref{L:LargeLocMain}). A result similar to (4) does not need to hold in a locality $(\L,\Delta,S)$ with a large subgroup $Q$. However, for most purposes we want to restrict attention to localities with such a property. Thus, we define a locality $(\L,\Delta,S)$ to be \emph{$Q$-replete} if $Q\leq S$ and $P\in\Delta$ for every non-trivial subgroup $P$ of $S$ normalized by $Q$. Since a result similar to (4) holds for saturated fusion systems (cf. Lemma~\ref{L:LargeSatFusMain}(b)), if $Q$ is large in a saturated fusion system $\F$, then there is a linking locality $(\L,\Delta,S)$ attached to $\F$ which is $Q$-replete (see Example~\ref{E:SubcentricQReplete}).

\smallskip

If $(\L,\Delta,S)$ is a locality over $\F$ and $Q$ is a subgroup of $S$ such that $(\L,\Delta,S)$ is $Q$-replete and $Q$ is large in $\L$, then it turns out that $\F$ is saturated and $Q$ is large in $\F$. On the other hand, for a saturated fusion system $\F$, a large subgroup of $\F$ is large in every linking locality over $\F$. In particular, large subgroups of saturated fusion systems correspond nicely to large subgroups of linking localities with big enough object sets. More details can be found in Lemma~\ref{L:LargeLF}.

\smallskip

Another question one might wonder about is the relationship between large subgroups of a given finite group and large subgroups of the $p$-fusion system of this group. To discuss this let $G$ be a finite group, $S$ a Sylow $p$-subgroup of $G$ and $Q\leq S$. It is indeed elementary to see that $Q$ is large in $\F_S(G)$ if $Q$ is large in $G$. One sees easily that the converse is false: For example, if $G$ is of the form $G=H\times R$, where $S$ is a Sylow $p$-subgroup of $H$, the group $R$ is a $p^\prime$-group, and $Q\leq S$ is large in $H$, then $Q$ is large in $\F_S(G)=\F_S(H)$. However, $Q$ is not large in $G$ as $R\leq C_G(Q)\not\leq Q$.

\smallskip

The just stated example may appear rather artificial for, while $Q$ is not large in $G$, its image is large in $G/O_{p^\prime}(G)$. Moreover, while the self-centralizing property fails in $G$, the $Q$-uniqueness property is true. We next give two series of examples where $O_{p^\prime}(G)=1$, and $Q$ is large $\F_S(G)$, but not in $G$. Furthermore, in these examples the $Q$-uniqueness property fails.

\begin{example}\label{Ex:AutG23}
The group $\G_2(3)$ has a subgroup $Q$ which is isomorphic to the central product of two subgroups isomorphic to $Q_8$. It is large both in $\G_2(3)$ and in $\Aut(\G_2(3))$. In particular, if we fix a Sylow $2$-subgroup $S$ of $\Aut(\G_2(3))$ containing $Q$ and set $T:=S\cap \G_2(3)$, then $Q$ is large in $\F_T(\G_2(3))$ and in $\F_S(\Aut(\G_2(3)))$. 
\begin{itemize}
 \item [(a)] If $q>3$ is an odd prime power with $q\equiv 3,5\mod 8$, then the $2$-fusion systems of $\G_2(3)$ and $\G_2(q)$ are isomorphic. Thus, we may identify $T$ with a Sylow $2$-subgroup of $\G_2(q)$ in such a way that $\F_T(\G_2(q))=\F_T(\G_2(3))$. Then $Q$ is large in $\F_T(\G_2(q))$, but it is not large in $\G_2(q)$. More precisely, the self-centralizing property holds, while the $Q$-uniqueness property fails in $\G_2(q)$. 
 \item [(b)] If $q=3^r$ for some odd integer $r>1$, then the $2$-fusion system of $\Aut(\G_2(3))$ is isomorphic to the $2$-fusion system of $\Aut(\G_2(q))$. Thus, identifying $S$ with a Sylow $2$-subgroup of $\Aut(\G_2(q))$ in a suitable way, we have $\F_S(\Aut(\G_2(q)))=\F_S(\Aut(\G_2(3)))$ and so $Q$ is large in $\F_S(\Aut(\G_2(q)))$. However, $Q$ is not large in $\Aut(\G_2(q))$. Indeed, both the self-centralizing property and the $Q$-uniqueness property are violated in $\Aut(\G_2(q))$. 
\end{itemize}
\end{example}

Example~\ref{Ex:AutG23} is restated and extended in Examples~\ref{Ex:G23G2q} and \ref{Ex:AutG23Extended}, where we also supply proofs of the stated claims. The isomorphism of $2$-fusion systems in (a) is shown in \cite[Theorem~A]{BMO}.

\smallskip

We suspect that examples of finite groups, where the $p$-fusion system possesses a large subgroup which is not large in $G/O_{p^\prime}(G)$, are rather exceptional case. The properties stated in the following lemma together with theorems about groups with strongly closed $p$-subgroups (like Goldschmidt's Theorem for $p=2$) may help to analyze the situation at least in special cases.

\begin{lemma}\label{L:QlargeFSGIntro}
 Let $G$ be a finite group, $S\in\Syl_p(G)$ and $Q\leq S$. Then $Q$ is large in $\F_S(G)$ if and only if the following two conditions hold:
 \begin{itemize}
  \item $C_S(Q)\leq Q$;
  \item for every non-trivial subgroup $P$ of $Z(Q)$, every characteristic subgroup of $Q$ is strongly closed in $N_G(P)$.
 \end{itemize}
\end{lemma}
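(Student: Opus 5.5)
The plan is to reduce both directions to a statement about normality of $Q$ in the fusion systems $N_\F(U)$, where $\F:=\F_S(G)$, and then to compare normality with strong closure inside the finite groups $N_G(P)$. The self-centralizing condition $C_S(Q)\leq Q$ appears verbatim in both the definition of largeness and the lemma, so only the $Q$-uniqueness part needs work. Fix $1\neq P\leq Z(Q)$. Then $Q\leq C_S(P)\leq N_S(P)$. As a first step I would show that \eqref{Q!F} for $U=P$ is equivalent to $Q\norm N_\F(P)$. Indeed, $N_\F(P)\subseteq N_\F(Q)$ means that every morphism of $N_\F(P)$ extends to a morphism normalizing $Q$, and a morphism between subgroups of $N_S(P)$ normalizing $Q$ and $P$ still lies in $N_\F(P)$. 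Since $N_\F(P)$ need not be saturated when $P$ is not fully normalized, I would handle the general case by choosing $\alpha\in\Hom_\F(N_S(P),S)$ with $P\alpha$ fully normalized. This $\alpha$ is a conjugation map $c_g$ with $g\in G$, so it transports both the hypothesis and $Q$ together. Working in the group $N_G(P)^g$ with Sylow subgroup $N_S(P)^g$, this reduces everything to the saturated fusion system $\F_T(H)$, where $H=N_G(P)$ and $T\in\Syl_p(H)$ contains $Q$.

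For the forward direction, assume $Q$ is large in $\F$, so $Q\norm\F_T(H)$. I would use the standard fact that characteristic subgroups of a normal subgroup of a fusion system are again normal; in particular they are strongly closed in $\F_T(H)$. By definition, this is strong closure of each $R\,\mathrm{char}\,Q$ in $H$ with respect to $T$. I would also check that this notion does not depend on the choice of $T\supseteq Q$. For this, note that any two such Sylow subgroups are conjugate by an element of $N_H(R)$, after replacing $T$ by a conjugate via Sylow's theorem in $N_H(Q)$, using that $Q$ is normal in the fusion system.

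For the converse, assume every characteristic subgroup of $Q$ is strongly closed in $H$ with respect to $T$, and prove $Q\norm\F_T(H)$. I would use Alperin's fusion theorem: it suffices to show that $Q\leq R$ and $\Aut_H(R)$ normalizes $Q$ for every $\F_T(H)$-centric radical subgroup $R$ (together with $R=T$). For such an $R$, strong closure of $Q$ shows that $N_Q(R)$ acts on $R$ while preserving $Q\cap R$. Applying strong closure of $Z(Q)$ and of $C_Q(\ldots)$-type characteristic subgroups, and combining with $C_T(Q)\leq Q$, one aims to show that $N_Q(R)$ induces a normal $p$-subgroup of $\Aut_H(R)$, hence lies in $\mathrm{Inn}(R)$, hence in $R$ by centricity. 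This forces $Q\leq R$. Once $Q\leq R$, strong closure of $Q$ gives that $Q$ is $\Aut_H(R)$-invariant, which is exactly what is needed.

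The main obstacle is this converse step: showing that $Q\leq R$ for centric radical subgroups $R$ using only strong closure of the characteristic subgroups. Strong closure of $Q$ alone does not imply normality, so the key is to exploit the whole family of characteristic subgroups. My first attempt would use $Z(Q)$ and the Thompson-type subgroup $J(Q)$, or the series $Z(Q)\leq C_Q(Z(Q))$ combined with $C_T(Q)\leq Q$. The aim is an argument in the spirit of $O_p$-properties: $N_Q(R)$ centralizes a chain of $\Aut_H(R)$-invariant sections, and so lies in $O_p(\Aut_H(R))=\mathrm{Inn}(R)$.
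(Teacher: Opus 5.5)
There is a genuine gap in your reduction to fully normalized $P$. Conjugating by $g$ with $P^g$ fully normalized sends $(Q,P,N_S(P))$ to $(Q^g,P^g,N_S(P)^g)$. However, $N_S(P)^g$ is \emph{not} a Sylow $p$-subgroup of $N_G(P^g)=N_G(P)^g$ unless $P$ was already fully normalized; the Sylow subgroup there is $N_S(P^g)$. To land in the saturated system $N_\F(P^g)=\F_{N_S(P^g)}(N_G(P^g))$ with the \emph{same} $Q$, you need $Q^g=Q$, i.e.\ that $Q$ is weakly closed in $\F$. Your proposal neither proves nor cites this.

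Whichever reading of ``strongly closed in $N_G(P)$'' you adopt, one direction fails without it.
\begin{itemize}
\item Relative to $N_S(P)$: the forward direction is immediate from Lemma~\ref{L:LargeFusMain}(b). In the converse, though, the transported hypothesis concerns characteristic subgroups of $Q^g$ and the non-Sylow subgroup $N_S(P)^g$, so the Alperin argument in $\F_{N_S(P^g)}(N_G(P^g))$ cannot be run. Meanwhile the hypothesis at $P^g$ is about $Q$ and needs $P^g\le Z(Q)$.
\item Relative to a Sylow subgroup $T\ge N_S(P)$ of $N_G(P)$: the converse actually works without any transport, by restricting $Q\norm\F_T(N_G(P))$ to $N_S(P)$. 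But then your sentence ``$Q$ is large in $\F$, so $Q\norm\F_T(H)$'' is unjustified. Largeness only gives $Q\norm\F_{N_S(P)}(N_G(P))$, and invoking $(Q!\F)$ at $P^g$ requires $P^g\le Z(Q)$.
\end{itemize}

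The paper supplies exactly this missing step in Lemma~\ref{L:LargeFusEquiv}: if $C_S(Q)\le Q$ and $N_\F(U)\subseteq N_\F(Q)$ for all \emph{fully normalized} $1\ne U\le Z(Q)$, then $Q$ is weakly closed, and hence large by Lemma~\ref{L:ConjFulNormZQ}. Weak closure comes from writing every morphism of $\F$ as a composite of restrictions of $\F$-automorphisms of subgroups $R$ with $Z(R)$ fully normalized. If $Q\le R$, then $1\ne Z(R)\le C_S(Q)=Z(Q)$, so $\Aut_\F(R)\subseteq N_\F(Z(R))\subseteq N_\F(Q)$. The converse then only uses the hypothesis at fully normalized $P$, where $N_S(P)\in\Syl_p(N_G(P))$ and Lemma~\ref{L:FusNormEquivalent} gives $Q\norm N_\F(P)$ (this is the route of Lemma~\ref{L:QlargeFSG}).

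By contrast, the step you flag as the main obstacle is standard. The upper central series of $Q$ consists of characteristic, hence strongly closed, subgroups $Q_i$ with $[Q_i,Q]\le Q_{i-1}$, and \cite[Proposition~I.4.6]{Aschbacher/Kessar/Oliver:2011} then yields normality; its proof is the argument of Lemma~\ref{CentralSeries}. Neither $C_T(Q)\le Q$ nor $J(Q)$ plays any role there. Also, $C_Q(Z(Q))=Q$, so the ``series'' $Z(Q)\le C_Q(Z(Q))$ you propose is trivial.
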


Lemma~\ref{L:QlargeFSGIntro} follows from Lemma~\ref{L:QlargeFSG}, where more conditions are listed which are equivalent to $Q$ being large in $\F_S(G)$.

\subsection{A characterization of the $2$-fusion system of $\Aut(\G_2(3))$}\label{SS:IntroMStr}

As stated before, we want to give a first example here how results from the program of Meierfrankenfeld et.al. can be translated to fusion systems. Thus, we characterize the $2$-fusion system of $\Aut(\G_2(3))$ following mainly the ideas in a paper by Meierfrankenfeld and Stroth \cite{MStr:2008} characterizing the group $\Aut(\G_2(3))$. Along the way, we point out a gap in the original paper and outline how it can be filled (cf. Remark~\ref{R:GapMStr}).

\smallskip

If $M$ is a finite group, then $Y_M$ is the largest normal elementary abelian $2$-subgroup of $M$ such that $O_2(M/C_M(Y_M))=1$. The existence of $M$ is shown in \cite[Lemma~2.2(a)]{MSS}. 

\smallskip

The theorem below is proved in Section~\ref{S:MS}. As a first step, we prove with Theorem~\ref{unique local} a result about the fusion system of a locality fulfilling a somewhat similar hypothesis. Then we deduce Theorem~\ref{mainMS} from Theorem~\ref{unique local} at the very end of this paper.

\begin{theorem}\label{mainMS}
 Let $\F$ be a saturated fusion system over a finite $2$-group $S$. Suppose $Q\leq S$ is large in $\F$ and $O_2(N_\F(Q))=Q$. Suppose there exists a finite group $M$ of characteristic $2$ containing $S$ as a Sylow $2$-subgroup such that $\F_S(M)\subseteq \F$ and, setting $V:=[Y_M,M]$, the following hold:
 \begin{itemize}
  \item [(I)] $M/O_2(M)\cong \SL_3(2)$ and $V$ is a natural $\SL_3(2)$-module for $M/O_2(M)$.
  \item [(II)] $Y_M\not\leq Q$ and $V\leq Q$.
 \end{itemize}
Then $\F$ is isomorphic to the $2$-fusion system of $\Aut(\G_2(3))$. 
\end{theorem}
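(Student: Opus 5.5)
The plan is to reduce Theorem~\ref{mainMS} to the locality version, Theorem~\ref{unique local}, and then identify the fusion system.

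\textbf{Step 1: pass to a locality.} Since $Q$ is large in the saturated fusion system $\F$, Lemma~\ref{L:LargeSatFusMain}(b) shows that for every non-trivial $P\leq S$ normalized by $Q$, the normalizer $N_\F(P)$ is constrained. Hence, as in Example~\ref{E:SubcentricQReplete}, there is a $Q$-replete linking locality $(\L,\Delta,S)$ over $\F$, for example with $\Delta$ the set of subcentric subgroups. By Lemma~\ref{L:LargeLF}, $Q$ is large in $\L$. The normalizers $N_\L(P)$ for $P\in\Delta$ are finite groups of characteristic $2$. In particular, $N_\L(Q)$ is a model of $N_\F(Q)$, and $O_2(N_\L(Q))=Q$ follows from $O_2(N_\F(Q))=Q$.

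\textbf{Step 2: realize $M$ inside $\L$.} The group $M$ is of characteristic $2$ with $\F_S(M)\subseteq\F$. I would show that $M$ is isomorphic to a model of $\F_S(M)$, and that this model embeds in $\L$ as the subgroup $N_\L(O_2(M))$ or a subgroup of it. This uses that $O_2(M)\in\Delta$: the subgroup $O_2(M)$ is centric, so it lies in $\Delta$. It also uses the uniqueness of models of constrained fusion systems: the model of $N_\F(O_2(M))$ is $N_\L(O_2(M))$, and $\F_S(M)$ is a constrained subsystem of $N_\F(O_2(M))$. With this in hand, hypotheses (I) and (II) transfer verbatim to a subgroup $M\leq\L$. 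This should put us exactly in the hypothesis of Theorem~\ref{unique local}.

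\textbf{Step 3: apply Theorem~\ref{unique local} and identify $\F$.} Its conclusion presumably pins down the local structure of $\L$. That is, it determines $N_\L(Q)$, with $Q\cong Q_8*Q_8$ and $N_\L(Q)/Q$ resembling the corresponding parabolic in $\Aut(\G_2(3))$, together with $M$, and shows that $\F$ is generated by $\F_S(N_\L(Q))$ and $\F_S(M)$. The remaining task is to compare this with $\Aut(\G_2(3))$. There, with $S$ a Sylow $2$-subgroup, the analogous two subgroups generate the fusion system by Alperin's theorem, since the essential subgroups can be checked to be $Q$ and $O_2(M)$. I would then produce an isomorphism $S\to S^*$ that simultaneously carries $N_\L(Q)$ and $M$ onto the corresponding subgroups of $\Aut(\G_2(3))$. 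The tools for this are the uniqueness of models and the fact that the relevant amalgam over $S$ is unique up to isomorphism. This uniqueness would be argued via the Goldschmidt-type rigidity of the amalgam, or by a small computation of $H^1$ and automorphism groups.

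\textbf{Main obstacle.} I expect the main difficulty to lie in Theorem~\ref{unique local} itself, namely the Meierfrankenfeld--Stroth analysis. This includes showing that $Y_M\not\leq Q$ together with $V\leq Q$ forces the precise structure of $Q$ and $N_\L(Q)$, and filling the gap mentioned in Remark~\ref{R:GapMStr}. Within this deduction, the delicate point is Step 2, ensuring $M$ genuinely sits in $\L$ with $O_2(M)$ an object. After that, I expect the uniqueness of the amalgam to be the hardest remaining step.
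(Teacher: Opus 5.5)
Your Steps 1 and 2 are the paper's deduction of Theorem~\ref{mainMS} from Theorem~\ref{unique local}. The one difference is that the paper realizes $M$ concretely as $\hat{M}=\{g\in N_\L(R)\colon c_g|_R\in\Aut_M(R)\}$ with $R=O_2(M)$, and checks (ii),(iii) of Hypothesis~\ref{MainHyp} directly from $\Aut_{\hat M}(R)=\Aut_M(R)$ and Lemma~\ref{L:pReduced}, never needing $\hat M\cong M$. Your model-uniqueness route also works, since this $\hat M$ is a model of $\F_S(M)$.

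Your Step 3 is superfluous, because Theorem~\ref{unique local} already concludes that $\F$ is isomorphic to the $2$-fusion system of $\Aut(\G_2(3))$. The generation $\F=\langle N_\F(Q),N_\F(V)\rangle$ and the amalgam comparison are part of its proof. The side claim you make there is also false: in $\Aut(\G_2(3))$ neither $Q$ nor $O_2(M)$ is essential, since their outer automorphism groups in the fusion system are $S_3\times S_3$ and $\SL_3(2)$, and neither has a strongly $2$-embedded subgroup. The paper instead shows in Lemma~\ref{EssentialSubgroups} that $\Aut_\F(R)$ lies in $N_\F(Q)$ or $N_\F(V)$ for every essential subgroup $R$.
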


Comparing with \cite[Theorem 1]{MStr:2008} it turns out that, among the groups having the same $2$-fusion system of $\Aut(\G_2(3))$, the group $\Aut(\G_2(3))$ is characterized by possessing a large $2$-subgroup. In view of Example~\ref{Ex:AutG23Extended} it seems interesting to notice that a classification of finite groups with a $2$-fusion system fulfilling the hypothesis of Theorem~\ref{mainMS} would not only characterize $\Aut(\G_2(3))$, but a larger class of groups including the groups $\Aut(\G_2(3^r))$ with $r$ odd. However, to achieve such a classification, one would have to characterize finite groups whose $2$-fusion system is isomorphic to the $2$-fusion system of $\Aut(\G_2(3))$. This is beyond the scope of this paper.



\section{Some background results on fusion systems and finite groups}

\subsection{Lemmas on fusion systems}

We refer the reader to \cite[Chapter~I]{Aschbacher/Kessar/Oliver:2011} for background on fusion systems. However, we state some specialized results here. Throughout this subsection let $p$ be a prime and $\F$ a fusion system over a $p$-group $S$.

\begin{lemma}\label{L:ConjFulNormZQ}
Let $\F$ be saturated and suppose that $Q\leq S$ is weakly closed in $\F$. Let $U\leq Z(Q)$ and let $V\in U^\F$ be fully $\F$-normalized. Then $V\leq Z(Q)$ and we have the following implication:
\[N_\F(V)\subseteq N_\F(Q)\Longrightarrow N_\F(U)\subseteq N_\F(Q).\]
\end{lemma}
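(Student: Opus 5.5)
Since $V$ is fully $\F$-normalized and $\F$ is saturated, \cite[Lemma~I.2.6(c)]{Aschbacher/Kessar/Oliver:2011} gives a morphism $\alpha\in\Hom_\F(N_S(U),S)$ with $U\alpha=V$. The plan is to show that $\alpha$ can be chosen to be defined on $Q$ and that it maps $Q$ onto $Q$. Both claims of the lemma then follow by transporting everything along $\alpha$.

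For the first claim, note that $U\leq Z(Q)$ implies $Q\leq C_S(U)\leq N_S(U)$, so $\alpha$ is defined on $Q$. Since $Q$ is weakly closed in $\F$, we get $Q\alpha=Q$. Hence $V=U\alpha\leq Z(Q)\alpha=Z(Q\alpha)=Z(Q)$, which proves $V\leq Z(Q)$.

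For the implication, assume $N_\F(V)\subseteq N_\F(Q)$, and let $\phi\in\Hom_{N_\F(U)}(A,B)$. Then $\phi$ extends to an $\F$-morphism $\hat\phi\colon AU\to BU$ with $U\hat\phi=U$. We need $\phi$ to extend further to a morphism of $AQ$ normalizing $Q$. Since $N_\F(U)$ is generated by its restrictions of morphisms between subgroups containing $U$, we may assume $U\leq A$, so $U\leq B$ and $U\phi=U$. Then $A,B\leq N_S(U)$, so $\alpha$ is defined on $A$ and on $B$. Consider the conjugate
\[\psi:=(\alpha|_{A})^{-1}\,\phi\,\alpha|_{B}\colon A\alpha\to B\alpha.\]
It satisfies $V\psi=V$, since $U\alpha=V$ and $U\phi=U$, and it also maps subgroups containing $V$ appropriately. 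So $\psi\in N_\F(V)\subseteq N_\F(Q)$. Hence $\psi$ extends to $\hat\psi\colon A\alpha Q\to B\alpha Q$ with $Q\hat\psi=Q$.

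Now pull back. Because $Q\leq N_S(U)$ and $Q\alpha=Q$, the map $\alpha$ is defined on $AQ$, with $(AQ)\alpha=A\alpha Q$, and likewise $(BQ)\alpha=B\alpha Q$. Set $\hat\phi:=\alpha|_{AQ}\,\hat\psi\,(\alpha|_{BQ})^{-1}$, which is a morphism of $\F$. It extends $\phi$ and satisfies $Q\hat\phi=Q$. Hence $\phi\in N_\F(Q)$, and so $N_\F(U)\subseteq N_\F(Q)$.

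The step that needs the most care is the bookkeeping in this pull-back: one must confirm that $\alpha$ is defined on $AQ$ and $BQ$, which holds because $AQ\leq N_S(U)$ when both $A$ and $Q$ normalize $U$. One must also confirm that the reduction to $U\leq A$ is legitimate. This reduction uses only the definition of $N_\F(U)$ as the fusion system whose morphisms extend to morphisms normalizing $U$. Weak closure of $Q$ is used exactly once, to obtain $Q\alpha=Q$.
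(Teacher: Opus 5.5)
Your proof is correct and follows essentially the same route as the paper. You take $\alpha\in\Hom_\F(N_S(U),S)$ with $U\alpha=V$ from \cite[Lemma~I.2.6(c)]{Aschbacher/Kessar/Oliver:2011}, use weak closure to get $Q\alpha=Q$ (and hence $V\leq Z(Q)$), conjugate a morphism of $N_\F(U)$ by $\alpha$ into $N_\F(V)\subseteq N_\F(Q)$, and pull back, just as the paper does; you additionally spell out the bookkeeping (reduction to $U\leq A$, and $\alpha$ being defined on $AQ$ and $BQ$) that the paper leaves as ``one sees easily''.
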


\begin{proof}
By \cite[Lemma~I.2.6(c)]{Aschbacher/Kessar/Oliver:2011}, there exists $\alpha\in\Hom_\F(N_S(U),S)$ with $U\alpha=V$. As $Q\leq N_S(U)$ and $Q$ is weakly closed in $\F$, it follows that $Q=Q\alpha$. Hence, $\alpha$ is a morphism in $N_\F(Q)$ and $V=U\alpha\leq Z(Q)\alpha=Z(Q)$. 

\smallskip

Suppose now $N_\F(V)\subseteq N_\F(Q)$. Given $X,Y\leq N_S(V)$ and $\phi\in\Hom_{N_\F(U)}(X,Y)$, one sees easily that $\alpha^{-1}\phi\alpha\in\Hom_{N_\F(V)}(X\alpha,Y\alpha)$ and so $\alpha^{-1}\phi\alpha$ is by assumption a morphism in $N_\F(Q)$. As $\alpha$ is a morphism in $N_\F(Q)$, it follows thus that $\phi$ is a morphism in $N_\F(Q)$. This shows $N_\F(U)\subseteq N_\F(Q)$ and thus the required implication.
\end{proof}

 \begin{lemma}\label{L:ZRfullynorm}
Let $\F$ be saturated. Then for every $R\leq S$, there exists $R'\in R^\F$ such that $Z(R')$ is fully $\F$-normalized.
 \end{lemma}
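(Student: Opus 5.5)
The idea is to choose $R'$ so that it is fully automized and receptive in a strong sense, and then transport a fully normalized conjugate of $Z(R')$ back along an $\F$-morphism whose domain is large enough. Concretely, set $Z:=Z(R)$ and pick $W\in Z^\F$ fully $\F$-normalized. By \cite[Lemma~I.2.6(c)]{Aschbacher/Kessar/Oliver:2011} there is $\alpha\in\Hom_\F(N_S(Z),S)$ with $Z\alpha=W$. Since $R$ centralizes $Z$, it normalizes $Z$, so $R\leq C_S(Z)\leq N_S(Z)$ and $\alpha$ is defined on $R$. I then take $R':=R\alpha\in R^\F$.

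The key observation is that an injective homomorphism defined on $R$ maps $Z(R)$ onto $Z(R\alpha)$. Indeed, $\alpha|_R\colon R\to R\alpha$ is a group isomorphism, and isomorphisms carry centers to centers. Hence
\[Z(R')=Z(R\alpha)=Z(R)\alpha=Z\alpha=W.\]
By the choice of $W$, the subgroup $Z(R')$ is fully $\F$-normalized, which is exactly what is required.

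The main point to check is that the morphism supplied by \cite[Lemma~I.2.6(c)]{Aschbacher/Kessar/Oliver:2011} really has domain containing $N_S(Z)$, and hence containing $R$. That lemma states that for $W$ fully normalized and any $Z\in W^\F$, there is a morphism from $N_S(Z)$ to $N_S(W)$ mapping $Z$ to $W$. Once this is granted, the rest of the argument is the center computation above.
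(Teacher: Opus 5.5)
Your proof is correct and is essentially the paper's argument: Lemma~I.2.6(c) of \cite{Aschbacher/Kessar/Oliver:2011} gives $\alpha\in\Hom_\F(N_S(Z(R)),S)$ with $Z(R)\alpha$ fully $\F$-normalized, and since $R\leq N_S(Z(R))$ one may take $R':=R\alpha$, so that $Z(R')=Z(R)\alpha$. Your opening remark about choosing $R'$ fully automized and receptive plays no role in the argument and can be dropped.
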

 
\begin{proof}
For every $R\leq S$, there exists by \cite[Lemma~I.2.6(c)]{Aschbacher/Kessar/Oliver:2011} a morphism $\alpha\in\Hom_\F(N_S(Z(R)),S)$ such that $Z(R)\alpha$ is fully $\F$-normalized. We have then $R':=R\alpha\in R^\F$ and  $Z(R')=Z(R)\alpha$ is fully $\F$-normalized.
\end{proof}

\begin{lemma}\label{L:FusNormEquivalent}
Let $\F$ be saturated and $Q\leq S$. Then the following conditions are equivalent:
 \begin{itemize}
  \item [(i)] $Q\unlhd \F$.
  \item [(ii)] Every characteristic subgroup of $Q$ is strongly closed in $\F$.
  \item [(iii)] There exists a series $1=Q_0\leq Q_1\leq\cdots \leq Q_n=Q$ of subgroups of $Q$ such that $Q_i$ is strongly closed in $\F$ and $[Q_i,Q]\leq Q_{i-1}$ for all $i=1,2,\dots,n$.
 \end{itemize}
\end{lemma}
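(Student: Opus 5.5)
We prove (i)$\Rightarrow$(ii)$\Rightarrow$(iii)$\Rightarrow$(i).

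\emph{(i)$\Rightarrow$(ii).} Suppose $Q\unlhd\F$ and let $C$ be a characteristic subgroup of $Q$. Take $P\leq C$ and $\phi\in\Hom_\F(P,S)$. By the definition of normality, $\phi$ extends to some $\hat\phi\in\Hom_\F(PQ,S)=\Hom_\F(Q,S)$ with $Q\hat\phi=Q$. Then $\hat\phi|_Q\in\Aut(Q)$ leaves $C$ invariant, so $P\phi\leq C\hat\phi=C$. Hence $C$ is strongly closed in $\F$.

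\emph{(ii)$\Rightarrow$(iii).} Use a characteristic central series of $Q$, for instance the upper central series $Q_i:=Z_i(Q)$, stopping at the first $n$ with $Z_n(Q)=Q$. Each $Q_i$ is characteristic in $Q$, hence strongly closed by (ii). Moreover $[Q_i,Q]\leq Q_{i-1}$ holds by definition of the upper central series.

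\emph{(iii)$\Rightarrow$(i).} This is the substantial direction. We use the standard criterion \cite[Proposition~I.4.5]{Aschbacher/Kessar/Oliver:2011}: for saturated $\F$, $Q\unlhd\F$ holds if and only if $Q$ is strongly closed in $\F$ and $Q$ is contained in every $\F$-centric radical (equivalently, every $\F$-essential) subgroup of $S$. Alternatively one can use Alperin's theorem together with the requirement that $Q$ lie in each $R$ with $\Aut_\F(R)$ generating, and that $\Aut_\F(R)$ normalize $Q$.

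Strong closure of $Q=Q_n$ is given. Now let $R$ be $\F$-centric and fully normalized with $O_p(\Out_\F(R))=1$ (or essential); we show $Q\leq R$. Let $A:=\Aut_\F(R)$. Each $R\cap Q_i$ is $A$-invariant, since $Q_i$ is strongly closed. Let $X$ be the subgroup of $\Aut(R)$ consisting of elements that centralize every factor of the $A$-invariant series $1\leq R\cap Q_1\leq\cdots\leq R\cap Q_n\leq R$. This $X$ is normal in $A\cdot\Aut_{N_S(R)}(R)$ and is a $p$-group by the standard lemma on stability groups of series.

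Now let $x\in N_Q(R)$. Then $[R\cap Q_i,x]\leq[Q_i,Q]\leq Q_{i-1}$, and this commutator also lies in $R$, so $c_x$ acts trivially on $(R\cap Q_i)/(R\cap Q_{i-1})$. On the top factor, $[R,x]\leq R\cap Q$ because $x\in Q$ and $Q$ is normal in $S$; note that a strongly closed subgroup is normal in $S$. So $c_x$ acts trivially on $R/(R\cap Q)$ as well, hence $c_x\in X$.

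Next, $X\cap A\unlhd A$ is a $p$-subgroup, so its image in $\Out_\F(R)$ lies in $O_p(\Out_\F(R))=1$, giving $X\cap A\leq\Inn(R)$. The element $c_x$ lies in $A$ since $x\in S$. Hence $c_x\in\Inn(R)$, i.e.\ $x\in RC_S(R)=R$, using that $R$ is centric. This shows $N_Q(R)\leq R$. Since $QR$ is a $p$-group, this forces $Q\leq R$.

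The main obstacle is this last step: making sure the action of $N_Q(R)$ lands in a normal $p$-subgroup of $\Aut_\F(R)$, including the top factor $R/(R\cap Q)$. After that, applying the cited criterion gives $Q\unlhd\F$.
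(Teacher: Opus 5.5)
Your proof is correct and follows the same structure as the paper's: (i)$\Rightarrow$(ii) is proved directly and (ii)$\Rightarrow$(iii) via the upper central series. The only difference is in (iii)$\Rightarrow$(i). The paper simply cites \cite[Proposition~I.4.6]{Aschbacher/Kessar/Oliver:2011}, whereas you reprove it from the centric--radical criterion with a stability-group argument. That argument is essentially the one behind the cited result, and it follows the same pattern as the paper's Lemma~\ref{CentralSeries}.
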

 
\begin{proof}
One checks easily that (i) implies (ii). Considering an upper or lower central series for $Q$, one sees moreover that  (ii) implies (iii). It is shown in \cite[Proposition~I.4.6]{Aschbacher/Kessar/Oliver:2011} that (iii) implies (i).
\end{proof}

\begin{lemma}\label{CentralSeries}
 Let $P\in\F^{cr}$ such that there exists a series $1=P_0\leq P_1\leq \dots \leq P_n=P$ which is $\Aut_\F(P)$-invariant. Let $X\leq S$ such that $[P_i,N_X(P)]\leq P_{i-1}$ for $i=1,\dots,n$ and $PX=XP$. Then $X\leq P$.
\end{lemma}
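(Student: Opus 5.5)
The idea is to show that $N_X(P)\leq P$; then, since $PX=XP$ is a $p$-subgroup containing $P$, the standard fact that a proper subgroup of a $p$-group is properly contained in its normalizer gives $N_{PX}(P)=P$ only if $PX=P$, that is, $X\leq P$. More precisely, set $T:=PX$. If $P<T$, then $P<N_T(P)$. Every element of $N_T(P)$ can be written $px$ with $p\in P$ and $x\in X$, and then $x\in N_X(P)$. So $N_T(P)=P\,N_X(P)$, and it suffices to show $N_X(P)\leq P$.

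To prove $N_X(P)\leq P$, let $A:=\Aut_\F(P)$ and consider
\[
K:=\{\alpha\in A : [P_i,\alpha]\leq P_{i-1}\mbox{ for } i=1,\dots,n\}.
\]
Since the series $1=P_0\leq \cdots\leq P_n=P$ is $A$-invariant, $K$ is a normal subgroup of $A$. By a classical result (Hall, or \cite[Lemma~A.2]{Aschbacher/Kessar/Oliver:2011}-type statements), the group of automorphisms stabilizing a series of subgroups is a $p$-group. Hence $K\leq O_p(A)$.

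Since $P\in\F^{cr}$, $P$ is radical, so $O_p(\Out_\F(P))=1$, meaning $O_p(A)=\Inn(P)$. By hypothesis, conjugation $c_x$ for $x\in N_X(P)$ lies in $K$, hence $c_x\in\Inn(P)$. Thus $x\in P\,C_S(P)$. As $P$ is centric, $C_S(P)\leq P$, so $x\in P$. This gives $N_X(P)\leq P$, and the first paragraph finishes the proof.

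The only step needing care is the claim that the stabilizer of the series is a $p$-group. This is the standard result that an automorphism acting trivially on each factor of a normal series of a $p$-group has $p$-power order: an automorphism of $p'$-order stabilizing the series is trivial by coprime action. The series here need not be normal in $P$, but the classical argument (Hall) for series of subgroups still applies. Alternatively, one can note that $K\cap\Inn(P)$ and the image of $K$ in $\Out$ are controlled by the same coprime argument. No other obstacle is expected.
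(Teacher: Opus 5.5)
Your proof is correct and follows essentially the same route as the paper. The paper works with the normal closure $\langle \Aut_X(P)^{\Aut_\F(P)}\rangle$ rather than your full stabilizer $K$ of the series, but in both cases one obtains a normal $p$-subgroup of $\Aut_\F(P)$ via \cite[Lemma~A.2]{Aschbacher/Kessar/Oliver:2011}, hence one contained in $\Inn(P)$. You then finish exactly as the paper does, with $N_{PX}(P)=PN_X(P)=P$.

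Your hesitation about non-normal series is unnecessary. If $\alpha$ has $p'$-order and already centralizes $P_{i-1}$, then for $g\in P_i$ with $g^\alpha=gh$ and $h\in P_{i-1}$ one gets $g^{\alpha^k}=gh^k$ for all $k$. Taking $k=o(\alpha)$ forces $h=1$, so $\alpha$ centralizes $P_i$, and induction on $i$ gives $\alpha=1$ for any subgroup series.
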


\begin{proof}
 Note that $[P_i,\Aut_X(P)]\leq P_{i-1}$ for $i=1\dots,n$. Set $N:=\<\Aut_X(P)^{\Aut_\F(P)}\>$. As the $P_i$ are $\Aut_\F(P)$-invariant, it follows that $[P_i,N]\leq P_{i-1}$ for $i=1,\dots,n$. Hence, by \cite[Lemma~A.2]{Aschbacher/Kessar/Oliver:2011}, $O^p(N)=1$ and $N$ is thus a normal $p$-subgroup of $\Aut_\F(P)$. Therefore, $\Aut_X(P)\leq N\leq O_p(\Aut_\F(P))=\Inn(P)$ as $P$ is radical. So $N_X(P)\leq PC_S(P)=P$ as $P$ is centric. Since by assumption $XP=PX$, the product $PX$ is a subgroup of $S$ and thus a $p$-group. Hence, $N_{PX}(P)=PN_X(P)=P$ implies $PX=P$ and thus $X\leq P$. 
\end{proof}

For every subgroup $R\leq S$, we set 
\[\Out_\F(R):=\Aut_\F(R)/\Inn(R)\mbox{ and }\Out_S(R):=\Aut_S(R)/\Inn(R).\]

\begin{lemma}\label{ActionRModPhiR}
 Let $R\in\F^{cr}$. Then $\Out_\F(R)$ acts faithfully on $R/\Phi(R)$.
\end{lemma}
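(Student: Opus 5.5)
Let $K$ be the kernel of the action of $\Out_\F(R)$ on $R/\Phi(R)$. The plan is to show that $K$ is a normal $p$-subgroup of $\Out_\F(R)$ and then use that $R$ is radical to conclude that $K=1$.

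Let $\hat K$ be the full preimage of $K$ in $\Aut_\F(R)$. Then $\hat K$ is precisely the set of $\F$-automorphisms of $R$ acting trivially on $R/\Phi(R)$. This set is a normal subgroup of $\Aut_\F(R)$, because $\Phi(R)$ is characteristic in $R$. By the classical theorem of Burnside, the kernel of the action of $\Aut(R)$ on $R/\Phi(R)$ is a $p$-group. Hence $\hat K$ is a normal $p$-subgroup of $\Aut_\F(R)$, and so $\hat K\leq O_p(\Aut_\F(R))$.

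Since $R\in\F^{cr}$, the subgroup $R$ is $\F$-radical, meaning $O_p(\Aut_\F(R))=\Inn(R)$. Therefore $\hat K\leq \Inn(R)$, so $K=\hat K/\Inn(R)$ is trivial and $\Out_\F(R)$ acts faithfully.

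One point needs checking first: the action of $\Out_\F(R)$ on $R/\Phi(R)$ is well defined. This holds because inner automorphisms act trivially on the abelian quotient $R/\Phi(R)$. Indeed, $[R,R]\leq\Phi(R)$, so for $r,x\in R$ we have $r^x\Phi(R)=r\Phi(R)$.

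I expect no real obstacle here. The only substantive input is Burnside's result that automorphisms of a $p$-group centralizing $R/\Phi(R)$ form a $p$-group, which could alternatively be cited as \cite[Lemma~A.2]{Aschbacher/Kessar/Oliver:2011}, applied to the series $\Phi(R)\leq R$ together with a central series of $\Phi(R)$. Centricity of $R$ is not even needed; radicality suffices.
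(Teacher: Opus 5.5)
Your proof is correct and is precisely the paper's argument: the paper cites Burnside's theorem as \cite[8.2.9(b)]{Kurzweil/Stellmacher:2004} to get that $C_{\Aut_\F(R)}(R/\Phi(R))$ is a $p$-group, hence contained in $O_p(\Aut_\F(R))=\Inn(R)$. One caution about your parenthetical alternative: as phrased, \cite[Lemma~A.2]{Aschbacher/Kessar/Oliver:2011} cannot be applied to an arbitrary central series of $\Phi(R)$, because an automorphism centralizing $R/\Phi(R)$ need not leave such a series invariant or centralize its factors, so Burnside's theorem is the right citation (the series route would need a characteristic series such as the lower exponent-$p$ central series of $R$).
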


\begin{proof}
As $R/\Phi(R)$ is abelian, $\Out_\F(R)=\Aut_\F(R)/\Inn(R)$ acts on $R/\Phi(R)$. By \cite[8.2.9(b)]{Kurzweil/Stellmacher:2004}, $C_{\Aut_\F(R)}(R/\Phi(R))$ is a $p$-group and thus contained in $O_p(\Aut_\F(R))=\Inn(R)$. This proves the assertion. 
\end{proof}

We recall from \cite[Definition~I.3.2]{Aschbacher/Kessar/Oliver:2011} that a subgroup $R\leq S$ is called \emph{essential} in $\F$, if $R$ is $\F$-centric and fully $\F$-normalized, and moreover $\Out_\F(R)$ has a strongly $p$-embedded subgroup. Every essential subgroup of $\F$ is in $\F^{cr}$.

\smallskip

If $G$ is a finite group with a strongly $p$-embedded subgroup $H$, then $H\cap O^{p^\prime}(G)$ is strongly $p$-embedded in $O^{p^\prime}(G)$. So if $R$ is essential, then $O^{p^\prime}(\Out_\F(R))$ has a strongly $p$-embedded subgroup. We use this fact in the proof of the following lemma.

\begin{lemma}\label{FF}
 Let $R$ be an essential subgroup of $\F$. Suppose there exists $A\leq N_S(R)$ such that for $\ov{R}=R/\Phi(R)$, we have $|\ov{R}/C_{\ov{R}}(A)|\leq |A/(A\cap R)|$. Then we have
 \[
 q:=|N_S(R)/R| =|A/A\cap R| , \qquad O^{p^\prime}(\Out_\F(R)) \cong \SL_2(q)
 \]
 and $\ov{R}/C_{\ov{R}}(O^{p^\prime}(\Out_\F(R)))$ is a natural $\SL_2(q)$-module for $O^{p^\prime}(\Out_\F(R))$. In particular, 
 \[|\ov{R}|\geq |N_S(R)/R|^2.\]
\end{lemma}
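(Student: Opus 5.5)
This is an FF-module argument: we pass to the group $G:=O^{p'}(\Out_\F(R))$, which has a strongly $p$-embedded subgroup, and apply a standard recognition result for $\SL_2(q)$ acting on an offender.

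\textbf{Setting up the offender.} By Lemma~\ref{ActionRModPhiR}, $\Out_\F(R)$ acts faithfully on $\ov{R}$. Since $R$ is essential it is fully normalized, so $\Out_S(R)\cong N_S(R)/R$ is a Sylow $p$-subgroup of $\Out_\F(R)$, and hence of $G$. Put $\bar A := AR/R\cong A/(A\cap R)$, viewed in $\Out_S(R)\le G$. This is a nontrivial $p$-subgroup: if $A\le R$, the hypothesis gives $|\ov R/C_{\ov R}(A)|\le 1$, which is harmless, so we assume $A\not\le R$ (the nontrivial case intended). The hypothesis $|\ov R/C_{\ov R}(\bar A)|\le |\bar A|$ says exactly that $\bar A$ is a (quadratic-free) offender on the faithful $\mathbb F_pG$-module $\ov R$.

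\textbf{Applying the recognition theorem.} We then invoke the standard result on groups with a strongly $p$-embedded subgroup containing an offender; see for instance \cite{Henke:2011} or \cite[Theorem~5.6]{Henke2010} (Henke's ``Recognizing $\SL_2(q)$ in fusion systems''). It states: if $G=O^{p'}(G)$ has a strongly $p$-embedded subgroup and $V$ is a faithful $\mathbb F_pG$-module with an offender $A\le G$, then $G\cong\SL_2(q)$ with $q=|A|$, $A\in\Syl_p(G)$, and $V/C_V(G)$ is a natural $\SL_2(q)$-module. The main obstacle is checking that this result applies as stated, in particular whether $\bar A$ lies in $G$. Since $\bar A$ is a $p$-subgroup of $\Out_\F(R)$, it is contained in $O^{p'}(\Out_\F(R))=G$, so this holds. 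We should also confirm that the fact, quoted before the lemma, that $G$ has a strongly $p$-embedded subgroup suffices. I would cite Henke's theorem as the earlier result doing the real work here.

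\textbf{Reading off the conclusions.} The theorem gives $\bar A\in\Syl_p(G)=\Syl_p(\Out_\F(R))$, so
\[
|A/(A\cap R)|=|\bar A|=|\Out_S(R)|=|N_S(R)/R|=q .
\]
The natural module statement is part of the theorem's conclusion. Finally, $|\ov R|\ge|\ov R/C_{\ov R}(G)|=q^2$ yields $|\ov R|\ge|N_S(R)/R|^2$.
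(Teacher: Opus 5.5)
Your proposal follows the paper's proof: both use Lemma~\ref{ActionRModPhiR} for faithfulness of $\Out_\F(R)$ on $\ov{R}$ and the Sylow property of $\Out_S(R)\cong N_S(R)/R$ to view $AR/R$ as an offender inside $O^{p'}(\Out_\F(R))$, and then cite \cite{Henke:2010}. The paper splits your single black box into three parts: Lemma~4.6(a) passes to $\ov{R}/C_{\ov{R}}(O^{p'}(\Out_\F(R)))$, Theorem~5.6 recognizes $\SL_2(q)$, and Lemma~4.10 shows that $AR/R$ is Sylow.

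One correction: the case $A\le R$ is not harmless but a genuine exception to the statement. There the hypothesis holds automatically, yet $|A/(A\cap R)|=1<|N_S(R)/R|$ because an essential $R$ is proper in $S$. So $A\not\le R$ is an implicit hypothesis of the lemma, and the paper's proof tacitly needs it as well.
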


\begin{proof}
As $R$ is essential, $R\in\F^{cr}$ and thus, by Lemma~\ref{ActionRModPhiR}, $\Out_\F(R)$ acts faithfully on $\ov{R}$. As $R\in\F^f$, $\Aut_S(R)\in\Syl_p(\Aut_\F(R))$. Since $R\in\F^c$, we have $N_S(R)/R\cong \Aut_S(R)/\Inn(R)$ and $A/A\cap R\cong AR/R\cong \Aut_A(R)\Inn(R)/\Inn(R)$. In particular, $\Aut_A(R)\Inn(R)/\Inn(R)$ is an offender on $\ov{R}$. Now the assertion follows from Lemma~4.6(a), Lemma~4.10 and Theorem~5.6 in \cite{Henke:2010}.

\smallskip

In detail, $AR/R$ is an offender on $\ov{R}/C_{\ov{R}}(O^{2^\prime}(\Out_\F(R)))$ by  \cite[Lemma~4.6(a)]{Henke:2010} with $(\ov{R}, O^{2^\prime}(\Out_\F(R)))$ in place of $(V,G)$. Then by \cite[Theorem~5.6]{Henke:2010}, $O^{2^\prime}(\Out_\F(R)) \cong \SL_2(q)$, where $q$ is the size of a Sylow $2$-subgroup of $\Out_\F(R)$, thus $q=|\Aut_S(R)/\Inn(R)|$, and finally \cite[Lemma~4.10]{Henke:2010} gives that $AR/R$ is a Sylow $2$-subgroup of $O^{2^\prime}(\Out_\F(R))$ and thus has size $q$.
\end{proof}

The following lemma is solely needed to prove Example~\ref{Ex:AutG23Extended}(b). It may also be useful in other cases to find isomorphisms between $p$-fusion systems of finite almost simple groups of Lie Type.

\begin{lemma}\label{L:AlmostSimpleFusIsoHelp}
Let $\F$ be saturated and suppose $\mG$ is a saturated subsystem of $\F$ over $S$. Let $\E$ be a subsystem of $\mG$ over $T\leq S$ such that $\E$ is a normal subsystem of $\F$ of $p$-power index. Then $\F=\mG$. 
\end{lemma}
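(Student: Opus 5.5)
We compare $\mG$ and $\F$ through the hyperfocal subgroup and Puig's description of subsystems of $p$-power index; the key is to show that $\F$ and $\mG$ have the same hyperfocal subgroup. Recall \cite[Theorem~I.7.4]{Aschbacher/Kessar/Oliver:2011}: the saturated subsystems of $\F$ of $p$-power index over a subgroup $U$ with $\mathfrak{hyp}(\F)\leq U\leq S$ correspond bijectively to such $U$. The subsystem over $S$ in this correspondence is $\F$ itself. Moreover, every saturated subsystem over $S$ containing $O^p(\F)$ equals $\F$ (cf. \cite[Theorem~I.7.4]{Aschbacher/Kessar/Oliver:2011} and its proof), since it contains all the automorphism groups $O^{p}(\Aut_\F(P))$ together with $\Aut_S(P)$, and $\F$ is generated by these.

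\textbf{Step 1.} Since $\E$ is normal of $p$-power index in $\F$, we have $O^p(\F)\subseteq \E$. This follows because $O^p(\F)$ is the unique minimal subsystem of $p$-power index and $\E$ has $p$-power index.

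\textbf{Step 2.} Since $\E\subseteq\mG\subseteq\F$, we get $O^p(\F)\subseteq\mG$.

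\textbf{Step 3.} We now show $\mG=\F$. By definition, $\mG$ has $p$-power index in $\F$ if $\mG$ is over a subgroup containing $\mathfrak{hyp}(\F)$ and $\Aut_\mG(P)\geq O^p(\Aut_\F(P))$ for all $P$. Here $\mG$ is over $S\geq T\geq \mathfrak{hyp}(\F)$. For each $P\leq S$,
\[
O^p(\Aut_\F(P))=O^p(\Aut_{O^p(\F)}(P))\leq \Aut_\mG(P).
\]
For the first equality we use $\Aut_{O^p(\F)}(P)\supseteq O^p(\Aut_\F(P))$, which is part of the defining properties in \cite[Theorem~I.7.4]{Aschbacher/Kessar/Oliver:2011}. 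Thus $\mG$ is a saturated subsystem of $p$-power index over $S$. By uniqueness in Puig's correspondence, $\mG=\F$.

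\textbf{Main obstacle.} The delicate point is Step 3: we must make sure the uniqueness statement applies to an arbitrary saturated subsystem over $S$ containing $O^p(\F)$, and not just to the subsystems constructed in the theorem. If uniqueness is only stated for the constructed subsystems, we would argue directly. Let $\phi\in\Hom_\F(P,S)$. By Alperin's fusion theorem, we may assume $\phi\in\Aut_\F(R)$ with $R$ fully normalized. Since $\Aut_S(R)$ is Sylow in $\Aut_\F(R)$, we have
\[
\Aut_\F(R)=O^p(\Aut_\F(R))\,\Aut_S(R)\subseteq\Aut_\mG(R),
\]
because $\mG$ is over $S$ and hence contains $\Aut_S(R)$. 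Therefore $\F\subseteq\mG$. This direct argument avoids citing uniqueness at all, so it is the route we would actually write down. Note that the normality of $\E$ is used only to guarantee $O^p(\F)\subseteq\E$, via the $p$-power index assumption.
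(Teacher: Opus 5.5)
\textbf{There is a genuine gap in Step~3, and the ``direct argument'' you offer as a replacement has the same gap.}

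The system $O^p(\F)$ is a fusion system over $\hyp(\F)$. So $O^p(\F)\subseteq\mG$ only gives $O^p(\Aut_\F(P))\leq\Aut_{O^p(\F)}(P)\leq\Aut_\mG(P)$ for $P\leq\hyp(\F)$; the definition of $p$-power index only imposes the automizer condition on subgroups of the smaller system's underlying group. Your inequality ``for each $P\leq S$'' is therefore only justified for $P\leq\hyp(\F)$. The same goes for the inclusion $\Aut_\F(R)=O^p(\Aut_\F(R))\Aut_S(R)\subseteq\Aut_\mG(R)$.

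Alperin's theorem, however, needs this for $R=S$ and for every essential $R$, and these need not lie in $\hyp(\F)$. For example, take $\F=\F_S(\Sym(6))$ with $p=2$ and $R=\langle(12),(34),(56)\rangle\leq S$. Then $R$ is essential with $\Aut_\F(R)\cong\Sym(3)$, but $R\not\leq\hyp(\F)=S\cap\Alt(6)$.

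What your argument does give is weaker: for a $p'$-element $\alpha\in\Aut_\F(R)$, the restriction $\alpha|_{R\cap\hyp(\F)}$ lies in $\mG$. Recovering $\alpha$ itself inside $\mG$ from this restriction is exactly the nontrivial point, and neither of your tools supplies it.
\begin{itemize}
\item The extension axiom in $\mG$ does not suffice, because you have no control over receptivity or centricity of $R\cap\hyp(\F)$.
\item Puig's uniqueness presupposes that $\mG$ already has $p$-power index in $\F$, which is the same unproved claim.
\end{itemize}
So the ``main obstacle'' you named is not the real one.

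\textbf{The missing ingredient} is a uniqueness theorem for saturated subsystems over $S$ with prescribed $O^p$, and this is how the paper proceeds.
\begin{itemize}
\item Since $\E$ has $p$-power index in $\F$, it also has $p$-power index in $\mG$.
\item Hence \cite[Lemma~2.29]{Chermak/Henke} gives $O^p(\F)=O^p(\E)=O^p(\mG)$.
\item By \cite[Theorem~1]{Henke:2013}, there is a unique saturated subsystem $\E S$ of $\F$ over $S$ with $O^p(\E S)=O^p(\E)$.
\item Both $\F$ and $\mG$ satisfy this, so $\F=\mG$.
\end{itemize}
This is also where the normality of $\E$ in $\F$ is genuinely used; your remark that normality only serves to get $O^p(\F)\subseteq\E$ reflects the missing step.

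Your Step~1 conclusion $O^p(\F)\subseteq\E$ is correct, since it follows from $O^p(\E)=O^p(\F)$. However, ``unique minimal subsystem of $p$-power index'' does not by itself show that $O^p(\F)$ is contained in every subsystem of $p$-power index.
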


\begin{proof}
As $\E$ has $p$-power index in $\F$, it has also $p$-power index in $\G$. Now it follows from \cite[Lemma~2.29]{Chermak/Henke} that $O^p(\F)=O^p(\E)=O^p(\G)$. By \cite[Theorem~1]{Henke:2013} there is a unique saturated subsystem $\mD=\E S$ of $\F$ over $S$ with $O^p(\mD)=O^p(\E)$. Our assumption yields thus $\F=\mD=\mG$.
\end{proof}

\subsection{Background results on finite groups} As before let $p$ always be a prime.

\smallskip

\begin{definition}~\label{D:YG}
 \begin{itemize}
  \item If $H$ is a finite group and $V$ an $\bF_p H$-module, then $V$ is called \emph{$p$-reduced}, if $O_p(H/C_H(V))=1$.
  \item For any finite group $G$, we denote by $Y_G$ the largest elementary abelian normal $p$-subgroup of $G$ which is a $p$-reduced $G$-module.
 \end{itemize}
\end{definition}

The existence of $Y_G$ was shown in \cite[Lemma~2.2(a)]{MS:2009}. If $G$ is a group of characteristic $p$ (i.e. $C_G(O_p(G))\leq O_p(G)$), then $\<\Omega_1(Z(T))\colon T\in\Syl_p(G)\>$ is a non-trivial elementary abelian normal subgroup of $G$ which is a $p$-reduced $G$-module. Hence, $Y_G$ non-trivial if $G$ is a group of characteristic $p$. 

\smallskip

We will need the following elementary technical lemma to deduce Theorem~\ref{mainMS} from Theorem~\ref{unique local}, which is stated later on.  

\begin{lemma}\label{L:pReduced}
 Let $G$ be a finite group, $R:=O_p(G)$ and $H:=\Aut_G(R)$. Then $Y_G$ is the largest $p$-reduced $H$-submodule of $\Omega_1(Z(R))$. 
\end{lemma}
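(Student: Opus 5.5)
We compare the two candidate objects directly, and the key step is to show that $Y_G\leq\Omega_1(Z(R))$. Since $H=\Aut_G(R)\cong G/C_G(R)$, the $H$-submodules of $\Omega_1(Z(R))$ are exactly the normal subgroups of $G$ contained in $\Omega_1(Z(R))$, because $G$ acts on $Z(R)$ through $G/C_G(R)$. Moreover, for such a normal subgroup $W$, the group $C_G(R)$ centralizes $W$. Hence $C_G(W)$ is the full preimage of $C_H(W)$ under $G\to H$, and $G/C_G(W)\cong H/C_H(W)$. So $W$ is $p$-reduced as a $G$-module if and only if it is $p$-reduced as an $H$-module.

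Once $Y_G\leq \Omega_1(Z(R))$ is established, the lemma follows. Recall that $Y_G$ is the largest elementary abelian normal $p$-subgroup of $G$ that is $p$-reduced. Under the dictionary above, it is then a $p$-reduced $H$-submodule of $\Omega_1(Z(R))$. Conversely, let $W$ be any $p$-reduced $H$-submodule of $\Omega_1(Z(R))$. Then $W$ is an elementary abelian normal $p$-subgroup of $G$ which is $p$-reduced as a $G$-module, so $W\leq Y_G$ by maximality. Thus $Y_G$ is the largest such submodule.

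To show $Y_G\leq\Omega_1(Z(R))$, first note that $Y_G$ is a normal $p$-subgroup of $G$, so $Y_G\leq R$, and $Y_G$ is elementary abelian. It remains to see that $R$ centralizes $Y_G$. The image $RC_G(Y_G)/C_G(Y_G)$ is a normal $p$-subgroup of $G/C_G(Y_G)$. Since $Y_G$ is $p$-reduced, $O_p(G/C_G(Y_G))=1$. Hence $R\leq C_G(Y_G)$, that is, $Y_G\leq Z(R)$. Being elementary abelian, $Y_G\leq \Omega_1(Z(R))$.

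The only point requiring care is the identification of the actions of $G$ and $H$ on normal subgroups of $\Omega_1(Z(R))$, i.e. that $C_G(R)\leq C_G(W)$ gives $G/C_G(W)\cong H/C_H(W)$. This is immediate, so I expect no real obstacle. The existence of a largest object of each kind is guaranteed by \cite[Lemma~2.2(a)]{MS:2009} for $Y_G$; for the $H$-module version it follows from the identification, or directly from the same argument applied to $H$.
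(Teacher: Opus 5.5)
Your proof is correct and follows essentially the same route as the paper. You identify the $H$-submodules of $\Omega_1(Z(R))$ with the normal subgroups of $G$ contained in it, use $C_G(R)\leq C_G(W)$ to get $G/C_G(W)\cong H/C_H(W)$, and combine this with $Y_G\leq \Omega_1(Z(R))$. The one addition is that you justify $Y_G\leq Z(R)$ explicitly via $O_p(G/C_G(Y_G))=1$, whereas the paper only asserts this inclusion.
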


\begin{proof}
 As $Y_G$ is elementary abelian and a $p$-reduced $G$-module, we have $Y_G\leq \Omega_1(Z(R))$. Moreover, a subgroup $Y\leq \Omega_1(Z(R))$ is an $H$-submodule if and only if it is normal in $G$. Fix now an $H$-submodule $Y$ of $\Omega_1(Z(R))$. Note that $C_G(R)\leq C_G(Y)$, $H\cong G/C_G(Y)$ and 
 \[H/C_H(Y)\cong G/C_G(Y).\]
 So $Y$ is a $p$-reduced $H$-submodule of $\Omega_1(Z(R))$ if and only if it is a $p$-reduced $G$-module. This implies the assertion.
\end{proof}

\begin{lemma}[{Cf. \cite[Lemma~1.3]{MStr:2008}}]\label{Lemma1.3}
 Let $H$ be a group, $\bF$ a field, $W$ an $\bF H$-module, and $A\unlhd B\leq H$. Suppose there exists an irreducible $\bF B$-submodule $Y$ of $W$ with $[W,A]\leq Y$ and $W=\<Y^H\>$. Then every proper $\bF H$-submodule of $W$ is centralized by $\<A^H\>$. In particular, $W/C_W(\<A^H\>)$ is either trivial or an irreducible $\bF H$-module. 
\end{lemma}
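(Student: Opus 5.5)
Let $U$ be a proper $\bF H$-submodule of $W$. The goal is to show $[U,A^h]=0$ for every $h\in H$. Since $U$ is $H$-invariant, we have $[U,A^h]=[U,A]^h$, so it is enough to show $[U,A]=0$; then $U$ is centralized by every conjugate of $A$ and hence by $\<A^H\>$.

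First I consider $U\cap Y$. This is an $\bF B$-submodule of $Y$, because $U$ is $H$-invariant and $Y$ is $B$-invariant. As $Y$ is irreducible, either $U\cap Y=Y$ or $U\cap Y=0$. In the first case $Y\leq U$, so $W=\<Y^H\>\leq U$, contradicting that $U$ is proper. Hence $U\cap Y=0$. On the other hand, $[U,A]\leq U$ because $U$ is $H$-invariant, and $[U,A]\leq [W,A]\leq Y$ by hypothesis. Therefore $[U,A]\leq U\cap Y=0$. Applying this to each conjugate as above shows that $U$ is centralized by $\<A^H\>$. Note that the normality $A\unlhd B$ is not needed for this step; it is only there to make sense of the setup, and could be used to remark that $[W,A]$ is $B$-invariant.

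For the ``in particular'' part, put $C:=C_W(\<A^H\>)$. Since $\<A^H\>$ is normal in $H$, $C$ is an $\bF H$-submodule. Let $C\leq U\leq W$ be a submodule. If $U\neq W$, the first part gives $U\leq C$, so $U=C$. Thus the only submodules of $W/C$ are $0$ and $W/C$. So $W/C$ is either trivial (meaning zero, i.e. $C=W$) or irreducible.

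I do not expect any real obstacle. The only subtle point is the claim $U\cap Y\in\{0,Y\}$, which requires $U\cap Y$ to be $B$-invariant. This holds because $B\leq H$.
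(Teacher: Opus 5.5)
Your proof is correct and complete. For a proper submodule $U$, the irreducibility of $Y$ forces $U\cap Y=0$, since otherwise $W=\langle Y^H\rangle\le U$. Hence $[U,A]\le U\cap[W,A]\le U\cap Y=0$, and the $H$-invariance of $U$ carries this over to all conjugates of $A$.

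The paper gives no argument beyond citing \cite[Lemma~1.3]{MStr:2008} and noting that the case $C_W(\langle A^H\rangle)=W$ was omitted there. Your write-up supplies the direct argument and handles that degenerate case explicitly. You are also right that the hypothesis $A\unlhd B$ is never used.
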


\begin{proof}
 This is \cite[Lemma~1.3]{MStr:2008} except that the authors forgot to mention the possibility that $W/C_W(\<A^H\>)$ might be trivial.
\end{proof}

In this paper we will frequently use a theorem of Gasch{\"u}tz in the form of the following lemma.

\begin{lemma}\label{L:ApplyGaschutz}
 Let $G$ be a finite group acting on an abelian $p$-subgroup $V$. Let $S$ be a Sylow $p$-subgroup of $G$. Then the following hold.
 \begin{itemize}
  \item [(a)] If $V=U_1\times U_2$ for a $G$-invariant subgroup $U_1$ of $V$ and an $S$-invariant subgroup $U_2$ of $V$, then there exists a $G$-invariant subgroup $W$ of $V$ such that $V=U_1\times W$.
  \item [(b)] If $V$ is elementary abelian and $C_V(G)\not\leq [V,S]$, then $[V,G]\neq V$.
  \item [(c)] If $V$ is elementary abelian and $V=[V,G]C_V(S)$, then $V=[V,G]C_V(G)$.
 \end{itemize}
\end{lemma}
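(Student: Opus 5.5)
The plan is to derive all three parts from Gaschütz's theorem: if $V$ is an abelian $p$-group on which $G$ acts, and $U_1$ is a $G$-invariant subgroup with a complement in $V$ that is invariant under a Sylow $p$-subgroup $S$, then $U_1$ also has a $G$-invariant complement. The standard statement is for a normal abelian $p$-subgroup of a group. We therefore first pass to the semidirect product $V\rtimes G$ (or $V\rtimes (G/C_G(V))$), in which $V$ is a normal abelian $p$-subgroup. There $U_1$ is normal, $VS$ is a Sylow $p$-subgroup, and the $S$-invariant complement $U_2$ translates into a complement of $U_1$ in $V$ that is normal in $VS$. Gaschütz's theorem (e.g. \cite[3.3.2]{Kurzweil/Stellmacher:2004}) then gives a complement $W$ normal in $V\rtimes G$, i.e. $G$-invariant. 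This proves (a), and it is the only step that needs an external input.

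For (b), assume $V$ is elementary abelian and pick $v\in C_V(G)\setminus [V,S]$. In the elementary abelian group $V/[V,S]$, which $S$ centralizes, choose a hyperplane-type complement: let $U_1:=\langle v\rangle$, which is $G$-invariant of order $p$. Choose a subgroup $U_2$ with $[V,S]\leq U_2$ and $V=\langle v\rangle\times U_2$; this is possible since $v\notin[V,S]$ and $V$ is a vector space over $\bF_p$. As $[V,S]\leq U_2$, the subgroup $U_2$ is $S$-invariant. Part (a) then yields a $G$-invariant $W$ with $V=\langle v\rangle\times W$. Now $G$ acts trivially on $V/W\cong\langle v\rangle$, so $[V,G]\leq W<V$, which proves (b).

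For (c), assume $V=[V,G]C_V(S)$ and argue by passing to a quotient. Set $\ov V:=V/[V,G]$. Then $G$ acts trivially on $\ov V$, and we want $C_V(G)$ to map onto $\ov V$. Choose a complement $U_2$ to $[V,G]$ inside the $S$-invariant space $C_V(S)$; this is possible because $C_V(S)$ maps onto $\ov V$ by the hypothesis. Thus $V=[V,G]\times U_2$ with $U_2\leq C_V(S)$, so $U_2$ is $S$-invariant, and $[V,G]$ is $G$-invariant. Part (a) gives a $G$-invariant complement $W$ with $V=[V,G]\times W$. Then $[W,G]\leq W\cap[V,G]=1$, so $W\leq C_V(G)$, and hence $V=[V,G]C_V(G)$.

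The main subtlety is in (a): correctly setting up the semidirect product so that the hypotheses of Gaschütz's theorem (normal abelian $p$-subgroup, complement in a Sylow $p$-subgroup) are met. This is routine. Parts (b) and (c) are linear algebra once (a) is available.
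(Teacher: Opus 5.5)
Your proof is correct and follows the paper's argument in all three parts: a semidirect product plus Gasch{\"u}tz for (a), then a complement containing $[V,S]$ for (b) and a complement inside $C_V(S)$ for (c). The one thing to tidy is how Gasch{\"u}tz is invoked in (a). In the form \cite[3.3.2]{Kurzweil/Stellmacher:2004} it needs a complement to $U_1$ in the Sylow subgroup $V\rtimes S$, not in $V$; take $U_2\rtimes S$, which is a subgroup because $U_2$ is $S$-invariant. It then returns a complement $H$ to $U_1$ in $V\rtimes G$, not a normal subgroup of $V$. So you still have to set $W:=H\cap V$, get $V=U_1W$ from the Dedekind law, and note that $W$ is normal in $HU_1=V\rtimes G$, since $H$ normalizes it and the abelian group $V\geq U_1$ centralizes it.
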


\begin{proof}
\textbf{(a)} Let $U_1$ and $U_2$ be as in (a). Consider the semidirect product $G\ltimes V$ and note that it contains $S\ltimes V$ as a Sylow $p$-subgroup. Moreover, $U_1$ is normal in $G\ltimes V$, and $S\ltimes U_2$ is a complement to $U_1$ in $S\ltimes V$. Hence, by a Gasch{\"u}tz' Theorem (see e.g. \cite[3.3.2]{Kurzweil/Stellmacher:2004}), there exists a complement $H$ to $U_1$ in $G\ltimes V$, i.e. $G\ltimes V=HU_1$ and $H\cap U_1=1$. Thus, setting $W:=H\cap V$, we have $V=WU_1$ and then $V=U_1\times W$. Since $V$ is abelian and $W$ is $H$-invariant, it follows that $W$ is normal in $G\ltimes V=HU_1$. In particular, $W$ is $G$-invariant. This shows (a).

\smallskip

\textbf{(b)} Suppose $C_V(G)\not\leq [V,S]$ and $V$ is elementary abelian. We may fix then a non-trivial subgroup $U_1$ of $C_V(G)$ with $U_1\cap [V,S]=1$. Furthermore, there exists a complement $U_2$ to $U_1$ in $V$ with $[V,S]\leq U_2$. Note that $U_2$ is $S$-invariant as $[U_2,S]\leq [V,S]\leq U_2$. Clearly $U_1$ is $G$-invariant. Hence, by (a), there exists $W\leq V$ such that $V=U_1\times W$ and $W$ is $G$-invariant. As $U_1$ is non-trivial and centralized by $G$, it follows $[V,G]\leq W<V$ and so (b) holds.

\smallskip

\textbf{(c)} Suppose $V=[V,G]C_V(S)$ and $V$ is elementary abelian. We may fix then $U_2\leq C_V(S)$ such that $U_2$ is a complement to $[V,G]$ in $V$. By (a) applied with $U_1=[V,G]$, there exists a $G$-invariant subgroup $W$ of $V$ such that $V=[V,G]\times W$. Then $[W,G]\leq [V,G]\cap W=1$. So $W\leq C_V(G)$ and $V=[V,G]C_V(G)$. This shows (c).
\end{proof}

\begin{lemma}\label{AutSemidirect}
 Let $G$ be a finite group and suppose $G=\<x\>\ltimes H$ for some $x\in G$ and some normal subgroup $H$ of $G$. Let $z\in Z(H)$ such that $o(x)=o(xz)$. Then there exists an automorphism $\beta\in\Aut(G)$ such that $\beta|_H=\id_H$ and $x^\beta=xz$.
\end{lemma}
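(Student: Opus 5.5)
The idea is to write every element of $G$ in the normal form coming from the semidirect product decomposition and to let $\beta$ act on the $\langle x\rangle$-coordinate only. Since $G=\langle x\rangle\ltimes H$, every element of $G$ can be written uniquely as $x^ih$ with $0\le i<n:=o(x)$ and $h\in H$. We define $\beta\colon G\to G$ by
\[
(x^ih)^\beta:=(xz)^ih .
\]
This is a well-defined map, because the normal form is unique and $(xz)^n=1$ by the hypothesis $o(xz)=o(x)=n$. Consequently the exponent $i$ may be read modulo $n$, so the formula $(x^ih)^\beta=(xz)^ih$ holds for every integer $i$. By construction $\beta|_H=\id_H$ and $x^\beta=xz$.

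\textbf{Step 1: a formula for $(xz)^i$.} Since $z\in Z(H)$ and $H\unlhd G$, conjugation by $x$ leaves $Z(H)$ invariant. Hence $z^{x^{-j}}\in Z(H)$ for all $j$, and induction gives
\[
(xz)^i=x^iz_i\quad\text{with } z_i\in Z(H).
\]
Explicitly, $z_i=z^{x^{i-1}}\cdots z^{x}z$, a product of conjugates of $z$.

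\textbf{Step 2: $\beta$ is a homomorphism.} Take two elements $x^ih$ and $x^jk$. Their product is $x^{i+j}h^{x^j}k$. Using Step 1 together with the fact that $z_j$ is central in $H$, we compute
\[
(xz)^ih\,(xz)^jk=(xz)^{i+j}\,\bigl((xz)^{-j}h(xz)^j\bigr)k=(xz)^{i+j}\,z_j^{-1}h^{x^j}z_j\,k=(xz)^{i+j}h^{x^j}k .
\]
This is exactly the image under $\beta$ of $x^{i+j}h^{x^j}k$, because the exponent may be reduced modulo $n$. So $\beta$ is a homomorphism. The step needing the most care is this one, specifically checking that conjugation by $(xz)^j$ agrees with conjugation by $x^j$ on $H$; that agreement is exactly what $z_j\in Z(H)$ provides.

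\textbf{Step 3: $\beta$ is bijective.} Since $G$ is finite, it suffices to show that $\beta$ is surjective. The image of $\beta$ contains $H$ and $xz$, and $z\in H$, so it also contains $x=(xz)z^{-1}$. As $G=\langle x\rangle H$, the image is all of $G$. Therefore $\beta\in\Aut(G)$, and it has the required properties.
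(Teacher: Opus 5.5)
Your proof is correct and takes essentially the same approach as the paper. The paper defines the same map $x^ih\mapsto (xz)^ih$ and justifies it by observing that $G=\langle xz\rangle\ltimes H$ and that $x$ and $xz$ induce the same automorphism of $H$; your Steps 1--3 verify the homomorphism property and bijectivity explicitly, where the paper only says ``one sees easily''.
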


\begin{proof}
Notice that $G=\<x,H\>=\<xz,H\>$ as $z\in H$. As $H$ is normal in $G$, this implies $\G=\<xz\>H$. Since $o(xz)=o(x)=|G:H|$, it follows that $\<xz\>\cap H=1$ and so $G=\<xz\>\ltimes H$. Note that $x$ and $xz$ induce the same automorphism of $H$ via conjugation as $z\in Z(H)$. From that one sees easily that the map $\beta\colon G\rightarrow G$ sending $x^ih$ to $(xz)^ih$ for all integers $i\geq 0$ and all $h\in H$ is well-defined and an automorphism of $G$ with the required properties.
\end{proof}

We continue now with some more specialized results.

\begin{lemma}\label{charact-of-symm-gr}
Let $G$ be a finite group with $O_2(G)=1$ and suppose there exists a subgroup $H \norm G$ such that $H \cong \Alt(n)$ for $n\geq 3$ with $n\neq 6$ and $G/H$ is a non-trivial 2-group. Then $G \cong \Sym(n)$.
\end{lemma}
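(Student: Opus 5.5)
The plan is to pass to the conjugation action of $G$ on $H$ and use that $\Aut(\Alt(n))\cong\Sym(n)$ when $n\neq 6$ and $n\geq 3$ (with the degenerate cases $n=3$, where $\Aut(\Alt(3))\cong\mathbb{Z}_2$, and $n=4$, where $\Aut(\Alt(4))\cong\Sym(4)$, checked by hand or covered by the same reasoning). Set $C:=C_G(H)$ and consider the homomorphism $G\to\Aut(H)$ given by conjugation, whose kernel is $C$.

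\emph{Step 1: $C=1$.} Since $H\cap C=Z(H)$ and $Z(\Alt(n))=1$ for $n\geq 3$ (for $n=3$ the group $\Alt(3)$ is abelian, so this case needs separate care; see below), we get $C\cong CH/H\leq G/H$. Thus $C$ is a $2$-group. It is normal in $G$, so $C\leq O_2(G)=1$. Hence $G$ embeds into $\Aut(H)$. For $n\geq 4$ with $n\neq 6$ we have $\Aut(H)\cong\Sym(n)$, in which $\Inn(H)\cong\Alt(n)$ has index $2$. Since the image of $H$ is $\Inn(H)$ and $G/H\neq 1$, the image of $G$ is all of $\Aut(H)$, so $G\cong\Sym(n)$.

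\emph{Step 2: the case $n=3$.} Here $H\cong \mathbb{Z}_3$ is abelian, so $H\leq C$ and the argument above must be adjusted. Let $T\in\Syl_2(G)$, so that $G=H\rtimes T$ with $T\cong G/H$. The subgroup $C_T(H)$ is normalized by $T$ and centralized by $H$, so it is normal in $G$. It is also a $2$-group, hence $C_T(H)\leq O_2(G)=1$. Therefore $T$ embeds into $\Aut(\mathbb{Z}_3)\cong\mathbb{Z}_2$. Since $T\neq 1$, we get $T\cong\mathbb{Z}_2$ acting by inversion, and so $G\cong\Sym(3)$.

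The only real input beyond these routine steps is the classical fact $\Aut(\Alt(n))\cong\Sym(n)$ for $n\geq 4$, $n\neq 6$, which I would cite. The case $n=3$ is the main point where care is needed, because $Z(H)\neq 1$ there.
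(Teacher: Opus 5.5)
Your proof is correct and follows the same route as the paper: you show that $C_G(H)$ is a normal $2$-subgroup, hence trivial, and then embed $G$ into $\Aut(\Alt(n))\cong\Sym(n)$, in which $\Inn(H)$ has index $2$. Your use of $Z(H)=1$ instead of simplicity of $H$, together with your separate treatment of $n=3$, is more careful than the paper's argument, which as written breaks down for $n=3$ because $\Alt(3)$ is abelian and $\Aut(\Alt(3))\not\cong\Sym(3)$; the case $n=4$ is vacuous anyway, since $O_2(\Alt(4))\neq 1$ would lie in $O_2(G)$.
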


\begin{proof}
Note that $C:=C_G(H)$ is normal in $G$. Since  $H$ is simple, we have $C \cap H = 1$ so that $CH/H \cong C$ showing that $C$ is a 2-group. Thus $C = 1$ as $O_2(G)=1$. Therefore, $G$ is isomorphic to a subgroup of $\Aut(\Alt(n))\cong \Sym(n)$. As $H\cong\Alt(n)$ is properly contained in $G$, it follows $G \cong \Sym(n)$.
\end{proof}

\begin{lemma}\label{L:GroupOrder180}
 Let $G$ be a group of order $180$. Let $S$ be a Sylow $2$-subgroup of $G$. Then $C_G(S)\neq S$. In particular, $N_G(S)$ is not isomorphic to $\Alt(4)$.
\end{lemma}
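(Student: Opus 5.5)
The plan is to argue by contradiction: assume $C_G(S)=S$ for a group $G$ of order $180=2^2\cdot 3^2\cdot 5$ and derive a contradiction, splitting according to the isomorphism type of $S$, which has order $4$. The ``in particular'' part is then immediate. If $N_G(S)\cong\Alt(4)$, then $S=O_2(N_G(S))$ is the Klein four-group and $C_{N_G(S)}(S)=S$. Hence $C_G(S)=C_{N_G(S)}(S)=S$, contradicting the main assertion. So all the work is in the main assertion.

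\textbf{Case $S$ elementary abelian.} Since $S$ is abelian and $C_G(S)=S$, the group $N_G(S)/S$ embeds in $\Aut(S)\cong\Sym(3)$ as a group of odd order. So either $N_G(S)=S$, or $N_G(S)\cong\Alt(4)$. If $N_G(S)=S$, Burnside's transfer theorem gives a normal $2$-complement $K$ of order $45$. Every group of order $45$ is abelian, so $G=K\rtimes S$ and $C_K(S)=1$. Then $S$ acts fixed-point-freely on $K$. But a Klein four-group cannot act fixed-point-freely on a nontrivial group of odd order, since $K=\<C_K(s)\colon 1\neq s\in S\>$. This is a contradiction.

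If $N_G(S)\cong\Alt(4)$, I would use the structure of $G$ itself. By Sylow's theorem $n_5\in\{1,6,36\}$. If $n_5=36$, a count of elements shows that $n_3=1$, so that a Sylow $3$-subgroup $P$ is normal. In either case there is a nontrivial normal subgroup $N$ of odd order: $N=P$ or $N=O_5(G)$. Then $S$ acts on $N$ and I would pass to $G/N$, of order $20$ or $36$, using Glauberman's lemma and coprime action: $C_{G/N}(SN/N)=C_G(S)N/N$. In a group of order $20$ or $36$ with Sylow $2$-subgroup $V_4$ one checks directly that $V_4$ is never self-centralizing unless $N_{G/N}(V_4)\cong\Alt(4)$. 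That forces $|G/N|=36$ with $G/N\cong 3\times\Alt(4)$ or $\Alt(4)\times 3$-type, and so on. From there $C_N(S)\neq 1$ should follow, because $V_4$ has fixed points on $N$ by the generation argument above. This gives $C_G(S)>S$.

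\textbf{Case $S$ cyclic.} Burnside again gives $G=K\rtimes S$ with $K$ abelian of order $45$, and $C_G(S)=S$ means $S\cong C_4$ acts fixed-point-freely on $K$. This is the step I expect to be the main obstacle. The action decomposes as an action on $K_3\cong C_3^2$ (or $C_9$) and on $K_5\cong C_5$. A generator acting of order $4$ on $C_5$ and as an element of order $4$ of $\GL_2(3)$ on $C_3^2$ appears to be fixed-point-free, giving $(C_3^2\times C_5)\rtimes C_4$. I would first try to rule this out by checking whether such an element of $\GL_2(3)$ really acts without fixed points. If it does, the statement as worded cannot be proved in this case. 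I would then expect the intended application, via $N_G(S)\cong\Alt(4)$, to require only the elementary abelian case treated above, and I would record that restriction explicitly.
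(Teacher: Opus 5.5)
Your suspicion about the cyclic case is right: the first assertion of the lemma is false. An element of order $4$ in $\GL_2(3)$ squares to $-1$, so its characteristic polynomial is $x^2+1$ and it fixes no nonzero vector. Hence your group $(C_3^2\times C_5)\rtimes C_4$ really has $C_G(S)=S$. An even simpler example is $C_{45}\rtimes C_4$ with a generator inverting $C_{45}$. The paper's proof misses these because it lists $|\Syl_5(G)|\in\{6,36\}$ and forgets $|\Syl_5(G)|=1$.

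Your proposed salvage is also false, though. In the case $N_G(S)=S$ you pass from $C_K(S)=1$ to a fixed-point-free action. But $C_K(S)=1$ only rules out a \emph{common} fixed point, while $K=\langle C_K(s)\colon 1\neq s\in S\rangle$ only says that \emph{some} $C_K(s)$ is nontrivial, and these are compatible. In the dihedral group $\langle r,s\rangle$ of order $180$ (with $o(r)=90$), the subgroup $S=\langle r^{45},s\rangle\cong C_2\times C_2$ satisfies $C_G(S)=C_G(s)=S=N_G(S)$. The same happens in $\Sym(3)\times D_{30}$, with $D_{30}$ dihedral of order $30$. So restricting to elementary abelian $S$ rescues nothing. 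What survives is only the second assertion, $N_G(S)\not\cong\Alt(4)$, which is all that the proof of Lemma~\ref{L:M*C*}(d) uses.

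For that assertion your argument has a genuine gap: the case $|\Syl_5(G)|=6$ is never treated. Your reduction needs a nontrivial normal subgroup of odd order, and you only produce one for $|\Syl_5(G)|\in\{1,36\}$. Even for $36$, counting elements alone does not give $|\Syl_3(G)|=1$; you need Burnside's normal $5$-complement first.

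The case $|\Syl_5(G)|=6$ is exactly where the paper's argument does its work. There $|N_G(P_5)|=30$. $P_5$ normalizes no Sylow $3$-subgroup, since it would centralize it and put a group of order $9$ inside $N_G(P_5)$. Take $D\in\Syl_3(N_G(P_5))$ and a Sylow $3$-subgroup $P_3\geq D$; then $P_3,P_5\le C_G(D)$. If $4$ divides $|C_G(D)|$, then $3$ divides $|C_G(S)|$. Otherwise $C_G(D)$ has a normal subgroup $H$ of order $45$ with $H/D$ cyclic. This forces $P_3\unlhd H$ and $P_5\le N_G(P_3)$, a contradiction.

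Two further steps of your sketch fail:
\begin{itemize}
\item Your claim that a self-centralizing four-group in a group of order $20$ or $36$ must have normalizer $\Alt(4)$ is false. The dihedral group of order $20$ and $\Sym(3)\times\Sym(3)$ are counterexamples.
\item Your final step, ``$C_N(S)\neq 1$ by the generation argument'', repeats the fallacy above.
\end{itemize}

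The other two cases are short:
\begin{itemize}
\item If $|\Syl_5(G)|=36$, Burnside gives a normal $5$-complement, and the Frattini argument makes $5$ divide $|N_G(S)|$.
\item If $|\Syl_5(G)|=1$, then $N_G(S)\cong\Alt(4)$ has abelianization $C_3$ and $\Aut(P_5)\cong C_4$, so it acts trivially on $P_5$. Thus $P_5\le C_G(S)$, contradicting $C_G(S)=C_{N_G(S)}(S)=S$.
\end{itemize}
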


\begin{proof}
Note that $|G|=180=2^2\cdot 3^2\cdot 5$. For $q\in\{3,5\}$ let $P_q$ be a Sylow $q$-subgroup of $G$. By Sylow's theorem, $|\Syl_5(G)|=|G:N_G(P_5)|$ divides $|G:P_5|$ and is congruent to $1$ modulo $5$. Hence, we have
\[|\Syl_5(G)|\in\{6,2^2\cdot 3^2\}.\]
If $|\Syl_5(G)|=2^2\cdot 3^2$, then $N_G(P_5)=P_5=C_G(P_5)$. Thus, $G$ has a normal $5$-complement $N$ by Burnside's Theorem. Then $S\in \Syl_2(N)$ and the Frattini argument yields that $5$ divides the order of $N_G(S)$ and thus the order of $C_G(S)$, as $\Aut(S)$ is not divisible by $5$. Thus, we may assume from now on that 
\begin{equation}\label{E:Syl5}
|\Syl_5(G)|=6\mbox{ and }|N_G(P_5)|=2\cdot 3\cdot 5. 
\end{equation}
Note that $\Aut(P_3)\cong GL_2(3)$ if $P_3$ is elementary abelian, and $\Aut(P_3)\cong C_6$ if $P_3$ is cyclic. So in either case, $\Aut(P_3)$ is not divisible by $5$. Thus, a Sylow $5$-subgroup of $G$ which normalizes $P_3$ would also centralize $P_3$, which would contradict \eqref{E:Syl5}. Hence,
\begin{equation}\label{E:Syl3}
|N_G(P_3)|\mbox{ is not divisible by }5.
\end{equation}
Let now $D\in \Syl_3(N_G(P_5))$. According to \eqref{E:Syl5}, $D$ has order $3$. Choose $P_3$ such that $D\leq P_3$. As $\Aut(P_5)\cong C_4$, we have $P_5\leq C_G(D)$. Since $P_3$ is abelian, we have also $P_3\leq C_G(D)$. In particular, $3^2\cdot 5$ divides $|C_G(D)|$. If $4$ divides the order of $C_G(D)$, then $D$ centralizes a Sylow $2$-subgroup of $G$, and thus $3$ divides $C_G(S)$. Thus, we may assume $|C_G(D)|\in \{2\cdot 3^2\cdot 5, 3^2\cdot 5\}$. If $|C_G(D)|= 2\cdot 3^2\cdot 5$, then Burnside's Theorem implies that $C_G(D)$ has a normal $2$-complement. So in any case, there is a normal subgroup $H$ of $C_G(D)$ of order $3^2\cdot 5$. Then $P_3$ and $P_5$ are contained in $H$. Moreover, $H/D$ has order $15$ and is thus cyclic. This yields that $P_3$ is normal in $H$ and thus $5$ divides $N_G(P_3)$, which contradicts \eqref{E:Syl3}. 
\end{proof}

\begin{lemma}\label{L:DetermineMfromM*}
 Let $G$ be a finite group such that $Y:=O_2(G)$ is elementary abelian, $G/Y\cong L_3(2)$, $V:=[Y,G]$ is a natural $L_3(2)$-module for $G/Y$, $Y\neq V$, $C_Y(G)=1$ and $Y\cap O^2(G)=V$. Then $G\cong \Aut(O^2(G))$. 
\end{lemma}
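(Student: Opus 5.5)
The idea is to embed $G$ into $\Aut(H)$, where $H:=O^2(G)$, and then compare orders, using that $|\Out(H)|\leq 2$ while $|G:H|\geq 2$.

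\textbf{Structure of $H$.} Since $G/Y\cong L_3(2)$ is perfect, we have $G=HY$. Together with $Y\cap H=V$ this gives $H/V\cong L_3(2)$ and
\[|G:H|=|Y:V|.\]
Because $Y\neq V$, the inclusion $H<G$ is proper. Moreover $V=O_2(H)$, and $H$ is an extension of the natural module $V$ by $L_3(2)$, either split or non-split.

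\textbf{Step 1: $C_G(V)=Y$.} The quotient $C_G(V)/Y$ is a normal subgroup of the simple group $G/Y\cong L_3(2)$, and $L_3(2)$ acts non-trivially on $V$. Hence $C_G(V)/Y=1$, that is, $C_G(V)=Y$.

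\textbf{Step 2: $G$ acts faithfully on $H$.} We have $C_G(H)\leq C_G(V)=Y$. Any element of $C_Y(H)$ is centralized by both $H$ and the abelian group $Y$, hence by $G=HY$. So $C_G(H)\leq C_Y(G)=1$. Therefore conjugation gives an injective map $G\hookrightarrow\Aut(H)$, whose image contains $\Inn(H)\cong H$ (note $Z(H)=1$). Consequently
\[|G:H|\leq|\Aut(H):\Inn(H)|.\]

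\textbf{Step 3: $|\Out(H)|\leq 2$.} This is the main step.
\begin{itemize}
\item Let $\alpha\in\Aut(H)$. Then $\alpha$ normalizes $V=O_2(H)$ and induces an automorphism of $H/V\cong L_3(2)$.
\item The graph automorphism of $L_3(2)$ interchanges the natural module with its dual, and these are non-isomorphic. So, after composing $\alpha$ with an inner automorphism of $H$, we may assume $\alpha$ acts trivially on $H/V$.
\item Then $\alpha|_V$ commutes with the action of $H/V$ on $V$. Since $\End_{L_3(2)}(V)=\bF_2$ ($V$ is absolutely irreducible), $\alpha|_V$ is the identity.
\item Automorphisms of $H$ centralizing both $V$ and $H/V$ correspond to $Z^1(H/V,V)$, via $h\mapsto h^{-1}h^\alpha$. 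The inner automorphisms induced by elements of $V$ correspond exactly to $B^1(H/V,V)$.
\item The remaining freedom, conjugation by elements of $H$ centralizing $H/V$, is only by $V$, since $C_H(H/V)=V$ as $C_{H/V}(H/V)=1$.
\end{itemize}
Hence $|\Out(H)|\leq |H^1(L_3(2),V)|=2$, using the standard fact that $H^1(L_3(2),V)\cong\bF_2$. This holds in both the split and the non-split case, since the argument never uses a complement.

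\textbf{Conclusion.} Combining, $2\leq|G:H|\leq|\Out(H)|\leq 2$. So all these inequalities are equalities, the embedding $G\hookrightarrow\Aut(H)$ is onto, and $G\cong\Aut(H)=\Aut(O^2(G))$.

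The main obstacle is Step 3. The delicate points there are ruling out the graph automorphism and correctly identifying the kernel of the action on $H/V$ in the non-split case. The cohomology value $|H^1(L_3(2),V)|=2$ I would quote as standard.
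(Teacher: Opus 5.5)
Your argument is correct, but you obtain the bound $|\Out(O^2(G))|\le 2$ by a different route from the paper. The opening is the same in both: $G=O^2(G)Y$ and $C_G(O^2(G))=C_Y(G)=1$, so $G$ embeds in $\Aut(O^2(G))$ and its image contains the inner automorphisms.

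The paper then sets $\hat{Y}:=C_{\Aut(O^2(G))}(V)$. It gets $\Aut(O^2(G))=\Inn(O^2(G))\hat{Y}$ at once from $\Aut(V)=\Aut_{O^2(G)}(V)\cong \GL_3(2)$. Since $C_{O^2(G)}(V)=V$, elements of $\hat{Y}$ automatically centralize $O^2(G)/V$, so $\hat{Y}$ is exactly your group of automorphisms centralizing $V$ and $H/V$. This normalization avoids your detour through the graph automorphism of $L_3(2)$, $V\not\cong V^*$ and Schur's lemma, although that detour is valid.

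From there you identify this group with $Z^1(H/V,V)$ and the $V$-inner automorphisms with $B^1(H/V,V)$. The paper does something else. It quotes \cite[Lemma~1.4]{Henke:Regular} to see that $\hat{Y}$ is an abelian $2$-group, and checks by a commutator computation that it is elementary abelian. It then applies \cite[Lemma~1.1(b)]{MStr:2008} to the $\Inn(O^2(G))$-module $\hat{Y}$, which satisfies $[\hat{Y},\Inn(O^2(G))]=\hat{V}:=\Aut_V(O^2(G))\cong V$ and has trivial fixed points. This gives $|\hat{Y}/\hat{V}|=2$, and since $V<Y$ it forces $\hat{Y}=\Aut_Y(O^2(G))$. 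That lemma is the module-theoretic form of $\dim H^1(L_3(2),V)=1$, so both proofs rest on the same fact.

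Your route is more transparent and self-contained. It gives the exact identification $\Out(H)\cong H^1(L_3(2),V)$ whether or not $H$ splits over $V$, and needs neither the abelianness lemma nor the elementary-abelian check. The paper's route instead reuses the Meierfrankenfeld--Stroth lemma it invokes repeatedly elsewhere, rather than citing a cohomology computation.
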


\begin{proof}
Set $G^*:=O^2(G)$. As $G/Y\cong L_3(2)$ is non-abelian simple, we have $G=G^*Y$ and $C_G(G^*)=C_Y(G^*)=C_Y(G)=1$, where the second equality uses that $Y$ is abelian. Hence, 
\[G\cong \Aut_G(G^*)\leq \Aut(G^*).\] 
As $Y\cap G^*=V$, we have $G^*/V\cong L_3(2)$ and $V$ is a natural $L_3(2)$-module for $G^*/V$. In particular, $\Aut(V)=\Aut_{G^*}(V)$ and $V:=O_2(G^*)$ is characteristic in $G^*$. Thus, setting 
\[\hat{Y}:=C_{\Aut(G^*)}(V),\]
we have 
\[\Aut(G^*)=\Inn(G^*)\hat{Y}.\]
Observe also that $[G^*,\hat{Y}]\leq C_{G^*}(V)=V$. Moreover, the action of $\Aut(G^*)$ on $G^*$ is naturally equivalent to the action of $\Aut(G^*)$ on $\Inn(G^*)\cong G^*$. In particular, $C_{\Aut(G^*)}(\Inn(G^*))=1$ and, setting $\hat{V}:=\Aut_V(G^*)$, we have  $\hat{Y}=C_{\Aut(G^*)}(\hat{V})$. Hence, it follows from \cite[Lemma~1.4]{Henke:Regular} applied with $(\hat{V},\Inn(G^*),\Aut(G^*))$ in place of $(U,N,G)$ that $O^2(\hat{Y})=1$ and $\hat{Y}'=1$. Thus, $\hat{Y}$ is an abelian $2$-group. 

\smallskip

We want to argue now that $\hat{Y}$ is elementary abelian. For an arbitrary choice of $\alpha\in\hat{Y}$ and $g\in G^*$, it is enough to show that $\alpha^2$ centralizes $g$. Indeed, the commutator formular \cite[1.5.4]{Kurzweil/Stellmacher:2004} gives that 
\[[g,\alpha^2]=[g,\alpha][g,\alpha]^\alpha=[g,\alpha]^2=1,\]
where the second equality and third equality use that $[g,\alpha]\in [G^*,\hat{Y}]\leq V$, the second equality uses moreover the definition of $\hat{Y}$, and the third equality uses also that $V$ is elementary abelian. So we have shown that $\hat{Y}$ is elementary abelian. 

\smallskip

Note that $\hat{V}\cong V$ is a natural $L_3(2)$-module for $\Inn(G^*)/\hat{V}$ and $C_{\hat{Y}}(\Inn(G^*))=C_{\hat{Y}}(G^*)=1$. As $[\hat{Y},\Inn(G^*)]\leq \hat{Y}\cap \Inn(G^*)=\hat{V}$ and $\hat{V}=[\hat{V},\Inn(G^*)]$, we have also $[\hat{Y},\Inn(G^*)]=\hat{V}$. Hence, it follows from \cite[Lemma~1.1(b)]{MStr:2008} that $|\hat{Y}/\hat{V}|=2$. As $V< Y$, we have $\hat{V}<\Aut_Y(G^*)\leq \hat{Y}$ and thus $\hat{Y}=\Aut_Y(G^*)$. As $G=G^*Y\cong \Aut_G(G^*)$, it follows that 
\[G\cong \Aut_G(G^*)=\Inn(G^*)\hat{Y}=\Aut(G^*)\]
as required.
\end{proof}

\begin{lemma}\label{A4Module}
 Let $U$ be an abelian $2$-group on which $G\cong A_4$ acts. Suppose there are $V\leq Y\leq U$ such that $|V|=2^3$, $|Y|=2^4$, $V=[U,G]$, $C_V(O_2(G))$ has order $2$, and $C_Y(t)\leq V$ for every involution $t\in G$. Then $U=YC_U(G)$.
\end{lemma}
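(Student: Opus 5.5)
The plan is a cohomological one: view elements of $U$ through the $1$-cocycles they define into $V$, and show that there is essentially only one non-trivial cocycle class, namely the one realized by an element of $Y\setminus V$. Let $T:=O_2(G)\cong C_2\times C_2$, which is a Sylow $2$-subgroup of $G$.

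For $u\in U$, define $\delta_u\colon G\rightarrow V$ by $g\mapsto [u,g]$. This takes values in $V$ since $[U,G]=V$. Because $U$ is abelian, the identity $[u,gh]=[u,h][u,g]^h$ shows that $\delta_u$ is a $1$-cocycle, and $u\mapsto \delta_u$ is a homomorphism $U\rightarrow Z^1(G,V)$. Its kernel is $C_U(G)$, and it maps $V$ onto $B^1(G,V)$. Composing with the projection gives a homomorphism $\bar\delta\colon U\rightarrow H^1(G,V)$ with $VC_U(G)\leq\ker\bar\delta$. In fact $\ker\bar\delta=VC_U(G)$: if $\delta_u=\delta_v$ with $v\in V$, then $uv^{-1}\in C_U(G)$. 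So it will suffice to prove two things:
\[
|H^1(G,V)|\leq 2 \qquad\mbox{and}\qquad \bar\delta(y)\neq 0 \mbox{ for } y\in Y\setminus V.
\]
Given these, $|U/VC_U(G)|\leq 2$ and $y\notin VC_U(G)$, so $U=\<y\>VC_U(G)=YC_U(G)$.

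For the second claim, suppose $\bar\delta(y)=0$. Then $yv\in C_U(G)$ for some $v\in V$. But $yv\in Y\setminus V$ and $yv$ is centralized by every involution $t\in G$, which contradicts $C_Y(t)\leq V$.

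For the first claim, $|G:T|=3$ is odd, so restriction $H^1(G,V)\rightarrow H^1(T,V)$ is injective. It therefore suffices to show $|H^1(T,V)|=2$. First note that $V$ is elementary abelian. It is a $G$-module of order $8$, and $C_V(T)$ has order $2$. The irreducible $\bF_2G$-modules are centralized by $T$, since they are inflated from $G/T\cong C_3$. Hence every minimal $G$-submodule of $V$ lies in $C_V(T)$, so $V$ has simple trivial socle. As an $\bF_2T$-module, $V$ has $1$-dimensional socle $C_V(T)$, so $V$ embeds into the injective hull of $\bF_2$, which is the regular module $\bF_2T$. So $V$ is isomorphic to a $3$-dimensional submodule of $\bF_2T$. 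Since $\bF_2T$ is local with unique maximal submodule the augmentation ideal $I$, we get $V\cong I=\Omega(\bF_2)$. By dimension shifting, $H^1(T,V)\cong \hat H^0(T,\bF_2)=\bF_2$, which has order $2$.

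The main obstacle is this module-theoretic identification of $V$, together with the elementary abelian claim. I would check it directly: $V=[U,G]$ with $U$ abelian, and by coprime action $V=[U,P]\,[U,T]$ for $P\in\Syl_3(G)$. The simple-socle argument with $|C_V(T)|=2$ also forces $\Omega_1(V)\geq C_V(T)$ to be essential. If needed, the cohomology computation can be replaced by an explicit check: pick $t\in T^\#$, note that $[V,t]$ and $C_V(t)$ both have order $4$, and compute cocycles on the generators of $T$ by hand.
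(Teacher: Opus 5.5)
Your argument is correct, modulo one unproved claim discussed below, and it takes a genuinely different route from the paper's.

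\textbf{The paper's route.} The paper works entirely by counting. For an involution $t\in G$, faithfulness together with $|[V,t]|\le|C_V(t)|$ gives $|V/C_V(t)|=2$. Hence $|U/C_U(t)|=|[U,t]|\le|C_V(t)|=4=|Y/C_Y(t)|$, where the last equality is where $C_Y(t)\le V$ enters, and so $U=YC_U(t)$. Next it shows that $V_1:=VC_U(t)$ satisfies $V_1=VC_{V_1}(O_2(G))$. This uses that all involutions of $G$ are conjugate and that $O_2(G)$ is generated by two of them, and it yields $U=YC_U(O_2(G))$. Finally, $[C_U(O_2(G)),G]\le C_V(O_2(G))$ has order $2$, and $G=O^2(G)$ then forces $C_U(O_2(G))=C_U(G)$.

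\textbf{Your route.} You embed $U/VC_U(G)$ into $H^1(G,V)$ via $u\mapsto[u,-]$. You bound $|H^1(G,V)|\le|H^1(T,V)|=2$ by restricting to the Sylow $2$-subgroup and identifying $V|_T$ with the augmentation ideal. The hypothesis on $Y$ is then used only to see that $Y$ hits the non-zero class.

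\textbf{Comparison.} The paper's proof is self-contained and needs nothing beyond commutator calculus and coprime action. Yours needs self-injectivity of $\bF_2T$, dimension shifting and restriction--corestriction. In exchange it explains the statement conceptually: $|U/VC_U(G)|\le 2$ holds for every abelian $U$ with $[U,G]=V$. Moreover, of the hypothesis on $Y$ you only use that no element of $Y\setminus V$ is centralized by all of $G$.

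\textbf{The unproved step.} You assert, but do not prove, that $V$ is elementary abelian. Your suggested ``direct check'' via $V=[U,P][U,T]$ and $\Omega_1(V)$ does not establish it. The claim is needed, since otherwise $V$ is not an $\bF_2T$-module and the module-theoretic identification does not apply.

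A short argument suffices. $V\le U$ is abelian of order $8$. If $V$ were $C_8$ or $C_4\times C_2$, then $\Aut(V)$ would be a $2$-group, namely $C_2\times C_2$ or $D_8$ respectively. Since $A_4$ has no non-trivial $2$-group quotient, $G$ would then centralize $V$, contradicting $|C_V(T)|=2$.

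The paper's proof uses the same fact tacitly, in $|[V,t]|\le|C_V(t)|$ and $[U,t]\le C_V(t)$. With this step inserted, your proof is complete.
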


\begin{proof}
Observe first that $G$ acts faithfully on $V$ as $[V,O_2(G)]\neq 1$ and $O_2(G)$ is contained in any non-trivial normal subgroup of $G$. Let $t\in G$ be an involution. Then $|V/C_V(t)|=|[V,t]|\leq |C_V(t)|$ and thus, as $|V|=2^3$, $|V/C_V(t)|=2$. In particular, $|U/C_U(t)|=|[U,t]|\leq |C_V(t)|=4$ and $|Y/C_Y(t)|=4$, so $U=YC_U(t)$. Set $V_1:=VC_U(t)$. Note that $V_1$ is a $G$-submodule since $[U,G]=V$. Moreover, $|V_1/C_{V_1}(t)|=|V/C_V(t)|=2$. As all involutions in $G$ are conjugate in $G$, it follows $|V_1/C_{V_1}(s)|=2$ for any involution $s\in G$. Moreover, $O_2(G)$ is generated by two involutions and hence, $|V_1/C_{V_1}(O_2(G))|\leq 4$. By assumption, $|V/C_V(O_2(G))|=4$ and so it follows $V_1=VC_{V_1}(O_2(G))$. Hence,
$$U=YC_U(t)=YV_1=Y(VC_{V_1}(O_2(G)))=YC_U(O_2(G)).$$
Moreover, $[C_U(O_2(G)),G]\leq C_V(O_2(G))$ and, as $|C_V(O_2(G))|=2$, $[C_V(O_2(G)),G]=1$. Using $G=O^2(G)$ we obtain $C_U(O_2(G))=C_U(G)$ and thus $U=YC_U(G)$ as required.
\end{proof}

We say that a group $A$ acts quadratically on a group $V$, if $A$ acts on $V$ such that $[V,A,A]=1$.

\begin{lemma}\label{L:TensorProductModule}
Let $G=G_1\times G_2$ be a finite group and let $V$ be an irreducible $\mathbb{F}_2G$-module. Suppose $G_1\cong L_3(2)$ and let  $H\leq G_1$ such that $H\cong S_4$. Assume furthermore that $Q:=O_2(H)$ acts quadratically on $V$ and $W:=[V,Q]$ is as an $((H/Q)\times G_2)$-module isomorphic to the tensor product of a natural $\SL_2(2)$-module for $H/Q$ and an irreducible $G_2$-module $V_2$. Then the following hold:
\begin{itemize}
\item[(a)] As a $G_2$-module, $V$ is isomorphic to the direct sum of three copies of $V_2$.  
\item[(b)] $[V,O^2(H)]\leq W$.   
\end{itemize}
\end{lemma}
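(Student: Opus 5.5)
The plan is to reduce everything to the action of $G_1$ on a multiplicity space by exploiting that $G_1$ and $G_2$ commute.

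\textbf{Step 1: $V$ is $V_2$-homogeneous as a $G_2$-module.} As a $G_2$-module, $W\cong V_2\oplus V_2$, because the natural $\SL_2(2)$-module is $2$-dimensional over $\bF_2$. Let $V_{(2)}$ be the $V_2$-homogeneous component of $V|_{G_2}$, that is, the sum of all $G_2$-submodules of $V$ isomorphic to $V_2$. Then $W\leq V_{(2)}$, so $V_{(2)}\neq 0$. Since $G_1$ centralizes $G_2$, each $g\in G_1$ maps $G_2$-submodules isomorphic to $V_2$ onto $G_2$-submodules isomorphic to $V_2$. Hence $V_{(2)}$ is $G$-invariant, and irreducibility of $V$ gives $V=V_{(2)}\cong V_2^{m}$ for some $m$.

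\textbf{Step 2: pass to the multiplicity space.} Set $D:=\End_{G_2}(V_2)$, a finite field extension of $\bF_2$, and $M:=\Hom_{G_2}(V_2,V)$. Then $M$ is a right $D$-vector space of dimension $m$, and $G_1$ acts $D$-linearly on it by post-composition. The evaluation map $M\otimes_D V_2\to V$ is a $G$-isomorphism, so $G$-submodules of $V$ correspond to $DG_1$-submodules of $M$. In particular, $M$ is an irreducible $DG_1$-module.

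Under this correspondence, $[V,Q]$ corresponds to $[M,Q]\otimes_D V_2$, and commutator conditions transfer. Thus $Q$ acts quadratically on $M$, and
\[
\Hom_{G_2}(V_2,W)\cong [M,Q]
\]
is isomorphic to the natural module for $H/Q$ tensored up to $D$. So $\dim_D[M,Q]=2$, and $H/Q$ acts on $[M,Q]$ as $\SL_2(2)$ on its natural module.

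\textbf{Step 3: identify $M$.} The irreducible $\bar{\bF}_2L_3(2)$-modules are the trivial module, the natural module, its dual, and the $8$-dimensional Steinberg module, and all are realized over $\bF_2$. Since $D\supseteq\bF_2$, $M$ is one of these four, extended to $D$.
\begin{itemize}
\item The trivial module is excluded because $[M,Q]\neq 0$.
\item The Steinberg module is excluded because it is projective, so its restriction to a Sylow $2$-subgroup is free. Then $Q\cong C_2\times C_2$ acts freely and $[M,Q,Q]\neq 0$, contradicting quadratic action.
\end{itemize}
Hence $\dim_D M=3$, and (a) follows from Step 1 with $m=3$.

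\textbf{Step 4: part (b).} In the $3$-dimensional module $M$, the four-group $Q$ is one of the two classes of root four-groups in $L_3(2)$. Since $\dim_D[M,Q]=2$, we must have $[M,Q]=C_M(Q)$ of codimension $1$. Then $H=N_{G_1}(Q)$ stabilizes the hyperplane $[M,Q]$ and acts on the $1$-dimensional quotient $M/[M,Q]$ through a $2$-group, since its only automorphism over the prime field is trivial up to $D^\times$, and $O^2(H)$ is generated by elements of order $3$ that fix the $\bF_2$-form. Hence $O^2(H)$ acts trivially on $M/[M,Q]$, which gives $[M,O^2(H)]\leq[M,Q]$. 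Transferring back through $M\otimes_D V_2\cong V$ yields $[V,O^2(H)]\leq W$.

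\textbf{Main obstacle.} I expect the main difficulty to be the bookkeeping over $D$ when $V_2$ is not absolutely irreducible: one must check that the $\bF_2$-tensor product in the hypothesis corresponds to $D$-dimension $2$ of $[M,Q]$. If this becomes awkward, the fallback is to extend scalars to a splitting field, where $V_2$ is absolutely irreducible, and argue there.
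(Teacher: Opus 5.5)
Your proof is correct, but it takes a different route from the paper's.

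\textbf{The paper's route.} The paper stays inside $V$ and works with explicit subspaces. It writes $W=\bigoplus_{i=1}^m U_1^{b_i}$ with $U_1$ a natural $H$-module and $b_i\in G_2$. It picks an irreducible $G_1$-submodule $V_1\supseteq U_1$ and uses quadratic action to show that $V_1$ is a natural module with $[V_1,Q]=U_1=W\cap V_1=C_{V_1}(Q)$. It then proves $V=\bigoplus_i V_1^{b_i}$, reads off (b) from $|V_1/U_1|=2$, and gets (a) by finding $g\in G_1$ with $V=W\oplus V_2^g$.

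\textbf{Your route.} You pass to the multiplicity space $M=\Hom_{G_2}(V_2,V)$ over $D=\End_{G_2}(V_2)$ and use $V\cong M\otimes_D V_2$. This turns the problem into one about an irreducible $DG_1$-module.

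\textbf{What each buys.} Your approach needs more machinery: the correspondence between $V_2$-homogeneous $G$-modules and $DG_1$-modules, the fact that $\bF_2$ is a splitting field for $L_3(2)$, and projectivity of the Steinberg module. In return it treats non-absolutely irreducible $V_2$ uniformly. It also gives $V\cong X\otimes_{\bF_2}V_2$ for the natural $\bF_2G_1$-module $X$ with $M\cong X\otimes_{\bF_2}D$, which is the tensor decomposition the paper only mentions in a closing remark. The paper's argument is more elementary and counts $\bF_2$-dimensions directly.

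\textbf{Two small points.}
\begin{itemize}
\item The bookkeeping you flag as the main obstacle is harmless. For the natural $H/Q$-module $N$ one has $\Hom_{G_2}(V_2,N\otimes_{\bF_2}V_2)\cong N\otimes_{\bF_2}D$ directly, so $\dim_D[M,Q]=2$.
\item The justification in Step 4 is muddled as worded. The clean reason is that $M/[M,Q]\cong (X/[X,Q])\otimes_{\bF_2}D$, where $X/[X,Q]$ is one-dimensional over $\bF_2$, so all of $H$ acts trivially on $M/[M,Q]$. Alternatively, $H$ acts through a homomorphism into $D^\times$, which has odd order, while $H/H'\cong C_2$.
\end{itemize}
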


\begin{proof}
By abuse of notation we assume that $V_2$ is an irreducible $G_2$-submodule of $W$. Write $m$ for the $\mathbb{F}_2$-dimension of $V_2$. As $V_2$ is irreducible, we may in particular choose a basis of $V_2$ such that all basis elements are conjugate under $G_2$. Our assumption allows us therefore to choose an $H$-submodule $U_1$ of $W$, such that $U_1$ is a natural $\SL_2(2)$-module for $H/Q$, $|U_1\cap V_2|=2$  and $W=\bigoplus_{i=1}^m U_1^{b_i}$ for some $b_1,\dots,b_m\in G_2$ with $b_1=1$. Choose now an irreducible $G_1$-submodule $V_1$ of $V$ with $U_1\leq V_1$. 

\smallskip

Notice that $Q:=O_2(H)$ acts quadratically on $V_1$ as it acts quadratically on $V$. The group $\SL_3(2)$ has up to isomorphism precisely three irreducible $\mathbb{F}_2$-modules, namely the two natural modules and the Steinberg module of dimension $8$. As there is no fours group acting quadratically on the latter, it follows that $V_1$ must be a natural $\SL_3(2)$-module. Note now that $U_1\leq W\cap V_1\leq C_{V_1}(Q)$. As $C_{V_1}(Q)$ has order at most $4$, it follows that we have equality everywhere, i.e. $U_1=W\cap V_1=C_{V_1}(Q)$. Now $[V_1,Q]\leq C_{V_1}(Q)=U_1$. As $U_1$ is an irreducible $H$-module, it follows indeed that 
\[[V_1,Q]=U_1= W\cap V_1=C_{V_1}(Q).\]

\smallskip

Note that $V_1^b$ is a $G_1$-module isomorphic to $V_1$ for every $b\in G_2$. In particular, $X:=\sum_{i=1}^mV_1^{b_i}$ is a $G_1$-submodule of $V$. Notice that $W=\sum_{i=1}^m U_1^{b_i}\leq X$. 
Thus, for every $b\in G_2$, we have $U_1^b\leq W\cap V_1^b\leq X\cap V_1^b$. As $V_1^b$ is an irreducible $G_1$-module, it follows $V_1^b\leq X$. This shows that $X$ is $G_2$-invariant and thus a $G$-submodule of $V$. As $V$ is irreducible, it follows $V=X=\sum_{i=1}^m V_1^{b_i}$. If this sum is not direct, then there exists $i\leq m$ with $V_1^{b_i}\cap \sum_{j\neq i}V_1^{b_j}\neq 0$ and thus, as $V_1^{b_i}$ is irreducible, $V_1^{b_i}\leq \sum_{j\neq i}V_1^{b_j}$. This implies 
\[U_1^{b_i}=[V_1^{b_i},Q]\leq \sum_{j\neq i}[V_1^{b_j},Q]=\sum_{j\neq i}U_1^{b_j}\]
contradicting $W=\bigoplus_{j=1}^mU_1^{b_j}$. Hence, $V=\bigoplus_{i=1}^m V_1^{b_i}$. 
In particular, $\dim(V)=m\cdot \dim(V_1)=3m$.

\smallskip

As $U_1$ has index $2$ in $V_1$, we have $[V_1,O^2(H)]\leq U_1$ and thus $[V_1^{b_i},O^2(H)]\leq U_1^{b_i}\leq W$ for $i=1,2,\dots,m$. As $V=X$, this shows (b). 

\smallskip

Notice that $G_1$ acts transitively on the non-trivial elements of $V_1$. Thus, we may choose $g\in G_1$ such that $(U_1\cap V_2)^g\not\leq U_1$. As $V_1\cap W=U_1$, this implies $V_2^g\not\leq W$. Notice that $V_2^g$ is a $G_2$-module, which is isomorphic to $V_2$ and thus irreducible. Hence, it follows $V_2^g\cap W=0$. Our assumption yields that $W$ as a $G_2$-module is isomorphic to the direct sum of two copies of $V_2$. In particular, $\dim(W)=2m$. As $\dim(V)=3m$, it follows that $V=W\oplus V_2^g$ is as a $G_2$-module isomorphic to a direct sum of three copies of $V_2$. 
\end{proof}

We remark that, with the notation as in the proof above, we could indeed show that the $G$-module $V$ is isomorphic to the tensor product of $V_1$ with $V_2$. This is however not needed later on.

\section{Partial groups and Localities}\label{S:Localities}

For the convenience of the reader we summarize here the basic background on localtities which is needed in this paper. Our notation and terminology follows Chermak \cite{Chermak:2013} and \cite{Chermak:2022}. We write $\W(\L)$ for the set of words in a set $\L$, by $\emptyset$ we denote the empty word, and $v_1\circ v_2\circ\dots\circ v_n$ denotes the concatenation of words $v_1,\dots,v_n\in\W(\L)$. The set $\L$ is identified with the set of words of length one in the natural way.

\subsection{Partial groups} 

The following definition of a partial group is due to Chermak (cf.  \cite[Definition~2.1]{Chermak:2013} or \cite[Definition~1.1]{Chermak:2022}). 

\begin{definition}\label{D:partial}
Suppose $\L$ is a non-empty set and $\D\subseteq\W(\L)$. Let $\Pi \colon  \D \longrightarrow \L$ be a map, and let $(-)^{-1} \colon \L \longrightarrow \L$ be an involutory bijection, which we extend to a map 
\[(-)^{-1} \colon \W(\L) \longrightarrow \W(\L), w = (g_1, \dots, g_k) \mapsto w^{-1} = (g_k^{-1}, \dots, g_1^{-1}).\]
Then $\L$ is called a \emph{partial group} with product $\Pi$ and inversion $(-)^{-1}$ if the following hold for all words $u,v,w\in\W(\L)$:
\begin{itemize}
\item  $\L \subseteq \D$ and
\[  u \circ v \in \D \Longrightarrow u,v \in \D.\]
(So in particular, $\emptyset\in\D$.)
\item $\Pi$ restricts to the identity map on $\L$.
\item $u \circ v \circ w \in \D \Longrightarrow u \circ (\Pi(v)) \circ w \in \D$, and $\Pi(u \circ v \circ w) = \Pi(u \circ (\Pi(v)) \circ w)$.
\item $w \in  \D \Longrightarrow  w^{-1} \circ w\in \D$ and $\Pi(w^{-1} \circ w) = \One$ where $\One:=\Pi(\emptyset)$.
\end{itemize}
\end{definition}

\smallskip

\textbf{For the remainder of this section let $\L$ be a partial group and $\Pi\colon \D\rightarrow \L$ the product on $\L$.}

\smallskip

Write  $f_1f_2\dots f_n$ for the product $\Pi(v)$ of a word $v=(f_1,\dots,f_n)\in\D$, and set $\One=\Pi(\emptyset)$. 

\smallskip

A subset $\m{H}$ of $\L$ is called a \textit{partial subgroup} of $\L$ if $h^{-1}\in \H$ for all $h\in\m{H}$ and $\Pi(v)\in\m{H}$ for all words $v\in\W(\m{H})\cap\D$. If $\H$ is a partial subgroup then, as $\emptyset\in\W(\H)\cap\D$, we have $\One=\Pi(\emptyset)\in\H$ and so $\H$ is non-empty. 

\smallskip

A partial subgroup $\H$ of $\L$ is called a \textit{subgroup} of $\L$ if $\W(\H)\subseteq\D$. If $\H$ is a subgroup of $\L$ then $\H$ forms indeed a group if one takes the restriction of $\Pi$ to $\H\times\H$ as the binary multiplication. In particular, we can consider \textit{$p$-subgroups} of $\L$, i.e. subgroup of $\L$ whose order is a power of $p$. 

\smallskip

A partial subgroup $\N$ of $\L$ is called a \textit{partial normal subgroup} of $\L$ (denoted $\N\unlhd\L$) if $x^f:=\Pi(f^{-1},x,f)\in\N$ for all $x\in\N$ and $f\in\L$ with $(f^{-1},x,f)\in\D$.

\smallskip

For any $g\in\L$, we denote by $\D(g)$ the set of $x\in\L$ with $(g^{-1},x,g)\in\D$. Thus, $\D(g)$ denotes the set of elements $x\in\L$ for which the conjugate $x^g:=\Pi(g^{-1},x,g)$ is defined. If $g\in\L$ and $X\subseteq \D(g)$ we set $X^g:=\{x^g\colon x\in X\}$. If we write $X^g$ for some $g\in\L$ and some subset $X\subseteq \L$, we will always implicitly mean that $X\subseteq\D(g)$.

\smallskip

If $\mathcal{X}$ and $\mathcal{Y}$ are subsets of $\L$, then we set
\[\mathcal{X}\mathcal{Y}:=\{xy\colon x\in\mathcal{X},\;y\in\mathcal{Y}\mbox{ with }(x,y)\in\D\}.\]

\begin{definition}
 Let $\L$ and $\L'$ be partial groups with products $\Pi\colon \D\rightarrow \L$ and $\Pi'\colon \D'\rightarrow\L'$ respectively. Let $\beta\colon\L\rightarrow\L'$ be a map. Let $\beta^*$ be the induced map on words
 \[\beta^*\colon \W(\L)\rightarrow\W(\L'),(f_1,\dots,f_n)\mapsto (f_1\beta,\dots,f_n\beta).\]
 Then $\beta$ is called a \emph{homomorphism of partial groups} if $\D\beta^*\subseteq\D'$ and $\Pi(w)\beta=\Pi'(w\beta^*)$ for all $w\in\D$. We call $\beta$ an \emph{isomorphism of partial groups} if in addition $\beta$ is bijective and $\D\beta^*=\D'$. 
\end{definition}

\subsection{Localities} We are now in a position to define localities. Our definition follows the one given in \cite[Definition~5.1]{Henke:2015}, which is by \cite[Remark~5.2]{Henke:2015} equivalent to the definition of a locality given in \cite[Definition~2.9]{Chermak:2013} and \cite[Definition~2.7]{Chermak:2022}.

\begin{definition}\label{LocalityDefinition}
We say that $(\L,\Delta,S)$ is a \textbf{locality} if $\L$ is a partial group which is finite as a set, $S$ is a $p$-subgroup of $\L$, $\Delta$ is a non-empty set of subgroups of $S$, and the following conditions hold:
\begin{itemize}
\item[(L1)] $S$ is maximal with respect to inclusion among the $p$-subgroups of $\L$.
\item[(L2)] A word $(f_1,\dots,f_n)\in\W(\L)$ is an element of $\D$ if and only if there exist $P_0,\dots,P_n\in\Delta$ such that 
\begin{itemize}
\item [(*)] $P_{i-1}\subseteq \D(f_i)$ and $P_{i-1}^{f_i}=P_i$.
\end{itemize}
\item[(L3)] The set $\Delta$ is closed under passing to $\L$-conjugates and overgroups in $S$.
\end{itemize}
If $(\L,\Delta,S)$ is a locality and $v=(f_1,\dots,f_n)\in\W(\L)$, then we say that $v\in\D$ via $P_0,\dots,P_n$ (or just $v\in\D$ via $P_0$), if $P_0,\dots,P_n\in\Delta$ and (*) holds.

\smallskip

The set $\Delta$ is called the \emph{set of objects} of the locality $(\L,\Delta,S)$.
\end{definition}

Note that, as $\Delta\neq\emptyset$, property (L3) implies in particular $S\in\Delta$. The condition in (L3) that $\Delta$ is \emph{closed under passing to $\L$-conjugates in $S$} means more precisely that, given an element $P\in\Delta$, we have $P^g\in\Delta$ for every $g\in\L$ with $P\subseteq \D(g)$ and $P^g\subseteq S$. Important examples for localities come from finite groups as follows:

\begin{eg}\label{E:LDeltaG}
Let $G$ be a finite group and $S\in\Syl_p(G)$. Suppose $\Delta$ is a non-empty set of subgroups of $S$ such that $\Delta$ is closed under passing to $G$-conjugates and overgroups in $S$. Set
$$\L_\Delta(G):=\{g\in G\colon \exists P\in\Delta\mbox{ such that }P^g\leq S\}.$$
Let $\D$ be the set of words $(f_1,\dots,f_n)\in\W(\L)$ such that there exist $P_0,\dots,P_n\in\Delta$ with $P_{i-1}^{f_i}=P_i$ for $i=1,\dots,n$. Let $\Pi$ be the restriction of the multivariable product on $G$ to $\D$. As an inversion map take the map $\L_\Delta(G)\rightarrow \L_\Delta(G)$ sending every element of $\L_\Delta(G)$ to its inverse in $G$. Then $\L_\Delta(G)$ forms with this product and inversion a partial group, and $(\L_\Delta(G),\Delta,S)$ is a locality.
\end{eg}

Suppose $(\L,\Delta,S)$ is a locality. For any $g\in\L$, write $c_g$ for the conjugation map
$$c_g\colon \D(g)\rightarrow \L,\;x\mapsto x^g.$$
For $g\in\L$ set furthermore 
$$S_g:=\{x\in S\colon x\in\D(g)\mbox{ and }x^g\in S\}.$$ 
For any subset $X$ of $\L$ set
$$N_\L(X):=\{f\in\L\colon X\subseteq\D(f),\;X^f=X\}.$$
We summarize the most important properties of localities in the following lemma.

\begin{lemma}\label{LocalitiesProp}
The following hold:
\begin{itemize}
\item [(a)] $N_\L(P)$ is a subgroup of $\L$ for each $P\in\Delta$.
\item [(b)] Let $P\in\Delta$ and $g\in\L$ with $P\subseteq S_g$. Then $Q:=P^g\in\Delta$, $N_\L(P)\subseteq \D(g)$ and 
$$c_g\colon N_\L(P)\rightarrow N_\L(Q)$$
is an isomorphism of groups.
\item [(c)] Let $w=(g_1,\dots,g_n)\in\D$ via $(X_0,\dots,X_n)$. Then 
$$c_{g_1} \dots c_{g_n}=c_{\Pi(w)}$$
is a group isomorphism $N_\L(X_0)\rightarrow N_\L(X_n)$.
\item [(d)] For every $g\in\L$, $S_g\in\Delta$. In particular, $S_g$ is a subgroup of $S$ and $c_g\colon S_g\rightarrow S_g^g=S_{g^{-1}}$ is a group isomorphism.
\item [(e)] For every $R\leq S$, the set $N_\L(R)$ forms a partial subgroup of $\L$.
\end{itemize}
\end{lemma}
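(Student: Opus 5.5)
The five statements are the standard basic properties of localities, established in \cite[Section~2]{Chermak:2013} (see also \cite[Section~2]{Chermak:2022} and \cite[Section~5]{Henke:2015}). My plan is to cite these sources for full details, and to reconstruct the proof directly from the axioms of Definition~\ref{D:partial} and (L1)--(L3) in the order (a), (b), (c), (d), (e).

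\textbf{The basic tool.} Everything rests on the following observation. Suppose $w=(f_1,\dots,f_n)\in\D$ via $P_0,\dots,P_n$ and $x\in P_0$. Then the word
\[w^{-1}\circ(x)\circ w\]
lies in $\D$ via $P_n,\dots,P_0,P_0,\dots,P_n$. The partial group axioms allow us to collapse $w^{-1}$ to $\Pi(w)^{-1}$ and $w$ to $\Pi(w)$, or alternatively to collapse the word step by step from the middle outwards. Comparing the two evaluations gives
\[x^{\Pi(w)}=(\cdots(x^{f_1})\cdots)^{f_n}.\]
The same argument shows that $\Pi(w)^{-1}=\Pi(w^{-1})$.

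\textbf{Parts (a)--(c).} For (a), take $P\in\Delta$. Every word in $N_\L(P)$ lies in $\D$ via $P,\dots,P$ by (L2), so $\W(N_\L(P))\subseteq\D$. Closure under products and inverses then follows from the basic tool, so $N_\L(P)$ is a subgroup. For (b), take $h\in N_\L(P)$. The word $(g^{-1},h,g)$ lies in $\D$ via $Q,P,P,Q$, so $N_\L(P)\subseteq\D(g)$ and $h^g\in N_\L(Q)$, using the basic tool once more; moreover $Q\in\Delta$ by (L3). To see that $c_g$ is multiplicative, apply the axioms to the word $(g^{-1},h,g,g^{-1},k,g)\in\D$ and cancel the middle pair $g\,g^{-1}=\One$. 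Conjugation by $g^{-1}$ gives an inverse map, so $c_g$ is an isomorphism. Part (c) follows by iterating (b) along $X_0,\dots,X_n$ and applying the basic tool to elements of $N_\L(X_0)$ rather than only to elements of $X_0$; this is legitimate because the argument for (b) shows $N_\L(X_{i-1})\subseteq\D(g_i)$ at each step.

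\textbf{Part (d), the main obstacle.} We must show that $S_g$, which is defined elementwise, is actually a subgroup lying in $\Delta$. Given $x\in S_g$, the word $(g^{-1},x,g)$ lies in $\D$ via some chain of objects. I would follow Chermak's argument \cite[Proposition~2.6]{Chermak:2013} as follows.
\begin{itemize}
\item Use this chain to produce an object $P\in\Delta$ with $P\leq S_g$.
\item Show that $S_g$ is contained in $N_S(P)$, or more precisely that each $x\in S_g$ can be conjugated by $g$ inside the group $N_\L(P)$.
\item Use maximality (L1) of $S$ together with (b) to conclude that $S_g$ is the preimage under $c_g$ of $S\cap N_\L(P^g)$. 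Hence $S_g$ is a subgroup containing $P$, and it lies in $\Delta$ by (L3).
\end{itemize}
Once $S_g$ is known to be a subgroup in $\Delta$, the isomorphism $c_g\colon S_g\to S_g^g=S_{g^{-1}}$ follows from (b). This is the step where I would rely most on the cited sources if the direct reconstruction gets delicate.

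\textbf{Part (e).} Take $R\leq S$ and $f\in N_\L(R)$. Then $R\leq S_f\in\Delta$ by (d), so $f^{-1}\in N_\L(R)$ by (d). For a word $(f_1,\dots,f_n)\in\D\cap\W(N_\L(R))$, the basic tool, applied via objects containing $R$, shows that the product normalizes $R$. Hence $N_\L(R)$ is a partial subgroup.
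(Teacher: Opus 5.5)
Your overall plan agrees with the paper. The paper's proof consists only of references: \cite[Lemma~2.3]{Chermak:2022} for (a)--(c), with $Q\in\Delta$ supplied by (L3) exactly as you say; \cite[Lemma~1.6(c) and Proposition~2.5(a),(b)]{Chermak:2022} for (d); and \cite[Lemma~5.5]{Henke:2015} for (e).

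Your reconstructions of (a)--(c) and (e) are fine, with two caveats. First, they silently use the standard partial-group facts that $x\in\D(f)$ implies $x^f\in\D(f^{-1})$ with $(x^f)^{f^{-1}}=x$, and that $\Pi(w^{-1})=\Pi(w)^{-1}$. Second, in (e) a chain through objects containing $R$ comes from replacing a given chain $(X_0,\dots,X_n)$ by $(\langle X_i,R\rangle)_i$. That step already needs each $S_{f_i}$ to be a subgroup on which $c_{f_i}$ is a homomorphism, i.e.\ it needs (d).

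If you intend your sketch of (d) as an actual argument, it does not go through.

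\emph{Your second step.} It asks for an object $P\le S_g$, read off from the chain of a single $x$, with $S_g\subseteq N_S(P)$. Nothing forces this. For $g=\One$ one has $S_g=S$, and $(\One,x,\One)\in\D$ via any object, e.g.\ a non-normal subgroup of $S$. The only object that obviously works is $S_g$ itself, which is circular.

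\emph{Your third step.} It is false even when the second step holds. The preimage of $N_S(P^g)$ under $c_g|_{N_\L(P)}$ is a $p$-subgroup of $N_\L(P)$ that need not lie in $S$. Take a finite group $G$, regarded as a locality with all subgroups of $S$ as objects, and $P=1$. Then this preimage is $S^{g^{-1}}$, whereas $S_g=S\cap S^{g^{-1}}$. At best one gets $S_g=N_S(P)\cap c_g^{-1}(N_S(P^g))$, and (L1) plays no role.

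What is missing is a reason why $S_g$ is closed under products. Here is one that works.
\begin{itemize}
\item For $x\in S_g$ put $a:=x^g\in S$. From $(g^{-1},x,g)\in\D$ one gets $\Pi(x,g)=\Pi(g,a)$, hence $g=\Pi(x,g,a^{-1})$.
\item For every object $X\le S_g$, the word $(x,g,a^{-1})$ lies in $\D$ via $X^{x^{-1}},X,X^g,(X^g)^{a^{-1}}$. So by (c), $c_g=c_xc_gc_{a^{-1}}$ on $N_\L(X^{x^{-1}})$, and in particular $X^{x^{-1}}\le S_g$.
\item Since $S_g$ is closed under inversion (invert the word $(g^{-1},x,g)$), the objects contained in $S_g$ are permuted by conjugation with elements of $S_g$.
\item Hence, for $x,y\in S_g$ and an object $P\le S_g$, the word $(g^{-1},x,g,g^{-1},y,g)$ lies in $\D$ via $(P^{y^{-1}x^{-1}})^g$. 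Evaluating it once by collapsing the two triples, and once by cancelling the middle pair $g\,g^{-1}$, gives $(xy)^g=x^gy^g\in S$.
\end{itemize}
Thus $S_g$ is a subgroup of $S$ containing an object, so $S_g\in\Delta$ by (L3). The remaining assertions of (d) then follow from (b).
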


\begin{proof}
Properties (a)-(c) correspond to the statements in \cite[Lemma~2.3]{Chermak:2022} except for the fact stated in (b) that $Q\in\Delta$, which is true by property (L3) in our definition of a locality.  Property (d) is true by Lemma~1.6(c) and Proposition~2.5(a),(b) in \cite{Chermak:2022}, and property (e) is stated in \cite[Lemma~5.5]{Henke:2015}.
\end{proof}

Given a locality $(\L,\Delta,S)$, Lemma~\ref{LocalitiesProp}(d) allows us to construct the fusion system $\F_S(\L)$, which is the fusion system over $S$ generated by the maps $c_g\colon S_g\rightarrow S$ with $g\in\L$. Given a fusion system $\F$, we say that $(\L,\Delta,S)$ is a fusion system \emph{over} $\F$ if $\F=\F_S(\L)$.  

\smallskip

If $\H$ is a partial subgroup of $\L$, then we can more generally construct the fusion system $\F_{S\cap \H}(\H)$ over $S\cap \H$, which is generated by all the conjugation maps $c_h\colon S_h\cap \H\rightarrow S\cap \H,x\mapsto x^h$ with $h\in \H$. In particular, for every $P\leq S$, Lemma~\ref{LocalitiesProp}(e) allows us to consider $\F_{N_S(P)}(N_\L(P))$.

\smallskip

If $(\L,\Delta,S)$ is a locality, then define a subgroup $P\leq S$ to be \textit{weakly closed} in $(\L,\Delta,S)$ if $P^f=P$ for any $f\in\L$ with $P\leq S_f$, or equivalently, if $P$ is weakly closed in $\F_S(\L)$.

\begin{lemma}\label{L:NormalizerLocFus}
 Let $(\L,\Delta,S)$ be a locality over a fusion system $\F$. Then $\F_{N_S(P)}(N_\L(P))\subseteq N_\F(P)$ for every $P\leq S$. If $P\in\Delta$ or $P$ is weakly closed in $(\L,\Delta,S)$, then $\F_{N_S(P)}(N_\L(P))=N_\F(P)$.
\end{lemma}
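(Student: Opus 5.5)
We prove the two claims in turn: first the inclusion $\F_{N_S(P)}(N_\L(P))\subseteq N_\F(P)$ for every $P\leq S$, then the reverse inclusion under either extra hypothesis.

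For the inclusion, recall that $\F_{N_S(P)}(N_\L(P))$ is generated by the maps $c_h\colon S_h\cap N_\L(P)\rightarrow N_S(P)$ with $h\in N_\L(P)$. It suffices to check that each such generator is a morphism of $N_\F(P)$. Fix $h\in N_\L(P)$ and set $X:=S_h\cap N_S(P)$. Then $P\subseteq \D(h)$ and $P^h=P$, so $P\subseteq S_h$. As $S_h$ is a subgroup by Lemma~\ref{LocalitiesProp}(d), we get $XP\leq S_h$. The map $c_h$ is defined on $XP$, it is an $\F$-morphism, and it sends $P$ to $P$. Since $X$ normalizes $P$, the image $X^h$ normalizes $P^h=P$. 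Hence $c_h|_{XP}$ is a morphism of $N_\F(P)$ extending $c_h|_X$, and so $c_h|_X\in N_\F(P)$.

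For the reverse inclusion, let $\phi\in\Hom_{N_\F(P)}(A,B)$. By definition $\phi$ extends to some $\hat\phi\in\Hom_\F(AP,BP)$ with $P\hat\phi=P$, so we may assume $P\leq A$ and $P\phi=P$. Since $\F=\F_S(\L)$ is generated by the maps $c_g|_{S_g}$, we can write $\phi=c_{g_1}|_{A_0}\circ\cdots\circ c_{g_n}|_{A_{n-1}}$ with $A_0=A$, $A_i=A_{i-1}^{g_i}$ and $A_{i-1}\leq S_{g_i}$. The aim is to replace this chain by a single element of $N_\L(P)$.

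\textbf{Case $P\in\Delta$.} The sets $P_i:=P^{g_1\cdots g_i}$ are subgroups of $S$ and are $\L$-conjugates of $P$, so they lie in $\Delta$ by (L3). Therefore $w=(g_1,\dots,g_n)\in\D$ via $(P_0,\dots,P_n)$. By Lemma~\ref{LocalitiesProp}(c) we have $c_{g_1}\cdots c_{g_n}=c_{\Pi(w)}$ on $N_\L(P)$, which contains $A$ because $P\unlhd A$. Since $P_n=P\phi=P$, the element $h:=\Pi(w)$ lies in $N_\L(P)$ and $\phi=c_h|_A$. Thus $\phi\in\F_{N_S(P)}(N_\L(P))$.

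\textbf{Case $P$ weakly closed.} Each $c_{g_i}$ is defined on $A_{i-1}\supseteq P$ and maps it into $S$, so weak closure gives $P^{g_i}=P$ at every step. Hence every $g_i$ lies in $N_\L(P)$ and each $c_{g_i}|_{A_{i-1}}$ is a generator of $\F_{N_S(P)}(N_\L(P))$ (note $A_{i-1}\leq N_S(P)$). So $\phi$ is a composite of such generators.

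The main point needing care is the reduction to $P\leq A$ with $P\phi=P$, and then checking that the objects $P_i$ in the first case satisfy $P_{i-1}\subseteq\D(g_i)$. The latter holds because $P_{i-1}\leq A_{i-1}\leq S_{g_i}$.
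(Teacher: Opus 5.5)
Your proof is correct and follows the paper's argument essentially verbatim. You make the same reduction to $P\le A$ with $P\phi=P$. In the case $P\in\Delta$, you use (L2)/(L3) and Lemma~\ref{LocalitiesProp}(c) to collapse the chain of conjugations into a single element of $N_\L(P)$, and otherwise you apply weak closure inductively, exactly as the paper does. You additionally write out the verification of the first inclusion, which the paper dismisses as easy.
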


\begin{proof}
One sees easily that $\F_{N_S(P)}(N_\L(P))\subseteq N_\F(P)$ for every $P\leq S$. Suppose now $P\in\Delta$ or $P$ is weakly closed in $(\L,\Delta,S)$. Given $R\leq N_S(P)$ and a morphism $\phi\in\Hom_{N_\F(P)}(R,N_S(P))$, we need to show that $\phi$ is a morphism in $\F_{N_S(P)}(N_\L(P))$. Without loss of generality, we may assume that $P\leq R$ and so $P\phi=P$. We can now write $\phi$ as a composition of restrictions of conjugation maps by elements $g_1,\dots,g_n\in\L$. 

\smallskip

If $P$ is weakly closed, it follows inductively that $P^{g_i}=P$ for $i=1,2,\dots,n$ and so $g_1,\dots,g_n\in N_\L(P)$, proving that $\phi$ is a morphism in $\F_{N_S(P)}(N_\L(P))$.

\smallskip

Suppose now $P\in\Delta$. As $P\in\Delta$, it follows from the definition of a locality that $(g_1,\dots,g_n)\in\D$ via $P$. Now by Lemma~\ref{LocalitiesProp}(c), $\phi=c_{\Pi(g_1,\dots,g_n)}|_R$. In particular, as $P\phi=P$, we have $\Pi(g_1,\dots,g_n)\in N_\L(P)$. Hence, $\phi$ is a morphism in $\F_{N_S(P)}(N_\L(P))$.  
\end{proof}

For any locality $(\L,\Delta,S)$, we define
\[O_p(\L):=\bigcap_{w\in\W(\L)}S_w.\]
It turns out that $O_p(\L)$ is the largest subgroup of $S$ which is a partial normal subgroup of $\L$, and also the maximal element of the set $\{P\leq S\colon \L=N_\L(P)\}$ (cf. \cite[Lemma~2.13]{Chermak:2022} and \cite[Lemma~3.13]{Henke:Regular}). 

\smallskip

We will need the following weak version of Alperin's Fusion Theorem for localities.
 
\begin{prop}\label{AlperinLocalities}
Let $(\L,\Delta,S)$ be a locality and let $g\in\L$. Then there exist $n\in\mathbb{N}$,  subgroups $R_1,\dots,R_n\in\Delta$ and $g_i\in N_\L(R_i)$ such that 
\begin{itemize}
\item $N_S(R_i)\in\Syl_p(N_\L(R_i))$, 
\item $(g_1,\dots,g_n)\in\D$ and $g=g_1 g_2\dots g_n$,
\item $S_g\leq S_{(g_1,\dots,g_n)}\cap R_1$ and $S_g^{g_1\dots g_{i}}\leq R_{i+1}$ $(1\leq i< n)$.
\end{itemize}
\end{prop}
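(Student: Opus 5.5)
The plan is to start from the fusion system $\F:=\F_S(\L)$ and use a known Alperin-type statement there, then lift the resulting factorization back to $\L$ with the help of Lemma~\ref{LocalitiesProp}.

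\textbf{Step 1: reduce to a fully normalized object.} Put $P:=S_g$, which lies in $\Delta$ by Lemma~\ref{LocalitiesProp}(d). Consider the morphism $c_g|_P\in\Hom_\F(P,S)$. The fusion system of a locality need not be saturated, so I would not quote Alperin's theorem for $\F$. Instead I would argue by induction on $|S:P|$, using the finite groups $N_\L(R)$ for $R\in\Delta$ directly. If $P=S$, then $g\in N_\L(S)$ and we take $n=1$, $R_1=S$, $g_1=g$. Here $N_S(S)=S$ is Sylow in $N_\L(S)$: a $p$-subgroup of $N_\L(S)$ containing $S$ is a $p$-subgroup of $\L$, so it equals $S$ by (L1).

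\textbf{Step 2: the inductive step.} Take $P<S$. First choose $R\in\Delta$ in the $\L$-conjugacy class of $P$ such that $N_S(R)\in\Syl_p(N_\L(R))$. Concretely, pick $R=P^h$ with $|N_S(R)|$ maximal; the standard argument below shows this choice works. Inside the finite group $N_\L(P)$, a Sylow $p$-subgroup $T$ containing $N_S(P)$ is conjugated by some element into a subgroup of $N_S(P^h)$. The tool for this is the conjugation isomorphisms $c_h\colon N_\L(P)\to N_\L(P^h)$ of Lemma~\ref{LocalitiesProp}(b). Now split $g$ along $R$:
\[
g=h^{-1}\cdot(hg h'^{-1})\cdot h' .
\]
Here $h,h'$ move $P$ and $P^g$ into $R$ with the extra property that they extend to larger subgroups $S_h, S_{h'}>P$ whenever $P$ is not fully normalized. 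The middle factor then lies in $N_\L(R)$. Since $S_h\geq N_S(P)>P$ when $P<S$, the induction hypothesis applies to $h$ and $h'$. Splicing the resulting words gives the required decomposition.

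The word lies in $\D$ via $S_g$ and its successive images, by (L2). The product equals $g$ by the axioms of partial groups, using Lemma~\ref{LocalitiesProp}(c). The containments $S_g^{g_1\cdots g_i}\le R_{i+1}$ follow from the construction, because each factor acts on the running image of $S_g$.

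\textbf{Main obstacle.} The hardest step is producing $h$ with $S_h$ strictly larger than $P$ and $P^h=R$ for a fully normalized $R$, together with the Sylow property. This is a Sylow/extension argument carried out in the groups $N_\L(P)$ and $N_\L(R)$. I would first try to imitate the proof of \cite[Lemma~I.2.6(c)]{Aschbacher/Kessar/Oliver:2011} inside the finite groups $N_\L(\cdot)$, where Sylow's theorem is available. Alternatively, one can cite \cite[Proposition~3.9]{Chermak:2013}, or the corresponding statement in \cite{Chermak:2022}, where this weak Alperin theorem for localities is proved.
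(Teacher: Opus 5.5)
Your architecture is the paper's. You induct on $|S_g|$, where the paper takes $f\notin\L_0$ with $|S_f|$ maximal. You route $g$ through a conjugate $R$ of $P=S_g$ with $N_S(R)\in\Syl_p(N_\L(R))$, so the two outer factors have strictly larger $S_{(\cdot)}$ and the middle factor lies in $N_\L(R)$. Then you splice, and your remark that only the running image of $S_g$ matters there is the right bookkeeping.

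\textbf{The gap.} It is the step you yourself flag, and it carries essentially all the content. Choosing $R=P^h$ with $|N_S(R)|$ maximal does eventually give the Sylow property, but nothing in your argument proves it. A Sylow $p$-subgroup of the finite group $N_\L(R)$ containing $N_S(R)$ need not lie in $S$. Before you can compare its order with $|N_S(R)|$, you must conjugate it into $S$ by an element of $\L$. Your sentence about conjugating a Sylow subgroup of $N_\L(P)$ into $N_S(P^h)$ presupposes that $N_S(P^h)$ is already Sylow in $N_\L(P^h)$, so it is circular. The proof of \cite[Lemma~I.2.6(c)]{Aschbacher/Kessar/Oliver:2011} does not transfer either, because it uses receptivity of fully normalized subgroups, i.e.\ saturation of $\F$, which you rightly decline to assume.

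\textbf{The missing ingredient.} The paper imports it from \cite[Lemma~2.9]{Chermak:2022}: for $P\in\Delta$ there is $x\in\L$ with $N_S(P)\le S_x$ and $N_S(P^x)\in\Syl_p(N_\L(P^x))$. To make this self-contained, prove it by induction on $|S:N_S(P)|$.
\begin{itemize}
\item If $N_S(P)=S$, take $x=\One$; this case follows from (L1).
\item Otherwise, suppose a Sylow $p$-subgroup $T$ of $N_\L(P)$ properly contains $Q:=N_S(P)$. Since $|N_S(Q)|>|Q|$, induction applied to $Q$ gives $w$ with $N_S(Q)\le S_w$ and $N_S(Q^w)$ Sylow in $N_\L(Q^w)$.
\item Sylow's theorem in $N_\L(Q^w)$ gives $v$ with $N_T(Q)^{wv}\le S$. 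Then $u:=\Pi(w,v)$ is defined on $Q$, and $P':=P^u$ satisfies $|N_S(P')|\ge|N_T(Q)|>|N_S(P)|$.
\item Apply induction to $P'$ and compose, using Lemma~\ref{LocalitiesProp}(c).
\end{itemize}

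\textbf{Two further repairs.} First, with $P^h=R=(P^g)^{h'}$, the expression $h^{-1}(hgh'^{-1})h'$ is not defined in the partial group, since nothing guarantees $R\subseteq\D(g)$. The correct factorization is $g=h\cdot(h^{-1}gh')\cdot h'^{-1}$, justified by $(h,h^{-1},g,h',h'^{-1})\in\D$ via $P$ and the partial group axioms. Second, applying a Lemma~2.9-type statement separately to $P$ and to $P^g$ need not produce the same $R$.

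The paper handles both at once:
\begin{itemize}
\item Apply \cite[Lemma~2.9]{Chermak:2022} only to $P^g$. This gives $x$ with $N_S(P^g)\le S_x$, where $R:=P^{gx}$ has Sylow normalizer.
\item Since $c_{gx}\colon N_\L(P)\to N_\L(R)$ is an isomorphism, Sylow's theorem gives $y\in N_\L(R)$ with $N_S(P)^{gxy}\le N_S(R)$.
\item Then $g=\Pi(gxy,y^{-1},x^{-1})$, because $(g,x,y,y^{-1},x^{-1})\in\D$ via $P$.
\item The outer factors satisfy $|S_{gxy}|\ge|N_S(P)|>|P|$ and $|S_{x^{-1}}|=|S_x|\ge|N_S(P^g)|>|P|$, so induction applies to them.
\item The middle factor $y^{-1}\in N_\L(R)$ is used directly with $R_1=R$, since the running image of $S_g$ at that point is $R$.
\end{itemize}
In your notation this is $h=gxy$ and $h'=x$.
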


\begin{proof}
 Let $\L_0$ be the set of all $g\in\L$ for which the assertion holds. Using Lemma~\ref{LocalitiesProp}(c) one observes that the following property holds:
\begin{itemize}
 \item [(*)] Let $f_1,\dots,f_m\in\L_0$ such that $(f_1,\dots,f_m)\in\D$ and $S_{\Pi(f_1,\dots,f_m)}\leq S_{(f_1,\dots,f_m)}$. Then $\Pi(f_1,\dots,f_m)\in\L_0$.
\end{itemize}
Assume now $\L_0\neq \L$ and let $f\in\L\backslash\L_0$ such that $|S_f|$ is maximal. Observe that $N_\L(S)\subseteq \L_0$ since $S\in\Delta$ with $S\in\Syl_p(N_\L(S))$. So $P:=S_f<S$ as otherwise $f\in N_\L(S)\subseteq \L_0$. By \cite[Lemma~2.9]{Chermak:2022}, there exists $g\in\L$ such that $N_S(P^f)\leq S_g$ and $N_S(P^{fg})\in\Syl_p(N_\L(P^{fg}))$. It follows from Lemma~\ref{LocalitiesProp}(c) that conjugation by $fg$ is defined on $N_\L(P)$, and that the conjugation map is an isomorphism of groups $N_\L(P)\rightarrow N_\L(P^{fg})$. In particular, $N_S(P)^{fg}$ is a $p$-subgroup of $N_\L(P^{fg})$. Thus, by Sylow's theorem, there exists $h\in N_\L(P^{fg})$ such that $N_S(P)^{fgh}\leq N_S(P^{fg})$. Notice that $(f,g,h,h^{-1},g^{-1})\in\D$ via $P$. Thus, $(fgh,h^{-1},g^{-1})\in\D$ and  $S_{\Pi(fgh,h^{-1},g^{-1})}=S_f=P\leq S_{(fgh,h^{-1},g^{-1})}$. Moreover, $|S_{fgh}|\geq |N_S(P)|>|P|$ and so $fgh\in\L_0$ by the maximality of $|S_f|=|P|$. Using Lemma~\ref{LocalitiesProp}(d), we get moreover $|S_{g^{-1}}|=|S_g^g|=|S_g|\geq |N_S(P^f)|>|P^f|=|P|$ and again by the maximality of $|P|$, $g^{-1}\in\L_0$. Clearly, $h^{-1}\in N_\L(P^{fg})\subseteq \L_0$ by definition of $\L_0$. Hence, it follows from (*) that $f=\Pi(fgh,h^{-1},g^{-1})\in\L_0$, a contradiction. 
\end{proof}

\subsection{Linking localities and localities of objective characteristic $p$}
If $\F$ is a saturated fusion system, it can be shown that there exist localities with nice properties over $\F$. To make this more precise, we need the following definition.

\begin{definition}\label{D:LinkingLoc}~
\begin{itemize}
\item A finite group $G$ is said to be of \textit{characteristic $p$} if $C_G(O_p(G))\leq O_p(G)$.
\item Define a locality $(\L,\Delta,S)$ to be of \textit{objective characteristic $p$} if, for any $P\in\Delta$, the group $N_\L(P)$ is of characteristic $p$. 
\item A locality $(\L,\Delta,S)$ is called a \textit{linking locality}, if $\F_S(\L)$ is saturated, $\F_S(\L)^{cr}\subseteq \Delta$ and $(\L,\Delta,S)$ is of objective characteristic $p$.
\item Let $\F$ be a fusion system over $S$. A subgroup $P\leq S$ is called $\F$-subcentric, if for some fully $\F$-normalized $\F$-conjugate $Q$ of $P$, we have $O_p(N_\F(Q))\in\F^c$. Write $\F^s$ for the set of $\F$-subcentric subgroups of $S$.
\item If $\F$ is saturated, then a \textit{subcentric locality} for $\F$ is a linking locality $(\L,\F^s,S)$ for $\F$.
\item (Cf. {\cite[Definition~3.5]{Chermak:2013}}) Let $(\L,\Delta,S)$ and $(\L',\Delta,S)$ be localities having the same set of objects. Then an isomorphism of partial groups $\beta\colon\L\rightarrow\L'$ is called a \emph{rigid isomorphism} if $\beta$ restricts to the identity on $S$. 
\end{itemize}
\end{definition}

If $\F$ is a fusion system over $S$ and $\Delta$ is a set of subgroups of $S$, then the existence of a locality  $(\L,\Delta,S)$ over $\F$ implies (according to (L3)) that the set $\Delta$ is closed under passing to $\F$-conjugates and overgroups in $S$. If $(\L,\Delta,S)$ is a linking locality over $\F$, then we have in addition that $\F^{cr}\subseteq\Delta\subseteq \F^s$, where the latter inclusion was shown in \cite[Lemma~6.1]{Henke:2015}.  The first part of the following theorem states that such conditions on $\Delta$ are actually not only necessary, but also sufficient for a linking locality to exist.

\begin{theorem}\label{T:LinkingLocalityExistence}
Let $\F$ be a saturated fusion system over $S$. 
\begin{itemize}
\item If $\Delta$ is a collection of subgroups of $S$ such that $\F^{cr}\subseteq\Delta\subseteq \F^s$ and $\Delta$ is closed under passing to $\F$-conjugates and overgroups in $S$, then there exists a linking locality $(\L,\Delta,S)$ over $\F$, which is unique up to rigid isomorphism. 
\item The set $\F^s$ contains $\F^{cr}$ and is closed under passing to $\F$-conjugates and overgroups in $S$. So in particular, there exists a subcentric locality over $\F$, and such a subcentric locality is unique up to rigid isomorphism.
\end{itemize}
\end{theorem}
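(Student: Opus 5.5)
The plan is to deduce the theorem from the existence and uniqueness of centric linking systems, together with the results of Chermak and the first author on changing the object set of a linking locality. I would treat the second bullet point first, since it is needed to make sense of the first.

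For the second bullet, closure of $\F^s$ under $\F$-conjugation is built into the definition, because subcentricity is tested on a fully normalized $\F$-conjugate. For $\F^{cr}\subseteq\F^s$, take $P\in\F^{cr}$ and a fully normalized conjugate $Q$ of $P$. Then $Q\in\F^{cr}$, so $Q$ is centric and $Q=O_p(\Aut_\F(Q))$-radical. Since $Q\leq O_p(N_\F(Q))$ and overgroups of centric subgroups are centric, $O_p(N_\F(Q))\in\F^c$. Closure under overgroups is the substantial point. Here I would invoke \cite[Lemma~3.16]{Henke:2015}, or reprove it, using that for fully normalized $Q\leq R$ with $R$ fully normalized (after conjugating, via Lemma~\ref{L:ZRfullynorm}-type arguments), the group $O_p(N_\F(Q))$ controls $O_p(N_\F(R))$. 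Concretely, one shows that $O_p(N_{N_\F(Q)}(R))$ is centric in $N_\F(Q)$ and hence in $\F$, and then compares it with $O_p(N_\F(R))$ through a model of $N_\F(Q)$, which is a group of characteristic $p$. The existence of a subcentric locality and its uniqueness up to rigid isomorphism then follow from the first bullet with $\Delta=\F^s$.

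For the first bullet, I would start from the centric linking system $\L^c$ over $\F$, which exists and is unique by Chermak, Oliver and Glauberman--Lynd. Via Chermak's correspondence between linking systems and localities, this gives a linking locality $(\L^c,\F^c,S)$ that is unique up to rigid isomorphism. Next I would restrict to $\Delta\cap\F^c$; restriction preserves objective characteristic $p$ by \cite{Chermak:2013}, and this set still contains $\F^{cr}$. Finally I would expand to $\Delta$ using the expansion theorem for linking localities \cite[Theorem~A]{Henke:2015}. That theorem says that whenever $\F^{cr}\subseteq\Delta_0\subseteq\Delta\subseteq\F^s$ with both sets closed under conjugation and overgroups, a linking locality with object set $\Delta_0$ extends to one with object set $\Delta$, uniquely up to rigid isomorphism.

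For uniqueness in the first bullet, I would argue that two linking localities over $\F$ with object set $\Delta$ restrict to linking localities with object set $\F^{cr}$. These are rigidly isomorphic because each corresponds to a linking system, and the uniqueness of such systems follows from the centric case through the same restriction and expansion comparison. The uniqueness part of the expansion theorem then lifts the rigid isomorphism back to $\Delta$. I expect the main obstacle to be the expansion step, namely checking that the elementary expansions (adding one $\F$-class of maximal-order objects $R$, and building $N_\L(R)$ as a model of $N_\F(R)$, which exists precisely because $R$ is subcentric) glue to a partial group satisfying (L1)--(L3). For that I would rely on the cited results rather than redo Chermak's construction.
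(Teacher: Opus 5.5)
Your route is the paper's. The paper's proof consists of citing the centric case (Chermak, or Oliver together with Glauberman--Lynd) and \cite[Theorem~A]{Henke:2015}, and the proof of that theorem is the restriction/expansion argument you outline. A small correction on references: in the paper's own citations, overgroup-closure of $\F^s$ is \cite[Proposition~3.3]{Henke:2015} and the expansion step is \cite[Theorem~7.2]{Henke:2015}, while Theorem~A is the statement itself. Your existence argument (restrict the centric linking locality to $\Delta\cap\F^c$, then expand) is fine. So are your arguments for conjugation-closure of $\F^s$ and for $\F^{cr}\subseteq\F^c\subseteq\F^s$.

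The uniqueness paragraph does not work as written.

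\emph{$\F^{cr}$ is not an admissible object set.} You restrict to the object set $\F^{cr}$, but $\F^{cr}$ is in general not closed under overgroups in $S$, so (L3) rules it out. For example, take $G=\Sym(4)\times\Sym(4)$ at $p=2$, with $S=D\times D$ and $D\in\Syl_2(\Sym(4))$. Then $P_0=O_2(G)$ lies in $\F^{cr}$. Now let $P=P_0\langle (x,y)\rangle$ with $x,y\in D\setminus O_2(\Sym(4))$. Then $N_G(P)=S$, so $\Out_\F(P)\cong S/P$ is a nontrivial $2$-group and $P\notin\F^{cr}$.

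\emph{Restriction alone cannot give uniqueness.} Uniqueness at a small object set does not follow from the centric case by restricting. You have to go up first, using the existence half of the expansion theorem. A correct version runs as follows.
\begin{enumerate}
\item Take linking localities $(\L_1,\Delta,S)$ and $(\L_2,\Delta,S)$ over $\F$. Expand both to $\F^s$ (existence), so that $\L_i=\L_i^s|_\Delta$.
\item The restrictions $\L_i^s|_{\F^c}$ are centric linking localities, so they are rigidly isomorphic.
\item The uniqueness part of the expansion theorem extends this to a rigid isomorphism $\L_1^s\to\L_2^s$.
\item A rigid isomorphism is the identity on $S$, so it satisfies $S_{f\beta}=S_f$. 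Hence it maps $\L_1^s|_\Delta=\L_1$ onto $\L_2^s|_\Delta=\L_2$.
\end{enumerate}
Equivalently, restrict both $\L_i$ to $\Delta\cap\F^c$, expand to $\F^c$, use centric uniqueness to get a rigid isomorphism of the restrictions, and then lift it from $\Delta\cap\F^c$ to $\Delta$ by the uniqueness of expansions.
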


\begin{proof}
For $\Delta=\F^c$ this was shown in \cite{Chermak:2013} (where the existence and uniqueness of centric linking systems is shown); a proof which does not rely on the classification of finite simple groups can be given through \cite{Oliver:2013} and \cite{GlaubermanLynd2016}. Using that the theorem is true for $\Delta=\F^c$, a proof of the general statement is given in \cite[Theorem~A]{Henke:2015}.  
\end{proof}


\begin{lemma}\label{lemma:parabolics in local of char p}
	Let $(\L,\Delta,S)$ be a locality of objective characteristic $p$ and $M \le \L$ be a subgroup of $\L$. If $S \le M$, then $M$ has characteristic $p$.
\end{lemma}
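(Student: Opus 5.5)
Since $M$ is a subgroup of $\L$ containing $S$, we have $\W(M)\subseteq\D$, so $M$ is an honest finite group with $S\in\Syl_p(M)$ by (L1). We must show $C_M(O_p(M))\leq O_p(M)$. The plan is to reduce to the finite group $N_\L(S)$, which is of characteristic $p$ because $S\in\Delta$ and the locality has objective characteristic $p$. The standard group-theoretic fact to use is: if $H$ is a finite group of characteristic $p$ and $P\in\Syl_p(H)$, then any subgroup $X$ with $P\leq X\leq H$ is again of characteristic $p$. We cannot apply this directly since $M$ need not lie in a single normalizer. So we first show that $M=N_M(R)$ for a suitable object $R$, namely $R:=O_p(M)$, and verify $R\in\Delta$.

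\textbf{Step 1: $O_p(M)\in\Delta$.} Set $R:=O_p(M)\leq S$. Since $M$ is a group with $S\leq M$, the Sylow $p$-subgroups of $M$ are the $M$-conjugates of $S$, and $R=\bigcap_{m\in M}S^m$. For each $m\in M$ the product $S^m$ is defined in $M$, hence in $\L$, so $S\subseteq \D(m)$. By Lemma~\ref{LocalitiesProp}(d) this gives $S_m\in\Delta$. Now $R=\bigcap_{m\in M} S^m$ is a finite intersection, but (L3) only provides closure under overgroups, so intersections are the issue. To handle this, I would argue via words. For $m_1,\dots,m_k\in M$, the word $(m_1,\dots,m_k)$ lies in $\D$ because $M$ is a subgroup, and we want it in $\D$ via $R$. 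I would use the characterization $O_p(\L)=\bigcap_w S_w$ for the sublocality structure. Concretely, consider $(M,\Delta_M,S)$ where $\Delta_M:=\{P\in\Delta\colon P\leq S\}$ restricted to those used by $M$. Alternatively, and more simply, I would use Alperin-type reasoning: by (L2), each $m\in M$ satisfies $S_m\in\Delta$, and $R\leq S_m$. But showing $R$ itself is in $\Delta$ genuinely needs more, and this is the main obstacle.

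\textbf{Step 1 alternative (preferred).} Avoid showing $R\in\Delta$. Choose instead a subgroup $P\in\Delta$ that is normal in $M$, for instance $P=S$ if possible, which fails in general. So I would instead argue as follows. $M$ is a finite group containing $S$, and the subset $\L_\Delta(M)$ built as in Example~\ref{E:LDeltaG} inside $M$ agrees with $M$, since $S\in\Delta$ and $S^m\leq$ ... no. The cleaner route: apply \cite[Lemma~2.13]{Chermak:2022}-style reasoning to the locality $(M,\Delta',S)$ with $\Delta'=\{P\in\Delta: P\leq S\}$. Then the maximal $P\leq S$ with $M=N_M(P)$ is $O_p(M)$, and it equals $\bigcap_w S_w\in\Delta$, because each $S_w\in\Delta$ and, in a locality, $S_w$ for $w\in\D$ is the intersection along a chain of conjugates of objects. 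Here I expect to need that finite intersections of the relevant $S_w$ lie in $\Delta$. This holds since in a group the product $w=(m_1,\dots,m_k)$ over all elements of $M$ satisfies $S_w=\bigcap$ of conjugates, and $S_w\in\Delta$ by Lemma~\ref{LocalitiesProp}(d) applied to products/words.

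\textbf{Step 2.} Given $R\in\Delta$ with $M\leq N_\L(R)$, the group $N_\L(R)$ is of characteristic $p$ with Sylow $p$-subgroup containing $S$, which equals $S$ by (L1). Then $C_M(O_p(M))\leq C_{N_\L(R)}(O_p(N_\L(R)))\leq O_p(N_\L(R))$, using $O_p(N_\L(R))\leq O_p(M)$: it is a normal $p$-subgroup of $N_\L(R)$ lying in $S\leq M$, hence normal in $M$. Thus $C_M(O_p(M))$ is a normal $p$-subgroup of $M$, so it lies in $O_p(M)$, as required.
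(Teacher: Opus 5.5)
Your Step~2 is exactly the paper's argument. Given an object $P\in\Delta$ with $M\le N_\L(P)$, the paper runs the same chain $C_M(O_p(M))\le C_M(O_p(N_\L(P)))\le O_p(N_\L(P))\le O_p(M)$. The only difference is where $P$ comes from. The paper quotes \cite[Proposition~2.10]{Chermak:2022}, which supplies such a $P$ for every subgroup of $\L$. You instead use $S\le M$ to take $P=O_p(M)$ explicitly and show $O_p(M)\in\Delta$ directly. That is a legitimate, self-contained alternative. The idea you finally reach, realizing $\bigcap_{m\in M}S^m$ as $S_w$ for a single word $w\in\W(M)$, is the right one, but Step~1 as written does not yet establish it.

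First, for $w=(m_1,\dots,m_k)\in\W(M)$ one has $S_w=\bigcap_{j=0}^{k}S^{(m_1\cdots m_j)^{-1}}$. This is an intersection over the partial products only, and for an arbitrary ordering of the elements of $M$ these need not run through all of $M$. So your claim that the word listing all elements of $M$ satisfies $S_w=\bigcap_{m\in M}S^m$ is false in general. Choose the word so that the partial products do exhaust $M$. With $M=\{g_1,\dots,g_k\}$, take $w=(g_1,g_1^{-1}g_2,\dots,g_{k-1}^{-1}g_k)$, or concatenate the words $(m,m^{-1})$ for $m\in M$. Then $S_w=\bigcap_{g\in M}S^{g}=O_p(M)$.

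Second, Lemma~\ref{LocalitiesProp}(d) concerns single elements only, so for the word you should argue from the axioms. Since $w\in\W(M)\subseteq\D$, (L2) gives objects $P_0,\dots,P_k\in\Delta$ with $w\in\D$ via them. Hence $P_0\le S_w$, and (L3) gives $S_w\in\Delta$.

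The detours through the first version of Step~1 and through a \emph{sublocality} $(M,\Delta',S)$ with Chermak's Lemma~2.13 are unnecessary and should be removed. Finally, $M\subseteq N_\L(O_p(M))$ because $\W(M)\subseteq\D$ and $O_p(M)\unlhd M$, so your Step~2 applies with $R=O_p(M)$ and completes the proof.
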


\begin{proof}
	By \cite[Proposition 2.10]{Chermak:2022} there exists $P \in \Delta$ such that $M \le N_\L(P)$. As $O_p(N_\L(P))\leq S\leq M$, we have $O_p(N_\L(P)) \le O_p(M)$. Then
	\[
	C_M(O_p(M)) \le C_M(O_p(N_\L(P))) \le O_p(N_\L(P)) \le O_p(M).
	\]
\end{proof}

\subsection{$p$-Residuals of localities}

\begin{definition}\label{D:pResidual}
 Let $(\L,\Delta,S)$ be a locality. We say that a partial normal subgroup $\K$ of $\L$ has $p$-power index in $\L$, if $\K S=\L$.  Define
 \[O^p(\L):=\bigcap\{\K\unlhd \L\colon \K S=\L\}.\]
\end{definition}

\begin{lemma}\label{L:pResidualLoc}
 Let $(\L,\Delta,S)$ be a locality and $\K\unlhd \L$. Then $\K$ has $p$-power index in $\L$ if and only if $O^p(N_\L(P))\subseteq \K$ for every $P\in\Delta$. In particular, $O^p(\L)$ has $p$-power index in $\L$.
\end{lemma}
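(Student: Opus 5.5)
The plan is to prove the two implications of the equivalence separately and then deduce the final sentence by applying the equivalence to $\K=O^p(\L)$. Throughout I would use that $\K S$ is computed inside $\L$ via products $(k,s)\in\D$, together with the standard facts from \cite{Chermak:2022} on partial normal subgroups. Specifically: for $\K\unlhd\L$ and $P\in\Delta$, the set $\K\cap N_\L(P)$ is a normal subgroup of the finite group $N_\L(P)$. The Frattini-type statement $\L=\K N_\L(S)$ also holds, or more precisely every $f\in\L$ can be written as $f=kh$ with $k\in\K$ and $h\in N_\L(S_f)$ in a suitable way.

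For ``$\Rightarrow$'': suppose $\K S=\L$ and let $P\in\Delta$. I would argue that $N_\L(P)=(\K\cap N_\L(P))N_S(P)$ as groups. Given $f\in N_\L(P)$, write $f=ks$ with $k\in\K$, $s\in S$ and $(k,s)\in\D$. The aim is to show that this factorisation can be chosen with $k,s\in N_\L(P)$. A direct approach may fail because $P^k$ need not equal $P$. To handle this, I would use the known lemma that products $ks$ with $(k,s)\in\D$ may be taken so that $S_f\le S_{(k,s)}$ (Chermak's "$\K S$" decomposition lemma). Then $P\le S_f$ is conjugated by $k$ into $S$, and composing with $s$ returns $P$. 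Conjugating by $s^{-1}$, the element $ks s^{-1}$ lies in $N_\L(P)\cap\K$ up to replacing $s$ by an element of $N_S(P)$, via a Sylow argument in $N_\L(P^k)$. Once $N_\L(P)/(\K\cap N_\L(P))$ is a $p$-group, we get $O^p(N_\L(P))\subseteq\K$. I expect this step to be the main obstacle, namely arranging the factorisation compatibly with $P$. I would first try reducing via Proposition~\ref{AlperinLocalities} to elements $g_i\in N_\L(R_i)$ with $N_S(R_i)$ Sylow, and then treat only those $R_i$.

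For ``$\Leftarrow$'': suppose $O^p(N_\L(P))\subseteq\K$ for all $P\in\Delta$. Given $g\in\L$, Proposition~\ref{AlperinLocalities} writes $g=g_1\cdots g_n$ with $g_i\in N_\L(R_i)$, $R_i\in\Delta$, and $N_S(R_i)$ Sylow in $N_\L(R_i)$. In the finite group $N_\L(R_i)$ we have $N_\L(R_i)=O^p(N_\L(R_i))N_S(R_i)$, so $g_i=k_is_i$ with $k_i\in\K$ and $s_i\in N_S(R_i)$. Then I would push the $s_i$ to the right one at a time. Each $s_i k_{i+1}=k_{i+1}^{s_i^{-1}}s_i$ is defined via the object chain, because conjugation by elements of $S$ preserves $\K$ and the relevant subgroups lie in $\Delta$. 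This yields $g=k s$ with $k\in\K$ (a product of conjugates of the $k_i$, defined via $S_g$) and $s\in S$. Hence $\L=\K S$. Checking that all the words lie in $\D$ uses the chain $S_g^{g_1\cdots g_i}\le R_{i+1}$ from Proposition~\ref{AlperinLocalities}.

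Finally, the intersection of partial normal subgroups is a partial normal subgroup. Each $\K$ of $p$-power index contains every $O^p(N_\L(P))$ by ``$\Rightarrow$'', so $O^p(\L)$ does too. By ``$\Leftarrow$'', $O^p(\L)$ then has $p$-power index in $\L$.
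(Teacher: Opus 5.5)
Your ``$\Leftarrow$'' direction and the deduction of the last sentence are fine. The words you get by moving the $s_i$ to the right lie in $\D$ via $S_g$ and its successive conjugates, all of which stay inside the $R_i$. (The paper itself just quotes \cite[Lemma~7.2]{Henke:Regular}.)

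The gap is in ``$\Rightarrow$''. The claim you aim for, $N_\L(P)=(\K\cap N_\L(P))N_S(P)$, is false for general $P\in\Delta$. Take $G=\Sym(4)$, $p=2$, $S=\langle (1\,2\,3\,4),(1\,3)\rangle$, $\Delta$ the set of all subgroups of $S$ (so the locality is just $G$), $\K=\Alt(4)$ and $P=\langle (1\,2)(3\,4)\rangle$. Then $N_\L(P)\cong D_8$, while $(\K\cap N_\L(P))N_S(P)$ is the fours group. In general the claim can only be expected when $N_S(P)\in\Syl_p(N_\L(P))$. Reducing to that case needs a separate conjugation argument, and Proposition~\ref{AlperinLocalities} does not supply it, since it decomposes elements of $\L$, not the group $N_\L(P)$.

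More seriously, even for such $P$ the step about ``$kss^{-1}$'' is not an argument. We have $kss^{-1}=k$, which sends $P$ to $P^k=P^{s^{-1}}$, in general different from $P$. ``Replacing $s$ by an element of $N_S(P)$'' tacitly assumes that $N_\L(P)/(\K\cap N_\L(P))$ is a $p$-group, which is exactly what you are trying to prove. As an aside, $\L=\K N_\L(S)$ is false (e.g.\ for $\K=\{\One\}$); Chermak's Frattini lemma involves $N_\L(S\cap\K)$. You never use it, though.

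The missing ingredient is something that detects the $S$-part of an element modulo $\K$. There are two ways to get it.

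\emph{Quotient.} Use Chermak's projection $\rho\colon\L\to\L/\K$, which has kernel $\K$. If $\L=\K S$, then $\L/\K=\rho(S)$ is a $p$-group. On $N_\L(P)$, $\rho$ restricts to a group homomorphism with kernel $\K\cap N_\L(P)$, so $N_\L(P)/(\K\cap N_\L(P))$ is a $p$-group.

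\emph{Powers.} Apply your own pushing technique to powers. Let $f\in N_\L(P)$ be a $p'$-element of order $m$, and write $f=ks$ with $S_f\le S_{(k,s)}$. Then $P^k\le S$ and $P^{ks}=P$. So the word $(k,s,\dots,k,s)$ with $m$ pairs lies in $\D$ via $P,P^k,P,\dots$. Move the $s$'s to the right as in your ``$\Leftarrow$'' argument; all intermediate objects are $S$-conjugates of $P$ or $P^k$. This gives $\One=f^m=k's^m$ with $k'\in\K$. Hence $s^m\in\K\cap S$, so $s\in\K$ because $p\nmid m$, and then $f=ks\in\K$.

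Since $O^p(N_\L(P))$ is generated by $p'$-elements, either route yields $O^p(N_\L(P))\subseteq\K$ for every $P\in\Delta$, with no need to pass to fully normalized $P$.
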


\begin{proof}
This is a special case of \cite[Lemma~7.2]{Henke:Regular}.
\end{proof}

If $\F$ is a saturated fusion system over $S$, then following \cite[Definition~I.7.3]{Aschbacher/Kessar/Oliver:2011}, we say that a subsystem $\E$ of $\F$ over a subgroup $T$ of $S$ is of \textit{$p$-power index}, if $O^p(\Aut_\F(P))\leq \Aut_\E(P)$ for all $P\leq T$, and $T$ contains the \textit{hyperfocal subgroup} 
\[\hyp(\F):=\<[Q,O^p(\Aut_\F(Q))]\colon Q\leq S\>.\]
By \cite[Theorem~I.7.4]{Aschbacher/Kessar/Oliver:2011}, there exists a unique saturated subsystem of $\F$ over $\hyp(\F)$ of $p$-power index. It is denoted by $O^p(\F)$ and turns out to be normal in $\F$. 

\begin{prop}\label{P:pResidualLocFus}
 Let $(\L,\Delta,S)$ be a linking locality and $\F=\F_S(\L)$. Then $\hyp(\F)=O^p(\L)\cap S$ and $O^p(\F)$ is the smallest normal subsystem of $\F$ containing $\F_{S\cap O^p(\L)}(O^p(\L))$. 
\end{prop}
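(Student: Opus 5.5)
The plan is to work with the single partial normal subgroup $\K:=O^p(\L)$ and prove the two assertions separately.

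\textbf{Step 1: $\hyp(\F)\leq S\cap\K$.} By Lemma~\ref{L:pResidualLoc}, $O^p(N_\L(P))\subseteq\K$ for every $P\in\Delta$. Take an arbitrary $Q\leq S$ and an element $\phi\in O^p(\Aut_\F(Q))$. I would show $[Q,\phi]\leq\K$ as follows.
\begin{itemize}
\item First replace $Q$ by a fully normalized $\F$-conjugate, which changes neither the statement nor the hyperfocal subgroup.
\item By Alperin's theorem and saturation, $O^p(\Aut_\F(Q))$ is generated by restrictions of $O^p(\Aut_\F(R))$ for $R\in\F^{cr}\subseteq\Delta$ with $Q\leq R$ fully normalized. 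More concretely, use the model theorem: $\Aut_\F(R)=\Aut_{N_\L(R)}(R)$.
\item This reduces the claim to $[R,O^p(N_\L(R))]\leq\K\cap S$ for $R\in\Delta$.
\item That containment holds because $\K\unlhd\L$ and $R\leq S$.
\end{itemize}
A cleaner route is to recall the standard description $\hyp(\F)=\langle [R,O^p(\Aut_\F(R))]\colon R\in\F^{cr}\rangle$ and apply Lemma~\ref{L:pResidualLoc} directly.

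\textbf{Step 2: $S\cap\K\leq\hyp(\F)$.} I would construct a partial normal subgroup $\K_0$ of $p$-power index with $S\cap\K_0\leq\hyp(\F)$. The natural candidate is the preimage of $O^p(\F)$ under the correspondence between normal subsystems and partial normal subgroups of linking localities, i.e. the results of Chermak and Henke. By that correspondence, $\mathcal{E}:=O^p(\F)$, a normal subsystem over $T:=\hyp(\F)$, yields $\K_0\unlhd\L$ with $S\cap\K_0=T$ and $\F_T(\K_0)=\mathcal{E}$.

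To check $\K_0S=\L$, I would use Lemma~\ref{L:pResidualLoc}. For $P\in\Delta$, the group $O^p(N_\L(P))$ should lie in $\K_0$, since $N_\L(P)$ has characteristic $p$ and $O^p(\Aut_\F(P))\leq\Aut_{\mathcal{E}}(P)$.

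This step is the main obstacle. It requires the precise correspondence theorem, and some care with elements of $O^p(N_\L(P))$ that centralize $O_p(N_\L(P))$; the latter are only $p$-elements, by characteristic $p$. If the correspondence is unavailable, I would instead argue via the Frattini-type identity $\L=\K S$ and Proposition~\ref{AlperinLocalities}.

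\textbf{Step 3: the smallest normal subsystem.} With $S\cap\K=\hyp(\F)=T$ established, $\F_T(\K)$ is a subsystem over $T$.
\begin{itemize}
\item \emph{$\F_T(\K)\subseteq O^p(\F)$.} The containment $\K\subseteq\K_0$ from minimality gives $\F_T(\K)\subseteq\F_T(\K_0)=O^p(\F)$.
\item \emph{Minimality.} Let $\mathcal{E}'\unlhd\F$ contain $\F_T(\K)$. Then $\mathcal{E}'$ is over $T$ and contains $\Aut_{O^p(N_\L(P))}(P\cap T)$ for $P\in\Delta$. From this one checks $O^p(\Aut_\F(P))\leq\Aut_{\mathcal{E}'}(P)$ on $T$, so $\mathcal{E}'$ has $p$-power index in $\F$.
\item \emph{Conclusion.} Uniqueness in \cite[Theorem~I.7.4]{Aschbacher/Kessar/Oliver:2011} (saturation of normal subsystems) then forces $\mathcal{E}'\supseteq O^p(\F)$.
\end{itemize}
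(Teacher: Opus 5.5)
Your Step~1 works if you take the ``cleaner route''. The subgroup $\hyp(\F)$ is generated by the subgroups $[R,O^p(\Aut_\F(R))]$ with $R\in\F^{cr}$: this follows from Alperin's theorem and $\Aut_\F(R)=\Aut_S(R)O^p(\Aut_\F(R))$ for fully normalized $R$, working modulo the $S$-invariant subgroup these generate. For $R\in\F^{cr}\subseteq\Delta$ you have $\Aut_\F(R)=\Aut_{N_\L(R)}(R)$. Hence $[R,O^p(\Aut_\F(R))]=[R,O^p(N_\L(R))]\leq R\cap O^p(\L)$, by Lemma~\ref{L:pResidualLoc} and $O^p(\L)\cap N_\L(R)\unlhd N_\L(R)$. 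Your first bullet list, claiming that $O^p(\Aut_\F(Q))$ is generated by restrictions from overgroups, is false as stated, but you do not need it.

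The genuine gap is Step~2, which is the actual content of the proposition. The bound $S\cap O^p(\L)\leq\hyp(\F)$ is a locality version of Puig's hyperfocal subgroup theorem, and you do not prove it. You obtain $\K_0$ by quoting a correspondence in the form $S\cap\K_0=T$ and $\F_T(\K_0)=O^p(\F)$. That equality is not what the dictionary provides for an arbitrary linking locality; compare the wording of the statement, which only speaks of the smallest normal subsystem containing $\F_{S\cap O^p(\L)}(O^p(\L))$.

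More importantly, you never establish that $\K_0$ has $p$-power index, and the justification you sketch does not work.
\begin{itemize}
\item The inclusion $O^p(\Aut_\F(P))\leq\Aut_{\E}(P)$ only concerns subgroups $P\leq T=\hyp(\F)$. Lemma~\ref{L:pResidualLoc}, however, requires $O^p(N_\L(P))\subseteq\K_0$ for every $P\in\Delta$, and such $P$ are in general not contained in $T$.
\item Even when $P\leq T$, an inclusion of automorphism groups of $P$ says nothing about elements of $O^p(N_\L(P))\cap C_\L(P)$, and $C_{N_\L(P)}(P)$ need not be a $p$-group. Characteristic $p$ only controls $C_{N_\L(P)}(O_p(N_\L(P)))$. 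Since $O_p(N_\L(P))$ is usually not contained in $T$, the fact that $O^p(\F)$ has $p$-power index does not reach it.
\item The fallback via $\L=\K S$ and Proposition~\ref{AlperinLocalities} gives no upper bound for $S\cap\K$ at all.
\end{itemize}

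Step~3 depends on Step~2 and is itself only sketched. A normal subsystem $\E'$ containing $\F_T(\K)$ need not be over $T$. Moreover, knowing that $\E'$ contains the automorphisms of the groups $P\cap T$ induced by $O^p(N_\L(P))$ for $P\in\Delta$ does not yield $O^p(\Aut_\F(U))\leq\Aut_{\E'}(U)$ for all subgroups $U$ of its Sylow subgroup, which is what $p$-power index requires.

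For comparison, the paper does not argue from first principles. It obtains the proposition directly from Theorem~A(a) and Theorem~E(b) of \cite{Chermak/Henke}, i.e.\ from the dictionary between partial normal subgroups of a linking locality and normal subsystems of its fusion system, together with the behaviour of $O^p$ under it. If you rely on that work, cite those results directly. Otherwise Step~2 needs a genuine proof.
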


\begin{proof}
This follows combining Theorem~A(a) and Theorem~E(b) in \cite{Chermak/Henke}.
\end{proof}

\section{Large subgroups of groups, fusion systems and localities}


\subsection{Large subgroups of groups}

Let $p$ always be a prime. Recall the definition of a large subgroup from Definition~\ref{D:LargeGroup} in the introduction.

\begin{definition}
A finite group $G$ is said to be of \textbf{parabolic characteristic $p$} if every $p$-local subgroup of $G$, which contains a Sylow $p$-subgroup of $G$, is of characteristic $p$.

\smallskip

A $p$-subgroup $Q$ of $G$ is called \emph{weakly closed} in $G$ if every Sylow $p$-subgroup of $G$ contains exactly one $G$-conjugate of $Q$.
\end{definition}

In the following lemma we list some important properties of large subgroups.

\begin{lemma}\label{L:LargeSubgroupsOfGroups}
 Let $G$ be a finite group and $Q$ be a large $p$-subgroup of $G$. Then the following hold:
\begin{itemize}
 \item [(a)] $O_p(N_G(Q))$ is large in $G$.
 \item [(b)] $Q$ is weakly closed in $G$.
 \item [(c)] If $R$ is a large subgroup of $G$ normalizing $Q$, then $QR$ is large in $G$. 
 \item [(d)] If $P$ is a non-trivial $p$-subgroup of $G$ normalized by $Q$, then $N_G(P)$ has characteristic $p$. In particular, $G$ has parabolic characteristic $p$.
\end{itemize}
\end{lemma}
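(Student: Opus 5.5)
I would first record the basic consequence of $C_G(Q)\leq Q$: since $Z(Q)\leq C_G(Q)\leq Q$, we have $C_G(Z(Q))\cap N_G(Q)$ controlled. More precisely, \eqref{Q!} applied to $U=Z(Q)$ gives $N_G(Z(Q))\leq N_G(Q)$, and since $Z(Q)$ is characteristic in $Q$ we get $N_G(Q)=N_G(Z(Q))$. Moreover, $N_G(Q)$ has characteristic $p$: indeed $Q\leq O_p(N_G(Q))=:Q^*$, so $C_G(Q^*)\leq C_G(Q)\leq Q\leq Q^*$.

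\textbf{(a)} Put $Q^*:=O_p(N_G(Q))$. Since $Q\leq Q^*$, $C_G(Q^*)\leq Q\leq Q^*$. As $Q^*\leq N_G(Q)$ we have $Z(Q^*)\leq C_{N_G(Q)}(Q)\leq Q$, hence $Z(Q^*)\leq Z(Q)$. So every $1\neq U\leq Z(Q^*)$ satisfies $N_G(U)\leq N_G(Q)$. Every element of $N_G(Q)$ normalizes $Q^*$, hence $N_G(U)\leq N_G(Q^*)$, proving $(Q^*!)$.

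\textbf{(b)} Suppose $Q^g\leq T\in\Syl_p(G)$ with $Q\leq T$. I would pick $1\neq U\leq Z(T)\cap \Omega_1(Z(Q))$; note $Z(T)\leq C_G(Q)\leq Q$, so $Z(T)\leq Z(Q)$, and similarly $Z(T)\leq Z(Q^g)$. Then $T\leq N_G(Z(T))\leq N_G(Q)\cap N_G(Q^g)$ by \eqref{Q!} for $Q$ and for $Q^g$ (which is also large). Thus $Q,Q^g\leq O_p(N_G(Z(T)))$... better: $N_G(Z(T))\leq N_G(Q)$ and $N_G(Z(T))\leq N_G(Q^g)$; I would then use that $Q^g\unlhd N_G(Z(T))$ and $Q\unlhd N_G(Z(T))$, and apply \eqref{Q!} in reverse: $Z(Q)\leq$? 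The cleanest route: $Q^g$ and $Q$ are both normal in $H:=N_G(Z(T))$, and $g$-conjugation: $Z(T)^{g^{-1}}\leq Z(Q)$ gives $N_G(Z(T))\leq N_G(Q^g)$, and then by Sylow in $N_G(Q)$ (which contains $T$ and $T^{g^{-1}}$-type Sylow subgroups) one finds $h\in N_G(Q)$ with $gh^{-1}\in N_G(Z(T))$-controlled; concretely, $T^{g^{-1}}$ and $T$ both lie... I expect this conjugation bookkeeping (an Alperin/Sylow argument showing $g\in N_G(Q)N_G(Z(T))$-type factorization, then $Q^g=Q$) to be the main obstacle, though it is elementary.

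\textbf{(c)} With $R$ normalizing $Q$, $QR$ is a $p$-group containing $Q$, so $C_G(QR)\leq Q\leq QR$. By (a)-type argument, $Z(QR)\leq C_G(Q)\cap C_G(R)\leq Q\cap R$, so $Z(QR)\leq Z(Q)\cap Z(R)$. Thus for $1\neq U\leq Z(QR)$, $N_G(U)\leq N_G(Q)\cap N_G(R)\leq N_G(QR)$.

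\textbf{(d)} Let $P\neq1$ be normalized by $Q$, $H:=N_G(P)$. Then $Q\leq H$ and I would show $C_H(O_p(H))\leq O_p(H)$. Set $Z:=\Omega_1(Z(Q))\cap C(P)$... more standardly: $C_Z(Q)$ where $Z=\Omega_1Z(P)$... I would take $U:=C_{Z(P)}(Q)\neq1$ (nontrivial as $Q$ is a $p$-group acting on $Z(P)\neq1$)? That needs $U\leq Z(Q)$, which fails in general; instead use $PQ$, so $Z(PQ)\leq C_G(Q)\leq Q$, giving $Z(PQ)\leq Z(Q)$ and $N_G(Z(PQ))\leq N_G(Q)$. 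Then argue $O_p(H)$ contains the relevant piece: by a standard $L$-lemma/Thompson-type step, $C_H(O_p(H))$ is normalized by $Q$, and its $p'$-part $X:=O^p(C_H(O_p(H)))$ with $[X,Q]$: $Q\cap C_H(O_p(H))$... I'd use $A\times B$-lemma: $X$ centralizes $C_{O_p(H)}(Q)\geq \Omega_1 Z(O_p(H))\cap$..., forcing $X\leq N_G(Q)$ via \eqref{Q!}, then $[X,Q]\leq Q\cap X$ so $X$ centralizes $Q$, so $X\leq Q$, $X=1$. The parabolic characteristic statement is the special case $P\unlhd$ containing Sylow.
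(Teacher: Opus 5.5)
Your parts (a) and (c) are correct and agree with the paper's short arguments. Parts (b) and (d), however, are not complete as written.

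In (b) you correctly obtain $T\le N_G(Z(T))\le N_G(Q)\cap N_G(Q^g)$, so both $Q$ and $Q^g$ are normal in $T$. You then stop at the ``conjugation bookkeeping'' and never derive $Q^g=Q$. The missing step is a Burnside-type Sylow argument:
\begin{itemize}
\item Since $Q\unlhd T$, also $Q^g\unlhd T^g$. Hence $T$ and $T^g$ are both Sylow $p$-subgroups of $N_G(Q^g)$.
\item So there is $c\in N_G(Q^g)$ with $T^{gc}=T$.
\item Then $gc\in N_G(T)\le N_G(Z(T))\le N_G(Q)$, and therefore $Q=Q^{gc}=(Q^g)^c=Q^g$.
\end{itemize}
For comparison, the paper does not prove (b) itself; it quotes Lemma~1.52(b) of Meierfrankenfeld--Stellmacher--Stroth.

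In (d) you discard the natural choice $U:=C_{Z(P)}(Q)$ because $U\le Z(Q)$ supposedly ``fails in general''. That is false. $U$ centralizes $Q$, so $U\le C_G(Q)\le Q$, i.e.\ $U\le C_G(Q)=Z(Q)$. Your own final sketch silently needs this same fact for $C_{O_p(H)}(Q)$.

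With $U\le Z(Q)$ the paper's argument is immediate. The $Q$-uniqueness property gives $Q\unlhd N_G(U)$, and together with $C_G(Q)\le Q$ this shows $N_G(U)$ has characteristic $p$. Then the overgroup-closure of $\{X\le S\colon N_G(X)\mbox{ has characteristic }p\}$ (Henke, Lemma~9.1 of ``Subcentric linking systems'') passes this to $P\ge U$.

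Your alternative route via $X=O^p(C_H(O_p(H)))$ can also be made to work, and it would avoid that lemma. But the step ``$[X,Q]\le Q\cap X$, so $X$ centralizes $Q$'' is a non sequitur as written. Note also that $O^p(\cdot)$ is not a ``$p'$-part''; what you need is that it is generated by $p'$-elements. A correct version goes as follows:
\begin{itemize}
\item Put $C:=C_H(O_p(H))$. Since $P\le O_p(H)$ and $Q$ acts on $O_p(H)$, the group $U':=C_{O_p(H)}(Q)$ is a non-trivial subgroup of $Z(Q)$.
\item Hence $C\le C_G(U')\le N_G(Q)$.
\item So $[Q,C]\le Q\cap C\le O_p(C)\le O_p(H)$, and this is centralized by $C$.
\item For every $p'$-element $x\in C$, coprime action gives $[Q,x]=[Q,x,x]=1$.
\item Thus $O^p(C)\le C_G(Q)\le Q$ is a $p$-group, which forces $O^p(C)=1$. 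Hence $C$ is a normal $p$-subgroup of $H$, so $C\le O_p(H)$.
\end{itemize}
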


\begin{proof}
 \textbf{(a,b)} Property (a) is easy to check and stated in \cite[Lemma~1.52(e)]{MSS}, and property (b) is \cite[Lemma~1.52(b)]{MSS}. 
 
 \smallskip
 
\textbf{(c)} Let $R\leq N_G(Q)$ be a large subgroup of $G$. Then $QR$ is a $p$-subgroup of $G$, which is large in $G$ since $Q$ and $R$ are large in $G$, $C_S(QR)=C_S(Q)\cap C_S(R)=Z(Q)\cap Z(R)=Z(QR)$ and $N_G(QR)=N_G(Q)\cap N_G(R)$.
 
\smallskip
 
\textbf{(d)} Property (d) is a reformulation of \cite[Lemma~1.55(a),(c)]{MSS}. We give however a new proof of (d) here which fits better into our framework: 

\smallskip

If $P$ is as in (d) then $PQ$ is a $p$-group and thus there exists a Sylow $p$-subgroup $S$ of $G$ with $PQ\leq S$. The key property we use is that the set
$$\Delta(S,G):=\{X\leq S\colon N_G(X)\mbox{ has characteristic $p$}\}$$
is by \cite[Lemma~9.1]{Henke:2015} closed under taking overgroups. As $P$ is a non-trivial $p$-group which is normalized by $Q$, we have $1\neq U:=C_P(Q)\leq C_G(Q)=Z(Q)$. Thus, the property (\ref{Q!}) yields $N_G(U)\leq N_G(Q)$ and thus $Q\unlhd N_G(U)$. Since $C_G(Q)\leq Q$, it follows that $N_G(U)$ has characteristic $p$, i.e. $U\in\Delta(S,G)$. As $\Delta(S,G)$ is closed under passing to overgroups in $S$, this implies $P\in\Delta(S,G)$ as required.   
\end{proof}

\subsection{Large subgroups of fusion systems} The reader might want to recall the definition of a large subgroup of a fusion system from Definition~\ref{D:LargeFusLoc} in the introduction. If $G$ is a finite group with Sylow $p$-subgroup $S$ and $\F=\F_S(G)$, then $N_\F(P)=\F_{N_S(P)}(N_G(P))$ for every $P\leq S$. Moreover, factoring out a normal $p^\prime$-subgroup of $G$ does not change the fusion system of $G$ up to isomorphism. Using these properties one sees easily that we have the following example.

\begin{eg}\label{E:LargeGLargeFSG}
 Let $G$ be a finite group, $S\in\Syl_p(G)$ and $Q\leq S$. If $Q$ is large in $G$, then $Q$ is large in $\F_S(G)$. More generally, if $N$ is a normal $p^\prime$-subgroup of $G$ and $Q$ is large in $G/N$, then $Q$ is large in $\F_S(G)\cong \F_{SN/N}(G/N)$. 
\end{eg}

We will now consider properties of large subgroups of abstract fusion systems.

\smallskip

\textbf{For the remainder of this section let $\F$ be a (not necessarily saturated) fusion system over a finite $p$-group $S$.}

\smallskip

We will establish some results on large subgroups of fusion systems which resemble the ones listed in Lemma~\ref{L:LargeSubgroupsOfGroups}. Moreover, we give in Lemma~\ref{L:LargeFusEquiv} an equivalent characterization of large subgroups of fusion systems.

\begin{lemma}\label{L:LargeFusMain}
Let $Q\leq S$ be a large subgroup of $\F$.
 \begin{itemize}
 \item [(a)] The subgroup $O_p(N_\F(Q))$ is large in $\F$.
 \item [(b)] For every $1\neq U\leq Z(Q)$, we have $Q\unlhd N_\F(U)$.
 \item [(c)] We have $Q\unlhd S$.
 \item [(d)] If $R\leq S$ is large in $\F$, then $QR$ is large in $\F$.
 \end{itemize}
\end{lemma}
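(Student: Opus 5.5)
I would prove (b) and (c) first, since (a) and (d) reduce to them. Throughout, recall that for a (not necessarily saturated) fusion system, $N_\F(X)$ denotes the subsystem over $N_S(X)$ whose morphisms are those $\phi\in\Hom_\F(A,B)$ that extend to some $\hat\phi\in\Hom_\F(AX,BX)$ with $X\hat\phi=X$.

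\textbf{(b)} Let $1\neq U\leq Z(Q)$. Then $Q\leq C_S(U)\leq N_S(U)$, so $Q$ lies in the underlying group of $N_\F(U)$. By \eqref{Q!F}, every morphism of $N_\F(U)$ lies in $N_\F(Q)$, so it extends to a morphism normalizing $Q$. I want it to extend to a morphism defined on a subgroup containing $Q$ which normalizes both $Q$ and $U$. For $\phi\in\Hom_{N_\F(U)}(A,B)$, first extend $\phi$ to $\psi$ on $AU$ with $U\psi=U$. Then $\psi$ is again a morphism of $N_\F(U)$, so by \eqref{Q!F} it extends to $\hat\psi$ on $AUQ$ with $Q\hat\psi=Q$. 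Its restriction to $AUQ$ fixes $U$, since $U\leq AU$ and $\hat\psi|_{AU}=\psi$. Hence $\phi$ extends to a morphism of $N_\F(U)$ on $AQ$ (note $AQ\leq N_S(U)$ because $U\leq Z(Q)$) normalizing $Q$. This is exactly the definition of $Q\unlhd N_\F(U)$ in the sense of \cite{Aschbacher/Kessar/Oliver:2011}, together with strong closure, which follows from the extension property. One point needs care: normality requires $Q$ to be normal in the underlying group $N_S(U)$. This follows because $c_x$ for $x\in N_S(U)$ is a morphism of $N_\F(U)$, hence of $N_\F(Q)$, hence $x\in N_S(Q)$ by extension. (If necessary, apply this to $c_x|_{QU}$ and use that an extension on $\<x\>Q$ normalizing $Q$ shows $Q^x=Q$.)

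\textbf{(c)} Since $Q\neq1$ (otherwise $C_S(Q)=S\le Q$ forces $S=1$, which is trivial), the group $Z(Q)\neq1$ because $C_S(Q)\leq Q$. Pick $U:=\Omega_1(Z(S)\cap Q)$. This is nontrivial, since $Z(S)\cap Q\supseteq Z(S)\cap C_S(Q)$ and $Z(S)\leq C_S(Q)\leq Q$. Moreover $U\leq Z(Q)$ and $N_S(U)=S$. By (b), $Q$ is normal in $N_S(U)=S$. Alternatively, the argument at the end of (b) applies directly with $U=Z(S)$.

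\textbf{(a)} Set $R:=O_p(N_\F(Q))$. This is defined because by (c) the normalizer system lives over $N_S(Q)=S$. We have $Q\leq R$, so $C_S(R)\leq C_S(Q)\leq Q\leq R$. Also $Z(R)\leq C_S(Q)\leq Q$, hence $Z(R)\leq Z(Q)$ since $Z(R)$ centralizes $Q$. Thus for $1\neq U\leq Z(R)$, property \eqref{Q!F} gives $N_\F(U)\subseteq N_\F(Q)$. Since $R\unlhd N_\F(Q)$, the same extension trick as in (b) upgrades any morphism of $N_\F(U)$ to one normalizing $R$: first extend inside $N_\F(Q)$, then use the normality of $R$ to extend to $R$ while keeping $U\le Z(R)$ fixed. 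So $N_\F(U)\subseteq N_\F(R)$.

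\textbf{(d)} By (c), $Q$ and $R$ are normal in $S$, so $QR$ is a subgroup. Then $C_S(QR)=C_S(Q)\cap C_S(R)\leq Q\cap R$, and this is central in $QR$, so $C_S(QR)=Z(QR)\le QR$. For $1\neq U\leq Z(QR)=C_S(Q)\cap C_S(R)\leq Z(Q)\cap Z(R)$, both \eqref{Q!F} conditions apply. By (b), $Q$ and $R$ are both normal in $N_\F(U)$, hence so is $QR$ (a product of normal subgroups is normal, via the strongly closed and extension characterization). Therefore $N_\F(U)\subseteq N_\F(QR)$.

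The main obstacle is the extension bookkeeping in (b) and (d) in the non-saturated setting. In particular, showing that a product of normal subgroups is normal requires extending a morphism to $QR$ in two steps, first to $AQ$ and then to $AQR$, at each step preserving $U$ so that we remain in $N_\F(U)$ and can reapply the hypothesis.
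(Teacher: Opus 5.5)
Your proof is correct and follows essentially the same route as the paper: (b) extends a morphism of $N_\F(U)$ first so that it fixes $U$, and then uses the $Q$-uniqueness property to normalize $Q$; (c) applies this to a nontrivial subgroup of $Z(S)\le Z(Q)$; (a) uses $Z(O_p(N_\F(Q)))\le Z(Q)$ and $N_\F(Q)\subseteq N_\F(O_p(N_\F(Q)))$; and (d) combines (b) with the fact that a product of normal subgroups is normal. Your two-step extension argument for that last fact, and your check that $Q\unlhd N_S(U)$, spell out details the paper leaves implicit.
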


\begin{proof}
\textbf{(a)} We have $Q\leq Q^\bullet:=O_p(N_\F(Q))$. In particular, $C_S(Q^\bullet)\leq C_S(Q)\leq Q\leq Q^\bullet$ and $Z(Q^\bullet)\leq Z(Q)$. Moreover, $N_\F(Q)\subseteq N_\F(Q^\bullet)$ by definition of $Q^\bullet$. This implies (a).

\smallskip

\textbf{(b)} Let $1\neq U\leq Z(Q)$. Let $X,Y\leq N_S(U)$ and $\phi\in\Hom_{N_\F(U)}(X,Y)$. We need to show that $\phi$ extends to a morphism in $N_\F(U)$ which is defined on $Q$ and leaves $Q$ invariant. By definition of $N_\F(U)$, we may assume that $U\leq X$ and $U\phi=U$. By \eqref{Q!F}, we have $N_\F(U)\subseteq N_\F(Q)$, i.e. $\phi$ extends to a morphism $\hat{\phi}\in\Hom_\F(XQ,YQ)$ with $Q\hat{\phi}=Q$. As $U\hat{\phi}=U\phi=U$, the map $\hat{\phi}$ is a morphism in $N_\F(U)$. This proves the assertion.

\smallskip

\textbf{(c)} Note that $1\neq Z(S)\leq C_S(Q)=Z(Q)$. Hence, $N_\F(Z(S))\subseteq N_\F(Q)$. As $N_\F(Z(S))$ is a subsystem on $S$, this implies $Q\unlhd S$.

\smallskip

\textbf{(d)} Let $R\leq S$ be large in $\F$. Then $C_S(QR)=C_S(Q)\cap C_S(R)=Z(Q)\cap Z(R)$ and so in particular, $Z(QR)=Z(Q)\cap Z(R)$. Hence, for $1\neq U\leq Z(QR)$, it follows from (b) that $Q$ and $R$ are normal in $N_\F(U)$. Thus, $QR$ is normal in $N_\F(U)$. In particular, the property \eqref{Q!F} holds for $QR$ in place of $Q$. 
\end{proof}

\begin{lemma}\label{L:LargeFusEquiv}
Let $\F$ be saturated and $Q\leq S$. Then $Q$ is large in $\F$ if and only if $C_S(Q)\leq Q$ and the following condition holds:
\begin{equation}\label{Q!Ff}\tag{$Q!\F^f$}
N_\F(U)\subseteq N_\F(Q)\mbox{ for all fully $\F$-normalized }U\leq Z(Q)\mbox{ with }U\neq 1
\end{equation} 
Moreover, if so, then $Q$ is weakly closed in $\F$.
\end{lemma}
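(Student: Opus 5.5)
The forward implication is immediate, since \eqref{Q!Ff} is a special case of \eqref{Q!F}. So the work lies in the converse and in the weak closure statement. I will prove weak closure first, using only $C_S(Q)\leq Q$ and \eqref{Q!Ff}, and then deduce \eqref{Q!F} from Lemma~\ref{L:ConjFulNormZQ}.

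\emph{Step 1: $Q\unlhd S$.} First I check that $Z(S)$ is fully $\F$-normalized. This holds because $N_S(Z(S))=S$, which is trivially of maximal order. Also $Z(S)\leq C_S(Q)\leq Q$ forces $Z(S)\leq Z(Q)$, and $Z(S)\neq 1$. So \eqref{Q!Ff} gives $N_\F(Z(S))\subseteq N_\F(Q)$. Since $N_\F(Z(S))$ is a fusion system over $S$, it contains $\Aut_S(S)$, and hence $S$ normalizes $Q$.

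\emph{Step 2: weak closure.} Let $\phi\in\Hom_\F(Q,S)$. I must show $Q\phi=Q$.
\begin{itemize}
\item Set $Z:=\Omega_1(Z(S))$. By Lemma~\ref{L:ConjFulNormZQ}-type reasoning, or directly, I plan to reduce to the case where $Q\phi$ is fully normalized. To do this I use saturation: I extend $\phi^{-1}$ composed with a suitable map, so that the image is arranged conveniently.
\item A cleaner route is as follows. Choose $\alpha\in\Hom_\F(N_S(Q\phi),S)$ with $Q\phi\alpha$ fully normalized. Then $R:=Q\phi\alpha$ is $\F$-conjugate to $Q$, and $R$ is receptive, so the isomorphism $\psi:=(\phi\alpha)^{-1}\colon R\to Q$ extends to $N_\psi$. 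Here $N_\psi\supseteq C_S(R)$, and also every element of $N_S(R)$ that acts on $R$ compatibly with the conjugation induced in $\Aut_S(Q)$.
\item Since $Q\unlhd S$, the group $\Aut_S(Q)$ is a full Sylow-sized subgroup. Because $R$ is fully normalized, $|N_S(R)|\geq |N_S(Q)|=|S|$, so $R\unlhd S$ as well.
\item Then $\psi^{-1}\Aut_S(Q)\psi$ and $\Aut_S(R)$ are both Sylow in $\Aut_\F(R)$. After adjusting $\psi$ by an element of $\Aut_\F(Q)$, I may assume $N_\psi=S$. So $\psi$ extends to $\hat\psi\in\Aut_\F(S)$ with $R\hat\psi=Q$.
\item Now $Z(S)\hat\psi^{-1}=Z(S)$. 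Also $\hat\psi^{-1}$ is a morphism of $N_\F(Z(S))\subseteq N_\F(Q)$, so $\hat\psi$ normalizes $Q$. This gives $R=Q$.
\end{itemize}
Consequently $Q\phi$ is an $\F$-conjugate of $Q$ lying in $N_S(Q\phi)$, mapped onto $Q$ by $\alpha$. I will handle this more carefully by running the argument with $Q\phi$ in place of $R$ in the first instance. Alternatively, I will first show that every fully normalized conjugate equals $Q$, and then apply this to both $Q\phi$ and its image. I expect this bookkeeping to be the main obstacle. The concrete version I aim for: every $\F$-conjugate $P$ of $Q$ has $|N_S(P)|\le|S|$. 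The fully normalized ones equal $Q$ by the above. For a general $P$, take $\alpha\colon N_S(P)\to S$ with $P\alpha$ fully normalized, so $P\alpha=Q$. Then $Q\leq N_S(Q)\alpha^{-1}$-image considerations, together with order counting $|N_S(P)|\le|N_S(Q)|$ and $Q\alpha^{-1}=P$, show $P\unlhd N_S(P)$. Combining this with $Z(Q)\cap P$ arguments then yields $P=Q$.

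\emph{Step 3: \eqref{Q!F}.} Given $1\neq U\leq Z(Q)$, choose a fully normalized $V\in U^\F$. By Lemma~\ref{L:ConjFulNormZQ} (using weak closure), $V\leq Z(Q)$ and $V\neq 1$. So \eqref{Q!Ff} gives $N_\F(V)\subseteq N_\F(Q)$, and the implication in Lemma~\ref{L:ConjFulNormZQ} yields $N_\F(U)\subseteq N_\F(Q)$.
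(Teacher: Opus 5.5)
Steps 1 and 3 are fine. Step 3 is exactly the paper's reduction via Lemma~\ref{L:ConjFulNormZQ} once weak closure is known. Your argument that every fully $\F$-normalized $\F$-conjugate $R$ of $Q$ equals $Q$ is also correct, apart from a slip: receptivity of $R$ concerns isomorphisms \emph{into} $R$, so it does not let you extend $\psi\colon R\to Q$. Either extend $\phi\alpha\colon Q\to R$ instead, or note that $Q\unlhd S$ is itself fully normalized and hence receptive.

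The genuine gap is the passage from fully normalized conjugates to an arbitrary $P=Q\phi$, which you describe as bookkeeping. The facts you list there ($P\unlhd N_S(P)$, $|N_S(P)|\leq |S|$) are tautologies, and the ``$Z(Q)\cap P$ arguments'' are never given. The step is not formal. A subgroup $Q\unlhd S$ that is its own unique fully normalized conjugate need not be weakly closed: take $p=2$, $G=S_4$ and $Q=Z(S)$ for $S\in\Syl_2(G)$. The other two subgroups generated by double transpositions are $G$-conjugates of $Q$ in $S$ whose normalizer in $S$ has order $4$. So the hypotheses must be used again at subgroups that are not fully normalized. Concretely, if $\alpha\in\Hom_\F(N_S(P),S)$ with $P\alpha=Q$, then $P=Q$ says exactly that $\alpha^{-1}$, a morphism defined on the proper overgroup $N_S(P)\alpha$ of $Q$, maps $Q$ to itself. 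What you need is control over all morphisms defined on overgroups of $Q$, and your Step 2 only supplies this for $\Aut_\F(S)$.

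The paper gets this control from Alperin's fusion theorem in a refined form. Every morphism of $\F$ is a composite of restrictions of $\F$-automorphisms of subgroups $R$ with $Z(R)$ fully normalized (via \cite[Proposition~2.10]{Diaz/Glesser/Mazza/Park:2009} and Lemma~\ref{L:ZRfullynorm}). If $Q\leq R$, then $1\neq Z(R)\leq C_S(Q)=Z(Q)$. So \eqref{Q!Ff} gives $\Aut_\F(R)\subseteq N_\F(Z(R))\subseteq N_\F(Q)$, and every such automorphism leaves $Q$ invariant.

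If you prefer to stay self-contained, prove by downward induction on $|R|$ that $Q\phi=Q$ for all $\phi\in\Hom_\F(R,S)$ with $Q\leq R$.
\begin{itemize}
\item The case $R=S$ is your Step 1 argument.
\item For $R<S$, pick a fully normalized $R^*\in R^\F$, together with $\alpha\in\Hom_\F(N_S(R),S)$ and $\alpha'\in\Hom_\F(N_S(R\phi),S)$ such that $R\alpha=R^*=(R\phi)\alpha'$.
\item Induction applied to $\alpha$ gives $Q\leq R^*$. Induction applied to $\alpha'^{-1}$, which is defined on $N_S(R\phi)\alpha'\geq R^*\geq Q$, then reduces you to an automorphism $\chi\in\Aut_\F(R^*)$.
\item Choose $\beta\in\Hom_\F(N_S(Z(R^*)),S)$ with $Z(R^*)\beta$ fully normalized; induction gives $Q\beta=Q$.
\item Applying \eqref{Q!Ff} to $Z(R^*\beta)\leq Z(Q)$ shows that $\beta^{-1}\chi\beta$ fixes $Q$, hence so does $\chi$.
\end{itemize}
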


\begin{proof}
Note that property \eqref{Q!F} implies \eqref{Q!Ff}. Moreover, if $Q$ is weakly closed, then it follows from Lemma~\ref{L:ConjFulNormZQ} that \eqref{Q!Ff} implies \eqref{Q!F}. Hence, assuming from now on that $C_S(Q)\leq Q$ and that the property \eqref{Q!Ff} holds, we only need to show that $Q$ is weakly closed in $\F$. 
 
\smallskip

It is a consequence of Alperin's fusion theorem (see e.g. \cite[Theorem~I.3.6]{Aschbacher/Kessar/Oliver:2011}), \cite[Proposition~2.10]{Diaz/Glesser/Mazza/Park:2009} and Lemma~\ref{L:ZRfullynorm} that every morphism in $\F$ can be written as a composition of restrictions of $\F$-automorphisms of subgroups $R$ of $S$ such that $Z(R)$ is fully $\F$-normalized. Thus, for $Q\leq R\leq S$ with $Z(R)$ fully $\F$-normalized, we need to argue that $Q$ is $\Aut_\F(R)$-invariant. As $C_S(Q)\leq Q$, we have $1\neq Q\leq R$ and $1\neq Z(R)\leq Z(Q)$. So the property \eqref{Q!Ff} yields $N_\F(R)\subseteq N_\F(Z(R))\subseteq N_\F(Q)$ and $Q$ is $\Aut_\F(R)$-invariant as required.
\end{proof}

If the fusion system $\F$ is saturated, then it is said to be of \emph{parabolic characteristic $p$} if $N_\F(P)$ is constrained for every non-trivial normal subgroup $P$ of $S$.

\smallskip

For a saturated fusion system $\F$, a fully $\F$-normalized subgroup $P\leq S$ is by \cite[Lemma~3.1]{Henke:2015} subcentric if and only if $N_\F(P)$ is constrained. In particular, $\F$ is of parabolic characteristic $p$ if and only if $P\in\F^s$ for every non-trivial normal subgroup $P$ of $S$.

\begin{lemma}\label{L:LargeSatFusMain}
 Let $\F$ be saturated and let $Q\leq S$ be large in $\F$.
 \begin{itemize}
\item [(a)] The subgroup $Q$ is weakly closed in $\F$. In particular, we have $Q\in\F^c$.
\item [(b)] We have $P\in\F^s$ for every $1\neq P\leq S$ with $Q\leq N_S(P)$. In particular, $\F$ is of parabolic characteristic $p$.
 \end{itemize}
\end{lemma}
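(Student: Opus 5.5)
For (a), weak closure comes directly from Lemma~\ref{L:LargeFusEquiv}. Property \eqref{Q!F} implies \eqref{Q!Ff}, and $C_S(Q)\leq Q$ holds by definition, so that lemma shows $Q$ is weakly closed in $\F$. It remains to see that $Q\in\F^c$. Since $Q$ is weakly closed, its only $\F$-conjugate is $Q$ itself. So we only need $C_S(Q)\leq Q$, and this is part of the definition of a large subgroup.

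For (b), let $1\neq P\leq S$ with $Q\leq N_S(P)$, and set $U:=C_P(Q)$. Since $Q$ normalizes the nontrivial $p$-group $P$, we have $U\neq 1$. Also $U\leq C_S(Q)\leq Q$, so $U\leq Z(Q)$. I mimic the proof of Lemma~\ref{L:LargeSubgroupsOfGroups}(d). First I show $U\in\F^s$, then I pass to the overgroup $P$ using that $\F^s$ is closed under overgroups in $S$ (Theorem~\ref{T:LinkingLocalityExistence}). Since $U\leq P$ and $P\leq S$, the second step is immediate once the first is done.

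To prove $U\in\F^s$, take a fully $\F$-normalized conjugate $V\in U^\F$. By Lemma~\ref{L:ConjFulNormZQ}, which applies because $Q$ is weakly closed by (a), we get $V\leq Z(Q)$ and $V\neq 1$. Lemma~\ref{L:LargeFusMain}(b) then gives $Q\unlhd N_\F(V)$, hence $Q\leq O_p(N_\F(V))$. By (a), $Q\in\F^c$, so the overgroup $O_p(N_\F(V))$ is also $\F$-centric. This is exactly the definition of $U$ being $\F$-subcentric. (Equivalently, $N_\F(V)$ is constrained, by \cite[Lemma~3.1]{Henke:2015}.)

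For the \emph{in particular}, let $P$ be a nontrivial normal subgroup of $S$. Then $Q\leq S=N_S(P)$, so $P\in\F^s$ by what was just shown. By the remark preceding the lemma, this means $\F$ is of parabolic characteristic $p$.

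The only delicate point is checking that the definition of $\F^s$ is satisfied with the particular fully normalized conjugate chosen. This is handled by Lemma~\ref{L:ConjFulNormZQ}, which keeps that conjugate inside $Z(Q)$.
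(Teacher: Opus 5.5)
Your proposal is correct and follows the paper's proof essentially step by step: weak closure from Lemma~\ref{L:LargeFusEquiv}, then $U=C_P(Q)$, a fully normalized conjugate $V\leq Z(Q)$ via Lemma~\ref{L:ConjFulNormZQ}, $Q\unlhd N_\F(V)$ via Lemma~\ref{L:LargeFusMain}(b), and finally closure of $\F^s$ under overgroups. The only cosmetic difference is that you check the definition of subcentric directly, using that $O_p(N_\F(V))$ contains the $\F$-centric subgroup $Q$ and is therefore centric, whereas the paper argues that $N_\F(V)$ is constrained and then applies \cite[Lemma~3.1]{Henke:2015}.
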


\begin{proof}
 \textbf{(a)} That $Q$ is weakly closed in $\F$ is just a restatement of Lemma~\ref{L:LargeFusEquiv}. In particular,  $Q\in\F^c$ as $C_S(Q)\leq Q$.
 
\smallskip

\textbf{(b)} Let $1\neq P\leq S$ with $Q\leq N_S(P)$. Set $U:=C_P(Q)$ and note that $1\neq U\leq C_S(Q)=Z(Q)$. Then by (a) and Lemma~\ref{L:ConjFulNormZQ}, a fully $\F$-normalized $\F$-conjugate $V$ of $U$ is also contained in $Z(Q)$. Note that $C_S(Q)\leq Q$ and $Q\unlhd N_\F(V)$ by Lemma~\ref{L:LargeFusMain}(b). This implies that $N_\F(V)$ is constrained and so $U\in\F^s$ by \cite[Lemma~3.1]{Henke:2015}. As the set $\F^s$ is by \cite[Proposition~3.3]{Henke:2015} closed under passing to overgroups in $S$, it follows $P\in\F^s$ as required.
\end{proof}

\subsection{Large subgroups of localities}

The reader might want to recall the definition of a large subgroup of a locality from Definition~\ref{D:LargeFusLoc} in the introduction. 

\begin{eg}\label{E:LargeGLargeLDeltaG}
 Let $G$ be a finite group, $S\in\Syl_p(G)$ and let $\Delta$ be a set of subgroups of $S$ which is closed under passing to $G$-conjugates and overgroups in $S$. Consider the locality $\L_\Delta(G)$ constructed in Example~\ref{E:LDeltaG}. If $Q\leq S$ is a large subgroup of $G$, then $Q$ is large in $\L_\Delta(G)$.
\end{eg}

\begin{lemma}\label{L:LargeLocMain}
 Let $(\L,\Delta,S)$ be a locality over $\F$ and let $Q\leq S$ be a large subgroup of $\L$. Then the following hold.
 \begin{itemize}
  \item [(a)] $Q$ is weakly closed in $\L$. In particular, $Q\unlhd S$, the triple $(N_\L(Q),\Delta,S)$ is a locality over $N_\F(Q)$, and $O_p(N_\L(Q))$ is well-defined.
  \item [(b)] $O_p(N_\L(Q))$ is large in $(\L,\Delta,S)$.
  \item [(c)] If $R\leq S$ is large in $\L$, then $QR$ is large in $\L$.
 \end{itemize}
\end{lemma}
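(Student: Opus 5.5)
For (a), I will imitate the proof of Lemma~\ref{L:LargeFusEquiv}, working directly in the locality. Let $f\in\L$ with $Q\leq S_f$; I must show $Q^f=Q$. By Proposition~\ref{AlperinLocalities}, write $f=g_1\cdots g_n$ with $g_i\in N_\L(R_i)$, $R_i\in\Delta$, $S_f\leq R_1$ and $S_f^{g_1\cdots g_i}\leq R_{i+1}$. It suffices to prove inductively that $Q^{g_1\cdots g_i}=Q$. Then $Q\leq R_{i+1}$ at each stage, and Lemma~\ref{LocalitiesProp}(c) lets conjugation by $g_1\cdots g_i$ be computed stepwise. So I reduce to the following claim: if $R\in\Delta$ with $Q\leq R$ and $g\in N_\L(R)$, then $Q^g=Q$.

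The claim is where $C_\L(Q)\subseteq Q$ enters. We have $C_S(R)\leq C_S(Q)\leq Q\leq R$, so $1\neq Z(R)\leq Z(Q)$; here $Z(R)\neq1$ since $R$ is a nontrivial $p$-group ($Q\neq1$ as $Z(S)\leq C_\L(Q)\subseteq Q$). Since $N_\L(R)$ is a group, $g$ normalizes $Z(R)$, i.e.\ $g\in N_\L(Z(R))$. By \eqref{Q!L} we get $g\in N_\L(Q)$, so $Q^g=Q$. The main obstacle is the bookkeeping that $Z(R)\subseteq\D(g)$ and $Z(R)^g=Z(R)$ in the partial-group sense; this follows because $N_\L(R)$ is a subgroup by Lemma~\ref{LocalitiesProp}(a), and inside it conjugation is ordinary group conjugation.

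With weak closure established, $Q\unlhd S$ holds because every $s\in S$ has $S_s=S\geq Q$, so $Q^s=Q$. For the locality statement, I will check that $(N_\L(Q),\Delta,S)$ satisfies (L1)--(L3). $N_\L(Q)$ is a partial subgroup by Lemma~\ref{LocalitiesProp}(e), and $S\subseteq N_\L(Q)$. For (L2), a word in $N_\L(Q)$ lying in $\D$ via $P_0,\dots,P_n$ is also in $\D$ via $P_0Q,\dots,P_nQ$, since each $g_i$ fixes $Q$, and these groups lie in $\Delta$ by overgroup closure. Conversely, the restricted domain is defined as words in $N_\L(Q)$ lying in $\D$. (L1) and (L3) are inherited. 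That $\F_S(N_\L(Q))=N_\F(Q)$ follows from Lemma~\ref{L:NormalizerLocFus} together with weak closure. Hence $O_p(N_\L(Q))$ makes sense as defined for localities.

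For (b), set $Q^\bullet:=O_p(N_\L(Q))$. Then $Q\leq Q^\bullet$, since $Q$ is fixed by every element of $N_\L(Q)$, so $C_\L(Q^\bullet)\subseteq C_\L(Q)\subseteq Q\leq Q^\bullet$ and $Z(Q^\bullet)\leq C_S(Q)\leq Z(Q)$. For $1\neq U\leq Z(Q^\bullet)$, \eqref{Q!L} gives $N_\L(U)\subseteq N_\L(Q)$, and $N_\L(Q)=N_{N_\L(Q)}(Q^\bullet)\subseteq N_\L(Q^\bullet)$ by the characterization of $O_p$ as the largest $P$ with $N_\L(P)$ equal to the whole locality. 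For (c), argue as in Lemma~\ref{L:LargeFusMain}(d). By (a), $Q$ and $R$ are normal in $S$, so $QR$ is a subgroup. Moreover $C_\L(QR)\subseteq C_\L(Q)\cap C_\L(R)\subseteq Z(Q)\cap Z(R)\leq QR$ and $Z(QR)\leq Z(Q)\cap Z(R)$. For $1\neq U\leq Z(QR)$, any $f\in N_\L(U)$ then normalizes $Q$ and $R$, hence normalizes $QR$, using that the elements of $QR$ are products $qr$ with $(f^{-1},q,r,f)\in\D$ via $QR$.
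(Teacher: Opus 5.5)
Your proof is correct and is essentially the paper's argument. You get weak closure from Proposition~\ref{AlperinLocalities} together with $1\neq Z(R)\leq Z(Q)$ for $Q\leq R\in\Delta$, and then prove (b) and (c) just as the paper does; the one structural difference is that you verify (L1)--(L3) for $N_\L(Q)$ by hand where the paper cites \cite[Lemma~2.12]{Chermak:2022}.

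One small repair is needed in (c): $QR$ need not lie in $\Delta$, so ``via $QR$'' is not available. Take the word $(f^{-1},q,r,f)$ in $\D$ via $S_f$ instead, which contains $Q$ and $R$ because $f$ normalizes both. Alternatively, use Lemma~\ref{LocalitiesProp}(d) directly, as the paper does, to get $(QR)^f=Q^fR^f=QR$.
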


\begin{proof}
\textbf{(a)} Let $Q\leq P\in\Delta$. To prove that $Q$ is weakly closed in $\L$, it is by Proposition~\ref{AlperinLocalities} sufficient to see that $N_\L(P)\subseteq N_\L(Q)$. As $C_S(Q)\leq Q$, we have $P\neq 1$ and $1\neq Z(P)\leq C_S(Q)=Z(Q)$. Hence, by \eqref{Q!L}, $N_\L(P)\subseteq N_\L(Z(P))\subseteq N_\L(Q)$. This shows that $Q$ is weakly closed in $\L$. 

\smallskip

We have shown in particular that $Q\unlhd S$. So by \cite[Lemma~2.12]{Chermak:2022}, the triple $(N_\L(Q),\Delta,S)$ is a locality and $O_p(N_\L(Q))$ is well-defined. As $Q$ is weakly closed, it follows moreover from Lemma~\ref{L:NormalizerLocFus} that $\F_S(N_\L(Q))=N_\F(Q)$. 

\smallskip

\textbf{(b)} Set now $Q^\bullet:=O_p(N_\L(Q))\leq S$. As $Q\leq Q^\bullet$, we have $C_\L(Q^\bullet)\subseteq C_\L(Q)\subseteq Q\leq Q^\bullet$ and $Z(Q^\bullet)\leq Z(Q)$. As $N_\L(Q)\subseteq N_\L(Q^\bullet)$, the property \eqref{Q!L} holds thus for $Q^\bullet$ as it holds for $Q$. 

\smallskip

\textbf{(c)} Let $R \le S$ be large in $\L$. It follows from (a) that $QR$ is a subgroup of $S$. Note that $C_\L(QR) \le C_\L(Q) \cap C_\L(R) = Z(Q) \cap Z(R)=Z(QR)$. If $1 \ne U \le Z(QR)$, then \eqref{Q!L} applied to $Q$ and $R$ yields
\[
N_\L(U) \subseteq N_\L(Q) \cap N_\L(R) \le N_\L(QR),
\]
where the last inclusion follows from Lemma~\ref{LocalitiesProp}(d).
\end{proof}

\subsection{Connections between large subgroups of fusion systems and of localities}

In this section we want to obtain connections between large subgroups of fusion systems and large subgroups of associated localities. If $Q$ is a large subgroup of a given locality $(\L,\Delta,S)$ or of the fusion system $\F_S(\L)$, then in view of the properties \eqref{Q!F} and \eqref{Q!L}, we want to ensure for many situations that every non-trivial subgroup of $C_S(Q)=Z(Q)$ is an object. This is important as it allows us to apply Lemma~\ref{L:NormalizerLocFus}. The following lemma is interesting in this context.

\begin{lemma}\label{L:QReplete}
 Let $(\L,\Delta,S)$ be a locality and $Q\leq S$. Then the following two conditions are equivalent:
 \begin{itemize}
\item [(i)] Every non-trivial subgroup $P$ of $S$ with $Q\leq N_S(P)$ is an element of $\Delta$.
\item [(ii)] Every non-trivial subgroup of $C_S(Q)$ is an element of $\Delta$. 
\end{itemize}
\end{lemma}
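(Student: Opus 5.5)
We prove the two implications separately; the direction (ii)$\Rightarrow$(i) carries all the content and reduces to a fixed-point argument plus the overgroup-closure axiom (L3).

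\textbf{(i)$\Rightarrow$(ii).} Let $1\neq P\leq C_S(Q)$. Then $Q$ centralizes $P$, so $Q\leq C_S(P)\leq N_S(P)$. Hence (i) applies and gives $P\in\Delta$.

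\textbf{(ii)$\Rightarrow$(i).} Let $1\neq P\leq S$ with $Q\leq N_S(P)$. Since $Q$ normalizes $P$, the product $PQ$ is a subgroup of $S$, and so it is a finite $p$-group. Moreover $P$ is a non-trivial normal subgroup of $PQ$. A non-trivial normal subgroup of a $p$-group meets its center non-trivially, so
\[
P\cap Z(PQ)\neq 1 .
\]
Set $U:=P\cap Z(PQ)$. Then $U$ centralizes $Q$, so $1\neq U\leq C_P(Q)\leq C_S(Q)$.

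Alternatively, $C_P(Q)\neq 1$ follows by letting the $p$-group $Q$ act on the non-trivial $p$-group $P$ by conjugation: the fixed points are non-trivial, since the number of fixed points is congruent to $|P|$ modulo $p$.

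In either form, (ii) gives $U\in\Delta$. Since $U\leq P\leq S$ and $\Delta$ is closed under passing to overgroups in $S$ by (L3), we conclude $P\in\Delta$.

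There is no real obstacle; the only point needing care is to ensure $C_P(Q)\neq 1$, which is exactly the standard fixed-point fact for $p$-group actions used above.
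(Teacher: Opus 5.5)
Your proposal is correct and follows the paper's proof: (i)$\Rightarrow$(ii) because $Q$ normalizes every subgroup of $C_S(Q)$, and (ii)$\Rightarrow$(i) by taking $U$ to be a non-trivial subgroup of $C_P(Q)$ and applying overgroup-closure of $\Delta$ from (L3). You also spell out why $C_P(Q)\neq 1$ (the fixed-point fact for $p$-group actions), which the paper uses without comment.
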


\begin{proof}
As every subgroup of $C_S(Q)$ is normalized by $Q$, property (i) implies property (ii). 

\smallskip

Assume that (ii) holds. To verify (i), fix a non-trivial subgroup $P$ of $S$ with $Q\leq N_S(P)$. Then $1\neq U:=C_P(Q)\leq C_S(Q)$. Hence, $U\in\Delta$ by assumption, and so $P\in\Delta$, since $\Delta$ is closed under passing to overgroups in $S$. This proves that (ii) implies (i).
\end{proof}

\begin{definition}
 Let $(\L,\Delta,S)$ be a locality and $Q\leq S$. We call $(\L,\Delta,S$) \textit{$Q$-replete} if one and thus each of the two equivalent conditions (i) and (ii) in Lemma~\ref{L:QReplete} holds.
\end{definition}

\begin{eg}\label{E:SubcentricQReplete}
If $\F$ is saturated, $Q\leq S$ is large in $\F$, and $(\L^s,\F^s,S)$ is a subcentric locality over $\F$, then $(\L^s,\F^s,S)$ is $Q$-replete by Lemma~\ref{L:LargeSatFusMain}(b). We show in Lemma~\ref{L:LargeLF}(b) below that $Q$ is large in $(\L^s,\F^s,S)$.
\end{eg}

\begin{lemma}\label{L:QRepleteSaturated}
Let $(\L,\Delta,S)$ be a locality over $\F$ which is $Q$-replete for some subgroup $Q\unlhd S$. Then $\F^c\subseteq\Delta$ and $\F$ is saturated.
\end{lemma}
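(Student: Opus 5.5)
The plan has two steps: first show $\F^c\subseteq\Delta$, and then deduce saturation from a known criterion for localities whose object set contains all centric subgroups.

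\textbf{Step 1: $\F^c\subseteq\Delta$.} Let $P\in\F^c$. Since $Q\unlhd S$, the product $PQ$ is a subgroup of $S$ containing $P$. By (L3), $\Delta$ is closed under overgroups, so it suffices to find an object inside $P$. Consider $Z(P)$. It is non-trivial, and it satisfies $C_S(P)=Z(P)$ because $P$ is centric. A priori $Q$ need not normalize $Z(P)$, so Lemma~\ref{L:QReplete}(i) does not apply directly. Instead I will use the standard $p$-group argument. Set $R:=N_{PQ}(P)$. If $P<PQ$, then $P<R$, and $R$ normalizes $P$. We have $1\neq Z(PQ)\leq C_S(Q)$, since $Q\leq PQ$. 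Moreover $Z(PQ)\leq C_S(P)=Z(P)\leq P$. Thus $U:=Z(PQ)$ is a non-trivial subgroup of $C_S(Q)$, so $U\in\Delta$ by $Q$-repleteness, condition (ii) of Lemma~\ref{L:QReplete}. Since $U\leq P$, we conclude $P\in\Delta$ by overgroup closure. This handles every centric $P$ at once, because $Z(PQ)$ centralizes both $P$ and $Q$ and hence lies in $Z(P)\cap C_S(Q)$. So in fact no induction is needed.

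\textbf{Step 2: saturation.} The natural tool is the known result (Chermak, or \cite[Proposition~3.?]{Henke:2015}, in the form "$\F_S(\L)$ is saturated if $\F^{cr}\subseteq\Delta$, or more generally if $\Delta$ contains all $\F$-centric subgroups") that for a locality $(\L,\Delta,S)$ over $\F$ with $\F^c\subseteq\Delta$, the fusion system $\F$ is saturated. I will cite the precise statement: by \cite[Theorem~2.11]{Chermak:2022} (or \cite{Henke:2015}), if $(\L,\Delta,S)$ is a locality over $\F$ with $\F^{c}\subseteq\Delta$, then $\F$ is saturated.

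If I must argue it more directly, I would use the saturation criterion of Broto--Castellana--Grodal--Levi--Oliver (or \cite[Theorem~I.3.10]{Aschbacher/Kessar/Oliver:2011}). This criterion says that $\F$ is saturated if it is $\F^c$-generated and $\F^c$-saturated, the latter meaning that the saturation axioms hold for centric subgroups. For $P\in\Delta$, Lemma~\ref{LocalitiesProp} and Lemma~\ref{L:NormalizerLocFus} identify $\Aut_\F(P)$ with the automorphisms induced by the finite group $N_\L(P)$. Sylow's theorem in $N_\L(P)$ together with conjugation along $N_\L(P)\to N_\L(P^g)$ then give the Sylow and extension axioms, using the $\Delta$-Alperin statement of Proposition~\ref{AlperinLocalities}. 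Generation by centric subgroups follows from Proposition~\ref{AlperinLocalities}. That proposition writes each morphism as a composite of maps $c_{g_i}$ with $g_i\in N_\L(R_i)$ and $N_S(R_i)$ Sylow. Such $R_i$ can be enlarged to $\F$-centric overgroups: $R_iC_S(R_i)$ is again an object and is normalized by $g_i$, up to replacing $g_i$ by an element adjusting the Sylow.

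The main obstacle is this last step, namely confirming the exact citation for "$\F^c\subseteq\Delta$ implies saturated", or else carrying out the $\F^c$-generation carefully. Step 1 is the real content specific to $Q$-repleteness.
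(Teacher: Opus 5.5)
Your proof is correct and is essentially the paper's proof. The paper uses $X=C_{Z(Q)}(P)$, which is nontrivial because the $p$-group $P$ acts on $Z(Q)\neq 1$; you use $Z(PQ)$, which contains $X$. Both arguments then conclude via $Q$-repleteness, $C_S(P)\leq P$ and overgroup closure, and quote saturation from \cite[Proposition~2.17(a)]{Chermak:2013} exactly as in your Step~2 (the detour through $N_{PQ}(P)$ is never used and can be dropped).

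Your optional direct fallback is shaky as written, since $R_iC_S(R_i)$ need not be $\F$-centric. If you want that route, apply the BCGLO criterion with $\mathcal{H}=\Delta$ rather than $\F^c$. Then $\Delta$-generation is immediate from Lemma~\ref{LocalitiesProp}(d), and the condition on centric subgroups outside $\mathcal{H}$ is vacuous because $\F^c\subseteq\Delta$.
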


\begin{proof}
Let $P\in\F^c$. As $Q\unlhd S$, the $p$-group $P$ acts on the $p$-group $Z(Q)\neq 1$ and so $X:=C_{Z(Q)}(P)\neq 1$. As $X$ is normalized by $Q$ and $(\L,\Delta,S)$ is $Q$-replete, it follows that $X\in\Delta$. As $P\in\F^c$, we have $X\leq C_S(P)\leq P$. Since $\Delta$ is closed under passing to overgroups, it follows that $P\in\Delta$. This shows $\F^c\subseteq\Delta$. In particular, by  \cite[Proposition 2.17(a)]{Chermak:2013}, $\F$ is saturated. 
\end{proof}

\begin{eg}\label{E:DeltaSGLarge}
Let $G$ be a finite group, $S\in\Syl_p(G)$ and 
\[\Delta:=\{P\leq S\colon N_G(P)\mbox{ is of characteristic }p\}.\]
By \cite[Lemma~10.2]{Henke:2015}, the set $\Delta$ is closed under passing to $G$-conjugates and overgroups in $S$. Hence, we have the locality $(\L_\Delta(G),\Delta,S)$ defined as in Example~\ref{E:LDeltaG}. By definition of $\Delta$, this is a locality of objective characteristic $p$.

\smallskip

Suppose now $Q\leq S$ is large in $G$. Then $Q$ is large in $\L_\Delta(G)$ as mentioned before in Example~\ref{E:LargeGLargeLDeltaG}. Moreover, $(\L_\Delta(G),\Delta,S)$ is a $Q$-replete linking locality over $\F_S(G)$.
\end{eg}

\begin{proof} We show that $(\L_\Delta(G),\Delta,S)$ is a $Q$-replete linking locality over $\F_S(G)$ with $G,S,\Delta,Q$ as in Example~\ref{E:DeltaSGLarge}. 

\smallskip

It follows from Lemma~\ref{L:LargeSubgroupsOfGroups}(b) that $Q\unlhd S$ and from Lemma~\ref{L:LargeSubgroupsOfGroups}(d) that $(\L_\Delta(G),\Delta,S)$ is $Q$-replete. Hence, $\F_S(\L_\Delta(G))^c\subseteq\Delta$ and $\F_S(\L_\Delta(G))$ is saturated by Lemma~\ref{L:QRepleteSaturated}. 

\smallskip

As $\F_S(\L_\Delta(G))\subseteq\F_S(G)$, we have $\F_S(G)^c\subseteq \F_S(\L_\Delta(G))^c\subseteq\Delta$. By \cite[Theorem~I.2.3]{Aschbacher/Kessar/Oliver:2011}, $\F_S(G)$ is saturated, and so by Alperin's fusion theorem for fusion systems (see e.g. \cite[Theorem~I.3.5]{Aschbacher/Kessar/Oliver:2011}), $\F_S(G)$ is generated by the automorphism groups $\Aut_G(P)$, where $P$ runs over all $P\in\F_S(G)^c$. As $P\in\Delta$ for $P\in\F_S(G)^c$, we have however $N_G(P)=N_{\L_\Delta(G)}(P)$ and $\Aut_G(P)\subseteq \F_S(\L_\Delta(G))$. Hence, $\F_S(G)=\F_S(\L_\Delta(G))$. In particular, as $\F_S(G)$ is saturated and $\F_S(G)^{cr}\subseteq\F_S(G)^c\subseteq\Delta$, the locality $(\L_\Delta(G),\Delta,S)$ is a linking locality over $\F_S(G)$. 
\end{proof}

\begin{lemma}\label{L:LargeLF}
Let $(\L,\Delta,S)$ be a locality over $\F$ and $Q\leq S$. 
\begin{itemize}
 \item [(a)] Assume that $Q$ is large in $(\L,\Delta,S)$ and $(\L,\Delta,S)$ is $Q$-replete. Then $Q$ is large in $\F$, we have $\F^c\subseteq\Delta$, and $\F$ is saturated. 
 \item [(b)] Suppose $(\L,\Delta,S)$ is $Q$-replete and of objective characteristic $p$. Then $Q$ is large in $(\L,\Delta,S)$ if and only if $Q$ is large in $\F$. Moreover, if so, then $\F$ is saturated and $(\L,\Delta,S)$ is a linking locality over $\F$.
 \item [(c)] If $\F$ is saturated and $Q$ is large in $\F$, then $Q$ is large in any linking locality over $\F$.
\end{itemize}
\end{lemma}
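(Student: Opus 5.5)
For (a), the plan is to first apply Lemma~\ref{L:LargeLocMain}(a). It gives that $Q$ is weakly closed in $\L$ and hence $Q\unlhd S$. Lemma~\ref{L:QRepleteSaturated} then yields $\F^c\subseteq\Delta$ and that $\F$ is saturated.

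Next I show $C_S(Q)\leq Q$. This holds because $C_S(Q)\subseteq C_\L(Q)\subseteq Q$.

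For \eqref{Q!F}, fix $1\neq U\leq Z(Q)$. By $Q$-repleteness $U\in\Delta$, so Lemma~\ref{L:NormalizerLocFus} gives $N_\F(U)=\F_{N_S(U)}(N_\L(U))$. Also, $Q$ is weakly closed in $\L$, so the same lemma gives $N_\F(Q)=\F_S(N_\L(Q))$. Now \eqref{Q!L} says $N_\L(U)\subseteq N_\L(Q)$. Every generating map $c_g$ of $N_\F(U)$ with $g\in N_\L(U)$ is then a map $c_g$ with $g\in N_\L(Q)$, so it lies in $N_\F(Q)$. This proves \eqref{Q!F}.

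For (b), the direction "large in $\L$ implies large in $\F$" is (a). Saturation of $\F$ and $\F^{cr}\subseteq\F^c\subseteq\Delta$ then make $(\L,\Delta,S)$ a linking locality. For the converse, suppose $Q$ is large in $\F$.

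I first check $C_\L(Q)\subseteq Q$. Lemma~\ref{L:LargeFusMain}(c) gives $Q\unlhd S$, and $Q\neq 1$ since $C_S(Q)\leq Q$. Hence $U_0:=Z(Q)\cap Z(S)\neq 1$ and $U_0\in\Delta$. The natural approach is to take $Q\leq P:=S\in\Delta$. Then $C_\L(Q)\subseteq N_\L(Z(Q))$, and $Z(Q)\in\Delta$ by repleteness. The group $H:=N_\L(Z(Q))$ is of characteristic $p$ and $Q\unlhd N_\F(Z(Q))=\F_{N_S(Z(Q))}(H)$ by Lemma~\ref{L:LargeFusMain}(b). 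Here $N_S(Z(Q))=S$ because $Q\unlhd S$, so $S$ is Sylow in $H$. A subgroup normal in the fusion system of a group is normal modulo $O_{p'}$, and $O_{p'}(H)=1$ since $H$ has characteristic $p$. So $Q\unlhd H$. Then $C_H(Q)$ is a normal subgroup of $H$ whose Sylow $p$-subgroup is $C_S(Q)=Z(Q)$. It is central in $C_H(Q)$, so $C_H(Q)$ has a normal $p$-complement, which lies in $O_{p'}(H)=1$. Thus $C_\L(Q)=C_H(Q)=Z(Q)\subseteq Q$.

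For \eqref{Q!L}, fix $1\neq U\leq Z(Q)$. The same argument with $H:=N_\L(U)$ works. Here $U\in\Delta$, $H$ has characteristic $p$, and $N_\F(U)=\F_{N_S(U)}(H)$. Since $Q\leq N_S(U)$, the group $Q$ is normal in this fusion system by Lemma~\ref{L:LargeFusMain}(b). The one point to check is that $N_S(U)$ is Sylow in $H$. I would reduce to $U$ fully normalized via Lemma~\ref{L:ConjFulNormZQ} and conjugation in the locality, using Lemma~\ref{LocalitiesProp}(b). Alternatively, one can note that $Q$ is strongly closed in $\F_T(H)$ for a Sylow $T\geq N_S(U)$. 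The conclusion is $Q\unlhd H$, that is, $N_\L(U)\subseteq N_\L(Q)$. The final saturation and linking statements follow from (a). I expect this Sylow/normality transfer, from $Q\unlhd\F_T(H)$ to $Q\unlhd H$ when $O_{p'}(H)=1$, to be the main obstacle. I would use the standard fact that a normal subgroup of $\F_T(H)$ lies in $O_p(H)$ when $C_H(O_p(H))\leq O_p(H)$, since then $\F_T(H)$ is constrained with model $H$.

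For (c), let $(\L,\Delta,S)$ be a linking locality over $\F$ with $Q$ large in $\F$. The issue is that it need not be $Q$-replete. I would compare it with a subcentric locality $(\L^s,\F^s,S)$, which is $Q$-replete by Example~\ref{E:SubcentricQReplete}. By (b), $Q$ is large in $\L^s$. By Theorem~\ref{T:LinkingLocalityExistence} and the restriction results of \cite{Henke:2015}, $\L$ is rigidly isomorphic to the restriction $\L^s|_\Delta\subseteq\L^s$. Normalizers $N_\L(X)$ are then contained in $N_{\L^s}(X)$, and \eqref{Q!L} and $C_\L(Q)\subseteq Q$ transfer from $\L^s$ to $\L$.
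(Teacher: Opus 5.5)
Your plan follows the paper in all three parts: (a) via Lemma~\ref{L:NormalizerLocFus} and Lemma~\ref{L:QRepleteSaturated}; (b) by conjugating $U$ inside $N_\L(Q)$ to a subgroup with Sylow normalizer and then using that $N_\L(U)$ has characteristic $p$; (c) by viewing $\L$ as a restriction of a subcentric locality. Part (a) is fine. However, one step in (b) and one in (c) are not justified as written.

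\textbf{(b)} Your claim that a subgroup normal in the fusion system of a group is normal modulo $O_{p^\prime}$ is false. A Sylow $5$-subgroup of $\Alt(5)$ is normal in its fusion system, but $O_{5^\prime}(\Alt(5))=1$.

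The fact you fall back on at the end is true but too weak: it says that $R\unlhd\F_T(H)$ forces $R\leq O_p(H)$ when $C_H(O_p(H))\leq O_p(H)$. You need $Q\unlhd H$, both to get $C_H(Q)\unlhd H$ and to get $N_\L(U)\subseteq N_\L(Q)$. What holds, and what you need, is this: if $C_H(O_p(H))\leq O_p(H)$, then every $R\unlhd\F_T(H)$ is normal in $H$. To see it, take $h\in H$ and extend $c_h|_{O_p(H)}$ to an $\F_T(H)$-morphism on $O_p(H)R$ that leaves $R$ invariant. Write this extension as $c_g$ with $g\in N_H(R)$. Then $hg^{-1}\in C_H(O_p(H))\leq O_p(H)\leq N_H(R)$, so $h\in N_H(R)$. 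With this correction, both your argument for $C_\L(Q)\subseteq Q$ inside $N_\L(Z(Q))$ and your argument for \eqref{Q!L} go through.

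This is the paper's argument with $U^*=O_p(N_\L(U))$, except that the paper never passes through $Q\unlhd N_\F(U)$. It uses \eqref{Q!F} to realize $c_h|_{U^*}$ by some $g$ in the group $N_\L(Q)$, which is a model for $N_\F(Q)$. Then $hg^{-1}\in C_\L(U^*)\leq U^*\leq N_\L(Q)$.

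Two smaller points in (b):
\begin{itemize}
\item The conjugation step needs $\F$ saturated and $Q$ weakly closed in $\F$ before \eqref{Q!L} is known. Get saturation from Lemma~\ref{L:QRepleteSaturated}, using $Q\unlhd S$. Taking it from (a), as you suggest, would be circular.
\item Your ``alternative'' via a Sylow subgroup $T\geq N_S(U)$ of $H$ does not work. Such a $T$ is then not contained in $S$, so $\F$ says nothing about $\F_T(H)$.
\end{itemize}

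\textbf{(c)} The containment $N_\L(X)\subseteq N_{\L^s}(X)$ only gives $N_\L(U)\subseteq N_{\L^s}(Q)\cap\L$. To finish you also need $N_{\L^s}(Q)\cap\L\subseteq N_\L(Q)$. In other words, for $g\in\L$, conjugation by $g$ must be defined on $Q$ in the restricted partial group whenever it is defined in $\L^s$. The paper uses the equality $N_\L(P)=N_{\L^s}(P)\cap\L$ for all $P\leq S$, from \cite[Lemma~2.23]{Henke:2021}.

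The equality holds because, for $g\in\L=\L^s|_\Delta$, the subgroup $S_g$ is the same whether computed in $\L$ or in $\L^s$. Indeed, each $x$ in $S_g$ (computed in $\L^s$) gives a word $(g^{-1},x,g)$ lying in the domain of $\L^s|_\Delta$, via the objects $S_{g^{-1}},S_g,S_g,S_{g^{-1}}\in\Delta$.
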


\begin{proof}
\textbf{(a)} Assume that $Q$ is large in $\L$. Clearly $ C_S(Q) \le C_\L(Q) = Z(Q)$.\\
Let now $1 \ne U \le Z(Q)$ so that $N_\L(U) \subseteq N_\L(Q)$ by \eqref{Q!L}. As $(\L,\Delta,S)$ is $Q$-replete, we have $U,Q\in\Delta$. Using Lemma~\ref{L:NormalizerLocFus} we obtain therefore
\[
N_\F(U) = \F_{N_S(U)}(N_\L(U)) \subseteq \F_{N_S(Q)}(N_\L(Q)) = N_\F(Q).
\]
Thus \eqref{Q!F} holds and $Q$ is large in $\F$. By Lemma~\ref{L:LargeLocMain}(a), we have $Q\unlhd S$ and so the remaining assertions follows from Lemma~\ref{L:QRepleteSaturated}.

\smallskip



\textbf{(b)} Assume now the hypothesis of (b). If $Q$ is large in $(\L,\Delta,S)$, then by (a), $Q$ is large in $\F$, $\F$ is saturated and $\F^{cr}\subseteq\F^c\subseteq\Delta$. Note that the latter two properties imply that $(\L,\Delta,S)$ is a linking locality. It remains thus only to show that $Q$ is large in $(\L,\Delta,S)$ if it is large in $\F$. 

\smallskip

Assume now that $Q$ is large in $\F$.  Then $Q\unlhd S$ by Lemma~\ref{L:LargeFusMain}(c) and so $\F$ is saturated by Lemma~\ref{L:QRepleteSaturated}. 

\smallskip

Since $(\L,\Delta,S)$ is $Q$-replete, we have $Z(Q) \in \Delta$. Thus $Q \in \Delta$ as $\Delta$ is overgroup-closed in $S$. In particular, $N_\L(Q)$ is a group of characteristic $p$. As $Q\unlhd S$ and $S$ is a maximal $p$-subgroup of $\L$, it follows that $S$ is a Sylow $p$-subgroup of $N_\L(Q)$. Hence, $N_\L(Q)$ is a model for $N_\F(Q)$ by Lemma~\ref{L:NormalizerLocFus}. As $C_S(Q)\leq Q$, if follows from \cite[Theorem~2.1(b)]{Henke:2015} that $C_\L(Q)=C_{N_\L(Q)}(Q)\leq Q$.

\smallskip

Let now $1 \ne U \le Z(Q)$ to verify the property \eqref{Q!L}. By \cite[Lemma~2.9]{Chermak:2022}, there exists $f\in\L$ such that $N_S(U) \le S_f$ and $N_S(U^f) \in \Syl_p(N_\L(U^f))$. Recall that $\F$ is saturated. Hence, $Q$ is by Lemma~\ref{L:LargeSatFusMain}(a) weakly closed in $\F$. As $Q \le N_S(U)$, it follows $Q = Q^f$. In particular, $U^f\leq Z(Q)$ and $f\in N_\L(Q)$. Moreover, if $N_\L(U^f) \le N_\L(Q)$, then $ N_\L(U) = N_\L(U^f)^{f\inv} \le N_\L(Q)$ by Lemma~\ref{LocalitiesProp}(b). Hence up to replacing $U$ by $U^f$, we may suppose that $N_S(U)\in\Syl_p(N_\L(U))$. In particular, we have then $U^*:= O_p(N_\L(U)) \le S$. As $\L$ is of objective characteristic $p$, we obtain $C_\L(U^*) = C_{N_\L(U)}(U^*) \le U^*$. 

\smallskip

For any $h \in N_\L(U)$ the map $c_h|_{U^*}$ is a well-defined morphism in $N_\F(U)$. Note that $N_\F(U) \subseteq N_\F(Q)$ by \eqref{Q!F} and that $N_\F(Q)=\F_S(N_\L(Q))$ by Lemma~\ref{L:NormalizerLocFus}. Hence, there exists $g \in N_\L(Q)$ such that $c_h|_{U^*} = c_g|_{U^*}$. Then $(h,g\inv)$ and $(h,g\inv,g)$ are in $\D(\L)$ via $U^*$ and $hg\inv \in C_\L(U^*) \le U^* \le S \le N_\L(Q)$. As $g\in N_\L(Q)$, it follows that $h=\Pi(h,g\inv,g) =\Pi(hg\inv,g)\in N_\L(Q)$, proving that $N_\L(U)\subseteq N_\L(Q)$. Thus, \eqref{Q!L} holds and $Q$ is large in $\L$.

\smallskip

\textbf{(c)} Suppose $\F$ is saturated, $Q$ is large in $\F$, and $(\L,\Delta,S)$ is a linking locality over $\F$. It follows from Proposition~3.3 and Theorem~7.2(a) in \cite{Henke:2015} that there exists a subcentric linking locality $(\L^s,\F^s,S)$ over $\F$ such that $\L=\L^s|_{\Delta}$. Here the ``restriction'' $\L^s|_\Delta$ of $\L^s$ is defined as in \cite[Definition~2.20]{Chermak:2013}. We refer the reader also to the discussion on restrictions of localities in \cite[Definition~2.22, Lemma~2.23]{Henke:2021}. Using \cite[Lemma~2.23]{Henke:2021} one observes easily that
\[N_\L(P)=N_{\L^s}(P)\cap \L\mbox{ for every }P\leq S.\]
As pointed out in Example~\ref{E:SubcentricQReplete}, the subcentric linking locality $(\L^s,\F^s,S)$ over $\F$ is $Q$-replete. In particular, it follows from (b) that $Q$ is large in $\L^s$. Since $\L \subseteq \L^s$ we get $C_\L(Q) \subseteq C_{\L^s}(Q) \subseteq Q$. Let $1 \ne U \le Z(Q)$. As the property \eqref{Q!L} holds for $\L^s$ in place of $\L$, we have 
\[N_\L(U)= N_{\L^s}(U)\cap \L\subseteq N_{\L^s}(Q)\cap \L=N_\L(Q).\]
Hence, the property \eqref{Q!L} holds for $\L$ and so $Q$ is large in $(\L,\Delta,S)$.
\end{proof}

\subsection{Large subgroups of group fusion systems}

Let $G$ be a finite group and $Q\leq S\in\Syl_p(G)$. If $N$ is  a normal $p^\prime$-subgroup of $G$ and the image of $Q$ in $G/N$ is large in $G/N$, then we have seen in Example~\ref{E:LargeGLargeFSG} that $Q$ is large in $\F_S(G)$. However, the converse is not true in general as either of the two Examples \ref{Ex:G23G2q} or \ref{Ex:AutG23Extended} below shows.

\smallskip

In the introduction we have already considered the case that $G$ is of the form $G=H\times R$, where $S\in\Syl_p(H)$, the group $R\neq 1$ has order prime to $p$, and $Q\leq S$ is large in $H$. Then $Q$ is large in $\F_S(G)=\F_S(H)$, but not in $G$. More precisely, the $Q$-uniqueness property \eqref{Q!} holds in $G$, but $C_G(Q)\not\leq Q$. We provide examples of finite groups $G$ below where a $2$-group $Q$ is large in the $2$-fusion system of $G$, and either just the $Q$-uniqueness property fails and the self-centralizing property holds, or both the self-centralizing and the $Q$-uniqueness property fail.

\smallskip

For any two groups $G_1$ and $G_2$ with $|Z(G_1)|=|Z(G_2)|=2$, we write $G_1\circ G_2$ for the central product of $G_1$ and $G_2$.

\begin{example}\label{Ex:G23G2q}
The group $\G_2(3)$ has a maximal subgroup $X$ of shape $(\SL_2(3)\circ\SL_2(3)):2$. Note that 
\[Q:=O_2(X)\cong Q_8\circ Q_8.\]
The maximality of $X$ yields $N_{\G_2(3)}(Q)=X=C_{\G_2(3)}(Z(Q))$. In particular, $C_{\G_2(3)}(Q)\leq Q$ and $Q$ is a large subgroup of $\G_2(3)$. Hence, for a Sylow $2$-subgroup $T$ of $\G_2(3)$ with $Q\leq T\leq X$, the subgroup $Q$ is by Example~\ref{E:LargeGLargeFSG} a large subgroup of the fusion system $\F_T(\G_2(3))$. 

\smallskip

Let $q$ be an odd prime power with $q\equiv 3,5\mod 8$ and $q>3$. Then the $2$-fusion system of $\G_2(q)$ is isomorphic to the $2$-fusion system of $\G_2(3)$. So identifying $T\in\Syl_2(\G_2(3))$ in a suitable way with a Sylow $2$-subgroup of $\G_2(q)$, we have $\F_T(\G_2(q))=\F_T(\G_2(3))$ and thus $Q$ is large in $\F_T(\G_2(q))$. It turns out then that $X_q:=C_{\G_2(q)}(Z(Q))$ is a maximal subgroup of $\G_2(q)$ of shape $(\SL_2(q)\circ \SL_2(q)):2$ and $Q$ is a Sylow $2$-subgroup of $O^2(X_q)\cong \SL_2(q)\circ \SL_2(q)$. Note that $Q$ is not normal in $O^2(X_q)$ as $q>3$. Hence, $N_{\G_2(q)}(Z(Q))=X_q\not\leq N_{\G_2(q)}(Q)$ and so the $Q$-uniqueness property \eqref{Q!} fails in $\G_2(q)$. In particular, $Q$ is not large in $\G_2(q)$. At the same time, we have $C_{\G_2(q)}(Q)=C_{X_q}(Q)\leq Q$. Note also that $\G_2(q)$ is simple and thus $O_{2^\prime}(\G_2(q))=1$. 
\end{example}

\begin{proof}
 It follows from \cite[Theorem A]{BMO} that the $2$-fusion systems of $\G_2(3)$ and $\G_2(q)$ are isomorphic if $q\equiv 3,5\mod 8$; explicitly this is stated in \cite[p.9, Table~0.1]{BMO2}. Indeed, a Sylow $2$-subgroup of $\G_2(3)$ or $\G_2(q)$ has order $2^6$ (cf. \cite[Section~4.3.3]{Wilson:2009}). 
 
 \smallskip
 
We use now that, for any odd prime power $q^*$, the group $\G_2(q^*)$ has a maximal subgroup of shape $(\SL_2(q^*)\circ\SL_2(q^*)):2$  (cf. Section~4.3.6 and Table~4.1 in \cite{Wilson:2009}). Note that a maximal subgroup $X$ of $\G_2(3)$ of shape $(\SL_2(3)\circ \SL_2(3)):2$ must contain a Sylow $2$-subgroup of $\G_2(3)$. Choose now $X$, $Q$ and $T\in\Syl_2(X)\subseteq \Syl_2(\G_2(3))$ as above and identify $T$ with a Sylow $2$-subgroup of $\G_2(q)$.  Observe that $Z(Q)=Z(T)$. As $\G_2(q)$ contains a maximal subgroup of shape $(\SL_2(q)\circ \SL_2(q)):2$  and such a maximal subgroup must contain a Sylow $2$-subgroup of $\G_2(q)$, we can conclude that $X_q:=C_{\G_2(q)}(Z(Q))$ is maximal and of this shape.
\end{proof}

Note that in the example above the $Q$-uniqueness property fails, while the self-centralizing property holds. We now provide also a series of examples where, for a large subgroup $Q$ of the $2$-fusion system of a group $G$, both the $Q$-uniqueness property and the self-centralizing property fail. Example~\ref{Ex:AutG23Extended} seems moreover interesting in connection with Theorem~\ref{mainMS} and the results in Section~\ref{S:MS}. Recall e.g. from \cite[Section~4.3.9]{Wilson:2009} that $\G_2(q)$ has a non-trivial graph automorphism only in characteristic $3$, which is why we restrict attention to this case below.

\begin{example}\label{Ex:AutG23Extended}
If $Q\cong Q_8 \circ Q_8$ is the subgroup of $\G_2(3)$ introduced in Example~\ref{Ex:G23G2q}, then $Q$ is not only large in $\G_2(3)$, but also in $\Aut(\G_2(3))$; for details see Lemma~\ref{L:G23}. In particular, if we take $S\in\Syl_2(\Aut(\G_2(3))$ with $Q\leq S$, then $Q$ is a large subgroup of $\F_S(\Aut(\G_2(3)))$ by Example~\ref{E:LargeGLargeFSG}.

\smallskip

Let now $q=3^r$ for a positive odd integer $r$. Identify $\G_2(q)$ with $\Inn(\G_2(q))$ and let $G$ be a subgroup of $\Aut(\G_2(q))$ of odd index such that $\G_2(q)\leq G$. Then the following hold:
\begin{itemize}
 \item [(a)] $O_{2^\prime}(G)=1$.
 \item [(b)] The Sylow $2$-subgroup $S$ of $\Aut(\G_2(3))$ may be identified with a Sylow $2$-subgroup of $G\leq \Aut(\G_2(q))$ such that $\F_S(G)=\F_S(\Aut(\G_2(3)))$. In particular, $Q\leq S$ is large in $\F_S(G)$.
 \item [(c)] For $r>1$, the subgroup $Q$ is not large in $G\cong G/O_{2^\prime}(G)$. More precisely, the $Q$-uniqueness property \eqref{Q!} always fails, and we have the following: 
 \begin{itemize}
 \item If $|G:\G_2(q)|=2$, then $C_G(Q)\leq Q$.
 \item If $G=\Aut(\G_2(q))$ or, more generally, if $|G:\G_2(q)|>2$, then $C_G(Q)\not\leq Q$.
 \end{itemize}
\end{itemize}
\end{example}

\begin{proof}
We use throughout that $Q$ is large in $\Aut(\G_2(3))$. Details are provided in Lemma~\ref{L:G23}

\textbf{(a)} As $\G_2(q)$ is simple and a normal subgroup of $G$ whose centralizer in $G$ is trivial, we have $O_{2^\prime}(G)=1$. 

\smallskip

\textbf{(b)} Since $3^2 \equiv 1 \mod 8$ and $r$ is assumed to be odd, we have $q=3^r \equiv 3 \mod 8$. Thus, the $2$-fusion systems of $\G_2(3)$ and $\G_2(q)$ are isomorphic as seen in Example~\ref{Ex:G23G2q}. 

\smallskip

Note that we may regard $\G_2(3)$ as a subgroup of $\G_2(q)$. As the two groups have isomorphic $2$-fusion systems, a Sylow $2$-subgroup $T$ of $\G_2(3)$ is also a Sylow $2$-subgroup of $\G_2(q)$ and $\F_T(\G_2(3))\subseteq \F_T(\G_2(q))$ are isomorphic fusion systems. Thus, $\F_T(\G_2(3))=\F_T(\G_2(q))$. 

\smallskip

By $\Aut(\bF_q)$ we will denote also the field automorphisms of $\G_2(q)$. Recall that, $q$ being a power of $3$, $\Aut(\G_2(q)) \cong\G_2(q) \rtimes \<\sigma_0 \>$ where $\sigma_0$ is a graph automorphism that squares to a generator $\phi$ of $\Aut(\bF_q)$; see for example \cite[Sections 4.3.9 and 4.5.1]{Wilson:2009} and \cite[]{Carter:1972a} for a construction of $\sigma_0$. In particular, $\Aut(\G_2(3))$ is isomorphic to the semidirect product of $\G_2(3)$ with a graph automorphism of order $2$. At the same time $\G_2(3)$ is the group of fix points of $\phi=\sigma_0^2$ and thus normalized by $\sigma_0$. Indeed, $\sigma_0|_{\G_2(3)}$ is also a graph automorphism of $\G_2(3)$ of order $2$. 

\smallskip

As $o(\phi)=o(\sigma_0^2)=|\Aut(\bF_q)|=r$ is odd, we can write $\<\sigma_0\>=\<\phi\>\times \<\tau\>$ for $\tau=\sigma_0^r$. Since $\phi$ fixes the elements of $\G_2(3)$, $\tau$ acts on $\G_2(3)$ the same way as $\sigma_0$. Hence, we may identify $\Aut(\G_2(3))$ with $\G_2(3)\rtimes \<\tau\>$. Similarly we identify $\Aut(\G_2(q))$ with $\G_2(q) \rtimes \<\sigma_0 \>$ and regard thus $\Aut(\G_2(3))$ as a subgroup of $\Aut(\G_2(q))$. We may choose $T\in\Syl_2(\G_2(3))$ to be $\tau$-invariant and thus $\sigma_0$-invariant. Then $T\rtimes \<\tau\>$ is a Sylow $2$-subgroup of $\Aut(\G_2(3))$ and we may assume $S=T\rtimes \<\tau\>$. Note that $S$ is also a Sylow $2$-subgroup of $\Aut(\G_2(q))$.

\smallskip

Observe that $N:=\G_2(q)\rtimes \<\tau\>=O^{2^\prime}(\Aut(\G_2(q)))$. In particular, by Sylow's Theorem, all Sylow $2$-subgroups of $\Aut(\G_2(q))$ are conjugate under $N=\G_2(q)S$ and thus under $\G_2(q)\leq G$. At the same time, as $G$ has odd index in $\Aut(\G_2(q))$, the subgroup $G$ contains a Sylow $2$-subgroup of $\Aut(\G_2(q))$. Hence, $G$ contains every Sylow $2$-subgroup of $\Aut(\G_2(q))$. In particular, $S\in\Syl_2(G)$ and $N\leq G$. 

\smallskip

The elements of $\Aut(\bF_q)=\<\phi\>$ act trivially on $T\leq \G_2(3)$. At the same time, $\tau$ and $\phi$ are both elements of the cyclic group $\<\sigma_0\>$ and therefore centralize each other. Thus, the elements of $\Aut(\bF_q)=\<\phi\>$ centralize $S$. In particular $\F_S(N)=\F_S(G)=\F_S(\Aut(\G_2(q))$. As $\G_2(q)$ has index $2$ in $N$, it follows that $\E:=\F_T(\G_2(q))$ is a normal subsystem of $\F_S(N)=\F_S(G)$ of $2$-power index. Moreover, $\E=\F_T(\G_2(3))\subseteq \F_S(\Aut(\G_2(3)))\subseteq \F_S(\Aut(\G_2(q)))=\F_S(G)$. Thus, it follows from Lemma~\ref{L:AlmostSimpleFusIsoHelp} applied with $\F_S(G)$ and $\F_S(\Aut(\G_2(3)))$ in place of $\F$ and $\mG$ that $\F_S(G)=\F_S(\Aut(\G_2(3)))$. This shows (b).

\smallskip

\textbf{(c)} We keep the notation introduced in the proof of (b). As argued in Example~\ref{Ex:G23G2q}, the $Q$-uniqueness property fails to hold in $\G_2(q)$. Hence, it fails also in $G\geq \G_2(q)$. 

\smallskip

As $Q$ is large in $\Aut(\G_2(3))$, we have in particular $C_S(Q)\leq Q$. Note moreover that $C_S(Q)$ is a  Sylow $2$-subgroup of $C_G(Q)$ since $Q\unlhd S$. We have furthermore seen in Example~\ref{Ex:G23G2q} that $C_{\G_2(q)}(Q)\leq Q$. Hence, if $|G:\G_2(q)|=2$, then $C_G(Q)$ contains no element of odd order and so $C_G(Q)=Z(Q)$. 

\smallskip

If $|G:\G_2(q)|>2$, then $N<G\leq \Aut(\G_2(q))=N\Aut(\bF_q)$ and so $G\cap \Aut(\bF_q)\neq 1$. As $\Aut(\bF_q)$ centralizes $T$ and thus $Q$, it follows $C_G(Q)\not\leq Q$.
\end{proof}

In the following lemma we state some group-theoretical properties which are equivalent to $Q$ being large in $\F_S(G)$.

\begin{lemma}\label{L:QlargeFSG}
 Let $G$ be a finite group, $S\in\Syl_p(G)$ and $Q\leq S$. Write $\mathcal{U}$ for the set of non-trivial subgroups of $Z(Q)$, and $\mathcal{U}^f$ for the fully $\F_S(G)$-normalized subgroups in $\mathcal{U}$. If $C_S(Q)\leq Q$, then the following conditions are equivalent:
 \begin{itemize}
  \item [(i)] Property \eqref{Q!F} holds with $\F=\F_S(G)$.
  \item [(ii)] For every $U\in\mathcal{U}$, the subgroup $Q$ is normal in $\F_{N_S(U)}(N_G(U))$.
  \item [(ii')] For every $U\in\mathcal{U}^f$, the subgroup $Q$ is normal in $\F_{N_S(U)}(N_G(U))$.
  \item [(iii)] For every $U\in \mathcal{U}$, every characteristic subgroup of $Q$ is strongly closed in $N_G(U)$. 
  \item [(iii')] For every $U\in \mathcal{U}^f$, every characteristic subgroup of $Q$ is strongly closed in $N_G(U)$.
  \item [(iv)] For every $U\in\mathcal{U}$, there exists a series $1=Q_0\leq Q_1\leq\cdots \leq Q_n=Q$ of subgroups of $Q$ such that $Q_i$ is strongly closed in $N_G(U)$ and $[Q_i,Q]\leq Q_{i-1}$ for all $i=1,2,\dots,n$.
  \item [(iv')] For every $U\in\mathcal{U}^f$, there exists a series $1=Q_0\leq Q_1\leq\cdots \leq Q_n=Q$ of subgroups of $Q$ such that $Q_i$ is strongly closed in $N_G(U)$ and $[Q_i,Q]\leq Q_{i-1}$ for all $i=1,2,\dots,n$.
 \end{itemize}
In particular, $Q$ is large in $\F_S(G)$ if and only if $C_S(Q)\leq Q$ and one of the above conditions (ii)-(iv) or (ii')-(iv') holds.
\end{lemma}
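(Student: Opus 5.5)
Throughout, set $\F:=\F_S(G)$ and assume $C_S(Q)\leq Q$. The plan is to reduce everything to the fusion systems $N_\F(U)=\F_{N_S(U)}(N_G(U))$, which are saturated when $U$ is fully normalized. We then prove the chain (i)$\Leftrightarrow$(ii), (ii)$\Rightarrow$(ii') trivially, (ii')$\Rightarrow$(i), and finally handle (ii')$\Leftrightarrow$(iii')$\Leftrightarrow$(iv') and the unprimed analogues via Lemma~\ref{L:FusNormEquivalent}.

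First, (i)$\Leftrightarrow$(ii). For any $U\leq S$ we have $N_\F(U)=\F_{N_S(U)}(N_G(U))$, a standard fact for group fusion systems. Suppose (i) holds, i.e.\ $N_\F(U)\subseteq N_\F(Q)$. The argument of Lemma~\ref{L:LargeFusMain}(b), which uses only \eqref{Q!F}, gives $Q\unlhd N_\F(U)$; this is (ii). Conversely, $Q\unlhd N_\F(U)$ means every morphism of $N_\F(U)$ extends to one normalizing $Q$, so $N_\F(U)\subseteq N_\F(Q)$. For the implication (ii')$\Rightarrow$(i), first note that (ii') implies \eqref{Q!Ff} by the same reasoning. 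Since $\F$ is saturated (\cite[Theorem~I.2.3]{Aschbacher/Kessar/Oliver:2011}), Lemma~\ref{L:LargeFusEquiv} upgrades \eqref{Q!Ff} together with $C_S(Q)\leq Q$ to \eqref{Q!F}. This closes the cycle (i)$\Rightarrow$(ii)$\Rightarrow$(ii')$\Rightarrow$(i).

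Next, for $U\in\mathcal{U}^f$ the system $N_\F(U)$ is saturated. Lemma~\ref{L:FusNormEquivalent} then gives directly that $Q\unlhd N_\F(U)$ is equivalent to every characteristic subgroup of $Q$ being strongly closed in $N_\F(U)$, and also to the existence of a series as in (iv'). A subgroup of $N_S(U)$ is strongly closed in $\F_{N_S(U)}(N_G(U))$ exactly when it is strongly closed in $N_G(U)$, with respect to the Sylow $p$-subgroup $N_S(U)$. Hence (ii')$\Leftrightarrow$(iii')$\Leftrightarrow$(iv').

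For the unprimed versions, (iii)$\Rightarrow$(iii') and (iv)$\Rightarrow$(iv') are trivial. It therefore remains to deduce (iii) and (iv) from (i), i.e.\ to handle $U$ that are not fully normalized. This is the one delicate point, since $N_S(U)$ need not be Sylow in $N_G(U)$, so Lemma~\ref{L:FusNormEquivalent} does not apply directly. The plan is as follows. By (i), $Q$ is weakly closed in $\F$, so Lemma~\ref{L:ConjFulNormZQ} provides $V\in\mathcal{U}^f$ together with $g\in G$ such that $U^g=V$ and $Q^g=Q$. Indeed, the morphism $\alpha$ in that proof is conjugation by some $g$, and it normalizes $Q$ by weak closure. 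Conjugation by $g$ carries $N_G(U)$ onto $N_G(V)$ and maps $Q$ to itself, hence maps characteristic subgroups of $Q$ to themselves. Strong closure in $N_G(V)$, which holds by (iii'), therefore transports back to strong closure in $N_G(U)$. If strong closure is read relative to a Sylow subgroup of $N_G(U)$ containing $N_S(U)$, this requires a small check; that check is where I expect any friction. The same transport argument gives (iv) from (iv').

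Finally, the ``in particular'' statement follows because largeness in $\F$ means $C_S(Q)\leq Q$ together with (i).
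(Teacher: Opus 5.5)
Your proof is correct and follows the paper's own structure: (i)$\Rightarrow$(ii) via Lemma~\ref{L:LargeFusMain}(b), the equivalence of (ii')--(iv') from the saturation of $N_\F(U)$ for $U\in\mathcal{U}^f$ together with Lemma~\ref{L:FusNormEquivalent}, and (ii')$\Rightarrow$(i) via \eqref{Q!Ff} and Lemma~\ref{L:LargeFusEquiv}. Your transport step for the unprimed conditions (iii) and (iv) conjugates by $g$ with $U^g=V\in\mathcal{U}^f$ and $Q^g=Q$; it supplies an implication into (iii) and (iv) that the paper's proof never spells out. The check you were worried about is harmless, because $N_S(V)^{g^{-1}}$ is a Sylow $p$-subgroup of $N_G(U)$ containing $N_S(U)$, and a subgroup $C$ that is strongly closed relative to one Sylow $p$-subgroup containing it is strongly closed relative to every Sylow $p$-subgroup containing it (the conjugating element is forced to normalize $C$).
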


\begin{proof}
Set $\F:=\F_S(G)$. We use throughout that, for every subgroup $U$ of $S$, we have $N_\F(U)=\F_{N_S(U)}(N_G(U))$. In particular, by Lemma~\ref{L:LargeFusMain}(b), (i) implies (ii).

\smallskip

Clearly (ii) implies (ii'), (iii) implies (iii') and (iv) implies (iv').

\smallskip

Note that, for every $U\in \mathcal{U}^f$, $N_S(U)\in\Syl_p(N_G(U))$ and $N_\F(U)=\F_{N_S(U)}(N_G(U))$ is saturated. In particular, for $U\in\mathcal{U}^f$, a subgroup of $N_S(U)$ is strongly closed in $N_\F(U)$ if and only if it is strongly closed in $N_G(U)$. Hence, the properties (ii')-(iv') are equivalent by Lemma~\ref{L:FusNormEquivalent}.

\smallskip

If (ii') holds, then $Q\unlhd N_\F(U)$ and thus $N_\F(U)\subseteq N_\F(Q)$ for every $U\in\mathcal{U}^f$. This means that the property \eqref{Q!Ff} holds and thus $Q$ is large in $\F$ by Lemma~\ref{L:LargeFusEquiv}. Hence, (ii') implies (i). This shows that all the stated conditions are equivalent.
\end{proof}

\begin{rmk}\label{R:QLargeFSGQrepleteLinkingLoc}
As before let $G$ be a finite group and $Q\leq S\in\Syl_p(G)$. If $Q$ is large in $G$, then we have seen in Example~\ref{E:DeltaSGLarge} that a $Q$-replete linking locality over $\F_S(G)$, in which $Q$ is large, can be constructed directly from the group $G$. This is not necessarily true anymore if we assume the weaker assumption that $Q$ is large in $\F_S(G)$. Setting
\[\Delta^*:=\{P\leq S\colon N_G(P)/O_{p^\prime}(N_G(P))\mbox{ is of characteristic $p$}\},\]
it is described in \cite[Lemma~10.7(b)]{Henke:2015} how a linking locality $(\L,\Delta^*,S)$ over $\F_S(G)$ with object set $\Delta^*$ can be constructed directly from the group $G$. By Lemma~\ref{L:LargeLF}(c), $Q$ is large in $\L$ if $Q$ is large in $\F_S(G)$. However, $(\L,\Delta^*,S)$ is not necessarily $Q$-replete. Indeed, we do not see any way to construct a $Q$-replete linking locality over $\F_S(G)$ directly from the group $G$. However, \cite[Theorem~7.2]{Henke:2015} together with \cite[Proposition~3.3]{Henke:2015} allows to extend the linking locality $(\L,\Delta^*,S)$ to a subcentric linking locality over $\F_S(G)$ using only relatively elementary constructions and not the existence and uniqueness of centric linking systems. Such a subcentric linking locality is $Q$-replete as pointed out in Example~\ref{E:SubcentricQReplete}. Thus, if $Q$ is large in $\F_S(G)$, then the existence of a $Q$-replete linking locality over $\F_S(G)$ can be shown without using the existence and uniqueness of centric linking systems.
\end{rmk}

\section{The proof of Theorem~\ref{mainMS} and a related theorem on localities}\label{S:MS}

In this section we will prove Theorem~\ref{mainMS}. However, we start by formulating the hypothesis in a locality setting and analyze first the structure of the locality. 

\subsection{Basic Setup}

The reader might want to recall the definition of $Y_G$ from Definition~\ref{D:YG}. In this section, we focus on the prime $2$. Thus, for any finite group $G$, we will denote by $Y_G$ the largest elementary abelian normal $2$-subgroup of $G$ which is a $2$-reduced $G$-module.

\smallskip

A finite group $G$ is called a $\m{K}$-group if every simple section of $G$ is a known finite simple group. For most of this section we will work under the following hypothesis.

\begin{hypo}\label{MainHyp}
 Suppose that $S$ is a $2$-group and let $(\L,\Delta,S)$ be a linking locality with a large subgroup $Q\leq S$. Assume furthermore that $(\L,\Delta,S)$ is $Q$-replete. 
 
 \smallskip
 
Set $\tilde{C}=N_\L(Q)$ and suppose that $\tilde{C}$ is a $\m{K}$-group. Let $M$ be a subgroup of $\L$ containing $S$ and set 
 \[V:=[Y_M,M].\]
Assume that the following conditions hold:
\begin{itemize}
 \item [(i)] $Q=O_2(\tilde{C})$.
 \item [(ii)] $M/O_2(M)\cong \SL_3(2)$ and $V$ is a natural $\SL_3(2)$-module for $M/O_2(M)$.
 \item [(iii)] $Y_M\not\leq Q$ and $V\leq Q$.
\end{itemize}
\end{hypo}

Recall that $Q\in\Delta$ as $(\L,\Delta,S)$ is $Q$-replete. Hence, $\tilde{C}$ is a subgroup and property (i) makes sense. Note also that, by Lemma~\ref{lemma:parabolics in local of char p}, the group $M$ is of characteristic $2$.

\smallskip

Our goal is to prove the following result.

\begin{theorem}\label{unique local}
Suppose that $(\L,\Delta,S)$ is a locality satisfying Hypothesis \ref{MainHyp} and let $\F=\F_S(\L)$. Then $\F$ is isomorphic to the $2$-fusion system of $\Aut(\G_2(3))$.
\end{theorem}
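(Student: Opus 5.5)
The plan is to follow the strategy of Meierfrankenfeld and Stroth \cite{MStr:2008}, working inside the finite groups $\tilde C=N_\L(Q)$ and $M$, and then pass to the fusion system. By Lemma~\ref{L:LargeLF}(a), $\F$ is saturated, $Q$ is large in $\F$, and $\F^c\subseteq\Delta$. By Lemma~\ref{lemma:parabolics in local of char p}, both $\tilde C$ and $M$ are groups of characteristic $2$ with $S$ as Sylow $2$-subgroup.

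\textbf{Step 1: the structure of $M$.} Using (iii), pick $y\in Y_M\setminus Q$. The $Q$-uniqueness property forces $N_M(U)\subseteq N_\L(Q)$ for every $1\neq U\leq Z(Q)$. Since $V$ is a natural module and $V\leq Q$, the group $Z(Q)\cap V$ is nontrivial. Moreover, $M\not\subseteq\tilde C$, because otherwise $Y_M\leq O_2(M)$ would normalize $Q$ with $[Q,Y_M]$ small, which contradicts $O_2(\tilde C)=Q$ together with $Y_M\not\leq Q$. I would then determine $Y_M$ and $O_2(M)$ in this order.
\begin{itemize}
\item Show $C_{Y_M}(M)=1$ and $|Y_M/V|=2$, using \cite[Lemma~1.1(b)]{MStr:2008} as in Lemma~\ref{L:DetermineMfromM*}.
\item Show $O_2(M)=Y_M$, so that $M\cong\Aut(O^2(M))$ by Lemma~\ref{L:DetermineMfromM*}, with $O^2(M)\cong 2^3\cdot\SL_3(2)$ the non-split extension appearing in $\G_2(3)$.
\end{itemize}
Lemma~\ref{L:ApplyGaschutz} handles the splitting and non-splitting questions, and Lemma~\ref{A4Module} identifies $O_2(M)$ relative to $Y_M$.

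\textbf{Step 2: the structure of $\tilde C$.} Set $\bar C:=\tilde C/Q$. I would analyse the action of $\tilde C$ on $Q/\Phi(Q)$, or on $Z(Q)$ and $[Q,\cdot]$, using the subgroup $N_M(Q)$. The latter contains $S$ and, inside $M/O_2(M)\cong\SL_3(2)$, corresponds to an $S_4$-parabolic. The element $y$ acts on $Q$ as an offender, since $V\leq Q$ and $|Y_M/V|=2$. Lemma~\ref{FF} (for essential subgroups), together with the $\K$-group hypothesis and quadratic-action arguments, then shows that $O^2(\bar C)$ is a product of $\SL_2(2)$- or $\SL_2(3)$-type pieces. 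From this I would derive $Q\cong Q_8\circ Q_8$ and $\tilde C\cong(\SL_2(3)\circ\SL_2(3)).2^2$, i.e. the centralizer of the central involution in $\Aut(\G_2(3))$.
\begin{itemize}
\item Lemma~\ref{Lemma1.3} gives irreducibility of the relevant module sections.
\item Lemma~\ref{L:TensorProductModule} treats the case where $\bar C$ has a direct factor acting as a tensor product.
\item Lemmas~\ref{charact-of-symm-gr} and \ref{L:GroupOrder180} eliminate small exceptional configurations, such as $N_G(S)\cong\Alt(4)$ in a group of order $180$.
\end{itemize}
This step, in particular excluding alternatives for $\bar C$ and handling the gap in \cite{MStr:2008} noted in Remark~\ref{R:GapMStr}, is the main obstacle I expect.

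\textbf{Step 3: identifying $\F$.} I would show that $\F$ is generated by $\Aut_\F$ of subgroups in $\{Q,O_2(M),S\}$. By Alperin's theorem, I would list the essential subgroups $R$ of $\F$. Each contains $Z(S)$, and since $Q$ is weakly closed either $R\geq Q$ (so $R$ is controlled by $\tilde C$), or Lemma~\ref{CentralSeries} and Lemma~\ref{FF} force $R=O_2(M)$ with $\Out_\F(R)$ coming from $M$. Hence $\F=\langle\F_S(\tilde C),\F_S(M)\rangle$, where $\tilde C$ and $M$ are determined up to isomorphism together with the embedding of $S$. The corresponding subgroups of $\Aut(\G_2(3))$ realize the same amalgam. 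An isomorphism of fusion systems then follows by matching the amalgams, adjusting by an automorphism via Lemma~\ref{AutSemidirect} if needed.
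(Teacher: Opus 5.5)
There is a genuine gap. Your overall architecture is the paper's: identify $M$ and $\tilde C$, prove $\F=\langle N_\F(Q),N_\F(V)\rangle$ via essential subgroups, and compare with the amalgam in $\Aut(\G_2(3))$. However, the part that carries the proof is missing, and the concrete steps you propose would fail.

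Step~1 cannot be done inside $M$. Nothing internal to $M$ bounds $O_2(M)/Y_M$, and neither Gasch\"utz's theorem nor Lemma~\ref{A4Module} forces $O_2(M)=Y_M$; the paper uses Lemma~\ref{A4Module} only in Lemma~\ref{L:AutBExtend}. In the paper, $Q_M=Y_M$ is one of the \emph{last} outputs of the analysis of $\tilde C$ (Proposition~\ref{Proposition2.16}(a), via Lemma~\ref{Lemma2.4}, which needs control of $[E,W]$ for $W=\langle V^L\rangle\le\tilde C$). The following also depend on information about $S$ coming from $\tilde C$:
\begin{itemize}
\item the hypothesis $Y_M\cap O^2(M)=V$ of Lemma~\ref{L:DetermineMfromM*}, i.e.\ $M/V\not\cong\SL_2(7)$;
\item the equality $M=N_\L(V)$, which your Step~3 needs in order to have $\F_S(M)=N_\F(V)$;
\item the split/non-split question.
\end{itemize}
The required information is that $Q\cong Q_8\circ Q_8$ is generated by involutions, and that $y\notin\hyp(\F)=S^*$, which the paper obtains from the Thompson--Lyons transfer lemma \cite{Lynd2014} (Lemma~\ref{L:M*C*}(a)). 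The paper never settles splitting directly. It transports $V\le S^*$ to $V_G\le S_G^*$ via Lemma~\ref{L:IsoC} and applies the model theorem (Lemma~\ref{L:McongMG}).

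Step~2, the heart of the matter, is not an argument, and the mechanism you suggest does not work. Lemma~\ref{FF} concerns essential subgroups, but $Q$ is not essential in the target, since $\Out_\F(Q)\cong\Sym(3)\times\Sym(3)$ has no strongly $2$-embedded subgroup. Also $y$ is not an offender on $Q/\Phi(Q)$: in the target $|[Q/Z,y]|=4>2=|Y_MQ/Q|$. Nothing in your sketch explains how configurations such as $\ov{M_2}\cong\SL_2(2)\times\Sp_4(2)$, $\Gamma\SU_4(2)$, $\G_2(2)$ or $\Sp_6(2)$ (Proposition~\ref{Lemma2.11}) would be excluded.

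The missing idea is the Meierfrankenfeld--Stroth machinery:
\begin{itemize}
\item a minimal $(M\cap\tilde C)$-invariant $L\le\tilde C$ with $Y_M\not\le O_2(LY_M)$;
\item the group $W=\langle V^L\rangle$ with $W'=Z$, where $\widehat W$ is a faithful irreducible $M_2/Q_2$-module with $|[\widehat W,Y_M]|=4$;
\item the $Q$-uniqueness property, used to show that commuting conjugates $Y_MY_M^g$ act as quadratic fours groups (Lemma~\ref{Lemma2.3}(c));
\item Glauberman's $Z^*$-theorem and the $\K$-hypothesis, to reach the list in \cite[Lemma~1.2]{MStr:2008};
\item elimination of the non-solvable cases (Lemmas~\ref{Lemma2.5}--\ref{Lemma2.15});
\item the solvable case, giving $\ov{L}\cong C_3$ and $W=Q\cong Q_8\circ Q_8$.
\end{itemize}
Even then $\tilde C$ is not determined until one knows that the two $Q_8$-factors of $Q$ are normal in $S^*$. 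You flag this but give no route. The paper needs the description of $S^*$ and $\tilde C^*$ obtained above, together with Lemma~\ref{L:GroupOrder180} applied to $N_{O^2(\L)}(A)/A$ for a putative elementary abelian $A\le S^*$ of order $2^4$ (Lemma~\ref{L:M*C*}(c),(d)).

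Finally, the dichotomy in your Step~3 is false. $O_2(M)=Y_M$ is not essential, since $\SL_3(2)$ has no strongly $2$-embedded subgroup. Yet not every essential subgroup contains $Q$, since otherwise Alperin's theorem would give $Q\unlhd\F$. The paper instead runs a case analysis on $RY_M/Y_M$ showing $\Aut_\F(R)\subseteq N_\F(Q)$ or $\Aut_\F(R)\subseteq N_\F(V)$. That analysis again needs Lemma~\ref{L:M*C*}(e) to exclude $S=RY_M$.
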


Recall that, by Theorem~\ref{T:LinkingLocalityExistence}, the linking locality $(\L,\Delta,S)$ is uniquely determined up to rigid isomorphism, provided $\F$ is uniquely determined up to isomorphism and the set $\Delta$ is known. Moreover, if $\F$ is known, then there are only finitely many choices for $\Delta$, with $\Delta=\F^s$ being the largest for which there exists a linking locality. Working out $\F^s$ and all the possible choices for $\Delta$ if $\F$ is the $2$-fusion system of $\Aut(\G_2(3))$ is however not a task which we consider worthwhile. Thus, strictly speaking, the locality $(\L,\Delta,S)$ will not be classified. Nevertheless, when showing Theorem~\ref{unique local}, we prove along the way many results about the locality $(\L,\Delta,S)$. In particular, the structure of $M$ is described in Lemma~\ref{L:M*C*}(a) and the structure of $\tilde{C}$ is determined in Lemma~\ref{L:StructuretildeC}. This is then used to show in Proposition~\ref{P:AmalgamsIso} that the amalgam formed by $M$, $\tilde{C}$, their intersection and the corresponding inclusion maps is isomorphic to a certain amalgam appearing inside of $\Aut(\G_2(3))$, which is specified in Lemma~\ref{L:G23} and the notation introduced thereafter.

\vspace*{0,4cm}

Throughout this section, except in the proof of Theorem~\ref{mainMS} at the very end, we assume Hypothesis~\ref{MainHyp} and use the notation introduced below.\\
Set 
\[Q_M:=O_2(M),\;\tilde{M}=N_\L(V),\;M^\circ=\<Q^M\>\mbox{ and }Z=\Omega_1 Z(S).\]

As $S\leq M$, we have $V \unlhd S$. Thus, since $(\L,\Delta,S)$ is $Q$-replete, we obtain $V \in \Delta$, so that $\tilde{M}$ is a subgroup of $\L$. By Lemma~\ref{L:LargeLocMain}(a) $Q$ is weakly closed in $\L$, so also $Q\unlhd S$ and $S\leq \tilde{C}$. By the definition of a locality, $S$ is maximal in the poset of $p$-subgroups of $\L$. Hence
$$S\in \Syl_2(M)\cap \Syl_2(\tilde{C}).$$
Note that there exists $L\leq \tilde{C}$ such that $L$ is $(M\cap \tilde{C})$-invariant and $Y_M\not\leq O_2(LY_M)$, since these properties hold for example for $L=\tilde{C}$. Choose such $L$ minimal with respect to inclusion. Set
$$W:=\<V^L\>,\;B:=(M\cap\tilde{C})(L\cap\tilde{M}),\;M_1:=MB,\;M_2:=LB,\;H:=LY_M,\;T:=O^2(M\cap\tilde{C}).$$
Clearly $L$ is $B$-invariant. Thus, we obtain $L \norm M_2 \le \tilde{C}$. Since $V \norm M$, we have $M_1 \subseteq \tilde{M}$. We will show in Lemma~\ref{Lemma2.1}(d) below that $M_1$ is a subgroup of $\tilde{M}$ and thus of $\L$.  Hence, it makes sense to define
\[Q_i=O_2(M_i)\mbox{ for }i=1,2.\]
We also set 
\[\ov{X}:=XQ_2/Q_2\mbox{ for any }X\leq M_2\] 
and 
\[\widehat{X}=XZ(W)/Z(W)\mbox{ for any }X\leq W.\]

Note that $W \le Q \le Q_2$ since $\<V^L\> \le \< Q^L \> = Q$ and since every Sylow $2$-subgroup of $M_2$ is also a Sylow $2$-subgroup of $\tilde{C}\geq M_2$. Similarly, we have $Q_1 \le Q_M$ as every Sylow $2$-subgroup of $M$ is also a Sylow $2$-subgroup of $M_1\geq M$.

\subsection{Some first reductions}

\begin{lemma}[Cf. {\cite[Lemma~2.1]{MStr:2008}}]\label{Lemma2.1}
\noindent \begin{itemize}
 \item [(a)] $Q\not\leq Q_M$, $C_\L(M^\circ)=1$, $Z(M)=1$ and $Y_M=\Omega_1Z(Q_M)$.
 \item [(b)] $Z=C_V(Q)$, $|Z|=2$, $|Y_M/V|=2$, $M\cap\tilde{C}=C_M(Z)$, $QQ_M=O_2(M\cap\tilde{C})$, $M=M^\circ Q_M$ and $[Y_M,Q]=V$. In particular, $|\ov{Y_M}| = 2$.
 \item [(c)] $\tilde{M}=M^\circ C_\L(V)=M^\circ C_{\tilde{M}}(V)$, $[M^\circ,C_\L(V)]\leq O_2(M^\circ)\leq O_2(\tilde{M})\leq Q_M$ and $M^\circ\unlhd \tilde{M}$.
 \item [(d)] $M_1=M^\circ B$ is a subgroup of $\tilde{M}$.
 \item [(e)] We have $Y_M\unlhd \tilde{M}$ and $C_\L(V)=C_\L(Y_M)$.
 \item [(f)] We have $O_2(M^\circ)=M^\circ\cap Q_1=M^\circ\cap Q_M$, $B=(M^\circ\cap B)C_B(V)$, $B\cap M^\circ=C_{M^\circ}(Z)$, $C_{M_1}(V)=C_B(V)$; moreover
 \[
M_1/Q_1=M^\circ Q_1/Q_1\times C_B(V)/Q_1 \quad \text{with } M^\circ Q_1/Q_1 \cong \SL_3(2).
 \]
  \item [(g)] We have $O_2(B)=Q_1Q_2=Q_1Q$, and $B$ acts irreducibly on $O_2(B)/Q_1$. Moreover, $O_2(B)$ is the transvection group to the point $Z\leq V$ and $[Y_M,O_2(B)]=V$. 
  \item [(h)] $C_{Q_2}(V)\leq Q_1$.
\end{itemize}
\end{lemma}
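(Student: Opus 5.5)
We prove the parts roughly in the order stated, since each uses the previous ones. Throughout, the tools are: $Q$ is large and weakly closed in $\L$ (Lemma~\ref{L:LargeLocMain}); $M$, $\tilde{C}$ and $\tilde{M}$ are groups of characteristic $2$ containing $S$ (Lemma~\ref{lemma:parabolics in local of char p}); and the elementary module theory of the natural $\SL_3(2)$-module $V$.

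\textbf{(a) and (b).} If $Q\leq Q_M$, then $V\leq Y_M\leq \Omega_1Z(Q_M)\leq C_S(Q)=Z(Q)$. The $Q$-uniqueness property would then give $M\leq N_\L(V)\subseteq N_\L(Q)$, so $Q\leq O_2(M)=Q_M$ and $Y_M\leq Z(Q)\leq Q$, contradicting (iii). Hence $Q\not\leq Q_M$.

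For $C_\L(M^\circ)=1$: a nontrivial $2$-element of $C_S(M^\circ)$ lies in $C_S(Q)=Z(Q)$. The $Q$-uniqueness property applied to a suitable nontrivial subgroup $U\leq Z(Q)$ centralized by $M^\circ$ then forces $M=M^\circ Q_M\leq N_\L(Q)$, which is impossible as above. Since $C_\L(M^\circ)$ is normalized by $S$, it is therefore a $2'$-group centralizing $O_2$ of a characteristic-$2$ group, hence trivial. This gives $Z(M)=1$. Then $Y_M=\Omega_1Z(Q_M)$ follows from $C_M(Q_M)\leq Q_M$ and the definition of $Y_M$ via Lemma~\ref{L:pReduced}.

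For (b), as $Q\unlhd S$ and $V\leq Q$, we have $1\neq C_V(Q)\leq Z(Q)$, and $Q$ acts on $V$ as a nontrivial $2$-subgroup of $\SL_3(2)$. Since $N_\L(C_V(Q))\subseteq N_\L(Q)$ and $Q\not\leq Q_M$, we get $C_V(Q)=Z$ of order $2$ and $M\cap\tilde{C}=C_M(Z)$, the point stabilizer $S_4$ over $Q_M$. Then $QQ_M=O_2(C_M(Z))$, and $[Y_M,Q]=V$ because $Y_M\not\leq Q\supseteq V$. The equality $|Y_M/V|=2$ comes from \cite[Lemma~1.1(b)]{MStr:2008} together with $C_{Y_M}(M)=1$, and $M=M^\circ Q_M$ holds because $Q$ maps onto a nontrivial subgroup of the simple group $M/Q_M$.

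\textbf{(c)--(f).} Apply the Frattini argument to $\tilde{M}$ acting on $V$. We have $\Aut_{\tilde M}(V)\leq \GL_3(2)=\Aut_M(V)$, so $\tilde{M}=MC_{\tilde M}(V)=M^\circ C_{\tilde M}(V)$. The subgroup $M^\circ=\<Q^M\>$ is normal in $\tilde{M}$ because $Q$ is weakly closed, so the $\tilde M$-conjugates of $Q$ inside $S$ equal $Q$, and Sylow's theorem gives $\<Q^{\tilde M}\>=\<Q^M\>$. The commutator $[M^\circ,C_\L(V)]$ lies in a normal $2$-subgroup, since it centralizes the chief factors. Part (e) follows from $Y_M=C_{\Omega_1 Z(O_2(\tilde M))}(\ldots)$ using Lemma~\ref{L:pReduced} applied in $\tilde{M}$, together with $|Y_M/V|=2$. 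Parts (d) and (f) are then bookkeeping: $B\leq \tilde{M}$ normalizes $M^\circ$, and $M^\circ B/Q_1$ splits as the product of the $\SL_3(2)$ image and the centralizer of $V$.

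\textbf{(g) and (h).} These come from identifying $O_2(B)$ with the preimage of the transvection group to $Z$. We use $QQ_M=O_2(C_M(Z))$, and $L\cap\tilde{M}$ acts on $V$ inside the stabilizer of $Z$. For (h), an element of $Q_2$ centralizing $V$ lies in $C_{M_1}(V)=C_B(V)$ and is a $2$-element normalized by $M_1$ modulo $Q_1$. By the direct-product decomposition in (f), it lies in $O_2$ of $C_B(V)/Q_1$ times something trivial, and minimality of $L$ is used to see that it must lie in $Q_1$.

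\textbf{Main obstacle.} I expect the main obstacle to be (h) and the irreducibility claim in (g). These require the minimal choice of $L$, rather than just standard module facts, and I would first try to show that any $B$-invariant proper subgroup strictly between $Q_1$ and $O_2(B)$ contradicts $Y_M\not\leq O_2(LY_M)$.
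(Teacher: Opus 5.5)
Your outline has the right overall shape, but several steps that carry the real content are either missing or rest on arguments that do not work.

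\textbf{(c) and (e).} Your justification of $M^\circ\unlhd\tilde M$ fails. Weak closure plus Sylow only shows that each $Q^g$ ($g\in\tilde M$) equals $Q^h$ for some $h\in\tilde M$ with $Q^g\le S^h$. Nothing forces $h\in MN_{\tilde M}(Q)$, and that is exactly what has to be shown. The input you need is \eqref{Q!L}: $C_\L(V)\subseteq C_\L(C_V(Q))\subseteq N_\L(Q)$, so $Q^{\tilde M}=Q^{C_\L(V)M}=Q^M$. Once $M^\circ\unlhd\tilde M$ is known, $[M^\circ,C_\L(V)]\le C_{M^\circ}(V)\le O_2(M^\circ)$ is immediate.

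Part (e) is not proved at all. ``$Y_M=C_{\Omega_1Z(O_2(\tilde M))}(\ldots)$'' identifies nothing, and Lemma~\ref{L:pReduced} applied in $\tilde M$ describes $Y_{\tilde M}$, not $Y_M$. The missing idea is to set $D:=\langle Y_M^{\tilde M}\rangle$.
\begin{itemize}
\item Since $M^\circ\unlhd\tilde M$, you get $[D,M^\circ]=V$ and $[D,V]=1$.
\item The three-subgroups lemma then gives $[D,D,M^\circ]=1$, i.e.\ $D'\le C_\L(M^\circ)=1$.
\item \cite[Lemma~1.1(b)]{MStr:2008}, applied to the elementary abelian group $D$, gives $|D/V|=2$, so $D=Y_M\unlhd\tilde M$.
\end{itemize}
Only after this does $|Y_M/V|=2$ yield $C_\L(V)=C_\L(Y_M)$, via $[Y_M,O^2(C_\L(V))]=1$ and $Q_M=C_S(V)\in\Syl_2(C_\L(V))$.

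\textbf{(b).} You never show that $Z=\Omega_1Z(S)$ lies in $V$ (equivalently, that $|Z|=2$); you silently identify $Z$ with $C_V(S)$. The paper obtains this by applying Lemma~\ref{L:ApplyGaschutz}(c) to the $M$-module $VZ\le Y_M$ and using $C_{VZ}(M)\le Z(M)=1$.

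Also, \eqref{Q!L} applied to $C_V(Q)$ alone does not force $|C_V(Q)|=2$. Suppose $C_V(Q)$ were the $S$-invariant subgroup $U$ of order $4$. Then $QQ_M/Q_M=O_2(N_M(U)/Q_M)$ is not excluded by $N_M(U)\le N_M(Q)$. You must apply \eqref{Q!L} to the point $Z\le C_V(Q)$ and use that $C_M(Z)$ acts irreducibly on $V/Z$.

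Two smaller points:
\begin{itemize}
\item $Y_M=\Omega_1Z(Q_M)$ comes from the simplicity of $M/Q_M$ together with $[Y_M,M]\ne 1$, not from $C_M(Q_M)\le Q_M$.
\item $[Y_M,Q]=V$ comes from \cite[Lemma~1.1(b)]{MStr:2008}, not from $Y_M\not\le Q$.
\end{itemize}

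\textbf{(g) and (h).} Your expectation that these need the minimality of $L$ is mistaken. The claim that $C_{Q_2}(V)$ is ``normalized by $M_1$ modulo $Q_1$'' is also unjustified. Both parts follow elementarily from (f).
\begin{itemize}
\item Since $C_B(V)=C_{M_1}(V)\unlhd M_1$, you have $O_2(C_B(V))\le Q_1$.
\item The direct product decomposition in (f) then gives $O_2(B)=O_2(B\cap M^\circ)Q_1$. Its image is $O_2$ of the point stabilizer $(B\cap M^\circ)Q_1/Q_1\cong S_4$, which gives both irreducibility and the transvection description.
\item Further, $O_2(B\cap M^\circ)\le O_2(M\cap\tilde C)=QQ_M$, so $O_2(B)=QQ_1$. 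Combined with $Q\le Q_2\le O_2(B)$, this gives $O_2(B)=QQ_1=Q_1Q_2$.
\item Then (h) is just Dedekind: $C_{Q_2}(V)\le C_{QQ_1}(V)=C_Q(V)Q_1=Q_1$, because $C_Q(V)\le M^\circ\cap Q_M=O_2(M^\circ)\le Q_1$.
\end{itemize}
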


\begin{proof}
\textbf{(a)} Since $Y_M$ is $2$-reduced, if $Q\leq Q_M$ then $Y_M\leq C_\L(Q)\leq Q$ contradicting hypothesis (iii). Hence, $Q\not\leq Q_M$. Observe that $C_\L(M^\circ)\leq C_\L(Q)=Z(Q)$ as $Q$ is large. In particular, $C_\L(M^\circ)=C_M(M^\circ)\unlhd M$. So if $C_\L(M^\circ)\neq 1$, by (\ref{Q!L}), $M\leq N_\L(C_\L(M^\circ))\subseteq N_\L(Q)$, contradicting $Q\not\leq Q_M$. Hence, $C_\L(M^\circ)=1$ and consequently $Z(M)=1$. Clearly, $Y_M\leq \Omega_1Z(Q_M)$. Since $Q_M\leq C_M(\Omega_1 Z(Q_M))\unlhd M$, $M/Q_M\cong \SL_3(2)$ is simple and $[Y_M,M]\neq 1$, we have $C_M(\Omega_1 Z(Q_M))=Q_M$ and so $O_2(M/C_M(\Omega_1 Z(Q_M)))=1$. Hence, by definition of $Y_M$, $Y_M=\Omega_1 Z(Q_M)$.

\smallskip

\textbf{(b)} As $V=[Y_M,M]$, $V_1:=VZ$ is an $M$-submodule of $Y_M$ with $[V_1,M]=V$ and $V_1=VC_{V_1}(S)$. Hence, by Lemma~\ref{L:ApplyGaschutz}(c) applied with $V_1$ in place of $V$, we have $V_1=VC_{V_1}(M)$ and then $V_1=V$ by (a). 
This shows $Z\leq V$ and, in particular, $Z=C_V(S)$ has order $2$. As $M/Q_M$ is simple and $Q\not\leq Q_M$, we have $M=M^\circ Q_M$. Since $Z\leq C_G(Q)=Z(Q)$, it follows from (\ref{Q!L}) that $C_M(Z)\leq M\cap \tilde{C}$. So $C_M(Z)$ normalizes $C_V(Q)$. Note $Z\leq C_V(Q)$ and $C_V(Q)<V$ as $Q\not\leq Q_M$. By the structure of the natural $\SL_3(2)$-module, $C_M(Z)$ acts irreducibly on $V/Z$. Hence, $C_V(Q)=Z$ and thus $M\cap\tilde{C}\leq N_M(Z)$. As $|Z|=2$, it follows $M\cap\tilde{C}=C_M(Z)$. Again by the structure of the natural $\SL_3(2)$-module, $(M\cap\tilde{C})/Q_M\cong S_4$ and $O_2(M\cap\tilde{C})$ is the transvection group to the point $Z\leq V$. As $1 \neq (QQ_M)/Q_M\unlhd (M\cap\tilde{C})/Q_M$, it follows $QQ_M=O_2(M\cap\tilde{C})$.
By hypothesis (iii), $V<Y_M$. So (a) together with \cite[Lemma 1.1(b)]{MStr:2008} gives $|Y_M/V|=2$ and $[Y_M,Q]=V$.

\smallskip

If $Y_M \le Q_2$, then  $Y_M \le (LY_M)\cap Q_2 \le O_2(LY_M)$, contradicting the choice of $L$. Thus $Y_M \not \le Q_2$ and, since $V \le Q \cap M_2 \le Q_2$, we have $|\ov{Y_M}|=2$.

\smallskip

\textbf{(c)} As $M/Q_M\cong \SL_3(2)=\Aut(V)$ and $M=M^\circ Q_M$, we have $\Aut(V)=\Aut_{M^\circ}(V)$ and $\tilde{M}=M^\circ C_\L(V)=M^\circ C_{\tilde{M}}(V)$. Since $Q$ is large, $C_\L(V)\subseteq C_\L(C_V(Q))\subseteq N_\L(Q)=\tilde{C}$. Thus $C_\L(V)=C_{\tilde{C}}(V)$ is a subgroup of $\tilde{C}$ and $[Q,C_\L(V)]\leq Q$. As $[Q,C_\L(V)]$ is normalized by $C_\L(V)$, it follows $[Q,C_\L(V)]\leq O_2(C_\L(V))\cap M^\circ\leq O_2(M^\circ)$. As $C_\L(V)$ and $O_2(M^\circ)$ are normalized by $M$, it follows from the definition of $M^\circ$ that $[M^\circ,C_\L(V)]\leq O_2(M^\circ)$. In particular, $M^\circ\unlhd \tilde{M}=M^\circ C_\L(V)$ and thus $O_2(M^\circ)\unlhd \tilde{M}$. Hence, $O_2(M^\circ)\leq O_2(\tilde{M})$. As $O_2(\tilde{M})\leq S\leq M\leq \tilde{M}$, we have furthermore $O_2(\tilde{M})\leq O_2(M)=Q_M$. This proves (c).

\smallskip

\textbf{(d)} By (b) and (c), we have $M=M^\circ Q_M$ and $M^\circ\unlhd \tilde{M}$. As $Q_M\leq S\leq M\cap\tilde{C}\leq B$, it follows $M_1=M^\circ B$. In particular, $M_1$ is a subgroup of $\tilde{M}$. 



\smallskip

\textbf{(e)} Set $D:=\<Y_M^{\tilde{M}}\>$. As $\tilde{M}$ normalizes $M^\circ$ and $V=[Y_M,M^\circ]$, it follows $V=[D,M^\circ]$. Since $[D,V]=1$, it follows $[D,M^\circ,D]=1=[M^\circ,D,D]$ and by the three-subgroups lemma, $[D,D,M^\circ]=1$. Now by (a), $[D,D]\leq C_\L(M^\circ)=1$. Hence, $D$ is abelian and thus elementary abelian. As $[D,M^\circ]=V$, it follows now from \cite[Lemma~1.1(b)]{MStr:2008} that $|D/V|= 2$. By hypothesis (iii), $V<Y_M\leq D$, so $Y_M=D\unlhd \tilde{M}$. Since $|Y_M/V|=2$, $[Y_M,\tilde{M}]\leq V$, hence $[Y_M,O^2(C_\L(V))]=1$. Note $C_\L(V)=C_{\tilde{M}}(V)\unlhd \tilde{M}$ and $S\in\Syl_2(\tilde{M})$. Hence, $Q_M=C_S(V)\in \Syl_p(C_\L(V))$. As $[Y_M,Q_M]=1$, it follows $C_\L(V)=Q_MO^2(C_\L(V))\leq C_\L(Y_M)$. This completes the proof.  

\smallskip

\textbf{(f)} We will use throughout that, by (c) and (d), $M^\circ\unlhd \tilde{M}$ and $M^\circ\unlhd M_1\leq \tilde{M}$. Thus, $O_2(M^\circ)\unlhd M_1$ and $Q_1\cap M^\circ\unlhd M^\circ$, so $O_2(M^\circ)=Q_1\cap M^\circ$. As $M^\circ\unlhd M$, a similar argument yields $O_2(M^\circ)=Q_M \cap M^\circ$. As we have shown in (b) that $M=M^\circ Q_M$, it follows  
\[M^\circ Q_1/Q_1\cong M^\circ/O_2(M^\circ)\cong M/Q_M\cong \SL_3(2).\]
By (c), $\tilde{M}=M^\circ C_\L(V)$, so $M_1=M^\circ C_{M_1}(V)$. Recall that, by (b), $C_V(Q)=Z$ has order $2$ and $M\cap\tilde{C}=C_M(Z)$. Hence, $Z$ is normalized and thus centralized by $B$ and $C_{M^\circ}(Z)=M^\circ\cap\tilde{C}$. Therefore, $B\cap M^\circ=C_{M^\circ}(Z)=M^\circ\cap\tilde{C}$ and 
\[B\leq C_{M_1}(Z)=C_{M^\circ}(Z)C_{M_1}(V)=(M^\circ\cap\tilde{C})C_{M_1}(V)=(M^\circ\cap B)C_{M_1}(V).\]
So $B=(M^\circ\cap B)C_B(V)$. In particular, by (d), $M_1=M^\circ B=M^\circ C_B(V)$ and $C_{M_1}(V)=C_B(V)$ as $C_{M^\circ}(V)\leq Q_M\leq B$. In particular, $Q_1 \le Q_M\leq  C_{M_1}(V) = C_B(V)$. Note also that $M^\circ\cap C_B(V)\leq C_{M^\circ}(V)\leq O_2(M^\circ)\leq Q_1$. Moreover, by (c), $[M^\circ,C_B(V)] \le Q_1$. This implies (f).

\smallskip

\textbf{(g)} By (f), $B/Q_1=(B\cap M^\circ)Q_1/Q_1 \times C_B(V)/Q_1$ and $O_2(C_B(V))=O_2(C_{M_1}(V))\unlhd M_1$. This implies $O_2(C_B(V))\leq Q_1$ and then $O_2(B)=O_2(B\cap M^\circ)Q_1$. By (f), $M^\circ\cap B = C_{M^\circ}(Z)$ corresponds to a point stabilizer and so $(M^\circ\cap B)Q_1/Q_1\cong S_4$. In particular, $M^\circ\cap B$ acts irreducibly on $O_2(B)/Q_1$, and $O_2(B)/Q_1$ is the transvection group to the point $Z$. By hypothesis (iii), $V<Y_M$ and thus $[Y_M,O_2(B)]=V$ by \cite[1.1(b)]{MStr:2008}. 

\smallskip

As $M\cap\tilde{C}\leq \tilde{M}\cap B$ normalizes $O_2(B\cap M^\circ)$, it follows from (b) that $O_2(B\cap M^\circ)\leq O_2(M\cap \tilde{C})=QQ_M$. As $Q\unlhd B$, this implies $O_2(B\cap M^\circ)=Q(Q_M\cap M^\circ)\leq QO_2(M^\circ)\leq QQ_1$ and $O_2(B)\leq QQ_1$. As $B\leq \tilde{C}\cap M_1$, it follows $O_2(B)=QQ_1$. Since $B\leq M_2\leq \tilde{C}$, we have $Q\leq Q_2\leq O_2(B)$ and obtain $O_2(B)=QQ_1=Q_1Q_2$. 

\smallskip

\textbf{(h)} Note that $C_Q(V)\leq C_{M^\circ}(V)=M^\circ\cap C_M(V)=M^\circ\cap Q_M\leq Q_1$ by (f). Moreover, $Q_1\leq Q_M=C_M(V)$ and, by (g), $Q_2\leq QQ_1$. Hence, $C_{Q_2}(V)\leq C_{QQ_1}(V)=C_Q(V)Q_1=Q_1$.
\end{proof}

\begin{lemma}[Cf. {\cite[Lemma~2.2]{MStr:2008}}]\label{Lemma2.2}~
 \begin{itemize}
  \item [(a)] $L=O^2(L)=[L,Y_M]$ and $H=\<Y_M^L\>=\<Y_M^{M_2}\>$.
  \item [(b)] $W\neq V$, $[W,L]\neq 1$ and $C_{Q_2}(L)=C_{Q_2}(H)=C_{Q_2}(W)\leq Q_1$.
  \item [(c)] $[Q_2^\prime,L]=1$ and $[Q_2,L]\leq W$.
  \item [(d)] $W\unlhd M_2$, $W\leq Q\leq Q_2$, $WQ_1=O_2(B)$, $[Y_M,W]=V$, $V\cap Z(W)=Z$ and $WQ_M=QQ_M$.
  \item [(e)] $[W,Q_2]=W^\prime=Z=\Phi(W)$.
  \item [(f)] $[W,L]=W$ and $C_W(L)=Z(W)$; in particular, $Y_M \not \le W$ and $Y_M \not \le Q_2$.
  \item [(g)] $\widehat{W}$ is a selfdual, irreducible $\mF_2M_2$-module, which is homogeneous as an $\mF_2H$-module (i.e. a direct sum of irreducible isomorphic $\mF_2 H$-modules). 
 \end{itemize}
\end{lemma}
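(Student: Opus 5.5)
The plan is to prove the parts essentially in the stated order. The driving tools are the minimality of $L$, the inclusion $Q_2\le QQ_1$ from Lemma~\ref{Lemma2.1}(g), the facts $Y_M\unlhd\tilde M$ and $C_\L(V)=C_\L(Y_M)$ from Lemma~\ref{Lemma2.1}(e), and finally Lemma~\ref{Lemma1.3} for the module-theoretic statement (g).

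For (a), set $L_0:=[L,Y_M]$. Since $Y_M\unlhd M$ and $L$ is $(M\cap\tilde C)$-invariant, $L_0$ is an $(M\cap\tilde C)$-invariant subgroup of $L$. Moreover $L_0Y_M=\<Y_M^L\>$ is normal in $H=LY_M$. If $Y_M\le O_2(L_0Y_M)$, then, as $O_2(L_0Y_M)$ is characteristic in $L_0Y_M\unlhd H$, we would get $Y_M\le O_2(H)$, contradicting the choice of $L$. Minimality therefore forces $L=[L,Y_M]$. The same argument with $O^2(L)$ in place of $L_0$ gives $L=O^2(L)$: here one uses that $H/O^2(L)Y_M$ is a $2$-group, so $O_2(O^2(L)Y_M)\le O_2(H)$. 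Hence $H=\<Y_M^L\>$. Since $B\le\tilde M$ normalizes $Y_M$ by Lemma~\ref{Lemma2.1}(e) and $M_2=LB$, we also get $\<Y_M^{M_2}\>=\<Y_M^L\>$.

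For (b) and (c), suppose first $W=V$. Then $L\le\tilde M$, so $Y_M\unlhd H$ and $Y_M\le O_2(H)$, a contradiction; the same argument excludes $[W,L]=1$. For (c), Lemma~\ref{Lemma2.1}(g) gives $Q_2\le QQ_1$. Since $[Q_1,Y_M]=1$ and $[Q,Y_M]=V\le W$ by Lemma~\ref{Lemma2.1}(b), we obtain $[Q_2,Y_M]\le W$. Thus $Y_M$ centralizes $Q_2/W$, and hence so does $\<Y_M^{M_2}\>=H\ge L$, i.e.\ $[Q_2,L]\le W$. For the centralizers, $C_{Q_2}(W)\le C_{Q_2}(V)\le Q_1\le Q_M$ by Lemma~\ref{Lemma2.1}(h). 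So $C_{Q_2}(W)$ centralizes $Y_M$, and being $M_2$-invariant it centralizes $H$ and hence $L$. Conversely, $C_{Q_2}(L)$ is normalized by $M_2$ and centralizes $[Q_2,L]$. I would show, via the three subgroups lemma applied to $L=[L,Y_M]$ and the quadratic action of $Y_M$ on $Q_2$, that $C_{Q_2}(L)$ centralizes $V$ and thus lies in $Q_1$. It then centralizes $Y_M$, and hence $H$ and $W=\<V^L\>$. The claim $[Q_2',L]=1$ should follow from the three subgroups lemma with $[Q_2,L,Q_2]\le[W,Q_2]$ once (e) is known; I would therefore prove it after (e), or via the statement $[W,Q_2]\le Z\le C(L)$.

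Part (d) is mostly bookkeeping. $W\unlhd M_2$ holds since $B$ normalizes $V$, and $W\le Q\le Q_2$ was noted in the setup. Since $V<W\le Q\le QQ_1=O_2(B)$ and $B$ acts irreducibly on $O_2(B)/Q_1$ (Lemma~\ref{Lemma2.1}(g)), we get $WQ_1=O_2(B)$. Then $[Y_M,W]=[Y_M,O_2(B)]=V$, and similarly $WQ_M=QQ_M$. Finally $V\cap Z(W)=Z$ follows from $C_V(Q)=Z$ together with the transvection-group description of $WQ_1/Q_1$.

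For (e) and (f), I would study $\widehat W$ as an $L$-module. One has $[W,L]\unlhd M_2$, and $W=\<V^L\>$ with $V$ centralized modulo $[W,L]$, which gives $[W,L]=W$. Next, $C_W(L)\le C_{Q_2}(L)=C_{Q_2}(W)$ by (b), so $C_W(L)=Z(W)$. Because $Y_M$ is not contained in $Q_2\ge W$, we also get $Y_M\not\le W$. For $[W,Q_2]=Z$, the commutator $[W,Q_2]$ is $M_2$-invariant. Using $[Q_2,L]\le W$ and quadraticity, $[W,Q_2]$ is centralized by $L$ and hence lies in $Z(W)$. On the other hand, $[V,Q_2]\le[V,Q]=Z$, so modulo $Z$ the subgroup $V$, and hence $W=\<V^L\>$, is centralized by $Q_2$; therefore $[W,Q_2]\le Z$. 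Equality holds since $Q$ acts nontrivially on $V$. As $W\le Q_2$ and $Z$ has order $2$, we get $W'=\Phi(W)=Z$.

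For (g), I would apply Lemma~\ref{Lemma1.3} with $H$ acting, $B:=N_H(V)$, $A:=Y_M$, and $Y$ an irreducible constituent built from $V\widehat{\ }$, using $[\widehat W,Y_M]\le\widehat V$ and $\widehat W=\<\widehat V^H\>$. This gives irreducibility of $\widehat W/C_{\widehat W}(H)$; here $C_{\widehat W}(H)=1$ by (f). Selfduality comes from the nondegenerate commutator form $\widehat W\times\widehat W\to Z$, which is $M_2$-invariant because $Z=W'$ is centralized by $M_2$. Homogeneity as an $H$-module follows by Clifford's theorem, using $H\unlhd M_2$. I expect the main obstacle to be the centralizer equality in (b), together with $[Q_2',L]=1$, since both need a careful three-subgroups argument exploiting minimality of $L$ rather than just quadratic action.
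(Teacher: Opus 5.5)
Your part (a) is correct and is the paper's argument, but several later steps fail as proposed. The most serious is that your ordering of (c) and (e) is circular. In (e) you pass from $[V,Q_2]\le Z$ to $[W,Q_2]\le Z$ via $W=\<V^L\>$. Since $[V^l,Q_2]=Z^l$, this needs $L$ to normalize (hence centralize) $Z$. Yet you planned to get $[Q_2',L]=1$ only afterwards, from $Z\le C(L)$. Nothing available gives $[Z,L]=1$ independently ($Z=\Omega_1Z(S)$, and $L$ need not normalize $S$), and the three subgroups lemma would at best give $[Q_2',L]\le Z$.

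The missing step is elementary. By Lemma~\ref{Lemma2.1}(g), $Q_2Q_1/Q_1\le O_2(B)/Q_1$ is abelian, so $Q_2'\le Q_1\le Q_M$ centralizes $Y_M$. Since $Q_2'$ is $M_2$-invariant, it centralizes $\<Y_M^{M_2}\>=H\ge L$, and then $Z=[V,Q]\le Q_2'$ is centralized by $L$.

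In (b), the three subgroups lemma only returns $[C_{Q_2}(L),Y_M]\le C_{Q_2}(L)$, which is trivial. What gives $D:=C_{Q_2}(L)\le Q_1$ is the following. Otherwise $DQ_1=O_2(B)$ by irreducibility of $B$ on $O_2(B)/Q_1$. Hence $[Y_M,D]=V$ by Lemma~\ref{Lemma2.1}(g) and \cite[Lemma~1.1(b)]{MStr:2008}. So $V\le D$ is centralized by $L$, giving $W=V$.

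In (d), $V<W$ does not exclude $W\le Q_1$, since $V\le Q_1$ as well. You need $[W,V]\neq 1$. This holds because otherwise $W=\<V^L\>$ is abelian, so $W\le C_{Q_2}(W)=C_{Q_2}(L)$, forcing $W=V$. The same fact is what makes $W'\neq 1$ in (e).

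In (f), knowing that $L$ centralizes $V$ modulo $[W,L]$ only gives $W=V[W,L]$, not $W=[W,L]$. In (g), Lemma~\ref{Lemma1.3} cannot be applied with the group $H$ and $N_H(V)=H\cap B$. Nothing forces $N_H(V)$ to act irreducibly on $\widehat{V}\cong V/Z$. Moreover, the conclusion you would draw, that $\widehat{W}$ is an irreducible $H$-module, is false in general. Already for $\Aut(\G_2(3))$ one has $\ov{H}\cong\Sym(3)$, $N_H(V)\le Y_MQ$ acts trivially on $\widehat{V}$, and $\widehat{W}\cong U\oplus U$ as an $H$-module (compare Lemma~\ref{Lemma2.3}(f)).

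The paper instead applies Lemma~\ref{Lemma1.3} to $W/Z$ with $(M_2,M\cap\tilde{C},Y_M,W/Z,V/Z)$. This works because $M\cap\tilde{C}$ induces $\Sym(4)$ on $V$ and is irreducible on $V/Z$. The conclusion is that every proper $M_2$-submodule of $W/Z$ lies in $C_{W/Z}(H)=Z(W)/Z$. This yields the $M_2$-irreducibility of $\widehat{W}$, and also (f). Indeed $[W,L]\not\le Z(W)$, else $W=VZ(W)$ is abelian; so $W=[W,L]Z$ and $Z=[W,[W,L]]\le [W,L]$.

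Finally, Clifford's theorem alone only says that $M_2$ permutes the homogeneous components $\widehat{W}_1,\dots,\widehat{W}_n$ of $\widehat{W}$ as an $H$-module transitively. To get $n=1$ you still need that $\widehat{V}=[\widehat{W},Y_M]=\bigoplus_i[\widehat{W}_i,Y_M]$ has order $4$ while $B$ acts transitively on $\widehat{V}^\#$.
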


\begin{proof}
\textbf{(a)} Since $L$ is $M \cap \tilde{C}$-invariant and $S \le M \cap \tilde{C}$, we get  $O^2(L)Y_M\unlhd O^2(L)Y_M(S \cap L)=LY_M$ and $[L,Y_M]Y_M\unlhd LY_M$. We obtain
\[
O_2(O^2(L)Y_M)\leq O_2(LY_M) \quad \text{and} \quad O_2([L,Y_M]Y_M)\leq O_2(LY_M).
\]
Hence, $Y_M$ is neither contained in $O_2(O^2(L)Y_M)$ nor in $O_2([L,Y_M]Y_M)$. As $O^2(L)$ and $[L,Y_M]$ are also $M\cap \tilde{C}$-invariant, the minimal choice of $L$ yields $L=O^2(L)=[L,Y_M]$. In particular, $\<Y_M^L\>=[L,Y_M]Y_M=LY_M=H$. By Lemma~\ref{Lemma2.1}(e), $B\leq N_\L(Y_M)$ and so (a) follows.

\smallskip

\textbf{(b)} Assume $W=V$. Then $L\leq N_\L(V)=\tilde{M}\leq N_\L(Y_M)$ by Lemma~\ref{Lemma2.1}(e). Hence, $Y_M\leq O_2(LY_M)$ contradicting the choice of $L$. Thus $W\neq V$ and, in particular, $[W,L]\neq 1$. Set $D:=C_{Q_2}(L)$. Suppose $D\not\leq Q_1$. Since $B$ normalizes $D$ and acts irreducibly on $O_2(B)/Q_1$ by Lemma~\ref{Lemma2.1}(g), we get $DQ_1=O_2(B)$. Now Lemma~\ref{Lemma2.1}(g) together with \cite[1.1(b)]{MStr:2008} yields $[Y_M,D]=V$. So $V\leq D$ is centralized by $L$, contradicting $V\neq W$. Hence, $D\leq Q_1\leq Q_M$ and $D=C_{Q_2}(LY_M)=C_{Q_2}(H)=C_{Q_2}(\<Y_M^L\>)$ using (a). By Lemma~\ref{Lemma2.1}(e), $C_{Q_2}(V)=C_{Q_2}(Y_M)$ and so $C_{Q_2}(W)=C_{Q_2}(\<V^L\>)=C_{Q_2}(\<Y_M^L\>)=D$. This shows (b). 

\smallskip

\textbf{(c)} By Lemma~\ref{Lemma2.1}(g) $Q_2/(Q_1 \cap Q_2) \cong Q_2Q_1/Q_1$ is abelian; it therefore follows that $Q_2^\prime\leq Q_1\leq Q_M$ and hence $[Q_2^\prime,Y_M]=1$. By (a), $L\leq\<Y_M^L\>$ and hence $[Q_2^\prime,L]=1$. By Lemma~\ref{Lemma2.1}(b), $|Y_M/V|=2$. Hence, $[Y_M,Q_2]\leq V\leq W$ and thus $[Q_2,L]\leq [Q_2,\<Y_M^L\>]\leq W$. 

\smallskip

\textbf{(d)} Since $B$ normalizes $V$ and $L$, $B$ normalizes $W$. In particular, $W\unlhd M_2$. By Hypothesis (iii), $V\leq Q$ and so $W\leq Q\leq Q_2$. If $[W,V]=1$ then $W$ is abelian and thus, by (b), $W\leq C_{Q_2}(W)=C_{Q_2}(L)$ contradicting $W\neq V$. Hence, $[W,V]\neq 1$ and $W\not\leq Q_1$. Recall that $B$ normalizes $W$ and thus also $WQ_1$. Using Lemma~\ref{Lemma2.1}(g), the irreducible action of $B$ on $Q_2Q_1/Q_1$ gives $O_2(B)=WQ_1=Q_1Q_2=Q_1Q$; moreover $[Y_M,W]=V$ and $Z(W)\cap V=C_V(W)=Z=[V,W]$. In particular, we have $QQ_1=WQ_1 \le WQ_M$ and so $QQ_M \le WQ_M$ and so (d) is proven.

\smallskip

\textbf{(e)} By Lemma~\ref{Lemma2.1}(g) and part (d), $[W,V]=[Q_2,V]=Z$. Moreover, by hypothesis (iii), $V\leq Q\leq Q_2$. In particular, $Z\leq Q_2^\prime$ and so, by (c), $Z$ is centralized by $L$. Hence, as $W=\<V^L\>$, it follows $[W,W]=[Q_2,W]=Z$. As $W$ is generated by involutions, $W^\prime=\Phi(W)$ and so (e) holds. 

\smallskip

\textbf{(f)} By (b), $Z(W)=C_W(L)=C_W(H)$. By (a), $L=O^2(L)$ and $LY_M=H=\<Y_M^L\>$. Using that $Z=W'$ by (e), we see therefore that $Z(W)/Z=C_{W/Z}(L)=C_{W/Z}(\<Y_M^L\>)$. It follows from hypothesis (ii) and Lemma~\ref{Lemma2.1}(b) that $V/Z$ is a simple $M\cap\tilde{C}$-module. By (d), $[W,Y_M]=V$ and, by (e), $Z \norm M_2$. So it follows from Lemma~\ref{Lemma1.3} applied with $(M_2,M\cap\tilde{C},Y_M,W/Z,V/Z)$ in place of $(H,B,A,W,Y)$ that every proper $\mF_2M_2$-submodule of $W/Z$ lies in $Z(W)/Z$. If $[W,L]\leq Z(W)$, then $W=\<V^L\>=VZ(W)$ and $W$ is abelian, contradicting (d). Thus, $W=[W,L]Z$. So by (e), $Z=[W,W]=[W,[W,L]]\leq [W,L]$ and thus $W=[W,L]$. If $Y_M \le Q_2$, then (d) and (e) imply $V = [Y_M,W] \le [Q_2, W] = Z$, a contradiction. Thus, $Y_M\not\leq Q_2$. In particular, by (d), $Y_M\not\leq W$. So (f) is proved. 

\smallskip

\textbf{(g)} As remarked in the proof of (f), every proper $\mF_2M_2$-submodule of $W/Z$ lies in $Z(W)/Z$. This means that $\widehat{W}=W/Z(W)$ is an irreducible $\mF_2M_2$-module. Recall also that $W'=Z$ by (e). Thus, the commutator map 
\begin{center}
	\begin{tikzcd}
		 \widehat{W}\times\widehat{W} \ar[r] & Z \cong \bF_2 \\[-25pt]
		(xZ(W),yZ(W)) \ar[r,mapsto] & \left[x,y\right]
	\end{tikzcd}
\end{center}
is a non-degenerate bilinear form on $\widehat{W}$ and so $\widehat{W}$ is a selfdual $\mF_2M_2$-module. By (a), $H$ is normal in $M_2$. Thus, by Clifford's Theorem, $\widehat{W}$ is semisimple as $\bF_2 H$-module and a direct sum of homogenous $\bF_2H$-modules $\widehat{W}_i$, $1\leq i\leq n$ such that $M_2$ acts transitively on $\{\widehat{W}_1,\dots,\widehat{W}_n\}$. As $\widehat{W}=\bigoplus_{i=1}^n\widehat{W}_i$, we have then $\widehat{V}=[\widehat{W},Y_M]=\bigoplus_{i=1}^n[\widehat{W}_i,Y_M]$. If $i>1$, then the action of $B$ on $\widehat{V}^\#$ is imprimitive. However, $\widehat{V}\cong V/V\cap Z(W)=V/Z$ as a $B$-module. It follows thus from Lemma~\ref{Lemma2.1}(f) that $|\widehat{V}|=4$ and $B$ is transitive on $\widehat{V}^\#$. In particular, the action of $B$ on $\widehat{V}^\#$ is primitive. Hence, $i=1$ and $\widehat{W}$ is a homogeneous $\mF_2H$-module.
\end{proof}

\begin{lemma}[Cf. {\cite[Lemma~2.3]{MStr:2008}}]\label{Lemma2.3}~
 \begin{itemize}
  \item [(a)] $W$ acts quadratically on $Q_M/V$. In particular, any non-trivial composition factor for $M$ on $Q_M/V$ is a natural $\SL_3(2)$-module. 
  \item [(b)] $N_{M_2}(Y_MQ_2)=N_{M_2}(V)=B$.
  \item [(c)] If $g\in L$ is such that $[Y_M,Y_M^g]\leq Q_2$, then $[Y_M,Y_M^g]=1$ and $Y_MY_M^g$ acts quadratically on $Q_2$ and $\widehat{W}$.
  \item [(d)] $V$ is the unique minimal normal 2-subgroup of $M$ and of $M_1$.
  \item [(e)] $C_{M_2}(\widehat{W})=Q_2$.
  \item [(f)] Let $U$ be an irreducible $H$-submodule of $\hat{W}$. Then $\hat{W}=U$ or $\hat{W}\cong U\oplus U$ as an $H$-module. Moreover, we have $\hat{W}\cong U\oplus U$ if and only if $|U/C_U(\ov{Y_M})|=2$. 
 \end{itemize}
\end{lemma}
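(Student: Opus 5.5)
We prove the six parts of Lemma~\ref{Lemma2.3} roughly in the order (a), (b), (d), (e), (c), (f), using Lemmas~\ref{Lemma2.1} and \ref{Lemma2.2} throughout.

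\textbf{Part (a).} By Lemma~\ref{Lemma2.2}(d), $W\leq Q$ and $WQ_M=QQ_M$. Since $[Q_M,W]\leq Q_M\cap W$, we reduce to computing modulo $V$. The plan is to show $[Q_M,W,W]\leq V$. By Lemma~\ref{Lemma2.2}(c),(e), $[Q_2,L]\leq W$ and $[W,Q_2]=Z$. First I would show $Q_M\leq Q_2Y_M$, or at least that $[Q_M,W]\leq W$, using that $Q_M\leq S\leq M_2$. Then $[Q_M,W,W]\leq [W,W]=Z\leq V$. The statement about composition factors then follows: $W$ induces on $Q_M/V$ a nontrivial quadratic action of $O_2(M\cap\tilde C)/Q_M$ (because $WQ_M=QQ_M$). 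The irreducible $\bF_2\SL_3(2)$-modules are the two natural modules and the Steinberg module. The Steinberg module admits no quadratic fours group, and a natural module admits a quadratic fours group only when it is the transvection group to a point, which matches $Z$.

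\textbf{Parts (b), (d), (e).} For (b), we have $B\leq N_{M_2}(V)$, and $N_{M_2}(V)\leq N_{M_2}(Y_MQ_2)$ because $Y_M\unlhd\tilde M$. For the reverse inclusion, an element normalizing $Y_MQ_2$ normalizes $[Y_MQ_2,Y_MQ_2]$-type objects. Concretely, it normalizes $[Y_M Q_2,W]$ together with $C_W(Y_MQ_2)$, and from these one recovers $V$ as $[Y_M,W]$ via Lemma~\ref{Lemma2.2}(d) modulo $Z$. Hence it lies in $\tilde M\cap M_2=N_{M_2}(V)$. We then use $N_{M_2}(V)\leq \tilde M\cap \tilde C$ and the definition $B=(M\cap\tilde C)(L\cap\tilde M)$: writing $M_2=LB$, an element $lb$ normalizes $V$ exactly when $l$ does.

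For (d), any minimal normal $2$-subgroup $N$ of $M$ lies in $\Omega_1Z(Q_M)=Y_M$. It is nontrivial, and $C_{Y_M}(M)=1$ since $Z(M)=1$ by Lemma~\ref{Lemma2.1}(a); hence $N\geq [N,M]\neq 1$. Because $|Y_M/V|=2$, this forces $V\leq N$, and minimality gives $N=V$. The same argument works for $M_1$, using $Y_M\unlhd \tilde M$ and $C_{\L}(M^\circ)=1$.

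For (e), $C_{M_2}(\widehat W)$ is normal in $M_2$, and by coprime action its $O^2$ centralizes $W$ modulo $Z(W)$ and hence $W$. By Lemma~\ref{Lemma2.2}(b),(f), this yields a contradiction with $[W,L]=W$ unless $C_{M_2}(\widehat W)$ is a $2$-group, so it lies in $Q_2$. The inclusion $Q_2\leq C_{M_2}(\widehat W)$ comes from $[W,Q_2]=Z\leq Z(W)$.

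\textbf{Parts (c), (f), and the main obstacle.} For (c), $Y_M$ and $Y_M^g$ are elementary abelian, and $[Y_M,Y_M^g]\leq Q_2\cap Y_MY_M^g$. Using $|\ov{Y_M}|=2$, the group $\ov{Y_MY_M^g}$ has order at most $4$ and is abelian. Since $[Y_M,Q_2]\leq V$ and $[Y_M^g,Q_2]\leq V^g$, the commutator $[Y_M,Y_M^g]$ lies in $V\cap V^g$ and commutes appropriately, and then a three-subgroups argument shows it is trivial. Quadraticity on $Q_2$ and on $\widehat W$ follows from $[Q_2,Y_M,Y_M]=1$ and $[Q_2,Y_M,Y_M^g]\leq [V,Y_M^g]=1$.

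For (f), by Lemma~\ref{Lemma2.2}(g) $\widehat W$ is homogeneous, say $\widehat W\cong U^k$. Then $|\widehat W/C_{\widehat W}(\ov{Y_M})|=|U/C_U(\ov{Y_M})|^k$. Comparing with $|\widehat V|=|[\widehat W,Y_M]|=4$ gives $k\leq 2$, with $k=2$ exactly when $|U/C_U(\ov{Y_M})|=2$.

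I expect the main obstacle to be (a), specifically controlling $[Q_M,W]$: it is not obvious that $Q_M\leq Q_2Y_M$, and it may be necessary to use Lemma~\ref{Lemma2.1}(h) and $C_{Q_2}(V)\leq Q_1$ instead.
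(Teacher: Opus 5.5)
Your arguments for (a), (b), (d) and (f) are fine and essentially the paper's. In (a) there is no real obstacle. Since $W\unlhd M_2$ and $Q_M\le S\le M_2$, you get $[Q_M,W,W]\le[W,W]=Z\le V$ immediately. Moreover, $WQ_M=QQ_M=O_2(M\cap\tilde{C})$ shows that $W$ induces a fours group, which rules out the Steinberg module. The genuine gaps are in (c) and (e).

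\textbf{Part (c).} Set $R:=[Y_M,Y_M^g]$. Knowing $R\le Q_2$ and $[Y_M,Q_2]\le V$ only gives $[Y_M,R]\le V$, not $R\le V$. To get $R\le V\cap V^g$ you need (b): $Y_M^g$ normalizes $Y_MQ_2$, so $Y_M^g\le B\le\tilde{M}=MC_\L(Y_M)$ and hence $R\le[Y_M,M]=V$; the same argument with $g^{-1}$ gives $R\le V^g$.

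More seriously, $R\le V\cap V^g$ only tells you that $R$ is centralized by $Y_M$ and $Y_M^g$. No three-subgroups argument can then force $R=1$, because two elementary abelian subgroups can generate a class~$2$ group with nontrivial commutator (for instance the two fours groups of $D_8$). This is exactly where largeness of $Q$ must enter, and your sketch never uses it.

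The paper argues as follows. If $R\neq 1$, then $g\notin\tilde{M}$, and \cite[Lemma~1.1(b)]{MStr:2008} gives $|R|=4$ and $[V,V^g]\neq1$. Hence some $x\in R^\#$ satisfies $V\not\le O_2(C_{\tilde{M}^g}(x))$. Since $Q$ induces the transvection group to a point on $V$, there is $m\in M$ with $x^{m^{-1}}\in Z(Q)$. Then $Q$-repleteness and \eqref{Q!L} give $V\le Q^m\le O_2(C_\L(x))$, so $V\le O_2(C_{\tilde{M}^g}(x))$, a contradiction.

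\textbf{Part (e).} Set $E:=O^2(C_{M_2}(\widehat{W}))$. Your claim that $E$ centralizes $W$ ``by coprime action'' is unjustified. A $2'$-group centralizing $W/Z(W)$ is only forced to centralize $W$ if it also centralizes $Z(W)$; Burnside's theorem would require centralizing $W/\Phi(W)=W/Z$. A concrete obstruction is $Q_8\times C_2^2$ with an element of order $3$ acting only on the second factor. At this stage $Z(W)$ may properly contain $Z$; the equality $Z(W)=Z$ is only established much later, with (e) among the ingredients. Also, Lemma~\ref{Lemma2.2}(b),(f) alone yield no contradiction with $[W,L]=W$.

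The missing ideas are these:
\begin{enumerate}
\item Use the minimality of $L$, applied to $F:=L\cap E<L$, to show that $E$ normalizes $\ov{Y_M}$. By (b) this means $E\le N_{M_2}(Y_MQ_2)=B$.
\item Then $[V,E]\le V\cap Z(W)=Z$, and $E=O^2(E)$ gives $[V,E]=1$. Consequently $M^\circ$ normalizes $E$.
\item If $E\neq1$, then $C_\L(Q)\le Q$ yields $O_2(E)\neq 1$. Applying (d) in $M_1$ and using $E\unlhd M_2$ gives $W=\langle V^L\rangle\le Z(O_2(E))$, contradicting that $W$ is nonabelian.
\end{enumerate}
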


\begin{proof}
\textbf{(a)} Note that $Q_M \le S\leq  M \cap \tilde{C}$, and so $Q_M$ normalizes both $V$ and $L$. Thus, $Q_M$ normalizes $W$ and, using Lemma~\ref{Lemma2.2}(e), we obtain $[Q_M,W,W]\leq [W,W]=Z\leq V$. So $W$ acts quadratically on $Q_M/V$ and thus also on every non-trivial composition factor for $M$. 

\smallskip

By Lemma~\ref{Lemma2.1}(f),(g), $Q_M\cap M^\circ=Q_1\cap M^\circ$ and $O_2(B)/Q_1$ has order $4$. As $W\leq Q\leq M^\circ$ and $O_2(B)=WQ_1$ by Lemma~\ref{Lemma2.2}(d), it follows that $W\cap Q_M=W\cap Q_1$ and $WQ_M/Q_M\cong WQ_1/Q_1$ has order $4$. Therefore, $W$ does not act quadratically on the Steinberg module for $\SL_3(2)$. Since the only simple $\mF_2\SL_3(2)$-modules are the trivial module, the Steinberg module, and the two natural modules (which are dual to each other), we have (a). 

\smallskip

\textbf{(b)} By Lemma~\ref{Lemma2.2}(d),(e), $N_{M_2}(Y_MQ_2)$ normalizes $[W,Y_MQ_2]=[W,Y_M]=V$. As $L\cap \tilde{M}\leq B\leq \tilde{M}=N_\L(V)$ and $M_2=LB$, it follows that $N_{M_2}(V)=M_2\cap \tilde{M}=LB \cap \tilde{M} = (L \cap \tilde{M})B=B$. Using Lemma~\ref{Lemma2.1}(e), we have $B\leq N_{M_2}(Y_MQ_2)$. This implies (b).

\smallskip

\textbf{(c)} Let $g\in L$  and note that $Y_M^g$ is defined in $M_2\leq \tilde{C}$. Suppose that $[Y_M,Y_M^g]\leq Q_2$. Then by (b), $Y_M^g\leq \tilde{M}$. By Lemma~\ref{Lemma2.1}(c),(e), $\tilde{M}=MC_\L(V)=MC_\L(Y_M)$. Thus, $[Y_M,Y_M^g]\leq [Y_M,M]=V$. Note that $[Y_M^{g^{-1}},Y_M]\leq Q_2^{g^{-1}}=Q_2$, so the same argument with $g^{-1}$ in place of $g$ gives $[Y_M,Y_M^{g^{-1}}]\leq V$. Hence conjugation by $g$ in the group $\tilde{C}$ gives $[Y_M^g,Y_M]\leq V^g$. Thus, $R:=[Y_M,Y_M^g]\leq V\cap V^g$. Suppose that $R\neq 1$. Then by Lemma~\ref{Lemma2.1}(e), $[V,Y_M^g]\neq 1$ and $g\not\in\tilde{M}=N_\L(V)$. In particular, $R\leq V\cap V^g$ is a proper subgroup of $V$. By \cite[1.1(b)]{MStr:2008} (applied to $V_1=Y_M$), $Y_M^g$ lies in the transvection group to a hyperplane of $V$. Then the proof of \cite[1.1(b)]{MStr:2008} shows that $[Y_M,t]=C_V(t)$ has order $4$ for every element $t\in Y_M^g\backslash Q_M$. Hence, $|R|=4$. Note that $V^g\leq W\leq S$ and thus $V^g\in\Delta$. Hence, by Lemma~\ref{LocalitiesProp}(b), $c_g\colon \tilde{M}\rightarrow \tilde{M}^g=N_\L(V^g)$ is a group isomorphism. In particular, by Lemma~\ref{Lemma2.1}(c),(d), $\tilde{M}^g=M^gC_\L(V^g)$, $V^g$ is a natural $\SL_3(2)$-module for $M^g/Q_M^g$ and $C_\L(V^g)=C_\L(Y_M^g)$. The latter property together with $[V,Y_M^g]\neq 1$ yields $[V,V^g]\neq 1$. Considering the action of $\tilde{M}^g$ on $V^g$, one sees that there exists $x\in R^\sharp$ such that $V$ is not in the transvection group to the point $\<x\>\leq V^g$, that is
\begin{equation*}
	\label{eq:lemma 4.6:V in transv gr}
	\tag{$\dagger$}
	V\not\leq O_2(C_{\tilde{M}^g}(x)).
\end{equation*}
Note that $x\in V$, and recall that, by Lemma~\ref{Lemma2.1}(g), $Q$ induces on $V$ the transvection group to a point in $V$. So looking now at the action of $M$ on $V$, it follows that there exists $m\in M$ with $[x,Q^m]=1$. As $C_M(Q)\leq C_\L(Q)\leq Q$, it follows $C_M(Q^m)=Z(Q^m)$. Then $\<x^{m^{-1}}\>\in Z(Q)$. As $(\L,\Delta,S)$ is $Q$-replete, we have $\<x^{m^{-1}}\>\in\Delta$. In particular, $C_\L(x^{m^{-1}})\leq N_\L(\<x^{m^{-1}}\>)$ is a subgroup of $\L$. The property \eqref{Q!L} gives $Q\leq O_2(C_\L(x^{m^{-1}}))$. Hence, $\<x\>\in\Delta$ and $Q^m\leq O_2(C_\L(\<x\>))$ by Lemma~\ref{LocalitiesProp}(b). By hypothesis (iii), $V\leq Q$ and thus $V\leq Q^m$. This yields $V\leq O_2(C_\L(x))\cap\tilde{M}^g\leq O_2(C_{\tilde{M}^g}(x))$, a contradiction to (\ref{eq:lemma 4.6:V in transv gr}). Hence, $R=1$. In particular, $Y_MY_M^g$ is abelian. Since $Q_2$ normalizes $Y_MY_M^g$ it follows that $[Q_2,Y_MY_M^g,Y_MY_M^g]\leq [Y_MY_M^g,Y_MY_M^g]=1$. This implies (c).

\smallskip

\textbf{(d)} Recall that $\Omega_1(Z(S))=Z$ has order $2$ by Lemma~\ref{Lemma2.1}(b). If $K$ is a normal $2$-subgroup of $M$ or of  $M_1$, then $K\cap \Omega_1(Z(S))=1$ and so $Z\leq K$. Hence, $V=\<Z^M\>\leq K$.

\smallskip

\textbf{(e)} As $\widehat{W}$ is an irreducible $M_2$-module by Lemma \ref{Lemma2.2}(g), we have $Q_2 \le C_{M_2}(\widehat{W})$. 
Set  
\[E:=O^2(C_{M_2}(\widehat{W})).\]
Recall that $L \norm M_2$, so that $F:=L \cap E \norm M_2$. Note that, by Lemma \ref{Lemma2.2}(f), $L \not \le E$ and so $F < L$. Hence, the minimality of $L$ yields $Y_M \le O_2(FY_M)$. In particular
\[
[F,Y_M] \le O_2(FY_M) \cap F = O_2(F) \le Q_2.
\]
By Lemma \ref{Lemma2.2}(a), we have $H=\<Y_M^{M_2}\>$. Hence, we get $[F, H] \le Q_2$, so that $\ov{F} \le Z(\ov{H})$. As $H$ is normal in $M_2$, we have $O_2(\ov{H})=1$ and thus $Z(\ov{H})$ has odd order. This implies $O_2(\ov{FY_M}) = \ov{Y_M}$. Now, since $H/L$ is abelian we have $[H,Y_M] \le [H,H] \le L$, which yields
\[
[H \cap E, Y_M] \le E \cap [H,Y_M] \le E \cap L = F.
\]
Another application of Lemma \ref{Lemma2.2}(a) provides $[H \cap E, H] \le F$, showing that $H \cap E$ normalizes $FY_M\leq H$ and thus also $O_2(\ov{FY_M}) = \ov{Y_M}$. This implies that $H \cap E$ (and thus also $F\leq H\cap E$) normalizes $Y_MQ_2$. So
\begin{equation*}
[H \cap E, Y_M] \le F \cap Y_MQ_2 \le O_2(F) \le Q_2.
\end{equation*}
Applying again Lemma \ref{Lemma2.2}(a), it follows $[H \cap E, H] \le Q_2$. In particular, $\ov{H \cap E} \le Z(\ov{H})$ has odd order and  $O_2(\overline{(H \cap E)Y_M}) = \overline{Y_M}$. As $E$ and $H$ are normal in $M_2$, we get
\[ [(H \cap E)Y_M,E] \le [H,E] \le H\cap E.
\]
Thus, $E$ normalizes $(H \cap E)Y_M$ and thus also $\ov{Y_M}=O_2(\ov{(H \cap E)Y_M})$. Using part (b) we obtain thus $E\leq N_{M_2}(Y_MQ_2) = B$. In particular, $E$ normalizes $V$.

\smallskip

Using Lemma \ref{Lemma2.2}(d) and the fact that $E$ acts by definition trivially on $\widehat{W}$, we get now $[V,E] \le V \cap Z(W) = Z$. As $E = O^2(E)$, this yields $[V,E]=[V,E,E]=1$. Hence, using $C_M(V)=Q_M$ and Lemma~\ref{Lemma2.1}(g), we see that
\[
[M^\circ,E] \le C_{M^\circ}(V)=Q_M\cap M^\circ \le Q_1.
\]
Hence, $M^\circ$ normalizes $E Q_1$. Note that $Q_1\leq S$ normalizes $E$ and so $O^2(EQ_1) = O^2(E) = E$. Thus, $M^\circ$ normalizes $E$.

\smallskip

Seeking a contradiction, suppose $E \ne 1$. As $Q$ is large in $\L$ we have $C_\L(Q) \le Q$, so $1 \ne [E,Q] \le E \cap Q \le O_2(E)$. Since  $M_1=M^\circ B$ normalizes $E$, we get $O_2(E) \le Q_1$. As $V$ is by (d) the unique minimal normal 2-subgroup in $M_1$, we have $V \le Z(O_2(E))$. Since $M_2$ normalizes $E$, it follows $W =\< V^L \>\le Z(O_2(E))$. So $W$ is abelian, a contradiction to Lemma \ref{Lemma2.2}(e). Thus $E = 1$, which implies $C_{M_2}(\widehat{W}) = Q_2$.

\smallskip

\textbf{(f)} Note that, by Lemma \ref{Lemma2.2}(d), $|[\widehat{W},Y_M]| = |\widehat{V}| =4$. Recall also that $\widehat{W}$ is $H$-homogeneous by Lemma~\ref{Lemma2.2}(g). Suppose now that $\widehat{W} = W_1 \oplus \dots \oplus W_k$ is a splitting into irreducible isomorphic $H$-modules. Then 
\[
[[\widehat{W},Y_M]=W_1 \oplus \dots \oplus W_k, Y_M] = [W_1,Y_M] \oplus \dots \oplus [W_k,Y_M]
\]
has order $|[W_1,Y_M]|^k$. Hence, $k\leq 2$, and $k=2$ if and only if $|[W_1,Y_M]|=2$. As $Y_M$ acts by Lemma~\ref{Lemma2.1}(b) and part (g) as an involution on $\hat{W}$, we have $|W_1/C_{W_1}(Y_M)|=|[W_1,Y_M]|$. As every irreducible $H$-submodule of $\hat{W}$ is isomorphic to $W_1$, this implies (f).	
\end{proof}

\begin{lemma}\label{Lemma2.4}
	Set $E= [Q_1,O^2(M)]$. Then also $E=[Q_M, O^2(M)]$. Moreover, if $E \le Y_MQ_2$ or $[E,W] \le V$, then $Y_M = Q_M$.
\end{lemma}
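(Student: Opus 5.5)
The plan is to prove the identity $E=[Q_M,O^2(M)]$ by a commutator argument, reduce both hypotheses to $[E,W]\leq V$, and then deduce $E\leq V$ and finally $Q_M=Y_M$.

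\textbf{The identity.} Since $M=M^\circ Q_M$ by Lemma~\ref{Lemma2.1}(b), we have $O^2(M)=O^2(M^\circ)\leq M^\circ$. By Lemma~\ref{Lemma2.1}(f), $M_1/Q_1$ is the direct product of $M^\circ Q_1/Q_1$ with $C_B(V)/Q_1$. Moreover $Q_M=C_S(V)\leq C_{M_1}(V)=C_B(V)$. Hence $[Q_M,M^\circ]\leq Q_1$, and in particular $[Q_M,O^2(M)]\leq Q_1$. Write $X:=O^2(M)$. Since $X$ is generated by elements of odd order, the coprime action identity gives $[P,X]=[P,X,X]$ for every $X$-invariant $2$-subgroup $P$. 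Applying this with $P=Q_M$ and using $Q_1\leq Q_M$, we get
\[[Q_1,X]\leq [Q_M,X]=[Q_M,X,X]\leq [Q_1,X],\]
so $E=[Q_1,X]=[Q_M,X]$.

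\textbf{Reduction to $[E,W]\leq V$.} Suppose first that $E\leq Y_MQ_2$. By Lemma~\ref{Lemma2.2}(d) we have $[Y_M,W]=V$, and by Lemma~\ref{Lemma2.2}(e) we have $[Q_2,W]=Z\leq V$. Hence $[E,W]\leq [Y_M,W][Q_2,W]\leq V$. So in both cases of the hypothesis we may assume $[E,W]\leq V$.

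\textbf{From $[E,W]\leq V$ to $E\leq V$.} As $E$ and $V$ are normal in $M$, we get $[E,W^m]\leq V$ for all $m\in M$, and so $[E,\<W^M\>]\leq V$. By Lemma~\ref{Lemma2.2}(d), $WQ_M=QQ_M$. Therefore
\[\<W^M\>Q_M\geq \<Q^M\>=M^\circ,\]
which gives $\<W^M\>Q_M=M$ and hence $O^2(M)\leq \<W^M\>$. It follows that $[E,O^2(M)]\leq V$, and then
\[E=[Q_M,O^2(M)]=[Q_M,O^2(M),O^2(M)]=[E,O^2(M)]\leq V.\]

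\textbf{Showing $Q_M=Y_M$.} First I show that $C_{Q_M}(O^2(M))=1$. This subgroup is normal in $M$. If it were non-trivial, it would meet $\Omega_1Z(S)=Z$ non-trivially, so it would contain $Z$, and then $\<Z^M\>=V$ would be centralized by $O^2(M)$. That is impossible, since $V$ is a natural $\SL_3(2)$-module.

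Next, $Q_M$ is abelian. Indeed, $[Q_M,O^2(M),Q_M]\leq [V,Q_M]=1$ and $[O^2(M),Q_M,Q_M]=1$. The three-subgroups lemma then gives $[Q_M',O^2(M)]=1$, so $Q_M'\leq C_{Q_M}(O^2(M))=1$.

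Finally, let $q\in Q_M$ and let $x\in O^2(M)$. We have $q^x=qv$ for some $v\in E\leq V$. Since $Q_M$ is abelian and $V$ is elementary abelian, $(q^2)^x=q^2v^2=q^2$. Thus $\mho^1(Q_M)\leq C_{Q_M}(O^2(M))=1$, so $Q_M$ is elementary abelian. By Lemma~\ref{Lemma2.1}(a) this gives $Q_M=\Omega_1Z(Q_M)=Y_M$.

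The step I expect to need the most care is establishing $O^2(M)\leq\<W^M\>$. It relies on the equality $WQ_M=QQ_M$ from Lemma~\ref{Lemma2.2}(d) together with $M=M^\circ Q_M$. The remaining steps are standard coprime-action and commutator calculations.
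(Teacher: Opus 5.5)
Your proof is correct. It matches the paper for the identity $E=[Q_M,O^2(M)]$ and for the reduction from $E\le Y_MQ_2$ to $[E,W]\le V$, and it takes a different route in the two remaining steps.

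\textbf{From $[E,W]\le V$ to $E\le V$.} The paper uses Lemma~\ref{Lemma2.3}(a). There $W$ acts quadratically on $Q_M/V$, so every non-trivial $M$-composition factor is a natural $\SL_3(2)$-module on which $W$ acts non-trivially. Hence $EV/V$ has only trivial composition factors, and $O^2(M)$ centralizes it. You instead note that $C_M(EV/V)$ contains $\langle W^M\rangle$, and that $\langle W^M\rangle Q_M=M$ because $WQ_M=QQ_M$. This avoids the quadratic-action input and the list of irreducible $\mathbb{F}_2\SL_3(2)$-modules entirely; the paper itself uses the same observation later, in Proposition~\ref{Proposition2.16}. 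It is the more elementary argument.

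\textbf{Showing $Q_M$ is elementary abelian.} The paper argues that $V\not\le\Phi(Q_M)$: otherwise $O^2(M)$ would centralize $Q_M/\Phi(Q_M)$, hence $Q_M$. It then applies Lemma~\ref{Lemma2.3}(d), that $V$ is the unique minimal normal $2$-subgroup of $M$, to get $\Phi(Q_M)=1$, and concludes by $2$-reducedness. You re-derive the content of Lemma~\ref{Lemma2.3}(d) in the form $C_{Q_M}(O^2(M))=1$. You then get $Q_M'=1$ from the three subgroups lemma, and that every square in $Q_M$ is trivial from $(q^2)^x=q^2$. You finish with $Y_M=\Omega_1Z(Q_M)$ from Lemma~\ref{Lemma2.1}(a). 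This takes a few more lines than the paper's Burnside-type argument but uses only basic commutator calculus.
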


\begin{proof}
By Lemma \ref{Lemma2.1}(b) we have $M=M^\circ Q_M$. Hence, $O^2(M^\circ) = O^2(M)$. By Lemma \ref{Lemma2.1}(f) we get
\[
[Q_M, O^2(M)] = [Q_M, O^2(M^\circ)] \le Q_M\cap M^\circ \le Q_1.
\]
As $Q_1 \le Q_M$, it follows now from the properties of coprime action that
\[
E \le [Q_M, O^2(M)] = [Q_M, O^2(M), O^2(M)] \le [Q_1, O^2(M)] = E.
\]
So all the inequalities in the statement above are equalities. This proves the first statement.

\smallskip

If $E \le Y_MQ_2$, then Lemma \ref{Lemma2.2}(d),(e) yields $[E,W] \le [Y_MQ_2, W] \le V$. Thus, for the proof of the second statement, we may assume that $[E,W] \le V$ holds. Our argument above gives in particular that $E\leq Q_M$. It follows from Lemma \ref{Lemma2.3}(a) that $W$ acts non-trivially on every non-trivial composition factor of $M$ in $Q_M/V$. Hence, there is no such non-trivial composition factor in $EV/V$ and thus $O^2(M)$ acts trivially on $EV/V$. In particular 
\[E = [Q_M, O^2(M)] = [Q_M, O^2(M), O^2(M)] = [E, O^2(M)] \le V.\]
Note that $O^2(M)$ cannot act trivially on $Q_M/\Phi(Q_M)$, otherwise it would act trivially on $Q_M$, a contradiction. This shows that $V \not \le \Phi(Q_M)$. Since $V$ is by Lemma \ref{Lemma2.3}(d) the unique minimal normal $2$-subgroup of $M$, we get $\Phi(Q_M) = 1$. So $Q_M$ is elementary abelian. Then $C_M(Q_M)=Q_M$, showing that $Q_M$ is $2$-reduced for $M$. Thus, $Q_M = Y_M$.
\end{proof}

The next two lemmas were not stated in a similar way in \cite{MStr:2008}, but are nevertheless useful when following a similar line of argument.

\begin{lemma}\label{Minimality of L}
Let $L_0 < L$ and suppose that $L_0$ is $M \cap \tilde{C}$-invariant. Then the following hold.
\begin{itemize}
\item[(a)] $\ov{Y_M} \le O_2(\ov{L_0Y_M})$.
\item[(b)] If $\ov{L_0} = E(\ov{L_0})$ or $\ov{L_0}$ is odd, then $[\ov{L_0}, \ov{Y_M}] = 1$.
\end{itemize}
\end{lemma}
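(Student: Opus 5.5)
For (a), the idea is to use the minimality of $L$ directly. Since $L_0<L$ is $(M\cap\tilde{C})$-invariant, the minimal choice of $L$ forces $Y_M\leq O_2(L_0Y_M)$. Here $L_0Y_M$ is a subgroup of $M_2$: $Y_M\leq S\leq M\cap\tilde{C}$ normalizes $L_0$. Since $Y_M\leq O_2(L_0Y_M)$, the image $\ov{Y_M}$ lies in $\ov{O_2(L_0Y_M)}$. The latter is a normal $2$-subgroup of $\ov{L_0Y_M}$, so it is contained in $O_2(\ov{L_0Y_M})$. This gives (a).

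For (b), set $P:=O_2(\ov{L_0Y_M})$. By (a), $\ov{Y_M}\leq P$, so $[\ov{L_0},\ov{Y_M}]\leq [\ov{L_0},P]$. The plan is to show that $\ov{L_0}$ centralizes $P$, treating the two cases separately.

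In the case where $\ov{L_0}$ has odd order, $\ov{L_0}$ is normal in $\ov{L_0Y_M}$, and $P\cap\ov{L_0}$ is a normal $2$-subgroup of a group of odd order. Hence $P\cap \ov{L_0}=1$. Thus $[\ov{L_0},P]\leq \ov{L_0}\cap P=1$, as both are normal in $\ov{L_0Y_M}$.

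In the case $\ov{L_0}=E(\ov{L_0})$, the group $\ov{L_0}$ is a central product of quasisimple groups. Here $O_2(\ov{L_0})$ is central in $\ov{L_0}$, since it is a solvable normal subgroup of a perfect central product of quasisimple groups. Again $[\ov{L_0},P]\leq P\cap\ov{L_0}\leq O_2(\ov{L_0})\leq Z(\ov{L_0})$. Hence $[\ov{L_0},P,\ov{L_0}]=1$, and by the three-subgroups lemma with $\ov{L_0}$ perfect we get $[\ov{L_0},P]=[\ov{L_0},\ov{L_0},P]=1$.

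In both cases $[\ov{L_0},\ov{Y_M}]=1$. The only delicate point is the claim $O_2(E(X))\leq Z(E(X))$, which is standard since normal solvable subgroups of a product of components are central. No earlier results beyond the choice of $L$ are needed.
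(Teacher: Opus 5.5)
Your proof is correct and follows the paper's route. Part (a) comes straight from the minimal choice of $L$, and part (b) from the fact that $\ov{L_0}$ centralizes $O_2(\ov{L_0Y_M})$ in both cases. The only difference is presentational: the paper simply notes that every component of $\ov{L_0}$ is a component of $\ov{L_0Y_M}$, and so centralizes its normal $2$-subgroups. You instead verify that commutation directly, via $O_2(\ov{L_0})\le Z(\ov{L_0})$ and the three-subgroups lemma.
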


\begin{proof}
By the minimality of $L$ we have $Y_M \le O_2(L_0Y_M)$, implying $\ov{Y_M} \le O_2(\ov{L_0Y_M})$, which is (a). Now note that $\ov{L_0} \norm \ov{L_0Y_M}$. Since every component of $\ov{L_0}$ is a component of $\ov{L_0Y_M}$, if $\ov{L_0} =E(\ov{L_0})$ we get $[\ov{L_0}, \ov{Y_M}]=1$. Similarly, if $\ov{L_0}$ is odd, then $[\ov{Y_M}, \ov{L_0}]  = 1$. This shows (b).
\end{proof}

\begin{lemma}\label{L:F*ovL}
Either $\ov{L} = E(\ov{L})$ or $\ov{L}=F(\ov{L})$ is an $r$-group for some odd prime $r$. Moreover, $F^*(\ov{H})=\ov{L}$.
\end{lemma}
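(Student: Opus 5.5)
We work in $\ov{M_2}=M_2/Q_2$, which has $O_2(\ov{M_2})=1$. Since $H=\<Y_M^{M_2}\>\unlhd M_2$ by Lemma~\ref{Lemma2.2}(a), also $O_2(\ov{H})=1$, and hence $F^*(\ov{H})=F(\ov{H})E(\ov{H})$ with $F(\ov{H})$ of odd order. Note that $\ov{L}\unlhd\ov{M_2}$ and $\ov{H}=\ov{L}\,\ov{Y_M}$ with $|\ov{Y_M}|=2$ by Lemma~\ref{Lemma2.1}(b). The plan is to show first that $\ov{L}$ is either quasisimple-like ($=E(\ov L)$) or an $r$-group, and then to identify $F^*(\ov H)$ with $\ov L$. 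Throughout, the main tool is Lemma~\ref{Minimality of L}(b): every proper $(M\cap\tilde C)$-invariant subgroup $L_0<L$ with $\ov{L_0}$ of odd order or $\ov{L_0}=E(\ov{L_0})$ commutes with $\ov{Y_M}$ modulo $Q_2$. If such an $L_0$ is also normalized by $L$ (for instance if it is characteristic in a normal subgroup), then $[\ov{L_0},\ov{H}]=[\ov{L_0},\<\ov{Y_M}^{\ov L}\>]=1$.

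\textbf{Step 1.} Consider $F^*(\ov{L})$. Let $L_0$ be the full preimage in $L$ of $E(\ov L)$, or of $O_r(\ov L)$ for an odd prime $r$. These subgroups are characteristic in $\ov L$, so $L_0$ is $M_2$-invariant, in particular $(M\cap\tilde C)$-invariant. If $L_0<L$, then Step~0 gives $[\ov{L_0},\ov H]=1$, so $\ov{L_0}\le Z(\ov H)$. Hence, if no such $L_0$ equals $L$, then $F^*(\ov L)\le Z(\ov H)\cap \ov L$. Since $F^*(\ov L)$ is self-centralizing in $\ov L$, this forces $\ov L=F^*(\ov L)$ to be abelian of odd order. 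Moreover $\ov{L}=[\ov L,\ov{Y_M}]$ by Lemma~\ref{Lemma2.2}(a), while $\ov L\le Z(\ov H)$ gives $[\ov L,\ov{Y_M}]=1$. Then $\ov L=1$, so $L\le Q_2$ and $L=O^2(L)=1$, contradicting $[W,L]\ne1$ from Lemma~\ref{Lemma2.2}(b). Consequently either $\ov{L}=E(\ov L)$, or $\ov L=O_r(\ov L)$ for some odd prime $r$, in which case $\ov L=F(\ov L)$ is an $r$-group. There is a subtlety here: one must make sure the preimage is genuinely proper and $(M\cap \tilde C)$-invariant, and that $Q_2\cap L$ causes no trouble. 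Since $L\unlhd M_2$ with $Q_2\le M_2$ and the statements are modulo $Q_2$, I expect this to be routine.

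\textbf{Step 2.} Now show $F^*(\ov H)=\ov L$. Since $|\ov H:\ov L|\le 2$ and $O_2(\ov H)=1$, we get $E(\ov H)=E(\ov L)$, and $F(\ov H)\cap\ov L=F(\ov L)$ has odd order. Hence $F(\ov H)\le \ov L$, because $|\ov H/\ov L|$ is a $2$-power and $F(\ov H)$ has odd order. Therefore $F^*(\ov H)=F^*(\ov L)$, and it remains to see $F^*(\ov L)=\ov L$. By Step~1 this holds in the $E$-case. In the $r$-group case, $F^*(\ov L)=\ov L$ since $\ov L$ is nilpotent.

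The main obstacle I expect is justifying in Step~1 that a proper characteristic piece $L_0$ of $L$ that commutes with $\ov{Y_M}$ really lies in $Z(\ov H)$, and then deriving the contradiction from $F^*(\ov L)\le Z(\ov H)$. This relies on $\ov L$ being generated by the $\ov L$-conjugates of $\ov{Y_M}$ inside $\ov H$, which is Lemma~\ref{Lemma2.2}(a). In the mixed situation where $F^*(\ov L)$ involves several primes together with components, the same argument shows that each characteristic factor is either all of $\ov L$ or central in $\ov H$, so only one factor survives.
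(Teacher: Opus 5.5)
Your proof is correct and follows essentially the paper's argument. You apply Lemma~\ref{Minimality of L}(b) to the characteristic pieces $E(\ov{L})$ and $O_r(\ov{L})$ of $F^*(\ov{L})$ and contradict the self-centralizing property of $F^*$, with $F^*(\ov{H})=F^*(\ov{L})$ coming from $O_2(\ov{H})=1$ and the $2$-power index of $\ov{L}$ in $\ov{H}$. The only difference is cosmetic: the paper reaches its contradiction via $[\ov{Y_M},F^*(\ov{H})]\neq 1$, whereas you upgrade to $F^*(\ov{L})\le Z(\ov{H})$ using $\ov{H}=\langle \ov{Y_M}^{\ov{L}}\rangle$ and then finish with $L=[L,Y_M]$.
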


\begin{proof}
By Lemma \ref{Lemma2.2}(a) $\ov{H}=\ov{LY_M} \norm \ov{M_2}$ and so $O_2(\ov{LY_M}) = 1$. This implies $F^*(\ov{L}) = F^*(\ov{H})$ as $\ov{L}$ is a normal subgroup of $\ov{H}$ of $p$-power index. In particular, $[\ov{Y_M}, F^*(\ov{L})] \neq 1$, as we would other have that $\ov{Y_M} \le C_{\ov{H}}(F^*(\ov{L}))=Z(F^*(\ov{L}))$, a contradiction to $O_2(\ov{L})=1$.

\smallskip

Assume now that both the layer $E(\ov{L})$ and the fitting subgroup $F(\ov{L})$ are proper subgroups of $\ov{L}$. Note that $F(\ov{L})$ has odd order, as $L \norm M_2$ and we are quotienting over $Q_2=O_2(M_2)$. Thus, by Lemma~\ref{Minimality of L}(b), $[\ov{Y_M},E(\ov{L})] = 1 = [\ov{Y_M},F(\ov{L})]$ contradicting $[\ov{Y_M}, F^*(\ov{L})] \neq 1$.  Thus, $\ov{L} = E(\ov{L})$ or $\ov{L} = F(\ov{L})$. In particular, $\ov{L}=F^*(\ov{L})=F^*(\ov{H})$. 

\smallskip

If $\ov{L}=F(\ov{L})$ and $\ov{L}$ is not an $r$-group for some odd prime $r$, then Lemma~\ref{Minimality of L}(b) applied with $\ov{L_0}=O_r(\ov{L})$ yields that $[O_r(\ov{L}),\ov{Y_M}]=1$ for every odd prime $r$. Thus $[F^*(\ov{L}),\ov{Y_M}]=[\ov{L},\ov{Y_M}]=1$, a contradiction as above.
\end{proof}

\subsection{The proof that $L$ is solvable}

We will now show in a series of lemmas that the group $L$ is solvable.

\begin{lemma}[{Cf. \cite[Lemma~2.5]{MStr:2008}}]\label{Lemma2.5}
Suppose that $L$ is non-solvable and let $U$ be an irreducible $L$-submodule of $\widehat{W}$. Then the following hold:
\begin{itemize}
    \item[(a)] $\ov{L}$ is quasisimple and $\ov{L}=F^*(\ov{H})$;
    \item[(b)] $H$ normalizes $U$ and $U$ is a self-dual $H$-module.
\end{itemize}
\end{lemma}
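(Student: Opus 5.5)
For (a) we use Lemma~\ref{L:F*ovL}. It says that either $\ov{L}=E(\ov{L})$ or $\ov{L}=F(\ov{L})$ is an $r$-group, and that $F^*(\ov{H})=\ov{L}$. The second alternative gives a solvable $\ov{L}$, hence a solvable $L$, since $Q_2$ is a $2$-group. So $L$ non-solvable forces $\ov{L}=E(\ov{L})=F^*(\ov{H})$, and it remains to show that $\ov{L}$ has only one component.

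Let $\ov{K_1},\dots,\ov{K_m}$ be the components of $\ov{L}$. The group $\ov{M\cap\tilde C}$ normalizes $\ov{L}$ and permutes these components. Suppose $m>1$. Let $\ov{L_0}$ be the product of the components in one $\ov{M\cap \tilde C}$-orbit, and let $L_0$ be its full preimage in $L$, intersected appropriately so that it is $(M\cap\tilde{C})$-invariant. If $\ov{L_0}<\ov{L}$, then Lemma~\ref{Minimality of L}(b) gives $[\ov{L_0},\ov{Y_M}]=1$. Doing this for every orbit would yield $[\ov{L},\ov{Y_M}]=1$, contradicting $\ov{L}=F^*(\ov{H})$ with $O_2(\ov{H})=1$. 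Hence $\ov{M\cap\tilde C}$ is transitive on the components.

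Now $\ov{Y_M}$ has order $2$ and $\ov{L}=[\ov{L},\ov{Y_M}]$ by Lemma~\ref{Lemma2.2}(a). Therefore $\ov{Y_M}$ either normalizes each $\ov{K_i}$ or swaps components in pairs. Our plan is to apply Lemma~\ref{Minimality of L} to a single component, or to a $\ov{Y_M}$-orbit of components. For this we will use that $\ov{L}$ is generated by the $\ov{L}$-conjugates of $\ov{Y_M}$. We will also use the fact that $Y_M$ normalizes $V$, and hence $L\cap\tilde M$ sits inside $B$, which normalizes $Y_MQ_2$ by Lemma~\ref{Lemma2.3}(b). A conjugate $\ov{Y_M}^g$ commuting with $\ov{Y_M}$ modulo $Q_2$ then gives quadratic action on $\widehat{W}$ by Lemma~\ref{Lemma2.3}(c).

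With $m\ge2$ and $\ov{Y_M}$ an involution whose conjugates generate $\ov{L}$, we can find $g$ with $\ov{Y_M}^g$ in a different set of components, so the two commute. The group $\<\ov{Y_M},\ov{Y_M}^g\>$ is then a fours group acting quadratically on $\widehat{W}$, while $|[\widehat{W},Y_M]|=4$. This bounds $\widehat{W}$ very tightly and should contradict the faithful action of $\ov{L}$ on $\widehat{W}$ (Lemma~\ref{Lemma2.3}(e)) by a product of several non-abelian quasisimple groups, each needing a module of dimension at least $2$ over its own support. We expect this step, excluding $m>1$, to be the main obstacle, and would first try the dimension count $|[\widehat W,Y_M]|=4$ against the tensor or direct decomposition of $\widehat W$ over $\ov{K_1}\cdots\ov{K_m}$.

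For (b), $L\unlhd H$ with $|H:L|\le 2$, because $|\ov{Y_M}|=2$. By Clifford's theorem, $U^{Y_M}$ is again an irreducible $L$-submodule of $\widehat W$. Suppose $U^{Y_M}\neq U$. Then $U+U^{Y_M}$ is an irreducible $H$-module on which $Y_M$ acts freely, so $|[U+U^{Y_M},Y_M]|=|U|$. By Lemma~\ref{Lemma2.3}(f), $\widehat{W}$ is either irreducible for $H$ or isomorphic to $X\oplus X$ with $|X/C_X(\ov{Y_M})|=2$. Together with $|[\widehat W,Y_M]|=4$, this forces $|U|\le 4$. But then the quasisimple group $\ov{L}$ acts on a space of order at most $4$, and faithfully enough to contradict Lemma~\ref{Lemma2.3}(e). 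Hence $H$ normalizes $U$, and $U$ is an irreducible $H$-module.

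For self-duality, Lemma~\ref{Lemma2.2}(g) says $\widehat{W}$ is self-dual and $H$-homogeneous. Hence $\widehat{W}^*\cong\widehat W$ is homogeneous of the same type, so $U^*\cong U$ as $H$-modules.
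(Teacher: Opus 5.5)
Your part (b) is essentially fine, but part (a) has a genuine gap, and it sits exactly at the substantive step. After Lemma~\ref{L:F*ovL} and your (correct) transitivity argument, everything in (a) hinges on showing that $\ov{L}$ has only one component, and you leave this open.

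The sketch you give for this step would not work as stated. Your alternative ``$\ov{Y_M}$ normalizes each $\ov{K_i}$ or swaps components in pairs'' is not exhaustive. In fact $\ov{Y_M}$ moves no component at all. Indeed, $Y_M\unlhd S$ and $|\ov{Y_M}|=2$ give $\ov{Y_M}\le Z(\ov{S})$. So if $t\in\ov{Y_M}$ moved a component $K$, the Sylow $2$-subgroup $K\cap\ov{S}$ of $K$ would lie in $K\cap K^t\le Z(K)$. This is impossible, since $K/Z(K)$ has even order. Combine this with your transitivity statement and with $\ov{M\cap\tilde{C}}\le\ov{B}=C_{\ov{M_2}}(\ov{Y_M})$ (Lemma~\ref{Lemma2.3}(b)). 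It follows that $\ov{Y_M}$, and likewise every $\ov{L}$-conjugate of it, acts non-trivially on every component. So there is no conjugate $\ov{Y_M}^g$ living in ``a different set of components'', and nothing forces $\ov{Y_M}$ and $\ov{Y_M}^g$ to commute. The tensor-product dimension count is also never carried out.

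The missing idea, which is how the paper proceeds, is the following.
\begin{enumerate}
\item Since $\ov{Y_M}$ normalizes every component, so does every $M_2$-conjugate of $Y_M$. Hence, by Lemma~\ref{Lemma2.2}(a), every component $L_1$ is normal in $\ov{H}$.
\item Let $U$ be a non-trivial irreducible $L_1$-submodule of $\widehat{W}$ and $y\in Y_M$. Then $|U|>4\ge|W/C_W(y)|\ge|\widehat{W}/C_{\widehat{W}}(y)|$, using $W\cap Q_M\le C_W(y)$ and $|W/W\cap Q_M|=4$. So $0\ne C_U(y)\le U\cap U^y$ and $U=U^y$. Thus $H$ normalizes every such $U$. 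This is really your argument from (b), but it has to be run for a single component rather than for $L$.
\item The decisive extra ingredient is Schur's lemma. $C_H(L_1)$ acts on $U$ through the multiplicative group of the finite field $\End_{L_1}(U)$. So $C_H(L_1)'$ centralizes each such $U$, and therefore, by the homogeneity of $\widehat{W}$ as an $H$-module, all of $\widehat{W}$.
\item By Lemma~\ref{Lemma2.3}(e), $C_{\ov{L}}(L_1)$ is then abelian. So $L_1$ is the only component, and $\ov{L}$ is quasisimple.
\end{enumerate}
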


\begin{proof}
\textbf{(a)} By Lemma \ref{L:F*ovL}, we have $\ov{L} = E(\ov{L})=F^*(\ov{H})$. Write $\mathfrak{C}$ for the set of components of $\ov{L}$ 
and pick $L_1 \in \mathcal{C}$. Note that $\< L_1 ^B \>$ is $M \cap \tilde{C}$-invariant. Hence, if $\< L_1 ^B \>$ is proper in $\ov{L}$, we may apply Lemma \ref{Minimality of L}(b) and obtain $[\< L_1^B \>, \ov{Y_M}] =1$. The same argument applies then to any component of $\ov{L}$ which is not a $B$-conjugate of $L_1$, so that $[\ov{L}, \ov{Y_M}]=1$. This contradicts Lemma~\ref{Lemma2.2}(a). Thus $\ov{L} = \< L_1 ^B \>$.

\smallskip

Note that $\ov{Y_M} \le Z(\ov{S})$  as $|\ov{Y_M}|=2$ by Lemma~\ref{Lemma2.1}(b). Hence, if there is $t \in \ov{Y_M}$ such that $L_1^t \ne L_1$, then $L_1$ and $L_1^t$ share the 2-Sylow $L_1 \cap \ov{S}$. As the intersection of any two distinct components is central in both and $|L_1/Z(L_1)|$ is even by the Feit-Thompson theorem, this yields a contradiction. Hence, $L_1$ is normalized by $Y_M$. As $L_1$ was arbitrary, this means that $Y_M$ normalizes every element of $\mathfrak{C}$. As $M_2$ acts on $\mathfrak{C}$ by conjugation, we obtain that every $M_2$-conjugate of $Y_M$ normalizes every element of $\mathfrak{C}$. By Lemma \ref{Lemma2.2}(a) $H=\<(Y_M)^{M_2} \>$. Hence, all the components of $\ov{L}$ are normal in $\ov{H}$.

\smallskip

Recall that by Lemma \ref{Lemma2.2}(g) and \ref{Lemma2.3}(e) $\widehat{W}$ is an irreducible and faithful $\bF_2 \ov{M_2}$-module. Now let $U$ be a non-trivial irreducible $\bF_2 L_1$-submodule of $\widehat{W}$. Then $|U| > 4$ since $L_1$ is non-solvable and acts faithfully on $\widehat{W}$. Take $ y \in Y_M$. Note that $C_W(y) \ge W \cap Q_M$ and $|W /(W \cap Q_M)|=|WQ_M / Q_M|  = 4$ by Lemma~\ref{Lemma2.1}(g) and Lemma~\ref{Lemma2.2}(d). Hence, $|\widehat{W} / C_{\widehat{W}}(y)| \leq  |W / C_W(y)| \le 4$ and so $1\neq C_U(y) \leq U \cap U^y$. As we have shown that $Y_M$ normalizes $L_1$, it follows that $L_1^y = L_1$ normalizes $U \cap U^y$. Since $U$ is an irreducible $\bF_2 L_1$-module, this implies $U = U^y$ showing that $Y_M$ normalizes $U$. Since $L_1\unlhd H$, every $H$-conjugate of $U$ is an irreducible $\bF_2 L_1$-submodule. Thus, it follows likewise that $Y_M$ normalizes every $H$-conjugate of $U$, and thus every $H$-conjugate of $Y_M$ normalizes $U$. Since $H = \< Y_M^H \>$ by Lemma~\ref{Lemma2.2}(a), we can conclude that $H$ normalizes $U$ and thus every non-trivial irreducible $\bF_2 L_1$-submodule of $\widehat{W}$.

\smallskip

As $U$ is an irreducible $L_1$-module, the ring $\End_{L_1}(U)$ of $\bF_2 L_1$-linear endomorphisms is a division ring by Schur's lemma, and then a field as it is finite. Note that we obtain a map $j:C_H(L_1) \fto \End_{L_1}(U)^*$, where $\End_{L_1}(U)^*$ denotes the set of invertible elements. As multiplication in a field is commutative, we have $C_H(L_1)^\prime \le ker(j)$, which means that $C_H(L_1)^\prime$ centralizes $U$. Since $\widehat{W}$ is a homogeneous $H$-module and $H$ normalizes all the non-trivial, irreducible $L_1$-submodules, it follows that $C_H(L_1)^\prime$ centralizes $\widehat{W}$. In particular, by Lemma \ref{Lemma2.3}(e), $C_{\ov{H}}(L_1)^\prime=1$, so $C_{\ov{L}}(L_1)$ is abelian. As a consequence, $L_1$ is the only component of $\ov{L}$ and $\ov{L}=E(\ov{L})=L_1$. This shows (a).



\smallskip

\textbf{(b)} By Lemma~\ref{Lemma2.2}(g), $\hat{W}$ is a self-dual $M_2$-module and thus also a self-dual $H$-module; this was shown using the non-degenerate $\bF_2$-bilinear form on $\hat{W}$ induced by the commutator map. It follows that $U$ is also a self-dual $\bF_2H$-module. 
\end{proof}

\begin{lemma}\label{Lemma2.6}
	Suppose that $L$ is non-solvable and let $U$ be an irreducible $H$-submodule of $\widehat{W}$. Then, up to isomorphism, one of the following holds.
	\begin{itemize}
		\item[(1)] $\ov{H} \in \{ \SL_n(2), \SL_n(4), \Sp_{2n}(2), \Sp_{2n}(4), \SU_{n}(2), \Omega_{2n}^{\pm}(2),\G_2(2)^\prime \}$ and $U$ is the corresponding natural module.
		\item[(2)] $\ov{H} \cong \Sp_6(2)$, $U$ is the spin module and $\ov{Y_M}$ is a short root subgroup of $\ov{H}$.
		\item[(3)] $\ov{H} \in \{ \Sym(n), \Alt(n) \}$ for $n \ge 5$ and $U$ is the natural permutation module; moreover $\ov{L} \cong \Alt(n)$. If $\ov{H} \cong \Sym(n)$, then $\ov{Y_M}$ is generated by a transposition, else if $\ov{H} \cong \Alt(n)$, then $\ov{Y_M}$ is generated by a double transposition.
		\item[(4)] $\ov{H} \cong \Alt(7)$ and $U$ is a spin module.
		\item[(5)] $\ov{H} \cong 3 \cdot \Alt(6)$ and $U$ is a module of dimension 6.
	\end{itemize}
\end{lemma}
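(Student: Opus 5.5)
The plan is to reduce Lemma~\ref{Lemma2.6} to a known classification of quasisimple $\mathcal{K}$-groups with a faithful irreducible module on which an involution has a small commutator space. This is essentially the list of ``$2$-transvection'' or ``quadratic $F$-module'' type groups of Timmesfeld, Meierfrankenfeld and Stroth. The key data are as follows.

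By Lemma~\ref{Lemma2.5}(a), $\ov{L}$ is quasisimple and $\ov{L}=F^*(\ov{H})$. By Lemma~\ref{Lemma2.3}(e), $\ov{H}$ acts faithfully on $\widehat{W}$, and Lemma~\ref{Lemma2.2}(g) makes $\widehat{W}$ homogeneous. So $\ov{H}$ acts faithfully on $U$: a kernel on $U$ is a kernel on every summand, since the summands are isomorphic. By Lemma~\ref{Lemma2.5}(b), $U$ is $H$-invariant, irreducible for $L$, and self-dual.

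The involution $\ov{Y_M}$ has order $2$ by Lemma~\ref{Lemma2.1}(b). By Lemma~\ref{Lemma2.2}(d),(e), $|[\widehat{W},Y_M]|=|\widehat{V}|=4$, so Lemma~\ref{Lemma2.3}(f) gives $|[U,\ov{Y_M}]|\in\{2,4\}$. It equals $2$ exactly when $\widehat{W}\cong U\oplus U$, in which case $\ov{Y_M}$ induces a transvection on $U$. Moreover $\ov{H}=\<\ov{Y_M}^{\ov{H}}\>$ by Lemma~\ref{Lemma2.2}(a).

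Lemma~\ref{Lemma2.3}(c) is also available. For $g\in L$ with $[\ov{Y_M},\ov{Y_M}^g]=1$, we get $[Y_M,Y_M^g]\leq Q_2$, so $\ov{Y_M}\,\ov{Y_M}^g$ acts quadratically on $\widehat{W}$. Thus $D:=\ov{Y_M}^{\ov{H}}$ is a class of involutions generating $\ov{H}$, and any two commuting members generate a quadratic fours group.

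The argument then splits into two cases.

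\textbf{Case 1: $|[U,\ov{Y_M}]|=2$.} Here $D$ is a class of transvections generating an irreducible subgroup $\ov{H}$ of $\GL(U)$ with $F^*(\ov{H})$ quasisimple. McLaughlin's classification of irreducible groups generated by transvections over $\bF_2$ (for $\mathcal{K}$-groups, which applies since $\tilde C$ is a $\mathcal{K}$-group) gives $\SL_n(2)$, $\Sp_{2n}(2)$, $\O^\pm_{2n}(2)$ and $\Sym(n)$ on the natural (heart of the permutation) module, including $\Sym(6)\cong\Sp_4(2)$ and $\Sym(8)\cong\O_6^+(2)$. Non-solvability excludes the small cases. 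This yields parts of (1) and (3), with $\ov{Y_M}$ a transposition and $\ov{L}\cong\Alt(n)$.

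\textbf{Case 2: $|[U,\ov{Y_M}]|=4$.} Here $\widehat{W}=U$ and $\ov{Y_M}$ is a $2$-central involution with $[U,t]$ of order $4$. Write $t$ for a generator of $\ov{Y_M}$. I would invoke the classification of irreducible modules for quasisimple $\mathcal{K}$-groups (extended by the involution) in which a generating class of involutions acts with commutator space of dimension $2$ and commuting pairs act quadratically. This is the classification of groups generated by ``$\mathbb{F}_2$-quadratic $2$-root involutions''; one could alternatively use the Guralnick--Saxl bounds on $\dim[U,t]$. Working through it yields:
\begin{itemize}
 \item $\SL_n(4)$, $\Sp_{2n}(4)$ and $\SU_n(2)$ on their natural modules viewed over $\bF_2$, where $t$ is a field transvection;
 \item $\G_2(2)'$ on its $6$-dimensional module;
 \item $\Sp_6(2)$ on the spin module, with $t$ a short root element;
 \item $\Alt(n)$ on the permutation module, with $t$ a double transposition;
 \item $\Alt(7)$ on a $4$-dimensional spin module over $\bF_2$ (via $\Alt(7)\leq\Alt(8)\cong\SL_4(2)$), although here $t$ is a transvection, so this may instead belong to Case 1;
 \item $3\cdot\Alt(6)$ on a $6$-dimensional module, obtained from $\SL_3(4)$ or the hexacode module.
\end{itemize}

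Two further steps close the argument. Self-duality (Lemma~\ref{Lemma2.5}(b)) removes non-self-dual candidates such as some $\SL_n(4)$ modules, unless the outer automorphism in $\ov{H}$ fuses a module with its dual. Quadraticity of commuting pairs removes candidates like the larger alternating modules with $t$ a product of more transpositions.

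The main obstacle is Case 2: justifying a precise, citable classification theorem for the $|[U,t]|=4$ situation and checking its list against the self-duality and quadraticity constraints. I would try first to cite the relevant result from \cite{MSS} or \cite{MStr:2008} (the original Lemma~2.6 there) and simply verify that its hypotheses hold in the locality setting. All of them are statements about the finite group $M_2\leq\tilde C$, so the group-theoretic proof should transfer verbatim.
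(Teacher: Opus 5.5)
Your closing fallback is what the paper does. However, you have not carried it out, and the one hypothesis that needs a real argument is missing.

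\textbf{What the paper's proof consists of.} It only checks the hypotheses of \cite[Lemma~1.2]{MStr:2008}, which is a general module lemma, not their Lemma~2.6. Those hypotheses are: $F^*(\ov H)=\ov L$ is quasisimple; $C_{\ov H}(U)=1$; $\ov{Y_M}\le Z(\ov S\cap\ov H)$; $|[U,Y_M]|\le 4$; and $Y_MY_M^g$ induces a quadratic fours group on $U$ for some $g$.

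\textbf{The missing step: a quadratic fours group.} Your sentence that any two commuting members of $D$ generate a quadratic fours group is vacuous until you show that some conjugate $\ov{Y_M}^g\neq\ov{Y_M}$ actually commutes with $\ov{Y_M}$. The paper gets this from Glauberman's $Z^*$-theorem. Since $O(\ov H)=O(\ov L)=Z(\ov L)$ and $\ov L=[\ov L,\ov{Y_M}]$ (Lemma~\ref{Lemma2.2}(a)), we have $\ov{Y_M}\not\le Z^*(\ov H)$. The $Z^*$-theorem then gives $g\in L$ with $\ov{Y_M}\neq\ov{Y_M}^g\le\ov S\cap\ov H$ and $[\ov{Y_M},\ov{Y_M}^g]=1$. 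Only at that point does Lemma~\ref{Lemma2.3}(c) produce the quadratic fours group. You also assert without proof that $\ov{Y_M}$ is $2$-central. This follows at once from $Y_M\unlhd S$ and $|\ov{Y_M}|=2$.

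\textbf{Your two-case route does not close as written.}
\begin{itemize}
\item In Case~1, McLaughlin's theorem also returns $\O^\pm_{2n}(2)$ generated by reflections, for example $\O^-_6(2)$ or $\O^\pm_{2n}(2)$ with $n\ge 4$. These are not among (1)--(5). Commuting reflections do generate quadratic fours groups, so your quadraticity constraint does not remove them. You need $2$-centrality, which fails for a reflection $r$ because its centralizer is $\langle r\rangle\times\Sp_{2n-2}(2)$.
\item Case~2 rests on an unnamed classification. That classification is precisely the cited lemma you still have to supply.
\item Your aside on $\Alt(7)$ is wrong. Its involutions are double transpositions in $\Alt(8)\cong\SL_4(2)$, and they act on the $4$-dimensional module with $2$-dimensional commutator space. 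The transvections of $\SL_4(2)$ correspond to the class of cycle type $2^4$, which does not meet $\Alt(7)$.
\end{itemize}

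Neither the split via Lemma~\ref{Lemma2.3}(f) nor self-duality is needed at this stage. The paper uses them only later, in Proposition~\ref{Lemma2.11} and Lemma~\ref{Lemma2.9}.
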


\begin{proof}
By Lemma \ref{Lemma2.5}, $\ov{L} = F^*(\ov{H})$ is quasisimple. Wishing to apply \cite[Lemma 1.2]{MStr:2008}, we check that the hypothesis are satisfied.

\smallskip

As $\widehat{W}$ is a faithful and homogeneous $\ov{H}$-module, $C_{\ov{H}}(U)=1$. Since $Y_M\unlhd S$ and $|\ov{Y_M}| = 2$ by Lemma~\ref{Lemma2.1}(b), we have  $\ov{Y_M} \le Z(\ov{S} \cap \ov{H})$.

\smallskip

Recall that $[Y_M,L] = L$ by Lemma~\ref{Lemma2.2}(a). As $O(\ov{H})=O(\ov{L})=Z(\ov{L})$, it follows that $\ov{Y_M} \not \le Z^*(\ov{H})$. Glauberman's $Z^*$-theorem now implies the existence of an element $g \in L$ such that $\ov{Y_M} \not = \ov{Y_M}^g\leq \ov{S} \cap \ov{H}$ and $[\ov{Y_M},\ov{Y_M}^g] =1$. Now Lemma \ref{Lemma2.3}(c) shows that $Y_MY_M^g$ induces a quadratic fours group on $U$. At the same time, by Lemma \ref{Lemma2.2}(d), $[U, Y_M] \le \widehat{V}$ has order at most 4.	Hence we can apply \cite[Lemma 1.2]{MStr:2008}, which provides us directly with the statement.
\end{proof}

\begin{lemma}\label{Lemma2.7}
Let $U$ be an irreducible $H$-submodule of $\widehat{W}$. Suppose that $\ov{L} \cong \Alt(n)$ for $n \ge 5$ and $U$ is the natural permutation module for $\ov{L}$. Then $n=6$ and $\ov{H}\cong \Sym(6)$.
\end{lemma}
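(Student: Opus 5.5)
We will work entirely inside $\ov{H}$ acting faithfully on $\widehat{W}$, using the constraints already established. These are: $|\ov{Y_M}|=2$; $|[\widehat{W},Y_M]|=|\widehat{V}|=4$ by Lemma~\ref{Lemma2.2}(d); and $\widehat{W}\cong U$ or $U\oplus U$ as $H$-modules, with the second case occurring exactly when $|U/C_U(\ov{Y_M})|=2$ (Lemma~\ref{Lemma2.3}(f)). By Lemma~\ref{Lemma2.6}(3), $\ov{H}\in\{\Sym(n),\Alt(n)\}$. Moreover, $\ov{Y_M}$ is generated by a transposition in the $\Sym(n)$ case and by a double transposition in the $\Alt(n)$ case.

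\textbf{Step 1: the $\Alt(n)$ case.} Suppose $\ov{H}\cong\Alt(n)$. Then $\ov{H}=\ov{L}$, so $\ov{Y_M}\le\ov{L}$. This contradicts $L=O^2(L)=[L,Y_M]$ together with the minimal choice of $L$ (Lemma~\ref{Lemma2.2}(a)), since $H=LY_M$ with $Y_M\not\le L Q_2$ should give $|\ov H:\ov L|=2$. More precisely, $\ov{H}/\ov{L}\cong \ov{Y_M}/(\ov{Y_M}\cap\ov{L})$. If $\ov{Y_M}\le\ov{L}$, then $\ov{H}=\ov{L}$, and I would derive the contradiction from the structure of $\ov{H}$ versus $H/L$ as follows. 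If the $H/L$ argument does not work directly, I would instead fall back on the module: a double transposition $t$ on the natural permutation module of $\Alt(n)$ (the heart of the permutation module) has $|[U,t]|=4$. Then Lemma~\ref{Lemma2.3}(f) forces $\widehat{W}=U$. Next I would use the self-duality and the symplectic form on $\widehat{W}$ (Lemma~\ref{Lemma2.2}(g)) together with the requirement that $\widehat{W}$ be an irreducible $M_2$-module on which $B$ acts, with $B/C_B(V)$ inducing $\SL_2(2)$ on $\widehat{V}$. The aim is to reach a contradiction with the structure of $N_{M_2}(Y_MQ_2)=B$ from Lemma~\ref{Lemma2.3}(b).

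\textbf{Step 2: the $\Sym(n)$ case.} Now $\ov{Y_M}=\langle t\rangle$ with $t$ a transposition, so $|[U,t]|=2$. By Lemma~\ref{Lemma2.3}(f), $\widehat{W}\cong U\oplus U$. Consider $N_{\ov{M_2}}(\ov{Y_M})=\ov{B}$ (Lemma~\ref{Lemma2.3}(b)), which contains $C_{\ov H}(t)\cong 2\times\Sym(n-2)$. By Lemma~\ref{Lemma2.1}(f),(g), $B$ induces on $\widehat{V}=[\widehat W,Y_M]$ (of order $4$) a group containing $\SL_2(2)\cong\Sym(3)$, while $O_2(B)/Q_1$ is a natural module of order $4$ for $B/C_B(V)\cong\Sym(4)$-type structure. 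The key point is that $\ov{B}\cap\ov{H}=C_{\ov H}(t)$ acts on $[\widehat W,t]$, and this action is trivial because $[U\oplus U,t]$ is spanned by the vectors $e_i+e_j$ in each copy. Hence the $\Sym(3)$ action on $\widehat V$ must come from $M_2$ outside $\ov H$, that is, from $C_{\ov{M_2}}(\ov{H})$ permuting the two copies of $U$ as $\GL_2(2)$ acting on the multiplicity space. I then want to compare $|O_2(B)/Q_1|=4$ and $|\ov{S}|$ with the Sylow $2$-subgroup of $2\times\Sym(n-2)\times\SL_2(2)$. Here $\ov{Y_M}$ must be the image of a transvection-type group, and $\ov{B}$ must have a normal $2$-subgroup acting as in Lemma~\ref{Lemma2.1}(g) with $O_2(\ov B/\ldots)$ small. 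This forces $O_2(\Sym(n-2))$ to supply the required normal fours-group structure, namely $\Sym(n-2)\cong\Sym(4)$, that is, $n=6$.

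\textbf{Main obstacle.} The hard part will be the precise bookkeeping in Step~2 of how $\ov{B}$ and $O_2(B)Q_2/Q_2$ sit inside $C_{\ov{M_2}}(t)$, in order to pin down $n-2=4$. In particular, I expect to need the fact that $O_2(B)=WQ_1$ induces on $\widehat W$ only a small group (the index $|W:W\cap Q_M|=4$). I would also need to rule out $n=5$ and $n\ge 7$ by counting: for $n\ge 7$, $\Sym(n-2)$ has no nontrivial normal $2$-subgroup compatible with $O_2(\ov{B})\neq\ov{Y_M}$; for $n=5$, $\Sym(3)$ is too small to contain $W Q_2\cap\ldots$. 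I would try this counting first, using $|\widehat W/C_{\widehat W}(y)|\le 4$ from the proof of Lemma~\ref{Lemma2.5}.
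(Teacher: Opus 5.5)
Your proposal has genuine gaps in both steps: neither the double-transposition case nor the values $n=5$ and $n\ge 7$ are actually excluded.

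\textbf{Step 1.} Your first argument is invalid. Nothing established so far forces $\ov{Y_M}\not\le\ov{L}$. The minimal choice of $L$ only gives $Y_M\not\le O_2(LY_M)$, and $\ov{H}=\ov{L}\cong\Alt(n)$ is consistent with $L=O^2(L)=[L,Y_M]$ because $\Alt(n)$ is perfect. Lemma~\ref{Lemma2.6}(3) lists exactly this case. Your fallback ($\widehat{W}=U$, self-duality, the action of $B$ on $\widehat{V}$) never names a contradiction.

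The missing idea is Lemma~\ref{Lemma2.3}(c). The elements $(12)(34)$ and $(13)(24)$ are commuting $\ov{L}$-conjugates; for instance, conjugating by $(234)$ sends one to the other. So there is $g\in L$ with $[Y_M,Y_M^g]\le Q_2$, and $Y_MY_M^g$ would induce the fours group $\langle(12)(34),(13)(24)\rangle$ quadratically on $U$. It does not act quadratically: $e_1+e_5\mapsto e_1+e_2\mapsto e_1+e_2+e_3+e_4\neq 0$. Hence $\ov{Y_M}$ is a transposition and $\ov{H}\cong\Sym(n)$ with $\ov{L}<\ov{H}$.

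\textbf{Step 2.} You never derive $n=6$. Your observation that $C_{\ov{H}}(t)$ acts trivially on $[\widehat{W},t]$ is correct, but what follows is unsupported.
\begin{itemize}
\item Lemma~\ref{Lemma2.1}(g) concerns $O_2(B)/Q_1\cong Q_2/(Q_1\cap Q_2)$, which is killed in $\ov{M_2}$. So it does not force $\ov{B}$ to have a sizeable normal $2$-subgroup.
\item Your claim $O_2(\ov{B})\neq\ov{Y_M}$ is assumed, not proved.
\end{itemize}
The argument has to run the other way, and the ingredient you are missing is Lemma~\ref{Lemma2.4}.

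For $n\ge 7$, proceed as follows.
\begin{itemize}
\item By Lemma~\ref{Lemma2.1}(e), $Y_M\le Q_1$. Lemma~\ref{charact-of-symm-gr} then gives $\ov{LQ_1}=\ov{H}$, so $\ov{B\cap LQ_1}=C_{\ov{H}}(\ov{Y_M})\cong\ov{Y_M}\times\Sym(n-2)$.
\item Since $O_2(\Sym(n-2))=1$, the $2$-radical of this group is $\ov{Y_M}$, which forces $Q_1\le Y_MQ_2$.
\item Hence $[Q_1,W]\le V$ by Lemma~\ref{Lemma2.2}(d),(e), and Lemma~\ref{Lemma2.4} yields $Y_M=Q_M=Q_1$.
\item Since $C_M(Z)/Q_M\cong\Sym(4)$ and $Q_1Q_2/Q_1$ has order $4$ (Lemma~\ref{Lemma2.1}(a),(g)), this gives $|S/Y_MQ_2|=2$.
\item But $\ov{S}$ contains a Sylow $2$-subgroup of $\ov{Y_M}\times\Sym(n-2)$, so $|\ov{S}/\ov{Y_M}|\ge 8$, a contradiction.
\end{itemize}

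For $n=5$ no counting is needed. $\ov{Y_M}\le Z(\ov{S})$ is $2$-central in $\ov{H}$, whereas transpositions in $\Sym(5)$ are not $2$-central.
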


\begin{proof}
By contradiction, suppose $U$ is the natural permutation module for $\ov{L}$. Then we fall in case (3) of Lemma \ref{Lemma2.6}. So, up to a suitable conjugation, $\ov{Y_M}$ corresponds either to $(12)(34)$ or to $(12)$. Recall that, by Lemma \ref{Lemma2.3}(c), if $g \in L$ is such that $[Y_M,Y_M^g] \le Q_2$, then $Y_MY_M^g$ induces a quadratic fours group on $\widehat{W}$. Since this holds for all choices of $g$ and $\<(12)(34),(13)(24)\>$ does not act quadratically on the natural permutation module, $\ov{Y_M}$ is indeed conjugate to $\<(12)\>$. In particular, $\ov{H}\cong \Sym(n)$ and $\ov{L}<\ov{H}$. 

\smallskip

Since $\ov{Y_M}$ is 2-central in $\ov{H}$ and the only 2-central elements in $\Alt(5)$ are double-transpositions, we have $n \ne 5$. Assume now $n > 6$.

\smallskip

It follows from Lemma~\ref{Lemma2.1}(e) that $Y_M\leq Q_1$ and thus $\ov{L}<\ov{H}\leq \ov{LQ_1}$. Thus Lemma \ref{charact-of-symm-gr} yields $\ov{LQ_1} \cong \Sym(n)$ and so $\ov{LQ_1}=\ov{H}$. Using Lemma \ref{Lemma2.3}(b) we obtain
\[\ov{B\cap LQ_1} = C_{\ov{LQ_1}}(\ov{Y_M}) \cong \ov{Y_M} \times \Sym(n-2).\]
From $n-2 > 4$ we get $O_2(\Sym(n-2)) =1$, so $Q_1 \le O_2(B) \cap Q_1L \le O_2(B \cap Q_1L) \le Y_MQ_2$ using Lemma~\ref{Lemma2.1}(g). It follows that $[Q_1,W] \le [Y_MQ_2, W] \le V$ by Lemma~\ref{Lemma2.2}(d),(e). In particular we can apply Lemma \ref{Lemma2.4} and obtain $Y_M=Q_M$. Then also $Y_M=Q_1$ by Lemma~\ref{Lemma2.1}(e). Recall that, by Lemma~\ref{Lemma2.1}(a),(g) we have that $M \cap \tilde{C} = C_M(Z)$ and $Q_1Q_2$ corresponds to the transvection group to the point $Z$ in the action on $V$. As $C_M(Z) / Q_M \cong \Sym(4)$, we obtain $|S/Y_MQ_2|  = 2$, which contradicts $\ov{B \cap LQ_1}/\ov{Y_M} \cong \Sym(n-2)$. This shows $n=6$.
\end{proof}

\begin{definition}
	Let $H$ be a finite group, $U$ an $\mathbb{F}_pH$-module and $T \in \Syl_p(H)$. Then the subgroup
	\[
	P_H(T,U) := O^{p^\prime}(C_H(C_U(T)))
	\]
	is called a \emph{point stabilizer} of $H$ on $U$ with respect to $T$.
\end{definition}


\begin{lemma}\label{Lemma2.8}
Let $U$ be a simple $H$-submodule of $\widehat{W}$.	Suppose that $\ov{H} \in \{\Omega_{2n}^\pm (2), \Sp_{2n}(2) \}$ and that $U$ is the corresponding natural module. Then $\ov{H} \cong \Sp_{2n}(2)$ 
and $\ov{Y_M}$ induces a transvection on $U$.
\end{lemma}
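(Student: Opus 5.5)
We work within case (1) of Lemma~\ref{Lemma2.6}: $\ov{H}$ is $\Omega^\pm_{2n}(2)$ or $\Sp_{2n}(2)$ and $U$ is its natural module. The aim is to pin down the action of the involution $t$ generating $\ov{Y_M}$, using two facts established earlier.

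\emph{Size of $[U,t]$.} By Lemma~\ref{Lemma2.2}(d) we have $[\widehat{W},Y_M]=\widehat{V}$ with $|\widehat{V}|=4$. By Lemma~\ref{Lemma2.3}(f), either $\widehat{W}=U$, in which case $|[U,t]|=4$, or $\widehat{W}\cong U\oplus U$ and $|[U,t]|=2$. Hence $t$ is either a transvection on $U$ ($|[U,t]|=2$) or an involution with $[U,t]$ of rank $2$.

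\emph{Quadratic fours groups.} By Lemma~\ref{Lemma2.3}(c), for every $g\in L$ with $[Y_M,Y_M^g]\le Q_2$, the group $\ov{Y_M}\,\ov{Y_M}^g$ is abelian and acts quadratically on $\widehat{W}$, hence on $U$. By Lemma~\ref{L:F*ovL} and Lemma~\ref{Lemma2.2}(a), $\ov{L}=[\ov{L},\ov{Y_M}]$ is the simple group $\Omega^\pm_{2n}(2)$ or $\Sp_{2n}(2)'$. Moreover $t$ is $2$-central in $\ov{H}$, because $|\ov{Y_M}|=2$ and $\ov{Y_M}\unlhd \ov{S}$.

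\emph{Ruling out $|[U,t]|=4$.} In $\Sp_{2n}(2)$ and in $\Omega^\pm_{2n}(2)$, a $2$-central involution with $[U,t]$ of rank $2$ is of type $c_2$ or $a_2$: it either has a totally singular commutator (orthogonal case), or has $[U,t]$ non-degenerate but fails to be $2$-central. Take $t$ of type $a_2$, with $[U,t]=E$ a totally singular (or isotropic) line. Then I would choose a conjugate $t^g$ with $[U,t^g]=E'$ a line meeting $E$ in a point, with $E+E'$ totally singular. In that case $t$ and $t^g$ commute and $[\ov{Y_M},\ov{Y_M}^g]=1\le \ov{Q_2}$ holds trivially. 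The product $tt^g$, however, acts non-quadratically, since $[U,t,t^g]\ne 0$ arises from the form pairing. To make this rigorous I would write $t=1+\phi_{E}$ with respect to the bilinear form and compute $[U,t,t^g]$ directly. The alternative, closer to \cite{MStr:2008}, is to invoke \cite[Lemma~1.2]{MStr:2008} in the sharpened form: for natural modules of orthogonal and symplectic groups, the quadratic fours groups generated by conjugates of a $2$-central involution $t$ with $|[U,t]|\le 4$ force $t$ to be a transvection.

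\emph{Excluding $\Omega^\pm_{2n}(2)$ and identifying the action.} Once $t$ is shown to be a transvection on $U$, the group $\Omega^\pm_{2n}(2)$ is excluded, since it contains no transvections. This gives $\ov{H}\cong\Sp_{2n}(2)$, with $\ov{Y_M}$ inducing a transvection on $U$. In the orthogonal case, the transvections (reflections) lie in $\O^\pm_{2n}(2)\setminus\Omega^\pm_{2n}(2)$. So if $\ov{H}=\O^\pm_{2n}(2)$ were allowed, a separate argument would be needed; but the case list in Lemma~\ref{Lemma2.6} restricts $\ov{H}$ to $\Omega^\pm_{2n}(2)$ itself, so no such argument is required.

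\emph{Main obstacle.} The hardest step is excluding the rank-$2$ case, especially for $\Omega^\pm_{2n}(2)$ with $n$ small. One such instance is $\Omega_4^-(2)\cong\Alt(5)$, which should already be handled by earlier cases. If the quadratic-pair argument fails there, I would fall back on the $\Sp_6(2)$ and $\Omega^+_8(2)$ exceptional isomorphisms and on case-specific computations.
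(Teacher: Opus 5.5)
Your reduction to the alternative ``$t$ is a transvection on $U$ or $|[U,t]|=4$'' (via Lemma~\ref{Lemma2.2}(d) and Lemma~\ref{Lemma2.3}(f)) is fine, and so is your last step. The decisive step ruling out $|[U,t]|=4$ fails, however, and not merely for lack of a computation. For any isometry $s$ of the non-degenerate form $B$ on $U$ one has $C_U(s)=[U,s]^\perp$. So for commuting involutions $t,t^g$, the fours group $\langle t,t^g\rangle$ acts quadratically on $U$ if and only if $[U,t]\perp[U,t^g]$. In your configuration $E+E'$ is totally singular, hence $E\perp E'$. Therefore $[U,t,t^g]=0$ and $\langle t,t^g\rangle$ \emph{does} act quadratically, which agrees with Lemma~\ref{Lemma2.3}(c) and yields no contradiction. (For Siegel elements $t_{v,w},t_{v,w'}$ with $\langle v,w,w'\rangle$ totally singular, $t_{v,w'}$ centralizes $\langle v,w\rangle=[U,t_{v,w}]$.) You would need a commuting conjugate with $[U,t]\not\perp[U,t^g]$, for example $t_{v,w'}$ with $w'\in v^\perp$ singular and $B(w,w')=1$. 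It still commutes with $t_{v,w}$, since both lie in the abelian group $O_2$ of the stabilizer of $v$, but $[w,t_{v,w'}]=v\neq 0$.

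Your list of $2$-central candidates is also wrong for the symplectic group. For every involution $t\in\Sp_{2n}(2)$ the space $[U,t]$ is totally isotropic, and both $a_2$- and $c_2$-involutions are $2$-central. For instance, in $\Sp_4(2)\cong\Sym(6)$ on its natural module, $Z(S)=\langle(12)(34),(56)\rangle$ contains the $c_2$-element $(12)(34)$ and the $a_2$-element $(12)(34)(56)$. So the $c_2$-case is never treated. The fallback to a ``sharpened'' \cite[Lemma~1.2]{MStr:2008} is circular: case (1) of Lemma~\ref{Lemma2.6} leaves $\ov{Y_M}$ unspecified, and determining it is exactly the content of this lemma.

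The paper avoids choosing pairs altogether. Let $\ov P=P_{\ov H}(\ov S\cap\ov H,U)$ be the point stabilizer. Then $O_2(\ov P)$ is abelian with $C_{\ov P}(O_2(\ov P))\le O_2(\ov P)$, so $\ov{Y_M}\le Z(\ov S)\cap\ov P\le C_{\ov P}(O_2(\ov P))\le O_2(\ov P)$. Hence $\langle\ov{Y_M}^{\ov P}\rangle$ is abelian, Lemma~\ref{Lemma2.3}(c) makes $A=\langle Y_M^P\rangle$ abelian, and since $W\le P$ normalizes $A$, $[W,A,A]\le[A,A]=1$. The $\ov P$-module structure of $O_2(\ov P)$ then shows that a non-trivial $\ov P$-invariant subgroup of $O_2(\ov P)$ acts quadratically on $U$ only if it is the transvection group of $\Sp_{2n}(2)$. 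This treats all involution types at once. Your deferral of $\Omega_4^-(2)\cong\Alt(5)$ to Lemma~\ref{Lemma2.7} is correct and in fact needed: there $\ov P$ is just a Sylow $2$-subgroup, so the normal-closure argument gives nothing.
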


\begin{proof}
Set $\ov{P} := P_{\ov{H}}(\ov{S} \cap \ov{H},U)$ and let $P$ be the preimage of $\ov{P}$ in $H$. Then one sees from the structure of $(\ov{H},U)$ that $O_2(\ov{P})$ is abelian and $C_{\ov{P}}(O_2(\ov{P}))\leq O_2(\ov{P})$. Note that $[\ov{Y_M},\ov{S}]=1$. Hence $\ov{Y_M} \le \ov{P}$ and then $\ov{Y_M}\leq C_{\ov{P}}(O_2(\ov{P}))\leq O_2(\ov{P})$. Thus, $\<(\ov{Y_M})^{\ov{P}}\>\leq O_2(\ov{P})$ is abelian and Lemma \ref{Lemma2.3}(c) implies that $A:=\<Y_M^P\>$ is abelian. Note that $W$ normalizes $Y_M$ and thus $A$. Hence, $[W,A,A]\leq [A,A]=1$ and $A$ acts quadratically on $\hat{W}$. This is only possible if $\ov{H} \cong \Sp_{2n}(2)$ and $\ov{Y_M}$ induces a transvection on $U$. 
\end{proof}

\begin{lemma}\label{Lemma2.9}
The group $\ov{H}$ is not isomorphic to any of the groups $\SL_n(2)$ for $n\geq 3$, $\SL_n(4)$ for $n\geq 3$, $3\cdot \Alt(6)$, $\Alt(7)$.
\end{lemma}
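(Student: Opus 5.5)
The plan is to argue by contradiction, assuming $\ov{H}$ is one of the listed groups. By Lemma~\ref{Lemma2.6} we may then take $U$ to be the corresponding natural module in case (1), the spin module in case (4), or the $6$-dimensional module in case (5). The common theme is that in each of these cases $\ov{H}=\ov{L}$ is perfect: $\SL_n(2)$, $\SL_n(4)$ ($n\ge 3$), $\Alt(7)$ and $3\cdot\Alt(6)$ have no outer part inside $\ov{H}$. So $\ov{Y_M}\le \ov{L}$. I would play this off against the involution $\ov{Y_M}$ and the constraints already established. These are: $|\ov{Y_M}|=2$ and $\ov{Y_M}\le Z(\ov{S}\cap\ov{H})$ (Lemma~\ref{Lemma2.1}(b)); $|[U,Y_M]|\le |[\widehat W,Y_M]|=4$ (Lemma~\ref{Lemma2.2}(d)); quadraticity of $Y_MY_M^g$ whenever $[Y_M,Y_M^g]\le Q_2$ (Lemma~\ref{Lemma2.3}(c)); and the description $N_{M_2}(Y_MQ_2)=B$ (Lemma~\ref{Lemma2.3}(b)).

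\textbf{Cases $\SL_n(2)$ and $\SL_n(4)$.} Here $\ov{Y_M}$ is a $2$-central involution, so it is a transvection (root element). I would imitate Lemma~\ref{Lemma2.8}. Take $\ov{P}=P_{\ov{H}}(\ov{S}\cap\ov{H},U)$, the stabilizer of a point (or of the corresponding hyperplane in the dual). Its unipotent radical $O_2(\ov{P})$ is elementary abelian and contains the $2$-central transvection $\ov{Y_M}$. As in Lemma~\ref{Lemma2.8}, Lemma~\ref{Lemma2.3}(c) then forces $A=\langle Y_M^P\rangle$ to be abelian, and $A$ acts quadratically on $\widehat W$. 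Next I would use Lemma~\ref{Lemma2.3}(f). It shows that $\widehat W$ is $U$ or $U\oplus U$, and that the second possibility occurs exactly when $|U/C_U(\ov{Y_M})|=2$. With $U$ natural (or the dual module, by self-duality of $\widehat W$ from Lemma~\ref{Lemma2.2}(g)) we have $|[U,Y_M]|=2$ for $\SL_n(2)$ and $4$ for $\SL_n(4)$. But $U$ is not self-dual for $n\ge3$, whereas $\widehat W$ is self-dual. If $\widehat W=U$ this is an immediate contradiction. If $\widehat W\cong U\oplus U$ is homogeneous, self-duality forces $U\cong U^*$, which is again false. I expect this self-duality argument to dispose of the linear cases uniformly. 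The one caveat is to check that the $\SL_n(4)$-module over $\bF_2$, viewed as an $\bF_2$-module, remains non-self-dual; it does, since the dual involves the inverse transpose twisted by Frobenius.

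\textbf{Cases $\Alt(7)$ and $3\cdot\Alt(6)$.} The $4$-dimensional $\bF_4$-modules (for $\Alt(7)$ inside $\SL_4(2)$, or the spin modules) and the $6$-dimensional $3\cdot\Alt(6)$-module (over $\bF_4$ of dimension $3$) are likewise not self-dual as $\bF_2H$-modules. For $3\cdot\Alt(6)$ this follows because $Z(\ov{H})$ of order $3$ acts fixed-point-freely through $\bF_4^\times$, and the dual inverts that scalar action. For $\Alt(7)$ on the $4$-dimensional module there are two dual modules interchanged by the outer automorphism. So Lemma~\ref{Lemma2.5}(b), which says $U$ is self-dual, gives a contradiction directly.

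The main obstacle will be making sure that self-duality really is available. Lemma~\ref{Lemma2.5}(b) provides it for irreducible $L$-submodules, and hence for $H$-submodules, since $H$ normalizes them. The remaining work is to verify the precise non-self-duality of each module, including the Galois twist issue over $\bF_4$. If one of these fails, for example if some $\bF_4$-module realized over $\bF_2$ turns out to be self-dual, I would fall back on the quadratic-fours-group and $|[U,Y_M]|\le4$ counting: apply Lemma~\ref{Lemma2.3}(f) together with the Lemma~\ref{Lemma2.4} trick to reach $Y_M=Q_M$, and then contradict $|S/Y_MQ_2|=2$ as in Lemma~\ref{Lemma2.7}.
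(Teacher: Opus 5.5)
Your main tool, self-duality of $U$ via Lemma~\ref{Lemma2.5}(b), is the same one the paper uses. However, your first step has a genuine gap. You write that ``by Lemma~\ref{Lemma2.6} we may take $U$ to be the corresponding natural module, the spin module, or the $6$-dimensional module''. Lemma~\ref{Lemma2.6} lists pairs $(\ov{H},U)$ only up to isomorphism, and the abstract groups in the statement can occur there in more than one guise. The key coincidence is $\SL_4(2)\cong\Alt(8)\cong\Omega_6^+(2)$. If $\ov{H}\cong\SL_4(2)$, Lemma~\ref{Lemma2.6} also allows $U$ to be the natural $6$-dimensional $\Omega_6^+(2)$-module from case (1), which is the same as the $6$-dimensional permutation module for $\Alt(8)$ from case (3). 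Similarly, $\ov{H}\cong\Alt(7)$ allows $U$ to be the permutation module of case (3). These modules carry invariant nondegenerate forms, so they are self-dual, and your argument gives no contradiction for them.

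You need an extra step here. The permutation-module cases are excluded by Lemma~\ref{Lemma2.7}, which forces $\ov{H}\cong\Sym(6)$. The $\Omega_6^+(2)$ case is excluded by Lemma~\ref{Lemma2.8}, which forces $\ov{H}\cong\Sp_{2n}(2)$. This is exactly how the paper finishes: after discarding the non-self-dual modules, the only remaining possibility is $\ov{H}\cong\SL_4(2)\cong\Omega_6^+(2)$ with $U$ the natural $\Omega_6^+(2)$-module, and Lemma~\ref{Lemma2.8} rules it out. Your fallback via Lemma~\ref{Lemma2.4} is too vague to replace this step. The Lemma~\ref{Lemma2.8}-style computation you start in the linear case is never used and can be dropped.

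A smaller point concerns your justification of non-self-duality for $3\cdot\Alt(6)$, and also for $\SL_n(4)$. The $\bF_2$-module here is an absolutely irreducible $\bF_4$-module $U$ viewed over $\bF_2$. Such a module is self-dual if and only if $U^*\cong U$ or $U^*\cong U^{\sigma}$, where $\sigma$ is the Frobenius twist. Inverting the central scalar only rules out $U^*\cong U$, because $U^{\sigma}$ has the same inverted central character. You must also exclude $U^*\cong U^{\sigma}$, which amounts to excluding an invariant nondegenerate Hermitian form. For $3\cdot\Alt(6)$ this holds because $|\operatorname{GU}_3(2)|=648<1080$. For $\SL_n(4)$ with $n\geq 3$ it holds because $\SL_n(4)$ is not contained in $\operatorname{GU}_n(2)$. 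The paper simply asserts these facts, so this is about giving a correct reason, not about the route.
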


\begin{proof}
Suppose this is false. Then $\ov{H}$ does not contain a subgroup of index $2$ and so $\ov{L}=\ov{H}$. Let $U$ be, as usual, a simple $H$-submodule of $\widehat{W}$. By Lemma \ref{Lemma2.5}, $U$ is a self-dual $H$-module.

\smallskip

If $n \ge 3$, the natural module for $\SL_n(q)$ is not self-dual. Similarly, also the 6-dimensional module for $3 \cdot \Alt(6)$ and the spin module for $\Alt(7)$ are not self-dual. 

\smallskip

Inspecting now the list in Lemma~\ref{Lemma2.6} and considering isomorphisms between the groups listed there, the only possibility that remains is that $\ov{H}\cong \SL_4(2) \cong \Omega_6^+(2)$ and that $U$ is the natural module for $\Omega_6^+(2)$. However, in this case, Lemma \ref{Lemma2.8} yields a contradiction.
\end{proof}


We now wish to use the gathered information about $\ov{H}$ to study the structure of $M_1/Q_1$, $\ov{M_2}$ and $B/O_2(B)$ in the case that $L$ is not solvable. Recall that we defined
\[T = O^2(M \cap \tilde{C}).\]

\begin{lemma}[{Cf. \cite[Lemma~2.10]{MStr:2008}}]\label{Lemma2.10}~
\begin{itemize}
\item [(a)] We have $T=O^2(B\cap M^\circ)$. If $F \norm B$ such that $[V/Z,F] \ne 1$, then $T  \le F$.
\item [(b)] If $[V/Z, L \cap B] \ne 1$, then $T \le L \cap B$ and $M_2 =LS$.
\end{itemize}
\end{lemma}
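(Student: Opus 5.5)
For (a), the plan is to work modulo $Q_1$ and then lift back.

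First, $T=O^2(M\cap\tilde{C})$. By Lemma~\ref{Lemma2.1}(f) we have $B\cap M^\circ=C_{M^\circ}(Z)=M^\circ\cap\tilde{C}$, and $M\cap\tilde{C}=(M^\circ\cap\tilde{C})Q_M$ since $M=M^\circ Q_M$ with $Q_M\le\tilde{C}$. Hence $M^\circ\cap\tilde{C}$ has $2$-power index in $M\cap\tilde{C}$, and so $O^2(M\cap\tilde{C})=O^2(M^\circ\cap\tilde{C})=O^2(B\cap M^\circ)$.

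Now let $F\unlhd B$ with $[V/Z,F]\ne1$. Recall from Lemma~\ref{Lemma2.1}(f) that
\[B/Q_1=(B\cap M^\circ)Q_1/Q_1\times C_B(V)/Q_1,\]
where $(B\cap M^\circ)Q_1/Q_1\cong S_4$ is a point stabilizer in $\SL_3(2)$ acting faithfully on $V/Z$ as $\SL_2(2)$ with kernel $O_2(B)/Q_1$. The image of $F$ in $B/C_B(V/Z)\cong \SL_2(2)\cong S_3$ is normal and non-trivial, so it contains the element of order $3$. Therefore $FO_2(B)C_B(V)$ contains a $3$-element of $B\cap M^\circ$.

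To get all of $T$ and not just a $3$-element, I would argue as follows. $FQ_1/Q_1$ is normal in $B/Q_1$, and its projection to the $S_4$ factor is normal there and not contained in the $O_2$, so that projection contains $O^2(S_4)\cong A_4$. The issue is that a normal subgroup of a direct product need not contain the factor $O^2$ piece itself, only its projection. To handle this, take the commutator $[F,O^2(B\cap M^\circ)]$. Since the direct factors commute modulo $Q_1$, this commutator, modulo $Q_1$, lies in the $S_4$-factor and equals $[\bar F_1,A_4]$, where $\bar F_1$ is the projection; this contains $[A_4,A_4]\cdot$(a $3$-element action), which is all of $A_4$ because $A_4$ is perfect on $O_2(A_4)$ via its $3$-element. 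So $A_4Q_1/Q_1\le FQ_1/Q_1$, i.e. $T\le FQ_1$. Then $T=O^2(T)\le O^2(FQ_1)\le F$, using that $F\unlhd FQ_1$ has $2$-power index.

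For (b), apply (a) with $F=L\cap B$, which is normal in $B$ because $L$ is $B$-invariant. This gives $T\le L\cap B$. For $M_2=LS$, note $M_2=LB$, so it suffices to show $B\le LS$. Here $B=(M\cap\tilde{C})(L\cap\tilde{M})$, and $M\cap\tilde{C}=TS$ with $T\le L$, so $M\cap\tilde{C}\le LS$; also $L\cap\tilde{M}\le L$. Hence $B\le LS$ and $M_2=LS$.

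The step I expect to be the main obstacle is obtaining the full $T$ rather than just its $3$-part, i.e. the direct-product projection argument in (a).
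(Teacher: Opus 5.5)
\textbf{Gap in the step you flagged.} Your identification $T=O^2(B\cap M^\circ)$, the lifting step $T=O^2(T)\le O^2(FQ_1)\le F$, and all of part (b) are correct and essentially as in the paper. The problem is the step you yourself marked as the main obstacle.

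You commute $F$ with $O^2(B\cap M^\circ)$, whose image in the $S_4$-factor of $B/Q_1$ is only $A_4$, and then assert $[\bar F_1,A_4]=A_4$. But $A_4$ is not perfect: $[A_4,A_4]=O_2(A_4)\cong C_2\times C_2$. Commutators of two elements of $A_4$ are trivial modulo $O_2(A_4)$, so no element of order $3$ can be produced this way. Consequently, whenever the projection $\bar F_1$ equals $A_4$ exactly, your argument breaks down. This does happen, for example for $F=O^2(B)$ or $F=T$. In that case $[F,O^2(B\cap M^\circ)]Q_1/Q_1$ is only the fours group, and you cannot conclude $TQ_1\le FQ_1$.

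\textbf{The fix.} Commute with all of $B\cap M^\circ$, as the paper does with $R:=[F,B\cap M^\circ]$.
\begin{itemize}
\item Since $F\unlhd B$, we have $R\le F$.
\item Modulo $Q_1$, the group $R$ becomes $[\bar F_1,S_4]$. Since $\bar F_1\ge A_4$, this contains $[A_4,S_4]=A_4$, because an element of $S_4\setminus A_4$ inverts $A_4/O_2(A_4)\cong C_3$.
\item Hence $TQ_1\le RQ_1\le FQ_1$, and your lifting argument then finishes the proof.
\end{itemize}
The paper works modulo $C_B(V)$ instead of $Q_1$ and lifts slightly differently. It notes that $R\le M^\circ$ because $M^\circ\unlhd\tilde{M}\ge B$. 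This gives $T\le RC_B(V)\cap M^\circ=RO_2(M^\circ)$, and then $T\le R$ directly.
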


\begin{proof}
\textbf{(a)} By definition of $B$ we have $B\cap M=M\cap \tilde{C}$ and so $B\cap M^\circ=\tilde{C}\cap M^\circ$.  As $M=M^\circ Q_M$, we obtain $\tilde{C}\cap M=(\tilde{C}\cap M^\circ)Q_M=(B\cap M^\circ)Q_M$. Hence, 
\[T=O^2(B\cap M^\circ).\]
By Lemma \ref{Lemma2.1}(f), we have $B=(B \cap M^\circ )C_B(V)$ with $B \cap M^\circ =C_{M^\circ}(Z)$, where $V$ is a natural $\SL_3(2)$-module for $M^\circ/O_2(M^\circ)=M^\circ/C_{M^\circ}(V)$ and $Z$ is a point. Hence, $B/C_B(V)\cong \Sym(4)$ corresponds to a point stabilizer and 
\[O_2(B/C_B(V))=C_B(V/Z)/C_B(V).\]
Thus, the image of $R:=[F,B \cap M^\circ]$ in $B/C_B(V)$ contains $O^2(B/C_B(V))\cong \Alt(4)$. In particular, $T\leq O^2(B)\leq RC_B(V)$. 
Recall that $M^\circ\unlhd \tilde{M}\geq B$ by Lemma~\ref{Lemma2.1}(c) and so $R\leq M^\circ$. Hence, $T=O^2(B\cap M^\circ)\leq RC_B(V)\cap M^\circ=RC_{M^\circ}(V)=RO_2(M^\circ)$ and so $T\leq R\leq F$. 

\smallskip

\textbf{(b)} Suppose now that $[V/Z, L \cap B] \ne 1$. Applying (a) with $F=L \cap B$, we obtain then $ T \le L \cap B\leq L$. Hence, $M_2 = LB = L(\tilde{C}\cap M) = LTS = LS$.
\end{proof}

The following lemma corresponds basically to \cite[Lemma~2.11]{MStr:2008}. However, we exclude the third case listed there immediately using an argument given in the proof of \cite[Lemma~2.12]{MStr:2008}.

\begin{prop}[{Cf. \cite[Lemma~2.11]{MStr:2008}}]\label{Lemma2.11}
Suppose that $L$ is non-solvable. Then one of the following holds.
\begin{enumerate}[itemsep=3pt]
 \item For some $n\geq 2$, we have $M_1/Q_1\cong \SL_3(2)\times \Sp_{2n-2}(2)$, 
 \[B/O_2(B)\cong \SL_2(2)\times \Sp_{2n-2}(2),\;\ov{M_2}\cong \SL_2(2)\times \Sp_{2n}(2)\]
 and $\hat{W}$ is the tensor product of the corresponding natural modules.
 \item	$M_1/Q_1 \cong \SL_3(2) \times \SL_2(2)$, 
 \[B/O_2(B) \cong \SL_2(2) \times \SL_2(2),\;\ov{M_2}\cong \Gamma\SU_4(2) \sim \SU_4(2).2\] 
 and $\widehat{W}$ is the corresponding natural module. Moreover, $[O_2(\ov{B}),\ov{T}]\neq 1$.
 \item	$M=M_1$, $B/O_2(B) \cong \Sym(3)$, $\ov{M_2} \in \{\G_2(2), \,\G_2(2)^\prime \}$ and $\widehat{W}$ is the corresponding natural module.
 \item	$M_1/Q_1 \cong  \SL_3(2) \times \SL_2(2)$, $B/O_2(B) \cong \SL_2(2) \times \SL_2(2)$, $\ov{M_2} \cong \Sp_6(2)$ and $\widehat{W}$ is the spin module. Moreover, $[O_2(\ov{B}),\ov{T}]\neq 1$.
\end{enumerate}
\end{prop}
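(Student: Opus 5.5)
We start from the list of Lemma~\ref{Lemma2.6} as pruned by Lemmas~\ref{Lemma2.7}, \ref{Lemma2.8} and \ref{Lemma2.9}. By Lemma~\ref{Lemma2.5}, $\ov{L}=F^*(\ov{H})$ is quasisimple, and $\hat W$ is homogeneous for $H$. Lemma~\ref{Lemma2.3}(f) gives $\hat W=U$ or $\hat W\cong U\oplus U$. The remaining possibilities for $(\ov{H},U)$ are:
\begin{itemize}
\item $\Sp_{2n}(2)$ with its natural module and $\ov{Y_M}$ a transvection (this includes $\Sym(6)\cong\Sp_4(2)$, whose permutation-module case is natural for $\Sp_4(2)$);
\item $\SU_n(2)$ or $\Sp_{2n}(4)$ with their natural modules;
\item $\G_2(2)'$ with its $6$-dimensional module;
\item $\Sp_6(2)$ with the spin module and $\ov{Y_M}$ a short root subgroup.
\end{itemize}

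In each case we compute $C_{\ov H}(\ov{Y_M})$ and $|U/C_U(\ov{Y_M})|$. For a transvection we have $|U/C_U(\ov{Y_M})|=2$, so $\hat W\cong U\oplus U$. We then determine $\ov{M_2}$ from the faithful irreducible action of $\ov{M_2}$ on $\hat W$ (Lemma~\ref{Lemma2.3}(e)) together with $\ov{M_2}=\ov{L}\,\ov{B}$. In the transvection case, $C_{\ov{M_2}}(\ov L)$ acts on the $2$-dimensional multiplicity space $\End_{\ov L}(\hat W)$-wise. Hence $\ov{M_2}$ embeds in $\Sp_{2n}(2)\times\GL_2(2)$, and $\hat W$ is the tensor product of the natural modules. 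The relation $|[\hat W,Y_M]|=|\hat V|=4$ from Lemma~\ref{Lemma2.2}(d) forces the $\SL_2(2)$ factor to be present; otherwise $B$ could not act as $\Sym(4)$ on $V$ modulo $C_B(V)$. In the remaining cases $|U/C_U(\ov{Y_M})|=4$, so $\hat W=U$, and $\ov{M_2}\le\Aut(\ov L)$ stabilizing $U$. For $\Sp_{2n}(4)$ and $\SU_n(2)$ with $n\neq 4$ we aim for a contradiction. We use $|\hat W/C_{\hat W}(y)|\le 4$ and the fact that $N_{\ov{M_2}}(\ov{Y_MQ_2})=\ov B$ must induce $\Sym(4)$-like action on $\hat V$ modulo a normal $2$-part.

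Next we identify $B$ and $M_1$ via Lemma~\ref{Lemma2.3}(b), which gives $\ov B=C_{\ov{M_2}}(\ov{Y_M})$. Thus $B/O_2(B)$ is the Levi part of the transvection (resp.\ root) centralizer in $\ov{M_2}$:
\begin{itemize}
\item $\SL_2(2)\times\Sp_{2n-2}(2)$ in case (1);
\item $\SL_2(2)\times\SL_2(2)$ in cases (2) and (4);
\item $\Sym(3)$ in case (3).
\end{itemize}
Lemma~\ref{Lemma2.1}(f) gives $M_1/Q_1\cong\SL_3(2)\times C_B(V)/Q_1$. The factor $(B\cap M^\circ)Q_1/Q_1\cong\Sym(4)$ contributes the $\SL_2(2)$, which is $O^2$ of the point stabilizer acting on $V/Z$. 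Hence $C_B(V)/Q_1$ is the complementary Levi factor, namely $\Sp_{2n-2}(2)$, $\SL_2(2)$, or trivial (so $M_1=M$). The condition $[O_2(\ov B),\ov T]\ne1$ in (2) and (4) is read off from the embedding of $T$ via Lemma~\ref{Lemma2.10}(a).

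We expect the main obstacle to be excluding the surplus cases: $\Sp_{2n}(4)$, $\SU_n(2)$ with $n\ne4$, and the third case of \cite[Lemma~2.11]{MStr:2008}. For these we would use the argument of \cite[Lemma~2.12]{MStr:2008}. We compare $|S/Y_MQ_2|$, which equals the $2$-part of $|B/O_2(B)|$ times $|O_2(B)/Y_MQ_2|$, against the structure forced by $C_M(Z)/Q_M\cong\Sym(4)$ as in the proof of Lemma~\ref{Lemma2.7}. Where needed, we deduce $Q_1\le Y_MQ_2$ and invoke Lemma~\ref{Lemma2.4} to get $Y_M=Q_M$, and then derive a contradiction with the order of $S$ or with Lemma~\ref{Lemma2.10}(b).
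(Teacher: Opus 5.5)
Your outline matches the paper's: reduce via Lemmas~\ref{Lemma2.6}--\ref{Lemma2.9}, split with Lemma~\ref{Lemma2.3}(f), compute $\ov B=C_{\ov{M_2}}(\ov{Y_M})$, and read off $C_B(V)/Q_1$ from $B/O_2(B)\cong\SL_2(2)\times C_B(V)/Q_1$ (Lemma~\ref{Lemma2.1}(f),(g)). In the transvection case, your argument forcing the $\SL_2(2)$ factor is an acceptable substitute for the paper's use of Lemma~\ref{Lemma2.10}(a): $C_{\ov H}(\ov{Y_M})$ fixes the transvection centre, so $\ov B$ acts on $[\hat W,Y_M]\cong V/Z$ only through $C_{\ov{M_2}}(\ov H)$. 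For $n=2$, however, you still owe an argument that $\ov{M_2}$ induces only inner automorphisms of $\Sp_4(2)\cong\Sym(6)$.

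\textbf{First gap: determining $\ov{M_2}$ when $\hat W=U$.} You assert $\ov{M_2}\le\Aut(\ov L)$ without justification, and it is false in general. For $\SU_n(2)$ and $\Sp_{2n}(4)$ the natural module is not absolutely irreducible over $\mathbb{F}_2$, so $C_{\GL(\hat W)}(\ov H)\cong\mathbb{F}_4^\times\cong C_3$. In the $\Sp_{2n}(4)$ case the paper shows this $C_3$ really occurs: $[V/Z,H\cap B]=1$ gives $\ov T\not\le\ov H$, and Lemma~\ref{Lemma2.10}(a) then forces $\ov T=\ov D\cong C_3$ for $D=C_{M_2}(\ov H)$. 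The tool you are missing is Lemma~\ref{Lemma2.10}(b). In the $\SU_n(2)$, $\G_2(2)'$ and spin cases one checks $[V/Z,L\cap B]\neq1$, hence $T\le L\cap B$ and $M_2=LS$, so $\ov{M_2}/\ov L$ is a $2$-group. This is what excludes scalars and gives:
\begin{itemize}
\item $\ov{M_2}\cong\Gamma\SU_4(2)$, after also ruling out $\ov{M_2}=\ov H$, where $\ov B$ would induce only $C_3$ on $V/Z$;
\item $\ov{M_2}=\ov H\cong\Sp_6(2)$ in the spin case;
\item $\ov{M_2}\in\{\G_2(2),\G_2(2)'\}$ in the $\G_2(2)'$ case.
\end{itemize}

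\textbf{Second gap: excluding the surplus cases.} Once $\ov D\cong C_3$ is allowed, $\ov B$ induces $C_3\rtimes\langle\sigma\rangle\cong\SL_2(2)$ on the $\mathbb{F}_4$-line $\hat V$, both in $\Gamma\SU_n(2)$ and in $(\Sp_{2n}(4)\times C_3)\langle\sigma\rangle$. So your proposed contradiction (``$\ov B$ must act on $\hat V$ like a point stabilizer'') does not materialize.

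What actually removes $\SU_n(2)$ for $n\ge5$ and $\Sp_{2n}(4)$ for $n\ge2$ is the direct-product decomposition $B/O_2(B)\cong\SL_2(2)\times C_B(V)/Q_1$, whose second factor centralizes $V/Z$. This is incompatible with $\sigma$ inducing an outer automorphism of $\SU_{n-2}(2)$, respectively $\Sp_{2n-2}(4)$.

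For $\ov H\cong\SL_2(4)$ even this is consistent, and your fallback fails. Here $\ov{Q_1}=\ov{O_2(B)}=O_2(\ov B)$ is a full Sylow $2$-subgroup of $\SL_2(4)$, of order $4$. So $Q_1\not\le Y_MQ_2$, and you give no route to $[E,W]\le V$; hence Lemma~\ref{Lemma2.4} is unavailable. The missing argument from the paper runs as follows:
\begin{enumerate}
\item Choose $h\in L$ with $\ov{S\cap H}=\ov{Y_M}\,\ov{Y_M^h}$.
\item By Lemma~\ref{Lemma2.3}(c), $Y_MY_M^h$ is elementary abelian and contains $[W,S\cap H]$.
\item Since $[W/Z,\ov H]=W/Z$, Lemma~\ref{L:ApplyGaschutz}(b) gives $Z(W)/Z\le C_{W/Z}(\ov H)\le[W/Z,\ov{H\cap S}]$, so $Z(W)$ is elementary abelian.
\item As $H$ is transitive on $\hat W^\#$ and $V\not\le Z(W)$, every element of $W$ is an involution. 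This contradicts Lemma~\ref{Lemma2.2}(e).
\end{enumerate}
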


\begin{proof}
Let $U$ be an irreducible $\ov{H}$-submodule of $\widehat{W}$. We inspect now the list of modules in Lemma \ref{Lemma2.6} to determine the possibilities for $\ov{H}$ and $U$. We remark that $\Omega^+_2(2) = 1$ and $\Omega^-_2(2) \cong C_3$. Moreover, $\Omega^+_4(2)$ is of order $36$ and thus solvable. Thus, if $\ov{H}\cong \Omega^\pm _{2n} (2)$, then $n\geq 3$ or $\ov{H}\cong \Omega^-(2)\cong A_5$. In particular, $\ov{H}$ is simple and so $\ov{H}=\ov{L}$. Then Lemma \ref{Lemma2.8} shows that $U$ is not a natural module. So we can exclude the possibility that $\ov{H}\cong \Omega^\pm _{2n} (2)$ and $U$ is the natural module.

\smallskip

Note moreover that $S_6\cong \Sp_4(2)$ and the natural permutation module for $S_6$ can be regarded as a natural $\Sp_4(2)$-module. We use also that $\SL_2(2) \cong \SU_2(2) \cong \Sp_2(2)$ and $\SU_3(2)$ are solvable, and that $\SL_2(4) \cong \Sp_2(4)$. So it follows from Lemmas \ref{Lemma2.6} to \ref{Lemma2.9} that one of the following holds.	
\begin{itemize}
    \item[(i)]		$\ov{H} \in \{\Sp_{2n}(2)\colon n\geq 2\}\cup\{\Sp_{2n}(4)\colon n\geq 1\}\cup\{\SU_{n}(2)\colon n\geq 4\}\cup\{\G_2(2)^\prime \}$ and $U$ is the corresponding natural module. If $\ov{H}\cong \Sp_{2n}(2)$, then $\ov{Y_M}$ induces an $\mathbb{F}_2$-transvection on $U$. 
    \item[(ii)]		$\ov{H} \cong \Sp_6(2)$, $U$ is the spin module and $\ov{Y_M}$ is a short root subgroup of $\ov{H}$.
\end{itemize}
In the cases above, unless $\ov{H}\cong \Sp_{2n}(2)$ and $U$ is the natural module, there is no involution in $\ov{H}$ which induces a transvection on $U$. Thus, using Lemma~\ref{Lemma2.3}(f), we see that we are left with the following possibilities.
\begin{itemize}
    \item[(a)]	$\ov{H} \cong \Sp_{2n}(2)$ for $n\geq 2$, $\widehat{W}$ is the direct sum of two isomorphic natural modules and $Y_M$ induces a transvection on these natural modules.
    \item[(b)]	$\ov{H} \cong \SU_{n}(2)$ for $n\geq 4$, $\widehat{W}$ is the natural module and $Y_M$ induces an $\mathbb{F}_4$-transvection on $\widehat{W}$.
    \item[(c)]	$\ov{H} \cong \Sp_{2n}(4)$ for $n\geq 1$, $\widehat{W}$ is the natural module and $Y_M$ induces an $\mathbb{F}_4$-transvection on $\widehat{W}$.
    \item[(d)]	$\ov{H} \cong \G_2(2)^\prime$, $\widehat{W}$ is the natural module and $\ov{Y_M}$ is a long root element.
    \item[(e)]	$\ov{H} \cong \Sp_6(2)$, $\widehat{W}$ is the spin module and $\ov{Y_M}$ is a short root subgroup.
\end{itemize}
When analysing these cases, we use throughout that, by Lemma \ref{Lemma2.3}(b), $B=C_{M_2}(\ov{Y_M})$ and so $\ov{H \cap B} = C_{\ov{H}}(\ov{Y_M})$. This allows us in each case to compute $\ov{H \cap B}$. In addition, by \ref{Lemma2.2}(d), $V/Z$ and $[\widehat{W},Y_M]$ are isomorphic $B$-modules. Thus, we can determine the action of $H \cap B$ on $V/Z$.

\smallskip

Note also that $B\cap H$ is normal in $B$ and thus $H \cap O_2(B) = O_2(B\cap H)$. This yields $(H \cap B)/ O_2(H \cap B)\cong \ov{H \cap B}/O_2(\ov{H\cap B})$. 

\smallskip

We remark also that $M^\circ\cap B$ is normal in $B$ and so $M^\circ \cap O_2(B)=O_2(M^\circ \cap B)$. Moreover, it follows from  Lemma~\ref{Lemma2.1}(f) that $M^\circ \cap B/O_2(M^\circ\cap B)\cong \SL_2(2)$ acts faithfully on $V/Z$. Hence, 
\begin{equation*}\label{eq-2.11}\tag{$*$}
\begin{aligned}
 & (M^\circ \cap B)O_2(B)/O_2(B)\mbox{ is a normal subgroup of }B/O_2(B)\\
& \mbox{ which is isomorphic to }\SL_2(2)\mbox{ and acts faithfully on }V/Z 
\end{aligned}
\end{equation*}
It follows moreover from Lemma~\ref{Lemma2.1}(g) that $M_1/Q_1\cong \SL_3(2)\times C_B(V)/Q_1$ and $B/Q_1\cong S_4\times C_B(V)/Q_1$. In particular, $O_2(C_B(V))=Q_1$ and so 
\begin{equation*}\label{eq-2.11-2}\tag{$**$}
M_1/Q_1\cong \SL_3(2)\times C_B(V)/Q_1\mbox{ and }B/O_2(B)\cong \SL_2(2)\times C_B(V)/Q_1.
\end{equation*}
Before we start with the case analysis, set now $D:=C_{M_2}(\ov{H})$. Note that $D \le N_{M_2}(\ov{Y_M})=B$ and that $D$ is normal $M_2$.

\smallskip

\textbf{Case (a):}	We first argue that $M_2=DH$. This clearly holds if $\Aut(\ov{H}) = \Inn(\ov{H})$, which is the case if $n\geq 3$. If $n=2$, then note that, for every element $g\in \ov{M_2}$, the centralizer $C_{\hat{W}}(\ov{Y_M}^g)$ has the same order as $C_{\hat{W}}(\ov{Y_M})$ and thus index $4$ in $\hat{W}$. As $\hat{W}$ is the sum of two isomorphic natural $\ov{H}$-modules, we can conclude that $\ov{Y_M}^g$ induces a transvection on each of these natural modules. It follows then from the structure of $\Aut(\Sp_4(2))=\Aut(S_6)$ and of the natural $S_6$-module that conjugation by $g$ induces an inner automorphism of $\ov{H}$. Thus, it is also true for $n=2$ that $M_2=DH$.

\smallskip

Since $M_2$ acts irreducibly on $\hat{W}$ and $\ov{H}$ does not, we have $\ov{D}\neq 1$. If $[V/Z,D]=1$, then, as $W=\<V^{M_2}\>$, it follows that $D$ acts trivially on $\hat{W}$, contradicting Lemma~\ref{Lemma2.3}(e). Hence, $[V/Z,D]\neq 1$ and Lemma~\ref{Lemma2.10}(a) implies $T=O^2(B\cap M^\circ)\leq D$. Now \eqref{eq-2.11} implies that $\ov{D}$ is not centralized by $\ov{B}$ and so $\ov{D}\not\leq Z(\ov{M_2})$. As $\ov{M_2}=\ov{H}\ov{D}$, this shows that $\ov{D}$ is not abelian. Since $C_{GL(\hat{W})}(\ov{H})\cong \SL_2(2)$, we can conclude that $\ov{D}\cong \SL_2(2)$. Hence,
\[\ov{M_2}=\ov{D}\times\ov{H}\cong \SL_2(2)\times \Sp_{2n}(2)\]
and $\hat{W}$ is the tensor product of the corresponding natural modules. As $\ov{B}=C_{\ov{M_2}}(\ov{Y_M})$, it follows thus from the structure of $\ov{H}$ that $B/O_2(B)\cong \ov{B}/O_2(\ov{B})\cong \SL_2(2)\times \Sp_{2n-2}(2)$. Comparing this to \eqref{eq-2.11-2}, it follows that $C_B(V)/Q_1\cong \Sp_{2n-2}(2)$ and $M_1/Q_1\cong \SL_3(2)\times \Sp_{2n-2}(2)$. This shows (1).

\smallskip

\textbf{Case (b)} As $\SU_n(2)$ contains no normal subgroup of index 2, we have $\ov{H}=\ov{L}$. We get from the discussion above as well as the structure of $\ov{H}$ and the natural module $\hat{W}$ that 
\[
(H \cap B)/O_2(H \cap B) \cong C_3 \times \SU_{n-2}(2) \qquad \text{ and } (H \cap B)/C_{H \cap B}(V/Z) \cong C_3.\]
Then $L \cap B=H\cap B$ acts non-trivially on $V/Z$, so Lemma \ref{Lemma2.10} yields $M_2=LS$. 

\smallskip

If $\ov{M_2}=\ov{H}$, then $\ov{B} \le \ov{H}$. Using \eqref{eq-2.11}, we obtain in this case a contradiction. Hence, $\ov{M_2}\neq \ov{H}$. As $\ov{H} \norm \ov{M_2}$ and $\Out(\ov{H})$ has order 2, it follows $\ov{M_2} \cong \Gamma\SU_n(2) = \SU_n(2)\<\sigma\>$ for a field automorphism $\sigma$. As $\ov{B}=C_{\ov{M_2}}(\ov{Y_M})$, it follows that 
\[B/O_2(B)\cong \ov{B}/O_2(\ov{B})\cong (C_3 \times \SU_{n-2}(2)) \< \sigma \>.\]
Again by (\ref{eq-2.11}) the only possibility is that $n=4$ and, in such a case, we obtain $B/O_2(B) \cong \SL_2(2) \times \SL_2(2)$. Comparing this to \eqref{eq-2.11-2} we can conclude that $M_1/Q_1\cong \SL_3(2)\times \SL_2(2)$. Moreover, one sees from the structure of $\ov{B}$ that $[O_2(\ov{B}),\ov{D}]\neq 1$ for every subgroup $D\leq B$ such that $DO_2(B)/O_2(B)$ is a normal subgroup of $B/O_2(B)$ of order $3$. Hence (2) holds.

\smallskip

\textbf{Case (c):}	We will show that this case leads to a contradiction. Similarly as in (b), we have $\ov{H} = \ov{L}$ as $\ov{H}$ is simple. Moreover, the structure of $(\ov{H},\hat{W})$ gives in this case
\[\frac{H \cap B}{O_2(H \cap B)} \cong \Sp_{2n-2}(4) \qquad \text{and} \qquad [V/Z, H \cap B] = 1.\]
As $T = O^2(M^\circ \cap B)$ by Lemma~\ref{Lemma2.10}(a), we get in particular from \eqref{eq-2.11} that $T \not \le H$. As $T=O^2(T)$, this implies $T\not\leq HQ_2$ and thus $\ov{T}\not\leq\ov{H}$.

\smallskip

Since $\Out(\ov{H})$ has order $2$, every element of $\ov{M_2}$ of odd order induces an inner automorphism of $\ov{H}$. Hence,
\[\ov{T}\leq O^2(\ov{M_2})\leq \ov{H}\ov{D}.\]
In particular, as $\ov{T}\not\leq \ov{H}$, we have $\ov{D}\neq 1$. 

\smallskip

Note that $C_{\GL(\widehat{W})}(\ov{H}) \cong C_3$, so $\ov{D}\cong C_3$. Since $\ov{D}\leq C_{\ov{M_2}}(\ov{Y_M})=\ov{B}$, we have moreover $\ov{H}\ov{D}\cap \ov{B}=\ov{(H\cap B)}\times \ov{D}$. As $\ov{H\cap B}/O_2(\ov{H\cap B})\cong \Sp_{2n-2}(4)$ is simple or trivial and $T=O^2(T)$ is normal in $B$, it follows that $1\neq \ov{T}\leq \ov{D}$ and then $\ov{T}=\ov{D}$. In particular, by \eqref{eq-2.11}, we have $\ov{D}\not\leq Z(\ov{B})$ and thus $\ov{D}\not\leq Z(\ov{M_2})$. Using again that $\Out(\ov{H})$ has order $2$, we can see now that 
\[\ov{M_2}\cong (\Sp_{2n}(4)\times C_3)\<\sigma\>,\]
where $\sigma$ induces a field automorphism. This implies
\[B/O_2(B)\cong \ov{B}/O_2(\ov{B})\cong (\Sp_{2n-2}(4)\times C_3)\<\sigma\>.\]
Now \eqref{eq-2.11} yields $n=1$. 
In particular, $\ov{H}=\ov{L} \cong  \SL_2(4)$. By the structure of $\SL_2(4)$, we may find an element $h \in L$ such that $\ov{S \cap H} = \ov{Y_M} \ov{Y_M^h}$; thus we have $(S \cap H)Q_2 = Y_M Y_M^h Q_2$. Then $[W,S \cap H] \le [W, Y_M Y_M^h]Z \le Y_M Y_M^h$, where the last inclusion follows from Lemma \ref{Lemma2.2}(d). By Lemma \ref{Lemma2.3}(c) and since the Sylow $2$-subgroups of $\SL_2(4)$ are abelian, $Y_M Y_M^h$ is elementary abelian.

\smallskip

Note that Lemma \ref{Lemma2.2}(f) yields $[W,H] = W$ and $Z(W) = C_W(L)$. Moreover, by Lemma~\ref{Lemma2.2}(e), $\ov{M_2}$ acts on $W/Z$, and $W/Z$ is elementary abelian. Hence, it follows from Lemma~\ref{L:ApplyGaschutz}(b) that
\[Z(W)/Z\leq C_{W/Z}(\ov{H})\leq [W/Z,\ov{H\cap S}].\]
In particular, as $[W,H\cap S]\leq Y_MY_M^h$ is elementary abelian, we can conclude that $Z(W)$ is elementary abelian. The transitivity of the action of $H$ on $\widehat{W}^\#$ together with the fact that $V\not\leq Z(W)$ show that every element of $W$ has order $2$ and thus $W$ is elementary abelian, a contradiction to Lemma \ref{Lemma2.2}(e). Thus, case (c) cannot occur.

\smallskip

\textbf{Case (d):}	Again, we have $\ov{H} = \ov{L}$ as $\ov{H}$ is simple. In this case, one can compute
\[(B \cap H)/O_2(B \cap H) \cong \SL_2(2) \qquad \text{and} \qquad [V/Z, B \cap L] \ne 1.\]
Therefore, Lemma \ref{Lemma2.10} yields $LB=M_2 =LS$ and $T \le L \cap B =L \cap M$. In particular, as $T=O^2(T)$ is normal in $B$, it follows $O^2(B\cap H)=T$. Thus, $L\cap \tilde{M}\leq B\cap H=T(B\cap H\cap S)\leq M$ and so $B\leq M$. As $\Out(\ov{H})$ has order $2$, one sees now that (3) holds.

\smallskip
		
\textbf{Case (e):} In this case, once more, $\ov{H}=\ov{L}$ as $\ov{H}$ is simple. Moreover, one can compute that
\[(B \cap H)/O_2(B \cap H) \cong \SL_2(2) \times \SL_2(2) \qquad \text{and} \qquad [V/Z,B \cap L] \ne 1.\]
Applying Lemma \ref{Lemma2.10}, we get $T \le L\cap B$ and $M_2 = LS$. We have in this case that $\Out(\ov{H})=1$ and so $\ov{M_2} = \ov{D}\ov{H} = \ov{D} \times \ov{H}$. As $|\ov{M_2}:\ov{H}|$ is a power of 2, $\ov{D}$ is a 2-group, so $\ov{D} \le O_2(\ov{M_2}) =1$. Thus, $\ov{M_2}=\ov{H}$, $\ov{B}=\ov{B\cap H}$ and $B/O_2(B)\cong \ov{B}/O_2(\ov{B})\cong \SL_2(2)\times \SL_2(2)$. Comparing this to \eqref{eq-2.11-2}, one sees that $M_1/Q_1\cong \SL_3(2)\times \SL_2(2)$. Furthermore one sees from the structure of $\ov{B}$ that $[O_2(\ov{B}),\ov{T}]\neq 1$. Hence, (4) follows.
\end{proof}

\begin{lemma}[{Cf. \cite[Lemma~2.12]{MStr:2008}}]\label{Lemma2.12}
Suppose that $L$ is non-solvable. Then $Q_2=Q=W$ and $Z(W)=Z$.
\end{lemma}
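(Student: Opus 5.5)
Under the assumption that $L$ is non-solvable, I would work through the four cases of Proposition~\ref{Lemma2.11} and first aim to show $Q_2\le Y_MQ_1$ with equality modulo $Q_1$ information. By Lemma~\ref{Lemma2.1}(g) and Lemma~\ref{Lemma2.2}(d), $O_2(B)=WQ_1=Q_1Q_2$, and $O_2(B)/Q_1$ has order $4$. So the plan is to compare $|Q_2/(Q_1\cap Q_2)|$ with the structure of $\ov{B}$ and of $O_2(\ov{B})$ given in Proposition~\ref{Lemma2.11}. The main tool for killing $Q_1$-contributions is Lemma~\ref{Lemma2.4}. If I can show $E=[Q_1,O^2(M)]\le Y_MQ_2$, or $[E,W]\le V$, then $Y_M=Q_M$, and hence $Q_1=Y_M$ by Lemma~\ref{Lemma2.1}(e).

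\textbf{Step 1: forcing $Q_1=Y_M$.} In cases (1), (2) and (4) of Proposition~\ref{Lemma2.11}, the group $T$ acts on $O_2(\ov{B})$, and I would analyse $Q_1/(Q_1\cap Q_2)$ there. In cases (2) and (4), the statement $[O_2(\ov{B}),\ov{T}]\neq 1$ is exactly what should let me play off the two $\SL_2(2)$ factors of $B/O_2(B)$. The factor coming from $M^\circ\cap B$ acts on $O_2(B)/Q_1$. The factor $C_B(V)/Q_1$ should then centralize $\ov{Q_1}$ modulo $\ov{Y_M}$, giving $Q_1\le Y_MQ_2$ and so $E\le Y_MQ_2$. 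In case (1), $O_2(\ov{B})$ for $\Sp_{2n}(2)\times\SL_2(2)$ is a known special group, and the same comparison should apply. In case (3), $M=M_1$ and $\ov{M_2}\cong \G_2(2)$ or $\G_2(2)'$; I would use the structure of the maximal parabolic there. Lemma~\ref{Lemma2.4} then gives $Y_M=Q_M=Q_1$.

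\textbf{Step 2: identifying $Q_2$, $Q$ and $W$.} With $Q_1=Y_M$, Lemma~\ref{Lemma2.1}(h) gives $C_{Q_2}(V)\le Y_M$. Also $Q_2\le O_2(B)=WY_M$ and $Y_M\not\le Q_2$, so $Q_2=W(Y_M\cap Q_2)$. Since $|Y_M/V|=2$ and $V\le W$, this yields $Q_2=W$. Because $W\le Q\le Q_2$, it follows that $Q=W=Q_2$.

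\textbf{Step 3: $Z(W)=Z$.} By Lemma~\ref{Lemma2.2}(b), $C_{Q_2}(L)=Z(W)\le Q_1=Y_M$, so $Z(W)\le Y_M\cap W$. By Lemma~\ref{Lemma2.2}(d) we have $V\cap Z(W)=Z$, and $|(Y_M\cap W)/V|\le 2$. If $Z(W)>Z$, then $Z(W)\cap V=Z$ forces $Z(W)V=Y_M\cap W=Y_M$. This contradicts $Y_M\not\le W$ from Lemma~\ref{Lemma2.2}(f). Hence $Z(W)=Z$.

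I expect Step 1 to be the main obstacle, since it requires case-by-case knowledge of $O_2(\ov{B})$ as a $B$-module. Case (3) is the most delicate, because there $B/O_2(B)\cong \Sym(3)$ carries no second $\SL_2(2)$ factor.
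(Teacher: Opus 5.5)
Your Steps 2 and 3 are valid deductions from $Q_1=Y_M$. Step 1, however, cannot be carried out, and the whole lemma rests on it.

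\textbf{Why Step 1 fails.} Since $Q_2\le O_2(B)=Q_1Q_2$ (Lemma~\ref{Lemma2.1}(g)), you have $\ov{Q_1}=O_2(\ov{B})$. So your intermediate claim $Q_1\le Y_MQ_2$ says exactly that $O_2(\ov{B})=\ov{Y_M}$ has order $2$. The structure in Proposition~\ref{Lemma2.11} contradicts this in every case:
\begin{itemize}
\item In case (1), $O_2(\ov{B})$ is the $2$-radical of a transvection centralizer in $\Sp_{2n}(2)$, of order $2^{2n-1}\ge 8$. The factor $C_B(V)/Q_1\cong\Sp_{2n-2}(2)$ acts on $O_2(\ov{B})/\ov{Y_M}$ as a natural module, so it does not centralize it, contrary to what you hoped.
\item In cases (2) and (4), $\ov{T}$ centralizes $\ov{Y_M}$ because $[Y_M,T]\le V\le Q_2$. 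Hence $[O_2(\ov{B}),\ov{T}]\neq 1$ proves $\ov{Q_1}\not\le\ov{Y_M}$, the opposite of what you want to extract from it.
\item In case (3), the centralizer of a root involution in $\G_2(2)'$ or $\G_2(2)$ likewise has $2$-radical much larger than $\ov{Y_M}$.
\end{itemize}
For the same reason you cannot reach either hypothesis of Lemma~\ref{Lemma2.4}. Either one gives $Q_M=Y_M$, hence $Q_1=Y_M$ (as $Y_M\le Q_1\le Q_M$), and again $|O_2(\ov{B})|=2$. So if your route worked it would refute Proposition~\ref{Lemma2.11} outright, which the available local data do not allow. Indeed, in the surviving case the paper later computes $|Q_1|=2^{10}$ (proof of Lemma~\ref{Lemma2.14}), and the final contradiction needs the separate argument of Lemma~\ref{Lemma2.15}.

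\textbf{The missing idea.} Leave $Q_1$ alone, attack $C_{Q_2}(W)$ directly, and reverse the order of your Steps 2 and 3.
\begin{enumerate}
\item Assume $Z<C_{Q_2}(W)$ and take $D\unlhd M_2$ minimal with $Z<D\le C_{Q_2}(W)$. By Lemma~\ref{Lemma2.2}(b), $[D,L]=1$ and $D\le Q_1$. Minimality gives $[D,Q_M]\le Z$ and $|D/Z|\le 4$.
\item One shows $D$ is abelian, and $[D,D^g]\le Z\cap Z^g=1$ for $g\in M\setminus B$. Hence $E:=\langle D^{M_1}\rangle$ is abelian.
\item If $[E,W]\le V$, then $E=DV$ turns out to be a normal elementary abelian $2$-reduced subgroup of $M$. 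So $D\le Y_M\cap Q_2=V$, which forces $D=V$, against $[D,L]=1$.
\item Otherwise $EY_M$ is an abelian normal subgroup of $B$ with $\ov{Y_M}<\ov{EY_M}$, acting quadratically on $\widehat{W}$. This contradicts the fact, read off from each case of Proposition~\ref{Lemma2.11}, that $\ov{Y_M}$ is maximal among normal subgroups of $\ov{B}$ acting quadratically on $\widehat{W}$.
\item Thus $C_{Q_2}(W)=Z$, and in particular $Z(W)=Z$.
\item Since $[W,Q_2]=Z$ (Lemma~\ref{Lemma2.2}(e)), the commutator pairing embeds $Q_2/Z$ into the dual of $W/Z$. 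This gives $|Q_2|\le|W|$, and with $W\le Q\le Q_2$ you get $W=Q=Q_2$.
\end{enumerate}
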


\begin{proof}
Our first goal is to show that $Z=C_{Q_2}(W)$. Thus, arguing by contradiction, suppose that $Z < C_{Q_2}(W)$. Pick a subgroup $D \norm M_2$ minimal such that $D \le C_{Q_2}(W)$ and $D > Z$. The minimality of $D$ shows that $\Phi(D)\leq Z$ and so $D/Z$ is abelian. 

\smallskip

Lemma \ref{Lemma2.2}(b) yields $[D,L]=1$ and $D \le Q_1$. Note now that $M_2=L(M\cap B)$ and, as a consequence of the action of $M \cap B$ on $V/Z$, $(M \cap B) / O_2(M \cap B) \cong \SL_2(2)$. Using again the minimality of $D$, we see that 
\[[D,Q_M]\leq [D,O_2(M\cap B)]\leq Z\]
and either $|D/Z|=2$ and $[D,M_2] \le Z$, or $O_2(M\cap B)=C_{M\cap B}(D/Z)$, $M_2/C_{M_2}(D/Z)\cong \SL_2(2)$ and $|D/Z|=4$.
	
\smallskip
		
We prove first that $D$ is abelian. Note that this is clear if $|D/Z|=2$. For the proof that $D$ is abelian, we may thus suppose that $|D/Z|=4$ and $C_{M\cap B}(D/Z)=O_2(M\cap B)$. Pick an element $g \in M \setminus B$. Lemma~\ref{Lemma2.1}(f) implies that $B=N_{M_1}(Z)$ and so $Z \ne Z^g$. It follows from Lemmas~\ref{Lemma2.1}(g) and \ref{Lemma2.2}(d) as well as the module structure of $V$ that $W\cap B^g=C_W(Z^g)$ acts non-trivially on $V/Z^g$ and is thus not contained in $O_2(M\cap B^g)$. Set 
\[R:=[D^g,W\cap B^g].\]
Then $R\leq D^g$. Moreover, as $O_2(M\cap B^g)=C_{M\cap B}(D/Z)^g=C_{M\cap B^g}(D^g/Z^g)$, we have $R\not\leq Z^g$. Since $D^g\leq Q_1\leq M_2$ normalizes $W$, we have also $R\leq W$. Thus, by Lemma~\ref{Lemma2.2}(e), $\Phi(R)\leq \Phi(W)=Z$. As $R\leq D^g\leq W^g$, we see furthermore that $\Phi(R)\leq \Phi(W^g)=Z^g$. As $Z\cap Z^g=1$, this implies $\Phi(R)=1$ and $R$ is elementary abelian. Since $M\cap B^g$ acts transitively on the non-trivial elements of $D^g/Z^g$, it follows that all non-trivial elements of $D^g$ are involutions. Thus, $D\cong D^g$ is elementary abelian. So we have shown that $D$ is abelian.

\smallskip
		
Notice now that $[D,D^g] \le [D,Q_1] \cap [Q_1,D^g] \le Z \cap Z^g =1$. Since $M_1=BM$ and $B$ normalizes $D$, it follows thus from the arbitrary choice of $g$ that $[D,D^h]=1$ for any $h\in M_1$, which implies that 
\[E := \< D^{M_1} \>\]
is abelian.

\smallskip

Assume now that $[E,W]\leq V$. As $O^2(M) \le \< W^M \>$, it follows then $[E,O^2(M)] \le V$. Since $M_1 = BO^2(M)$,  we deduce $E = \< D^{O^2(M)} \> \le DV$. As $V=\<Z^M\>\leq E$, it follows $E =DV$. Then $[D,Q_M]=[DV,Q_M]=[E,Q_M]\norm M$ and $\Phi(E)=\Phi(D) \norm M$. We observed at the beginning that $[D,Q_M] \le Z$ and $\Phi(D) \le Z$. Hence, since $V$ is an irreducible $M$-module,  we conclude $[D,Q_M]=1$ and $\Phi(D)=1$. In particular $[Q_M,E]=1$ and thus $E$ is a normal, elementary abelian, $2$-reduced subgroup of $M$. This implies $E \le Y_M$. Thus, using Lemma~\ref{Lemma2.1}(b), we obtain $D \le Y_M \cap Q_2 =V$. As $B$ acts irreducibly on $V/Z$ and normalizes $D$, it follows $D=V$, which contradicts $[D,L]=1$ and Lemma \ref{Lemma2.2}(b).

\smallskip

Thus we have proved $[E,W] \not  \le V$. In particular we get $E \not \le Y_MQ_2$ and $\ov{Y_M} < \ov{EY_M}$. Since $E$ and $Y_M$ are normal in $M_1$, the subgroup $W$ normalizes $EY_M$. Moreover, $E \le Q_1$ implies that $EY_M$ is abelian. Thus, we have $[W, EY_M, EY_M] \le [EY_M, EY_M] = 1$. In particular $EY_M\unlhd B$ acts quadratically on $\widehat{W}$. Note that in all the cases of Lemma \ref{Lemma2.11}, $\ov{Y_M}$ is maximal with respect to inclusion among the normal subgroups of $\ov{B}= C_{\ov{M_2}}(\ov{Y_M})$ acting quadratically on $\widehat{W}$. This provides us with a contradiction. Thus, we have shown that $Z=C_{Q_2}(W)$. In particular, $Z(W)=Z$.
		
\smallskip

Using Lemma~\ref{Lemma2.2}(e), we see that, for every $g\in Q_2$, the map $g^*\colon W/Z\rightarrow Z,wZ\mapsto [w,g]$ is a group homomorphism which can be regarded as an element of the dual space $(W/Z)^*$ of the $\mF_2$-vector space $W/Z$. Moreover, the map $Q_2\rightarrow (W/Z)^*,\;g\mapsto g^*$ is a group homomorphism with kernel $C_{Q_2}(W)=Z$. Hence, $Q_2/Z$ embeds into $(W/Z)^*\cong (W/Z)$. As $W\leq Q\leq  Q_2$, this implies $W=Q=Q_2$. 
\end{proof}

We finally look for contradictions relative to each of the cases (1) to (4), thus providing a contradiction to the non-solvability of $L$.

\begin{lemma}[{Cf. \cite[Lemma~2.13]{MStr:2008}}]\label{Lemma2.13}
Suppose $L$ is non-solvable. Then case (3) of Lemma~\ref{Lemma2.11} does not hold.
\end{lemma}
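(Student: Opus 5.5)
Suppose case (3) of Proposition~\ref{Lemma2.11} holds: $M=M_1$, $B/O_2(B)\cong\Sym(3)$, $\ov{M_2}\in\{\G_2(2),\G_2(2)'\}$ and $\widehat{W}$ is the natural $6$-dimensional module. By Lemma~\ref{Lemma2.12} we also have $Q_2=Q=W$ and $Z(W)=Z$. So $Q$ is extraspecial of order $2^7$ and $\tilde C=N_\L(Q)$ contains $M_2$ with $M_2/Q\cong\G_2(2)$ or $\G_2(2)'\cong \SU_3(3)$. I would derive a contradiction by comparing orders of $2$-groups and the structure of $Q_M$ on the $M$-side.

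\textbf{Step 1.} Since $M=M_1$, Lemma~\ref{Lemma2.1}(f) gives $M/Q_1\cong\SL_3(2)\times C_B(V)/Q_1$. Then $B/O_2(B)\cong\Sym(3)$ together with \eqref{eq-2.11-2} forces $C_B(V)=Q_1$. Hence $Q_1=Q_M$ and $S\cap M^\circ$ covers $S/Q_M$ with $|S/Q_M|=8$.

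\textbf{Step 2.} Compute $|S|$ from the $M_2$-side. We have $|S|=|Q|\cdot|\ov{S}|=2^7\cdot 2^{6}$ or $2^7\cdot 2^5$, according as $\ov{M_2}\cong\G_2(2)$ or $\G_2(2)'$. Hence $|Q_M|=2^{10}$ or $2^9$.

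\textbf{Step 3.} Use Lemma~\ref{Lemma2.1}(g): $O_2(B)=QQ_1$ and $|O_2(B)/Q_1|=4$. This gives $|Q/Q\cap Q_M|=4$, so $|Q\cap Q_M|=2^5$. On the other hand, $O_2(B)/Q=\ov{O_2(B)}=O_2(\ov B)$, where $\ov B=C_{\ov{M_2}}(\ov{Y_M})$ is the centralizer of a long root involution in $\G_2(2)$ (order $2^6\cdot 3$, so $|O_2(\ov B)|=2^5$ in $\G_2(2)$, respectively $2^4$ in $\G_2(2)'$). Thus $|O_2(B)|=2^{12}$ or $2^{11}$, and so $|Q_M|=|O_2(B)|/4=2^{10}$ or $2^9$, consistent with Step 2. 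Counting alone therefore gives no contradiction, and I would instead pin down the module structure of $Q_M$.

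\textbf{Step 4.} By Lemma~\ref{Lemma2.3}(a), the non-trivial $M$-composition factors of $Q_M/V$ are natural $\SL_3(2)$-modules, and $|Y_M/V|=2$. Apply Lemma~\ref{Lemma2.4} with $E=[Q_M,O^2(M)]$: if $[E,W]\le V$, then $Q_M=Y_M$ has order $2^4$, contradicting $|Q_M|\ge 2^9$. So $[E,W]\not\le V$. Now compare the action of $W=Q$ on $Q_M/V$ (quadratic, with $WQ_M/Q_M$ of order $4$) with the structure of $\ov B$ acting on $O_2(\ov B)$. In $\G_2(2)$, $O_2(\ov B)$ has a central $\ov{Y_M}$ with quotient a $4$-dimensional $B$-module on which $O^2(B)$ acts without trivial factors. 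I would show this forces $Q_M\cap Q$ and $Q_M/Q_M\cap Q$ to be small, so that $Q_M/V$ has at most one natural composition factor. That bounds $|Q_M|\le 2^{3+1+3+1}<2^9$, a contradiction.

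\textbf{Main obstacle.} The hard part is Step 4: controlling the $M$-chief factors of $Q_M$ via the $B$-module structure of $O_2(B)/Q$ inherited from $\G_2(2)$. I would first try to show $[Q_M,O^2(M)]\le VQ$-type containments using Lemma~\ref{Lemma2.4} and the rank-$1$ character of $B$ (only $\Sym(3)$). If that fails, I would fall back on Lemma~\ref{Lemma2.3}(c) applied to $Y_MY_M^g$ quadratic on $Q_2$, with $g\in L$. Since $L\cap B$ is large, this gives abelian subgroups of $Q_M$ generated by $L$-conjugates of $Y_M$, and should contradict $Z(W)=Z$.
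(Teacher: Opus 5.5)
\textbf{There is a genuine gap: Step 4, which carries the entire contradiction, is a plan rather than an argument.}

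Your Steps 1--3 are correct. As you observe yourself, they only produce consistent orders ($|Q_M|=2^{10}$ or $2^9$). The bound ``$Q_M/V$ has at most one natural $\SL_3(2)$-factor'' is asserted without any mechanism.

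The structural input you propose for it is also false. In $\G_2(2)$, $\ov{B}=C_{\ov{M_2}}(\ov{Y_M})$ is the parabolic $2^{1+4}_+:\SL_2(2)$. Its Levi factor acts on $O_2(\ov{B})/\ov{Y_M}$ as on $U\otimes U$, where $U$ is the natural module. So an element of order $3$ centralizes a subgroup of order $4$ there. In $\G_2(2)'\cong\SU_3(3)$ one has $\ov{B}\cong\mathrm{GU}_2(3)$, and an element of order $3$ centralizes the central cyclic subgroup of order $4$. Hence $O^2(B)$ does have trivial composition factors on $O_2(\ov{B})/\ov{Y_M}$. The fallback via Lemma~\ref{Lemma2.3}(c) is likewise only a hope.

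The missing idea is that Lemma~\ref{Lemma2.12}, which you already quote, finishes the proof immediately.
\begin{itemize}
\item In case (3), $\ov{M_2}$ acts transitively on $\widehat{W}^\#$. $\G_2(2)$ is transitive on the $63$ non-zero vectors of its natural module. So is $\G_2(2)'$, because the orbits of a normal subgroup of index $2$ have equal length and $63$ is odd.
\item Since $Z(W)=Z=\langle z\rangle$ has order $2$ and $Z<V\le W$, you can pick an involution $v\in V\setminus Z$.
\item For every $w\in W\setminus Z$ there is $g\in M_2$ with $(wZ)^g=vZ$. Then $w^g\in\{v,vz\}$, and both are involutions because $z$ is central of order $2$.
\item Thus $W$ has exponent $2$, so it is elementary abelian. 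This contradicts $W'=Z\neq 1$ from Lemma~\ref{Lemma2.2}(e).
\end{itemize}
Equivalently, $wZ\mapsto w^2$ is an $M_2$-invariant quadratic form on $\widehat{W}$ vanishing on $\widehat{V}\neq 0$. Transitivity forces it to vanish identically, which makes the commutator form trivial. No information about $Q_M$ or the $M$-side is needed.
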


\begin{proof}
If case (3) of Lemma~\ref{Lemma2.11} holds, then $\ov{M_2}$ acts transitively on $\widehat{W}^\#$. By Lemma \ref{Lemma2.12}, $Z(W) =Z$ is elementary abelian. Since $Z<V\leq W$, it follows that every element of $W$ is an involution and thus $W$ is elementary abelian. This contradicts Lemma \ref{Lemma2.2}(e).
\end{proof}

\begin{lemma}\label{Lemma2.14}
Suppose $L$ is non-solvable. Then the following hold:
\begin{itemize}
 \item [(a)] Case (1) of Lemma~\ref{Lemma2.11} holds with $n=2$, i.e. $M_1/Q_1\cong \SL_3(2)\times \SL_2(2)$, $B/O_2(B)\cong \SL_2(2)\times \SL_2(2)$, $\ov{M_2}\cong \SL_2(2)\times \Sp_4(2)$ and $\hat{W}=Q_2/Z$ is a tensor product of the corresponding natural modules.
 \item [(b)] A Sylow $3$-subgroup of $C_B(V)$ acts fixed point freely on $Q_1/Y_M$.
\end{itemize}
\end{lemma}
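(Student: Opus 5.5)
The plan is to combine the order information from Lemma~\ref{Lemma2.12} with the module structure of $Q_1$ under $M_1$, and to treat the three cases (1), (2), (4) of Proposition~\ref{Lemma2.11} that survive Lemma~\ref{Lemma2.13}. By Lemma~\ref{Lemma2.12}, $Q_2=Q=W$ is extraspecial with $Z(W)=Z$, and $\hat W=Q_2/Z$. In each case $|\hat W|$ and $|\ov{M_2}|_2$ are known, so
\[|S|=|Q_2|\cdot|\ov{M_2}|_2 .\]
On the other hand $S\in\Syl_2(M_1)$, so $|S|=|Q_1|\cdot|M_1/Q_1|_2$. This determines $|Q_1|$ in each case; for instance in case (1) one gets $|Q_1|=2^{6n-2}$.

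The first step is to describe $Q_1/Y_M$. We have $Y_M=\Omega_1Z(Q_M)$ of order $2^4$ and $Q_1\le Q_M$. By Lemma~\ref{Lemma2.3}(a), every non-trivial $M$-composition factor of $Q_M/V$, hence of $Q_1/Y_M$, is a natural $\SL_3(2)$-module on which $W$ acts quadratically. By Lemma~\ref{Lemma2.1}(f), $M_1/Q_1=M^\circ Q_1/Q_1\times C_B(V)/Q_1$. I would apply Lemma~\ref{L:TensorProductModule} with $G_1=M^\circ Q_1/Q_1$, $G_2=C_B(V)/Q_1$ and $H$ the $\Sym(4)$ given by $(B\cap M^\circ)Q_1/Q_1$, to each irreducible $M_1$-factor $X$ of $Q_1/Y_M$ with $[X,O_2(H)]\neq1$. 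The input needed is that $[X,O_2(H)]$ is a tensor product of a natural $\SL_2(2)$-module and an irreducible $G_2$-module. I would obtain this from the identification $Q_1/(Q_1\cap W)\cong O_2(B)/W$, together with the known action of $\ov B$ on $O_2(\ov B)$ in each case.

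Comparing dimensions then rules out the bad cases. In case (1) this means $6n-6$ against the bound coming from $W\cap Q_1$, where $|W/W\cap Q_1|=4$. In cases (2) and (4) the extra condition $[O_2(\ov B),\ov T]\neq1$ forces a non-trivial $T$-action on $Q_1Q_2/Q_2$. Combined with the tensor structure, I expect this to force $E=[Q_1,O^2(M)]\le Y_MQ_2$ or $[E,W]\le V$. Lemma~\ref{Lemma2.4} then gives $Y_M=Q_M$, hence $Q_1=Y_M$ has order $2^4$, which contradicts the computed value of $|Q_1|$. The same order contradiction should exclude $n\ge3$ in case (1). For $n=2$ the computed order is $|Q_1|=2^{10}$ and $|Q_1/Y_M|=2^6$, which is consistent with $3\otimes 2$.

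For (b), in the surviving case $C_B(V)/Q_1\cong\SL_2(2)$. By Lemma~\ref{L:TensorProductModule}(a), each non-trivial $M_1$-factor of $Q_1/Y_M$ is, as a $C_B(V)$-module, a sum of copies of the natural $\SL_2(2)$-module. An element of order $3$ acts fixed point freely on that module. It remains to exclude trivial $M^\circ$-factors, and this is where the counting $|Q_1/Y_M|=2^6=3\cdot2$ enters. Coprime action then lifts fixed point freeness from the composition factors to $Q_1/Y_M$.

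I expect the main obstacle to be excluding $n\ge3$ in case (1), and cases (2) and (4), precisely. For this I would first try to verify the tensor-product hypothesis of Lemma~\ref{L:TensorProductModule} through $O_2(B)=WQ_1$, and then apply part (b) of that lemma to get $[Q_1,T]\le WY_M$. That gives the commutator condition required by Lemma~\ref{Lemma2.4}.
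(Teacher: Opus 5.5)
Your proposal has genuine gaps, and the first is at the step meant to do the real work. You want to derive the hypothesis of Lemma~\ref{Lemma2.4} from $[Q_1,T]\le WY_M$, conclude $Q_M=Y_M$, and contradict the computed order of $Q_1$. But Lemma~\ref{Lemma2.4} needs $[Q_1,O^2(M)]\le Y_MQ_2$ or $[[Q_1,O^2(M)],W]\le V$. Information about the point-stabilizer group $T=O^2(M\cap\tilde{C})$ does not control $O^2(M)$.

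This mechanism cannot be repaired. Take the configuration you expect to survive: case (1) with $n=2$, $Q_1/Y_M$ a sum of natural $\SL_3(2)$-modules for $M^\circ$, and $|\ov{Q_1}|=2^3$. There $[Q_1,T]\le W$ holds, yet $[Q_1,O^2(M)]Y_M=Q_1\not\le Y_MQ_2$ and $[[Q_1,O^2(M)],W]V=W\cap Q_1\neq V$. Nor is there an order contradiction for $n\ge 3$. Your count $|Q_1/Y_M|=2^{6n-6}$ is exactly the order of three copies of the natural $\Sp_{2n-2}(2)$-module, as Lemma~\ref{L:TensorProductModule}(a) predicts.

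The paper uses $[Q_1,T]\le W\cap Q_1$ directly. It gives $[O_2(\ov{B}),\ov{T}]=1$, which contradicts the extra condition in cases (2) and (4). Your $[Q_1,T]\le WY_M$ would give the same by coprime action, since $T=O^2(T)$ centralizes $\ov{Y_M}$. The case $n\ge 3$ is excluded by a splitting argument that your plan lacks. $O_2(\ov{B})$ is the $O_2$ of the centralizer of a transvection in $\Sp_{2n}(2)$. For $n\ge 3$ it is a non-split extension of $\ov{Y_M}$ by the natural $\Sp_{2n-2}(2)$-module. On the other hand, for $E=\langle (W\cap Q_1)^{M_1}\rangle$ one shows $E\cap Q_2=W\cap Q_1$, and then $\ov{E}$ is a natural $\Sp_{2n-2}(2)$-module complementing $\ov{Y_M}$.

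The input to Lemma~\ref{L:TensorProductModule} is also taken from the wrong place. Since $O_2(B)=WQ_1$, an $M_1$-factor $X$ of $Q_1/Y_M$ satisfies $[X,O_2((B\cap M^\circ)Q_1/Q_1)]=[X,W]$. This is a section of $[Q_1,W]Y_M/Y_M\le (W\cap Q_1)Y_M/Y_M\cong (W\cap Q_1)/V$. That is the \emph{bottom} of $Q_1$, not $Q_1/(Q_1\cap W)\cong O_2(\ov{B})$. In case (1) the $\SL_2(2)$-factor of $\ov{M_2}$ containing $\ov{T}$ even centralizes $O_2(\ov{B})$, so no tensor structure is visible there.

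What is needed, and what the paper proves first, has three parts:
\begin{itemize}
\item One needs $W\cap Q_1=[W,Q_1]$, and that $(W\cap Q_1)/V\cong[\hat{W},Q_1]/\hat{V}$ is the simple $B$-module ``natural $\SL_2(2)\otimes$ natural $C_B(V)/Q_1$''. This is read off from $\ov{B}=C_{\ov{M_2}}(\ov{Y_M})$ and $[\hat{W},\ov{Y_M}]=\hat{V}$.
\item One needs an irreducible $M_1$-module whose $W$-commutator is exactly this module. The paper takes $E/F$ with $F/V=C_{E/V}(\langle W^{M_1}\rangle)$ and uses Lemma~\ref{Lemma1.3}.
\item Counting the three non-trivial $C_B(V)$-composition factors of $O_2(B)$ gives $F=V$. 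This is what makes $\ov{E}$ available in the splitting argument, and it yields $Q_1=EY_M$, from which (b) follows as you describe.
\end{itemize}
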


\begin{proof}
By Lemma~\ref{Lemma2.13} one of the cases (1),(2) or (4) of Lemma~\ref{Lemma2.11} holds. Set $m:=n-1$ in the first case and $m:=1$ in the other two cases. Then $B/O_2(B)\cong \SL_2(2)\times \Sp_{2m}(2)$ and $C_B(V)/Q_1\cong \Sp_{2m}(2)$ in all cases. Recall that $\ov{B}=C_{\ov{M_2}}(\ov{Y_M})$, $\ov{Q_1}=O_2(\ov{B})$ and $[\hat{W},\ov{Y_M}]=\hat{V}$ by Lemmas~\ref{Lemma2.2}(d) and \ref{Lemma2.3}(b). Hence, in all cases $[\hat{W},Q_1]$  has index $4$ in $\hat{W}$, $\hat{V}=C_{\hat{W}}(Q_1)=[\hat{W},Q_1,Q_1]$,  and the $B/O_2(B)$-module $[\hat{W},Q_1]/\hat{V}$ is the tensor product of a natural $\SL_2(2)$-module and a natural $\Sp_{2m}(2)$-module. 
 
 \smallskip
 
By Lemmas~\ref{Lemma2.2}(d) and \ref{Lemma2.12} we have $Z(W)=Z\leq V=[W,Y_M]\leq [W,Q_1]$. Clearly, $[W,Q_1]\leq W\cap Q_1$. As $|W/W\cap Q_1|=4$ by Lemmas~\ref{Lemma2.1}(g) and Lemmas~\ref{Lemma2.2}(d), it follows from order considerations that $W\cap Q_1=[W,Q_1]$. In particular, 
\[[W\cap Q_1,Q_1]\leq V\]
and $(W\cap Q_1)/V$ is a $B/O_2(B)$-module, which is the tensor product of a natural $\SL_2(2)$-module and a natural $\Sp_{2m}(2)$-module. In particular, $(W\cap Q_1)/V$ is a simple $B$-module and $O^2(M^\circ\cap B)$ acts fixed point freely on $(W\cap Q_1)/V$. 

\smallskip

Set now $E:=\<(W\cap Q_1)^{M_1}\>$. Then $[E,Q_1]\leq V$ so that $E/V$ is elementary abelian and an $M_1/Q_1$-module. Let $V\leq F\leq E$ with $F/V=C_{E/V}(\<W^{M_1}\>)$. It follows from Lemma~\ref{Lemma1.3} applied with $(E/V,(W\cap Q_1)/V,W\unlhd B\leq M_1)$ in place of $(W,Y,A\unlhd B\leq H)$ that $E/F$ is either trivial or an irreducible $M_1$-module.

\smallskip

Notice that $WQ_1=O_2(B)=QQ_1$ by Lemmas~\ref{Lemma2.1}(g) and \ref{Lemma2.2}(d), so $M^\circ Q_1=\<W^{M_1}\>Q_1$. As $B\cap M^\circ$ acts fixed point freely on $(W\cap Q_1)/V$, it follows $(W\cap Q_1)\cap F=V$ and so 
\[(W\cap Q_1)F/F\cong (W\cap Q_1)/V\]
as a $B$-module. In particular, $F\neq E$. Thus, $E/F$ is an irreducible $M_1$-module and $[E,W]\not\leq V$. As $[E,W]\leq [Q_1,W]=Q_1\cap W$ and since $(W\cap Q_1)/V$ is a simple $B$-module, it follows $[E,W]V=W\cap Q_1$ and $[E/F,W]=(W\cap Q_1)F/F$. Recall also that $O_2(B)=Q_1W$ and $[Q_1\cap W,W]\leq W'=Z\leq V\leq F$. Hence, $O_2(B)$ acts quadratically on $E/F$ and 
\begin{equation}\label{E:EmodFO2B}
[E/F,O_2(B)]\cong (W\cap Q_1)/V\mbox{ as a $B/O_2(B)$-module.} 
\end{equation}
It is now a consequence of Lemma~\ref{L:TensorProductModule} that $E/F$ is isomorphic to the direct sum of three copies of the natural $\Sp_{2m}(2)$-module for $C_B(V)/Q_1$, and that $[E,T]\leq (W\cap Q_1)F$. As $M^\circ Q_1=\<W^{M_1}\>Q_1$ and $[Q_1,W]\leq W\cap Q_1\leq E$, we have $[Q_1,O^2(M^\circ)]\leq E$ and $[F,M^\circ]\leq V$. Hence, we can conclude that $[Q_1,T]\leq W\cap Q_1\leq W\leq Q_2$ and so $[O_2(\ov{B}),\ov{T}]=1$. Thus, cases (2) and (4) of Lemma~\ref{Lemma2.11} cannot hold, which means that we are in case (1) of that Lemma.

\smallskip

Recall now that $Z(W)=Z$ and $Q_2=W$ so that $Q_2/Z$ is the direct sum of two natural $\Sp_{2n}(2)$-modules for $\ov{H}$. This means that there are precisely two non-trivial $C_B(V)$-composition factors in a $C_B(V)$-composition series for $Q_2$. Indeed, these non-trivial composition factors are $C_B(V)$-composition factors of $[W,Q_1]=W\cap Q_1$. Moreover, there is precisely one non-trivial $C_B(V)$-composition factor in $O_2(\ov{B})$. Thus, looking at the structure of $M_2$, we see that there are altogether exactly three non-trivial $C_B(V)$-composition factors in $O_2(B)$. Recall now that $C_B(V)$ has also three non-trivial composition factors on $E/F$. Hence, there is no non-trivial $C_B(V)$-composition factor in $F$ and thus $[F,O^2(C_B(V))]=1$. As $(W\cap Q_1)/V$ is the direct sum of two natural $\Sp_{2m}(2)$-modules for $C_B(V)/Q_1$, we can see that $E/V=\<((W\cap Q_1)/V)^{M_1}\>$ is a sum of natural $\Sp_{2m}(2)$-modules and can then be written as a direct sum of such modules. In particular, $C_{E/V}(O^2(C_B(V)))=1$. Thus we obtain $F=V$ and 
\begin{equation}\label{E:EmodV}
E/V\mbox{ is the direct sum of three natural $\Sp_{2m}(2)$-modules for }C_B(V)/Q_1.
\end{equation}
Equation~\eqref{E:EmodFO2B} yields now $[E,Q_2]V=[E,O_2(B)]V=W\cap Q_1$ and so $W\cap Q_1\leq E\cap Q_2$. As $W=Q_2$, we have also $E\cap Q_2=E\cap W\leq Q_1\cap W$. Hence, $W\cap Q_1=E\cap Q_2$ and $\ov{E}\cong E/E\cap Q_2$ is a natural $\Sp_{2m}(2)$-module by \eqref{E:EmodV} and since $(W\cap Q_1)/V$ is the direct sum of two natural $\Sp_{2m}(2)$-modules. It follows from the structure of $\ov{M_2}$ that $O_2(\ov{B})$ is a an extension of $\ov{Y_M}$ by a natural $\Sp_{2m}(2)$-module, which is non-split if $n\geq 3$. Hence, $n=2$ and so (a) holds. In particular, $m=1$ and $|\ov{Q_1}|=|O_2(\ov{B})|=2^3$.

\smallskip

We calculate now that $|Q_1\cap Q_2|=|Q_1\cap W|=|V|\cdot 2^4=2^7$ and so $|Q_1|=|\ov{Q_1}|\cdot |Q_1\cap Q_2|=2^{10}$. At the same time, by \eqref{E:EmodV}, we have $|E/V|=2^6$ and thus $|E|=2^9$. As $[Y_M,M_2]\leq V$, it follows also from \eqref{E:EmodV} that $Y_M\not\leq E$. Hence, $Q_1=EY_M$ and $E\cap Y_M=V$. So $Q_1/Y_M\cong E/V$ as an $M_1$-module and (b) is obtained from another application of \eqref{E:EmodV}. 
\end{proof}

\begin{lemma}[{Cf. \cite[Lemma~2.15]{MStr:2008}}]\label{Lemma2.15}
$L$ is solvable.
\end{lemma}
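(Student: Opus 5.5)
We argue by contradiction and assume that $L$ is non-solvable. By Lemma~\ref{Lemma2.14}, we are then in case (1) of Lemma~\ref{Lemma2.11} with $n=2$. So $M_1/Q_1\cong \SL_3(2)\times\SL_2(2)$, $B/O_2(B)\cong \SL_2(2)\times \SL_2(2)$ and $\ov{M_2}\cong \SL_2(2)\times \Sp_4(2)$. Moreover, $Q_2=Q=W$ with $Z(W)=Z$ by Lemma~\ref{Lemma2.12}, and $\hat{W}=Q_2/Z$ is the $8$-dimensional tensor product module. In particular $|Q|=2^9$ and $|Q_1|=2^{10}$, as computed at the end of the proof of Lemma~\ref{Lemma2.14}. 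Also $Q_1=EY_M$ with $E\cap Y_M=V$, and $Q_1/Y_M$ is a sum of three natural $\SL_2(2)$-modules for $C_B(V)/Q_1$, on which a Sylow $3$-subgroup $D$ of $C_B(V)$ acts fixed point freely by Lemma~\ref{Lemma2.14}(b). The aim is to contradict the largeness of $Q$, more precisely the property \eqref{Q!L} applied to a suitable non-trivial subgroup of $Z(Q)=Z$. Alternatively, we aim to contradict the fact that $\tilde{C}$ and $M$ are groups of characteristic $2$.

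The first step is to use $D$. Since $D\leq C_B(V)\leq C_\L(V)\leq \tilde{C}$ and $D$ centralizes $Y_M$ by Lemma~\ref{Lemma2.1}(e), we get $C_{Q_1}(D)=C_{Y_M}(D)C_{E}(D)$. Fixed point freeness on $Q_1/Y_M$ then gives $C_{Q_1}(D)=Y_M$. Next consider the $\ov{M_2}$-picture. Here $\ov{D}$ lies in the $\Sp_4(2)$-factor, inside $C_{\ov{H}}(\ov{Y_M})$, as the $3$-element of the $\SL_2(2)$ Levi factor of the transvection centralizer. On $\hat{W}=U\otimes U'$, where $U$ is the natural $\SL_2(2)$-module and $U'$ the natural $\Sp_4(2)$-module, the fixed space of $\ov{D}$ is $U\otimes C_{U'}(\ov{D})$, which has dimension $4$. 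Combining this with $C_{Q_1}(D)=Y_M$ and $|Q_2\cap Q_1|=2^7$, we compute $C_{Q_2}(D)$. We expect $C_{Q}(D)$ to be a group of order $2^5$ which contains $V$ but is not contained in $Q_1$. Comparing it with $C_{Q_1}(D)=Y_M$ and $Y_M\not\leq Q$ should force an inconsistency. Concretely, $C_{O_2(B)}(D)=C_Q(D)C_{Q_1}(D)$ by coprime action on $O_2(B)=QQ_1$. The orders obtained from the $Q_1$-side and from the $\hat{W}$-side should not match.

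If this order count does not close the argument, we fall back on the original strategy of \cite[Lemma~2.15]{MStr:2008}, which is to build a second $\SL_3(2)$-type subgroup. The subgroup $C_{\tilde{C}}(D)$, or rather $N_{M_1}(D)$ together with $N_{M_2}(D)$, generates an amalgam with smaller $2$-part. Its $O_2$ contains $Y_M$ and centralizes $V$. We would show that $\<N_{M_1}(D),N_{M_2}(D)\>$ lies in $N_\L(P)$ for some $P\in\Delta$, using $Q$-repleteness and Lemma~\ref{lemma:parabolics in local of char p}. This would contradict the fact that $N_{M_2}(D)$ induces the $\Sp_4(2)$-action moving $\ov{Y_M}$. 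Such an action is incompatible with $N_{M_1}(D)$ normalizing $Y_M$ and with $\tilde{M}=N_\L(V)$ containing $N_{M_1}(D)$ by Lemma~\ref{Lemma2.1}(e).

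We expect the main obstacle to be the precise bookkeeping of $C_{Q_2}(D)$ versus $C_{Q_1}(D)$. This requires identifying the $D$-action on $\ov{Q_1}=O_2(\ov{B})$, an extension of $\ov{Y_M}$ by a natural module. It also requires tracking the position of $\ov{D}$ inside $\Sp_4(2)\cong\Sym(6)$, where we distinguish whether it acts as a $3$-cycle or a product of two $3$-cycles relative to the transvection $\ov{Y_M}$. We would settle this first by using $\Sp_4(2)\cong \Sym(6)$, with $\ov{Y_M}$ a transposition, so that $\ov{D}$ is a $3$-cycle disjoint from it.
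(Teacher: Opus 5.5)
Your preliminary facts are correct: Lemma~\ref{Lemma2.14}, $C_{Q_1}(D)=Y_M$, $\ov{D}$ a $3$-cycle of $\ov{H}\cong\Sp_4(2)\cong\Sym(6)$ disjoint from the transposition $\ov{Y_M}$, and $|C_{\hat{W}}(\ov{D})|=2^4$. However, the inconsistency you expect from them does not exist; the two order counts agree.
\begin{itemize}
\item[(1)] From the $M_1$-side: by Lemma~\ref{Lemma2.1}(f), $DQ_1/Q_1$ lies in the direct factor $C_B(V)/Q_1$ of $B/Q_1$. So $D$ centralizes $O_2(B)/Q_1$, and $|C_{O_2(B)}(D)|=|Y_M|\cdot 4=2^6$.
\item[(2)] From the $M_2$-side: $|C_{Q_2}(D)|=2\cdot 2^4=2^5$. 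Also $\ov{D}$ acts on $O_2(\ov{B})=\ov{Q_1}\cong\ov{Y_M}\times O_2(\Sym(4))$ centralizing only $\ov{Y_M}$. So again $|C_{O_2(B)}(D)|=2^5\cdot 2=2^6$.
\item[(3)] Similarly $C_Q(D)\cap Q_1=V=Y_M\cap Q$, which is exactly what $C_{Q_1}(D)=Y_M$ and $Y_M\not\leq Q$ predict.
\end{itemize}
So working only with the $3$-subgroup $C_D(V)$ and counting centralizer orders cannot produce a contradiction.

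Your fallback is not an argument either. You name no object $P\in\Delta$ normalized by both $N_{M_1}(D)$ and $N_{M_2}(D)$, and without one their ``join'' is not even a subgroup of $\L$. Moreover, $N_{M_2}(D)$ does not induce $\Sp_4(2)$: its image in $\ov{M_2}$ is $N_{\ov{M_2}}(\ov{D})\cong\SL_2(2)\times\Sym(3)\times\Sym(3)$. Appeals to \eqref{Q!L} or to characteristic $2$ without a specific subgroup add nothing.

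The missing idea is to take a full Sylow $3$-subgroup $D=D_1D_2$ of $B$, where $D_1=C_D(V)$ is your $D$ and $D_2=D\cap M^\circ Q_1$. One then computes $N:=N_B(D)$ from both sides.
\begin{itemize}
\item[(1)] From $M_1$: fixed-point-freeness of $D_1$ on $Q_1/Y_M$ gives $N_{M_1}(D_1)\cap Q_1=Y_M$. It also gives $N_{M_1}(D_1)=E_1\times F_1$ with $E_1\cong\SL_2(2)\leq C_B(V)$ and $F_1/Y_M\cong\SL_3(2)$. This forces $N/D\cong C_2\times D_8$, and $C_N(D_2)$ has elementary abelian Sylow $2$-subgroups of order $8$.
\item[(2)] From $M_2$: $\ov{D_2}$ is the $3$-element of the factor $C_{\ov{M_2}}(\ov{H})\cong\SL_2(2)$, which is normal in $\ov{M_2}$ and fixed-point-free on $\hat{W}$. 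Hence $C_{Q_2}(D_2)=Z$ and $N_{M_2}(D_2)/Z\cong\Sp_4(2)\times\SL_2(2)$. Further, $U_2:=C_{M_2}(D_2)'\cong\Alt(6)$ with $U_2\cap Z=1$; the extension splits because $2\cdot\Alt(6)\cong\SL_2(9)$ has quaternion Sylow $2$-subgroups.
\item[(3)] Choose $D_1\leq U_2$. Then $(N\cap U_2)D/D$, $ZD/D$ and $E_1D/D$ give three distinct central involutions in $N/D$. The last is not in $\ov{H}'$ because involutions of $\ov{E_1}$ induce transvections. So $|Z(N/D)|\geq 8$, contradicting $N/D\cong C_2\times D_8$.
\end{itemize}
Your proposal lacks a comparison of this kind, one that brings in $D_2$ and not just $D_1$.
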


\begin{proof}
Suppose that $L$ is non-solvable. We show now that the situation described in Lemma~\ref{Lemma2.14} does not occur. By that lemma, we have $M_1/Q_1\cong \SL_3(2)\times \SL_2(2)$. Indeed, according to Lemma~\ref{Lemma2.1}(f), $M_1/Q_1=M^\circ Q_1/Q_1\times C_B(V)/Q_1$ where $M^\circ Q_1/Q_1\cong \SL_3(2)$ and $C_B(V)/Q_1\cong \SL_2(2)$. We note also that $C_B(V)=C_B(Y_M)$ by Lemma~\ref{Lemma2.1}(e).

\smallskip

Let now $D$ be a Sylow $3$-subgroup of $B$. Then $D=D_1D_2$ where $D_1=C_D(V)$ and $D_2=D\cap M^\circ Q_1$. Set 
\[N_1:=N_{M_1}(D_1).\]
As $D_1Q_1$ is normal in $M_1$, it follows from the Frattini argument that $M_1=N_1Q_1$. By Lemma~\ref{Lemma2.14}(b), $D_1$ acts fixed point freely on $Q_1/Y_M$. Hence, $N_1\cap Q_1=Y_M$, 
\[N_1\sim 2^{3+1}(\SL_3(2)\times \SL_2(2))\] 
and $|O_2(N_1/D_1)|=2^5$. By \cite[Lemma~1.1(b)]{MStr:2008} we have $C_{Y_M}(M^\circ)=1$ and so $Y_MD_1/D_1$ intersects trivially with $Z(N_1/D_1)$. As $Z(N_1/D_1)$ does not contain any element of odd order, it follows that $|Z(N_1/D_1)|=2$. Let $E_1$ be the inverse image of $Z(N_1/D_1)$ in $N_1$ and set $F_1=C_{N_1}(E_1)$. Then $E_1\cong \SL_2(2)$. In particular, $Z(E_1)=1$ and $\Aut(E_1)\cong E_1$. Hence, $N_1=E_1\times F_1$. Note also that $E_1\leq C_B(V)=C_B(Y_M)$ and $Y_M D_2\leq F_1$. Moreover, $F_1/Y_M\cong \SL_3(2)$. Set now
\[N:=N_B(D)=N_{N_1}(D_2)\cap B.\]
Then $|Y_M\cap N|=|C_{Y_M}(D_2)|=4$ and 
\[(F_1\cap N)/(Y_M\cap N)\cong (F_1\cap N)Y_M/Y_M\cong \SL_2(2)\]
using a Frattini argument. By \cite[Lemma~1.1(c)]{MStr:2008} we have $[Y_M\cap N,F_1\cap N]\neq 1$. Hence, $(F_1\cap N)/D_2\cong D_8$. Note also that $E_1\leq C_N(D_2)\leq N$, so $N=E_1\times (F_1\cap N)$ and $C_N(D_2)=E_1\times C_{F_1\cap N}(D_2)=E_1\times D_2\times (Y_M\cap N)$. Thus, 
\[N/D\cong (E_1/D_1)\times (F_1\cap N)/D_2\cong C_2\times D_8\] 
and the Sylow $2$-subgroups of $C_N(D_2)$ are elementary abelian of order $8$.

\smallskip

We now consider the embedding of $N$ in $M_2$ to obtain a contradiction. By Lemma~\ref{Lemma2.14} we have $\ov{M_2}\cong \SL_2(2) \times \Sp_4(2)$ and $\hat{W}=Q_2/Z$ is a tensor product of the corresponding natural modules. Here more precisely, $\ov{H}\cong \Sp_4(2)\cong S_6$ and $C_{\ov{M_2}}(\ov{H})\cong \SL_2(2)$. 

\smallskip

As $D_1$ and $D_2$ are the only normal subgroups of $N$ of order $3$ and since $D_1$ centralizes $V/Z$ while $D_2$ does not, we have 
\[\ov{D_1}\leq \ov{H} \mbox{ and } \ov{D_2}=O^2(C_{\ov{M_2}}(\ov{H}))\unlhd \ov{M_2}.\]
In particular, every $2$-subgroup of $\ov{M_2}$ which is normalized by $\ov{D_2}$ is also centralized by $\ov{D_2}$. This yields $[O_2(B\cap F_1),D_2]\leq Q_2$. Looking at the structure of $F_1$ we note now that there are two non-trivial $D_2$-composition factors in $O_2(B\cap F_1)$ which then must be $D_2$-composition factors of $O_2(B\cap F_1)\cap Q_2$. Moreover, $D_2$ centralizes $Z\leq O_2(B\cap F_1)\cap Q_2$. Hence, $|F_1\cap Q_2|\geq 2^5$ and so $|C_{Q_2}(E_1)|\geq 2^5$. Observe also that $\ov{E_1}\leq O^{2^\prime}(C_{\ov{M_2}}(\ov{D_2}))=\ov{H}$. Recall that $\hat{W}=Q_2/Z$ is the direct sum of two natural $\Sp_4(2)$-modules for $\ov{H}$. So the involutions in $\ov{E_1}$ must induce transvections on each of these natural modules. In particular, $\ov{E_1}\not\leq \ov{H}'\cong \Sp_4(2)'\cong A_6$. 

\smallskip

Set now 
\[N_2:=N_{M_2}(D_2)\mbox{ and }U_2:=C_{M_2}(D_2)'.\]
Note that $N_2\cap Q_2=C_{Q_2}(D_2)=Z$. As $\ov{D_2}$ is normal in $\ov{M_2}$, a Frattini argument gives moreover $\ov{N_2}=\ov{M_2}$. Hence
\[N_2/Z\cong \ov{M_2}\cong \Sp_4(2)\times \SL_2(2).\]
As $\Aut(D_2)\cong C_2$, it follows that
\[C_{M_2}(D_2)/Z\cong \Sp_4(2)\times C_3\mbox{ and }U_2Z/Z\cong \Sp_4(2)'\cong A_6.\]
In particular, $U_2Z$ has index $6$ in $C_{M_2}(D_2)$. Note that $C_N(D_2)\leq C_{M_2}(D_2)$ and recall that $C_N(D_2)$ has Sylow $2$-subgroups which are elementary abelian of order $8$. Hence, $U_2Z$ contains a fours group. Since $2.A_6\cong \SL_2(9)$ has quaternion Sylow $2$-subgroups, it follows that the extensions $U_2Z/Z$ must be split. Thus, $U_2\cong \Sp_4(2)'$ and $U_2\cap Z=1$.

\smallskip

We may now choose $D$ such that $D_1\leq U_2$. Then $N\cap U_2\cong \SL_2(2)$ and so $(N\cap U_2)D/D$ has order $2$. As $N\cap U_2 \unlhd N$, it follows $(N\cap U_2)D/D\leq Z(N/D)$. Note also that $ZD/D\leq Z(N/D)$ as $Z$ is centralized by $B$. Moreover, $E_1D/D\leq Z(N/D)$ and $\ov{E_1}\not\leq \ov{H}'=\ov{U_2Z}$. Hence, $|Z(N/D)|\geq 8$ contradicting $N/D\cong C_2\times D_8$. 
\end{proof}

\subsection{The structure of the amalgam}\label{SS:Amalgam}

In this section we first determine the structure of $M$ and $\tilde{C}$. Furthermore we will show that $M=\tilde{M}$ and $B=M\cap \tilde{C}$. In a second step, we will determine the structure of the amalgam formed by $B$, $M$, $\tilde{C}$ and the corresponding inclusion maps showing in particular that this amalgam is isomorphic to an amalgam found in $\Aut(\G_2(3))$.  

\begin{prop}\label{Proposition2.16}
The following hold.
\begin{itemize}
    \item[(a)] $Z=Z(W)$, $Q_M = Y_M$, $\ov{H} \cong \Sym(3)$ and $|S|=2^7$.
    \item[(b)] $Q \cong Q_8 \circ Q_8$ is the central product of two quaternion groups of order 8.
    \item[(c)] $\tilde{C}/Q \cong \Sym(3) \times \Sym(3)$.
    \item[(d)] $Q_2=W=Q$ and $M_2 = \tilde{C}$.
\end{itemize}
\end{prop}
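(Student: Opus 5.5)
Now that $L$ is solvable by Lemma~\ref{Lemma2.15}, Lemma~\ref{L:F*ovL} gives $\ov{L}=F(\ov{L})$, an $r$-group for an odd prime $r$, with $F^*(\ov{H})=\ov{L}$. The plan is to determine $\ov{H}$ and $\widehat{W}$ from this, and then push the information down to $Q_M$, $Q$ and $\tilde{C}$.

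\textbf{Step 1: $\ov{H}\cong\Sym(3)$.} Since $\ov{L}$ is an $r$-group and $\ov{Y_M}$ has order $2$, I would first argue that $\ov{L}$ is elementary abelian. By minimality of $L$ (Lemma~\ref{Minimality of L}), $\ov{Y_M}$ centralizes every proper $(M\cap\tilde{C})$-invariant subgroup of $\ov{L}$, in particular $\Phi(\ov{L})$; coprime action then forces $\Phi(\ov{L})=1$. Next I would use that $\ov{Y_M}$ inverts $\ov{L}=[\ov{L},\ov{Y_M}]$ and the bound $|[\widehat{W},Y_M]|=|\widehat{V}|=4$. An irreducible $H$-submodule $U$ of $\widehat{W}$ has the property that $[U,\ov{Y_M}]$ has order $2$ or $4$. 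Lemma~\ref{Lemma2.3}(f) says $\widehat W$ is $U$ or $U\oplus U$, and Lemma~\ref{Lemma2.3}(c) supplies quadratic fours groups $Y_MY_M^g$ whenever $\ov{L}$ is not cyclic. A faithful module for $\ov{L}\rtimes\langle t\rangle$ with $t$ inverting and $|[U,t]|\leq 4$ should force $|\ov{L}|=3$: for $r>3$ or rank $\geq 2$, $t$ moves too much. Hence $\ov{H}\cong \Sym(3)$ and $|\widehat{W}|\in\{4,16\}$. Since $V\not\leq Z(W)$ gives $|\widehat V|=4$ with $\widehat V<\widehat W$ (as $W\neq V$), we get $\widehat{W}\cong U\oplus U$ with $|U|=4$, so $|\widehat W|=16$.

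\textbf{Step 2: $Z(W)=Z$, $W=Q_2=Q$ and the shape of $Q$.} I would repeat the argument of Lemma~\ref{Lemma2.12} to get $C_{Q_2}(W)=Z$. This means taking a minimal $M_2$-normal $D$ with $Z<D\leq C_{Q_2}(W)$, showing $E=\langle D^{M_1}\rangle$ is abelian, and deriving a contradiction either via $E\leq Y_M$ or via a quadratic normal overgroup of $\ov{Y_M}$ in $\ov B$. The last step is easy here since $\ov{B}=C_{\ov{M_2}}(\ov{Y_M})$ is small. The dual-space embedding $Q_2/Z\hookrightarrow (W/Z)^*$ then yields $Q_2=W=Q$, with $|W|=2^5$ and $W'=\Phi(W)=Z$ of order $2$ by Lemma~\ref{Lemma2.2}(e). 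So $Q$ is extraspecial of order $2^5$. Because $\ov{L}\cong C_3$ acts fixed-point-freely on $\widehat W$ preserving the form, the quadratic form has $+$ type on each $3$-orbit pair, giving $Q\cong Q_8\circ Q_8$ rather than $D_8\circ D_8$. Equivalently, $D_8\circ D_8$ admits no such action compatible with $V\leq Q$ elementary abelian of order $8$, i.e.\ a maximal elementary abelian subgroup of order $8$ lying in a nontrivial coset structure.

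\textbf{Step 3: $Q_M=Y_M$ and $|S|$.} From $Q_2=W$ and $Q_1\leq O_2(B)=WQ_1$, I would compute $[Q_1,W]\leq V$ using $|\ov B|$ small, or show $E=[Q_1,O^2(M)]\leq Y_MQ_2$. Lemma~\ref{Lemma2.4} then gives $Q_M=Y_M$ of order $2^4$, so $|S|=2^4\cdot 2^3=2^7$ by $M/Q_M\cong\SL_3(2)$.

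\textbf{Step 4: $\tilde C/Q\cong\Sym(3)\times\Sym(3)$ and $M_2=\tilde C$.} Since $C_\L(Q)\leq Q$, $\tilde C/Q$ embeds in $\Out(Q_8\circ Q_8)\cong \O_4^+(2)\cong \Sym(3)\wr C_2$. Its Sylow $2$-subgroup $S/Q$ has order $4$, and $\ov{M_2}\geq \ov H\times\ov{(M^\circ\cap B)}$-type pieces; indeed $T$ acts nontrivially on $V/Z$ by Lemma~\ref{Lemma2.10}. This forces $\ov{M_2}\cong\Sym(3)\times\Sym(3)$, and order considerations with $|S/Q|=4$ give $\tilde C=M_2$. \textbf{The main obstacle} I expect is Step 1: excluding $r\geq5$ or noncyclic $\ov L$. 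Here I would lean on the quadratic fours groups from Lemma~\ref{Lemma2.3}(c) together with $|[\widehat W,Y_M]|=4$.
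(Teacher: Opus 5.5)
Your proposal has a genuine gap at the start of Step 1. Lemma~\ref{Minimality of L} gives two things: $[\Phi(\ov{L}),\ov{Y_M}]=1$, and irreducibility of $M\cap B$ on $\ov{L}/\Phi(\ov{L})$. Hence $\ov{Y_M}$ inverts $\ov{L}/\Phi(\ov{L})$ and $\Phi(\ov{L})\le Z(\ov{H})\le Z(\ov{L})$. But coprime action does \emph{not} force $\Phi(\ov{L})=1$. An extraspecial group $3^{1+2}$ admits an involutory automorphism that inverts it modulo the centre and centralizes the centre, and one still has $[\ov{L},\ov{Y_M}]=\ov{L}$.

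Ruling out this nonabelian case is the main work in the paper's proof, and your plan does not anticipate it. The paper's argument runs as follows:
\begin{enumerate}
\item $Z(\ov{L})\le\ov{B}$ acts faithfully on $V/Z$, so $|Z(\ov{L})|=3$ and $\ov{L}$ is extraspecial.
\item A subgroup $A\cong C_3\times C_3$ gives $W/Z=\sum_i C_{W/Z}(A_i)$ with $|C_{W/Z}(A_i)|=4$, hence $|W/Z|=2^6$.
\item Since a Sylow $3$-subgroup of $\GL_6(2)$ is $C_3\wr C_3$, one gets $\ov{L}\cong 3^{1+2}$ of exponent $3$.
\item Winter's theorem embeds $\ov{O_2(B)}$ in $\SL_2(3)$, so $\Omega_1(\ov{O_2(B)})\le\ov{Y_M}$ and hence $\langle (W\cap Q_M)^M\rangle\le Y_MQ_2$.
\item Lemma~\ref{Lemma2.4} then gives $Q_M=Y_M$ and $|S|=2^7=|W|$, which is absurd because $W\le Q<S$.
\end{enumerate}

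Even once $\ov{L}$ is known to be abelian, your route to $|\ov{L}|=3$ fails as stated.
\begin{itemize}
\item Lemma~\ref{Lemma2.3}(c) is vacuous here. $\ov{H}=\ov{L}\ov{Y_M}$ is generalized dihedral, so no two distinct conjugates of $\ov{Y_M}$ commute. Cyclicity of $\ov{L}$ comes instead from an abelian group acting faithfully and irreducibly (faithfulness by homogeneity of $\widehat{W}$ and Lemma~\ref{Lemma2.3}(e)).
\item More seriously, the bound $|[U,\ov{Y_M}]|\le 4$ does not exclude $r=5$. $D_{10}$ acts on $\mathbb{F}_{16}$ with the inverting involution acting as $x\mapsto x^4$, whose commutator space has order $4$.
\end{itemize}

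The paper reverses your order. Since $\ov{H}$ is generated by two conjugates of $\ov{Y_M}$, one gets $|W/Z|\le 16$. Then $Z(W)=Z$ holds immediately by coprime action, because $[W/Z,\ov{L}]=W/Z$ forces $C_{W/Z}(\ov{L})=1$; there is no need to rerun Lemma~\ref{Lemma2.12}. So $W$ is extraspecial of order $2^5$. Only then does the embedding $\ov{M_2}\hookrightarrow\Out(W)\cong\O_4^+(2)\cong\Sym(3)\wr C_2$ yield $\ov{L}\cong C_3$.

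Note also that $Q_8\circ Q_8\cong D_8\circ D_8$. The group to be excluded is $D_8\circ Q_8\cong 2^{1+4}_-$. It is excluded because it has no elementary abelian subgroup of order $8$, while $V$ is one. A fixed-point-free $C_3$ does not distinguish the types, since $2^{1+4}_-$ also admits one.

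Your Steps 3 and 4 are essentially the paper's. There, $[W,Q_M]\le C_W(V)=V$ and Lemma~\ref{Lemma2.4} give $Q_M=Y_M$. Then $Q_1\cap Q_2=V=Q_1\cap W$ and $Q_1Q_2=Q_1W$ give $Q_2=W=Q$. Finally $|\tilde{C}:M_2|\le 2$ together with maximality of $S$ gives $M_2=\tilde{C}$.
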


\begin{proof}
We use that $L$ is solvable by Lemma~\ref{Lemma2.15}. Hence, by Lemma \ref{L:F*ovL}, $F^*(\ov{H}) = \ov{L}$ is an $r$-group for some odd prime $r$. As $Y_M\not\leq O_2(LY_M)$, we have $[\ov{L},\ov{Y_M}]\neq 1$ and thus, as the action of $\ov{Y_M}$ on $\ov{L}$ is coprime, $[\ov{L}/\Phi(\ov{L}),\ov{Y_M}]\neq 1$. Using the minimality of $L$ in the form of Lemma~\ref{Minimality of L}, we see moreover that $[\Phi(\ov{L}),\ov{Y_M}]=1$ and the action of  $M \cap B = M \cap \tilde{C}$ on $\ov{L}/\Phi(\ov{L})$ is irreducible. The latter fact implies in particular that $\ov{Y_M}$ acts fixed point freely on $\ov{L}/\Phi(\ov{L})$ and inverts thus every element of $\ov{L}/\Phi(\ov{L})$. As $[\Phi(\ov{L}),\ov{Y_M}]=1$ and $\ov{H}=\<\ov{Y_M}^{\ov{L}}\>$, we obtain moreover
\[\Phi(\ov{L})\leq Z(\ov{H})\leq Z(\ov{L}).\]

\smallskip

Lemma~\ref{Lemma2.2}(e),(f) gives $W=[W,L]$ and $[W,Q_2]=Z=W'=\Phi(W)$. This implies that $W/Z$ is abelian, $\ov{L}$ acts on $W/Z$, and $[W/Z,\ov{L}]=W/Z$. As $\ov{L}$ and $W/Z$ have coprime orders, it follows that $C_{W/Z}(\ov{L})=1$. In particular, by Lemma~\ref{Lemma2.2}(f), $Z(W)=C_W(L)\leq Z$ and thus $Z=Z(W)$. This shows that $W$ is an extra-special 2-group. Moreover, by Lemma~\ref{Lemma2.3}(e), $\ov{M_2}$ acts faithfully in $W/Z$. 

\smallskip

Next we prove that $\ov{L}$ is abelian. By contradiction, suppose it is not. Then $1\neq \ov{L}'\leq \Phi(\ov{L})$ and so $Z(\ov{H}) \ne 1$. Moreover, $Z(\ov{L})$ is a proper subgroup of $\ov{L}$ and thus Lemma~\ref{Minimality of L}(b) yields 
\[Z(\ov{L})\leq C_{\ov{M_2}}(\ov{Y_M})=\ov{B},\]
where the latter equality uses Lemma~\ref{Lemma2.3}(b). In particular, $Z(\ov{L})$ acts on $V/Z$. If an element of $Z(\ov{L})$ acts trivially on $V/Z$ then, as $W=\<V^L\>$, it acts trivially on $W/Z$ and is thus trivial since $\ov{M_2}$ acts faithfully on $W/Z$. Hence, $Z(\ov{L})$ acts faithfully on $V/Z$. This shows that $\ov{L}'=\Phi(\ov{L})=Z(\ov{L})$ has order $3$. Hence, $\ov{L}$ is an extraspecial $3$-group.

\smallskip

Let $Z(\ov{L}) \le A \le \ov{L}$ with $|A|=9$. As $Y_M$ inverts $\ov{L}/Z(\ov{L})$, the subgroup $A$ is $Y_M$-invariant and $A$ is not centralized by $Y_M$. Let $A_1=[A,Y_M]$ so that $A = A_1 \times C_A(Y_M)$. Since $Z(\ov{L}) \le C_A(Y_M)$ and $|A|=9$ we have $Z(\ov{L}) = C_A(Y_M)$ and so $A = A_1 \times Z(\ov{L})$ is elementary abelian. Denote by $A_1,A_2, A_3, Z(\ov{L})$ the subgroups of order $3$ in $A$.

\smallskip

Since $Z(\ov{L})$ acts faithfully and irreducibly on $V/Z$ and $W/Z = \<(V/Z)^L\>$, we can write $W/Z$ as a direct sum of faithful and irreducible $Z(\ov{L})$-modules. In particular, $C_{W/Z}(Z(\ov{L}))=1$. As $A$ is abelian and non-cyclic and the action of $A$ on $W/Z$ is coprime, it follows  
\[W/Z = \<C_{W/Z}(B)\colon 1\neq B\leq A\>=\sum_{i=1}^3 C_{W/Z}(A_i).\]
Since $A$ is a normal non-central subgroup of $\ov{L}$, the action of $\ov{L}$ on $\{A_1,A_2,A_3 \}$ is transitive. Hence $|W/Z| \leq  |C_{W/Z}(A_i)|^3$ for $i=1,2,3$. In addition, as $Z(\ov{L})$ acts non-trivially on $C_{W/Z}(A_i)$, we have $|C_{W/Z}(A_i)| \ge 4$. Note that 
$C_{W/Z}(A_2)\cap C_{W/Z}(A_3)=C_{W/Z}(A)=1$ and $\ov{Y_M}$ swaps $A_2$ and $A_3$. Hence, $|[C_{W/Z}(A_2),\ov{Y_M}]|=|C_{W/Z}(A_2)|$. As $|[W/Z,Y_M]|=|V/Z|=4$ by Lemma~\ref{Lemma2.2}(d), it follows $|C_{W/Z}(A_i)|=4$ for $i=1,2,3$. We obtain then $|W/Z|\leq 2^6$. Recall that $\ov{L}$ acts faithfully on $\widehat{W}=W/Z$ and is thus isomorphic to a subgroup of $\GL_6(2)$. Since a Sylow $3$-subgroup of $\GL_5(2)$ has order $9$, we have indeed
\[|W/Z|=2^6.\]
 A Sylow 3-subgroup of $\GL_6(2)$ has order $3^4$ and is by \cite{Weir:1955} isomorphic to $C_3 \wr C_3$, which is not extra-special. Thus $|\ov{L}|=3^3$. As $A$ is elementary abelian and $A$ was an arbitrary subgroup of $\ov{L}$ of order $9$, $\ov{L}$ is of exponent $3$. Thus, by \cite[Theorem 1]{Winter:1972}, we have
	\[
	C_{\Out(\ov{L})}(Z(\ov{L})) \cong \SL_2(3) \qquad \text{and} \qquad |C_{\GL(W/Z)}(\ov{L})|=3=|Z(\ov{L})|.
	\]
The latter property implies that $C_{\ov{M_2}}(\ov{L})=Z(\ov{L})$ and $\ov{M_2}/\ov{L}$ embeds into $\Out(\ov{L})$. 

\smallskip

Recall that $Z(\ov{L})\leq C_{\ov{M_2}}(\ov{Y_M})=\ov{B}$. In particular, $[Z(\ov{L}),\ov{O_2(B)}]\leq \ov{O_2(B)} \cap \ov{L}=1$. Hence, $\ov{O_2(B)}$ is isomorphic to a subgroup of $\SL_2(3)$. Note that $\ov{Y_M}\leq \ov{O_2(B)}$. As the unique Sylow $2$-subgroup of $\SL_2(3)$ is quaternion of order $8$, we get $\Omega_1(\ov{O_2(B)}) \le \ov{Y_M}$.

\smallskip

Set $E:= \<(W \cap Q_M)^M \>$. By Lemma~\ref{Lemma2.1}(h) we have $W\cap Q_M\leq C_{Q_2}(V)\leq Q_1$ and thus $E\leq Q_1\leq O_2(B)$. As $\Phi(W \cap Q_M) \le Z \le V$, the group $E/V$ is generated by involutions. Since $V \le Q_2$, it follows that $\ov{E}$ is generated by involutions. So $\ov{E} \le \Omega_1(\ov{O_2(B)}) \le \ov{Y_M}$ and hence $E \le Y_MQ_2$.

\smallskip

As $W \not \le Q_M$ and $M/Q_M$ is simple, we have $\<W^M\>Q_M=M$ and thus $O^2(M) \le \<W^M\>$. In particular, $[Q_M,O^2(M)] \le \<[Q_M,W]^M\> \le E\leq Y_MQ_2$. Thus Lemma \ref{Lemma2.4} yields $Q_M=Y_M$ and we get $|S|=2^7=|W|$, a contradiction.

\smallskip
	
Therefore, $\ov{L}$ must be abelian. As we have argued at the beginning of the proof that $\ov{Y_M}$ inverts $\ov{L}/\Phi(\ov{L})$ and centralizes $\Phi(\ov{L})$, it follows that $\ov{L}$ must be elementary abelian and $\ov{Y_M}$ inverts $\ov{L}$. Pick $R$ to be a simple $L$-submodule of $\widehat{W}=W/Z$. Then $C_{\ov{L}}(R)$ is normalized by $LY_M=H$ and centralizes thus $\< R^H \>$. As $\widehat{W}$ is a homogeneous $H$-module by Lemma~\ref{Lemma2.2}(g) and $\ov{M_2}$ acts faithfully on $\widehat{W}$ by Lemma~\ref{Lemma2.3}(e), it follows that $C_{\ov{L}}(R)=1$. Note that $C_R(l)=1$ for every $1\neq l\in \ov{L}$, since $\ov{L}$ is abelian and so $C_R(l)$ is an $\ov{L}$-submodule of $R$. Thus, it follows from \cite[Theorem 8.3.2]{Kurzweil/Stellmacher:2004} that $\ov{L}$ is cyclic. So $\ov{H}=\ov{L}\ov{Y_M}$ is dihedral and thus generated by two conjugates of $\ov{Y_M}$. By Lemma \ref{Lemma2.2}(d) $[W/Z, Y_M] =V/Z$, hence $|[W/Z,Y_M]|=4$. Using Lemma~\ref{Lemma2.2}(f), we see now that $|W/Z|=|[W/Z,H]|\leq |[W/Z,Y_M]|^2 = 16$. It follows from Lemmas~\ref{Lemma2.1}(b) and \ref{Lemma2.2}(d) that $|W/W\cap Q_M|=4$. As $V\leq W\cap Q_M$, this implies $|W|\geq 2^5$.  Thus $W$ is extraspecial of order $2^5$. Since $\ov{L}$ has odd order and does not normalize $V$, there are at least three $\ov{L}$-conjugates of $V$ in $W$. Hence, $W$ contains more than $7+4=11$ involutions. Thus, $W \cong Q_8 \circ Q_8$ and so $\Out(W) \cong \O^+_4(2) \cong \Sym(3) \wr C_2$. Note that $\ov{M_2}$ embeds into $\Out(W)$, as $\Inn(W)=C_{\Aut(W)}(W/Z)$. In particular, as $\ov{L}$ is cyclic, we find that $\ov{L} \cong C_3$ and $\ov{H}\cong \Sym(3)$.

\smallskip

Clearly $[T,Y_M] \le V \le Q_2$, so that $\ov{T} \not \le \ov{L}$. At the same time, $\ov{T}=O^2(\ov{T})$ and $\ov{L}$ embed into $O^2(\Out(W))\cong C_3\times C_3$, so $\ov{TL} \cong C_3 \times C_3$. By the structure of $W$, we have $[W,Q_M] \le C_W(V) = V$. In particular, $[O^2(M),Q_M, W] \le V$ and Lemma \ref{Lemma2.4} gives $Y_M=Q_M$. This in implies $|S|=2^7$ and (a) and (b) are proven. Moreover, $Q_M=Y_M=Q_1$.

\smallskip

Lemmas~\ref{Lemma2.1}(g) and \ref{Lemma2.2}(d),(f) imply $Q_1\cap Q_2=Y_M \cap Q_2 = V = Y_M \cap W=Q_1\cap W$  and $Q_1W=Q_1Q_2=O_2(B)$. Then $Q_2=W=Q$ and $|\ov{S}|=|S/Q_2|=2^2$. It follows from Lemma~\ref{Lemma2.1}(b) that $M\cap \tilde{C}/O_2(M\cap \tilde{C})\cong \Sym(3)$. Hence, there exists $g\in S$ such that conjugation by $g$ invert $T/O_2(T)$. It follows from the above that $|\ov{T}|=3$, so $Q_2\cap T=O_2(T)$ and $\ov{g}$ inverts $\ov{T}$. As $\ov{LY_M} \cong \Sym(3)$ and $\ov{Y_M}$ centralizes $\ov{T}$, it follows that $\ov{M_2}=\ov{TLS} \cong \Sym(3) \times \Sym(3)$. As $Q$ is large, we have $C_\L(Q) \le Q$. So $\Out(Q) \cong \O^+_4(2)$ implies $|\tilde{C}:M_2| \le 2$. However, $S$ is a maximal $2$-subgroup of $\L$, as $(\L,\Delta,S)$ is a locality. Hence, $M_2 = \tilde{C}$. This gives (c) and (d).
\end{proof}

The reader might want to recall now Definition~\ref{D:pResidual}. We set
\[M^*=M\cap O^2(\L),\;\tilde{C}^*:=\tilde{C}\cap O^2(\L)\mbox{ and }S^*=S\cap O^2(\L).\]
By \cite[Lemma~3.1(c)]{Chermak:2022}, $S^*$ is a maximal $2$-subgroup of $O^2(\L)$. Thus, $S^*$ is a Sylow $2$-subgroup of $M^*$ and of $\tilde{C}^*$.

\smallskip

The proofs of (a) and (b) follow essentially the arguments given in \cite{MStr:2008}, whereas the main argument in the proof of (c) was provided to us by Ulrich Meierfrankenfeld in the case of groups to fill in a gap in the proof of \cite[Theorem~1]{MStr:2008} (see Remark~\ref{R:GapMStr} below). 

\begin{lemma}\label{L:M*C*}~
\begin{itemize}
\item [(a)] We have $M = \tilde{M}=N_\L(V) = N_\L(Y_M)$, $B=M\cap \tilde{C}$, $M^*=M'=O^p(M)$ has index $2$ in $M$, $M^*\cap Y_M=V$ and $M^*/V\cong \SL_3(2)$. Moreover, $S^*$ has index $2$ in $S$.
\item [(b)] $\tilde{C}^*$ has index $2$ in $\tilde{C}$ and contains $Q$. Moreover $\tilde{C}^*/Q\cong S_3\times C_3$.
\item [(c)] There exists an involution in $S^*\backslash Q$.
\item [(d)] The two subgroups of $Q$ which are isomorphic to $Q_8$ are normal in $S^*$.
\item [(e)] There is no elementary abelian subgroup of $S^*$ of order $2^4$.
\end{itemize}
\end{lemma}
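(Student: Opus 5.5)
Part (a) comes first. Proposition~\ref{Proposition2.16} gives $Q_M=Y_M$ of order $2^4$, so $M\sim 2^4\SL_3(2)$ with $|Y_M/V|=2$ and $C_{Y_M}(M)=1$. By Lemma~\ref{Lemma2.1}(c),(e), $\tilde M=M^\circ C_\L(V)$ and $C_\L(V)=C_\L(Y_M)$. Here $C_\L(Y_M)\le \tilde C$ by \eqref{Q!L}, because $Z\le Y_M$. Inside $\tilde C$, whose structure is given by Proposition~\ref{Proposition2.16}(c), the centralizer of $Y_M$ is a $2$-group, and it is normal in $\tilde M$, so it lies in $O_2(\tilde M)\le Q_M$. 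Hence $\tilde M=M^\circ Q_M=M$. This also yields $N_\L(Y_M)=M$, and $B=M\cap\tilde C$ because $L\cap\tilde M\le B\le M$.

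For the index-$2$ statements, Proposition~\ref{P:pResidualLocFus} and Lemma~\ref{L:pResidualLoc} say that $O^2(\L)$ has $2$-power index and that $\L=O^2(\L)S$. So I want to exhibit $S^*<S$. Consider $M'=O^2(M)$. It is generated by elements of odd order, so $M'V/V\cong\SL_3(2)$, and $Y_M\cap M'$ is a submodule containing $V$. If it were $Y_M$, then $M'=M$ would be perfect, which contradicts the computation of $H^1$ in \cite[Lemma~1.1]{MStr:2008}, giving a non-split $2^{3+1}$ behaviour. I would instead show that $\F$ has a subsystem of index $2$, by producing the homomorphism $\L\to C_2$ defined via $M/O^2(M)$ and $\tilde C/O^2(\tilde C)$. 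This is the main obstacle. I would try the following: $\tilde C/Q\cong S_3\times S_3$ has $O^2$ equal to $C_3\times C_3$, with $\tilde C/O^2(\tilde C)$ elementary of order $2^2$, and one generator (the image of $Y_M$) should be killed by gluing with $M$. Concretely, define $\K:=\langle O^2(N_\L(P))\colon P\in\Delta\rangle$ as the partial normal subgroup, and use Proposition~\ref{AlperinLocalities} to see that every $N_\L(R_i)$ with $N_S(R_i)$ Sylow is contained in $M$ or in $\tilde C$ up to conjugation. The relevant radical objects are $Q$, $Y_M$ and $S$, since $Q$ is weakly closed. This gives $S\cap\K=S\cap(M'\tilde C^\circ)$, where $\tilde C^\circ:=O^2(\tilde C)Q\langle Y_M\rangle$ is the hyperfocal data. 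An element of $S\setminus (Y_M\cap M')Q\,O^2\ldots$ then survives, namely the one acting as the non-$Y_M$ swap of the two $Q_8$-factors. This gives $|S:S^*|=2$, and with it $M^*=M'$ of index $2$ with $M^*\cap Y_M=V$ (part (a)) and $\tilde C^*\supseteq QO^2(\tilde C)$ of index $2$ with quotient $S_3\times C_3$ (part (b)). The $S_3$ factor is $\ov{LY_M}$ and the $C_3$ factor is $\ov T$, because $Y_M\not\le Q$ while $Y_M\cap M^*=V\le Q$ shows that the element of $\tilde C/Q$ lying in $S^*$ is not $\ov{Y_M}$.

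For (c), following Meierfrankenfeld's argument, take $M^*\sim 2^3\cdot\SL_3(2)$ with $S^*\in\Syl_2(M^*)$, and $S^*Q/Q$ of order $2$. I would decide whether $M^*/V$ splits. If $M^*$ were the non-split extension, $\tilde C^*$ would have all involutions of $S^*$ inside $Q$. I would argue instead via Lemma~\ref{L:GroupOrder180}-type order counting, or via Lemma~\ref{L:DetermineMfromM*}: with $G=M$ that lemma gives $M\cong\Aut(M^*)$, so $M^*$ is $\Aut$-characterised, and since $M$ contains $Y_M\not\le M^*$ one derives splitness of $M^*$ over $V$. An involution in $S^*$ mapping to a transvection of $M^*/V$ outside $QV/V$ then exists.

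For (d), $S^*$ induces on $Q/Z$ elements of $O^2$-index in $\O_4^+(2)\cong S_3\wr C_2$. Elements in $\tilde C^*$ lie in $S_3\times S_3$, so they do not swap the two $Q_8$-factors. For (e), I would take an elementary abelian $A\le S^*$ of order $2^4$. We have $|A\cap Q|\le 4$, since $Q_8\circ Q_8$ has $2$-rank $3$, but then $A\cap Q$ has order at least $8$ as $|S^*/Q|=2$. This is a contradiction unless $A\cap Q$ is of rank $3$, and that case is excluded because the outer involution from (c) acts on the $Q_8$ factors by (d) and centralizes only a rank-$2$ part.
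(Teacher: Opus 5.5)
Your argument for $\tilde M=M$ is fine: $C_\L(V)=C_\L(Y_M)\subseteq C_\L(Z)\subseteq\tilde C$, and no odd-order element of $\tilde C$ centralizes $V$, so $C_\L(V)\le O_2(\tilde M)\le Q_M$. However, $N_\L(Y_M)=M$ is only asserted, and the index-$2$ statements, which are the real content of (a), are not proved.

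First, $M'\neq M$ does not follow from $H^1$. Lemma~1.1 of Meierfrankenfeld--Stroth describes the module $Y_M$; it says nothing about whether the central extension $M/V$ of $\SL_3(2)$ by $Y_M/V$ splits. The correct input is that the only perfect such extension is $\SL_2(7)$. It has a unique involution, whereas $Q\not\le Y_M$ is generated by involutions.

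Second, and more seriously, $|M:M'|=2$ says nothing about $M^*=M\cap O^2(\L)$: fusion outside $M$ might put $Y_M\setminus V$ into $O^2(\L)$. Your plan to build $\L\to C_2$ rests on the claim that only $Q$, $Y_M$, $S$ matter, i.e.\ on knowing the essential subgroups of $\F$. In the paper that is Lemma~\ref{EssentialSubgroups}, whose proof uses (a) and (e) of the present lemma. As it stands this is circular, and the displayed description of $S\cap\K$ is not an argument.

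The missing idea is transfer. Pick $y\in Y_M\setminus V$ centralized by an element of order $7$. Since $C_\L(Z)\subseteq\tilde C$ and $|\tilde C|$ is prime to $7$, $\langle y\rangle$ is not $\F$-conjugate to $Z$. On the other hand, every involution of $S\cap M'$ is $\F$-conjugate into $Z$: $M$ is transitive on $V^\#$, $\tilde C$ is transitive on the involutions of $Q\setminus Z$, and $M'/V\cong\SL_3(2)$ has one class of involutions. The Thompson--Lyons transfer lemma for fusion systems then gives $y\notin\mathfrak{foc}(\F)\supseteq\hyp(\F)=S^*$. Hence $M^*=M'$, $|S:S^*|=2$ and $M^*\cap Y_M=V$. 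Then $N^*:=N_\L(Y_M)\cap O^2(\L)$ has index $2$ and satisfies $C_{N^*}(V)=N^*\cap Y_M=V$. So $N^*/V$ embeds in $\GL(V)\cong\SL_3(2)$, forcing $|N^*|=|M^*|$ and $N_\L(Y_M)=M$.

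The remaining parts also have gaps.

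In (c) you aim to show that $M^*$ splits over $V$. This cannot succeed: in $\Aut(\G_2(3))$ itself $M^*\cong 2^3\cdot\SL_3(2)$ is the non-split extension, and non-splitness does not force the involutions of $S^*$ into $Q$. The paper's argument is elementary. $Q$ has involutions outside $V$, so some involution of $M^*/V$ lifts to an involution. Since $\SL_3(2)$ has one class of involutions, all of them lift, in particular one in $S^*/V$ outside $Q/V$.

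In (d), ``elements of $\tilde C^*$ lie in $S_3\times S_3$'' confuses the abstract isomorphism $\tilde C/Q\cong S_3\times S_3$ with the base group $\Out(P_1)\times\Out(P_2)$ of $\Out(Q)\cong S_3\wr C_2$, where $P_1,P_2$ are the two $Q_8$-subgroups. Every $y\in Y_M\setminus V$ centralizes $V$ and hence swaps $P_1$ and $P_2$. So $\tilde C/Q$ is not the base group, and $S/Q\cong C_2\times C_2$ contains a second swapping involution besides $\ov{Y_M}$. Excluding that this second one generates $S^*/Q$ is the whole content of (d). The paper does this by assuming some $t\in S^*\setminus Q$ swaps $P_1,P_2$. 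Then $A=\langle t^Q\rangle$ is elementary abelian of order $2^4$ with $C_\L(A)=A$, and $R=N_{O^2(\L)}(A)$ is transitive on $A^\#$ with $C_R(Z)/A\cong\Alt(4)$. So $R/A$ has order $180$ with Sylow $2$-normalizer $\Alt(4)$, contradicting Lemma~\ref{L:GroupOrder180}.

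Your (e) is essentially the paper's argument once ``$|A\cap Q|\le 4$'' is corrected to $|A\cap Q|=8$, but it depends on (d).

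In (b), identifying the $S_3$-factor with $\ov{LY_M}$ contradicts your own remark that $\ov{Y_M}\not\le\tilde C^*/Q$. In fact the shape $S_3\times C_3$ is incompatible with (d). If $\ov t$ generates $S^*/Q$ and normalizes $P_1,P_2$, then it commutes with $y$ on $Q/Z$, because $[t,y]\in Q$ acts trivially there. It cannot act trivially on $Q/Z$ since $t\notin Q$. Hence it acts nontrivially on both $P_i/Z$ and inverts $O^2(\tilde C)/Q\cong C_3\times C_3$; compare Lemma~\ref{L:StructuretildeC}(b). So $\tilde C^*/Q$ has trivial centre, as in $\G_2(3)$. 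The shape $S_3\times C_3$ holds only under the contrary hypothesis inside the proof of (d), which is the only place the paper uses it.
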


\begin{proof}
\textbf{(a)} By Lemma \ref{Lemma2.1}(c) and (e) we get $M \le \tilde{M} = N_\L(V) \le N:=N_\L(Y_M)$, where $N$ is a subgroup of $\L$ of characteristic $2$, as $Y_M$ contains $V \in \Delta$ and is thus itself an element of $\Delta$. As $S\leq M$ is a maximal $2$-subgroup of $\L$, it is a Sylow $2$-subgroup of $N$. Recall that $Y_M=Q_M=O_2(M)$ by Proposition~\ref{Proposition2.16}(a). Hence, it follows that $Y_M=O_2(N)$. So $C_\L(V)=C_\L(Y_M)=C_N(Y_M)=Y_M$, where the first equality uses Lemma~\ref{Lemma2.1}(e) and the last equality uses that $N$ has characteristic $2$.

\smallskip

It follows from Proposition~\ref{Proposition2.16}(b) that $Q$ is generated by involutions. Thus there exists an involution in $S \setminus Y_M$. As $\SL_2(7)$ has a centre of order $2$ and the Sylow $2$-subgroups of $\SL_2(7)$ are generalized quaternion of order 16, the group $\SL_2(7)$ has no involution outside of the centre. Hence, we get that $M/V \not \cong \SL_2(7)$. Note now that $M/V$ is a central extension of $\SL_3(2)$ with a centre of order $2$. We use now that the Schur multiplier of $\SL_3(2)$ is cyclic of order 2 and $\SL_2(7)$ is the unique perfect central extension of $\SL_3(2)$ (cf. \cite[Example 6.9.11]{Weibel:1994a}). 
Hence, $M/V$ is not perfect. As $M/Y_M$ is perfect, it follows that the derived subgroup of $M/V$ has index $2$ in $M/V$ and does not contain $Y_M/V$. Since $V=[Y_M,M]\leq M'$, it follows that $M'$ has index $2$ in $M$ and $M'\cap Y_M=V$. In particular, 
\[
M'/V\cong M'Y_M/Y_M=M/Y_M\cong \SL_3(2).
\]
As $\SL_3(2)$ is simple and $V$ is an irreducible $M'$-module, it follows $M'=O^2(M)$. Thus, by Lemma~\ref{L:pResidualLoc}, $M'\subseteq O^2(\L)$ and so $M'\leq M^*$.

\smallskip

As a $7$-Sylow subgroup of $M$ acts coprimely on $Y_M$ and centralizes $Y_M/V$, we can pick an element $y \in Y_M \setminus V$ which is centralized by an element of order $7$. As $Q$ is large and $Z=Z(Q)\neq 1$, we have $N_\L(Z)\subseteq N_\L(Q)=\tilde{C}$. Thus, by Proposition~\ref{Proposition2.16}(c), there is no element of order $7$ centralizing $Z$. Note also that $Z\in \Delta$, since $(\L,\Delta,S)$ is $Q$-replete by Hypothesis~\ref{MainHyp}. Thus, $\<y\>$ is not $\F$-conjugate to $Z$ by Lemma~\ref{LocalitiesProp}(c).

\smallskip

Note now that $M$ acts transitively on $V^\#$. Furthermore, as $\tilde{C}/Q\cong \Sym(3)\times \Sym(3)$ by Lemma~\ref{Proposition2.16}(c), all involutions in $Q\backslash Z$ are $\tilde{C}$-conjugate. So all involutions in $Q$ are $\F$-conjugate to the involution in $Z$. As all involutions in $M'/V\cong \SL_3(2)$ are conjugate and $V\leq Q\leq M'$ with $Q/V\cong C_2\times C_2$, we see moreover that every involution in $M'$ is conjugate into $Q$. Hence, all involutions in $S\cap M'$ are $\F$-conjugate to the involution in $Z$. In particular   $y$ is not $\F$-conjugate to an element of $S\cap M'$. Now by \cite[Theorem~TL]{Lynd2014}, $y\not\in\mathfrak{foc}(\F)$, where $\mathfrak{foc}(\F)$ is the focal subgroup of $\F$ which contains the hyperfocal subgroup $\hyp(\F)$ of $\F$ (cf. \cite[p.32]{Aschbacher/Kessar/Oliver:2011}). Hence, $y\not\in \hyp(\F)=S^*$, where the latter equality uses Proposition~\ref{P:pResidualLocFus}. In particular, $y\not\in M^*$. We have seen above that $M'=O^2(M)\leq M^*$. As $M'$ has index $2$ in $M$, it follows that $M'=M^*$, $S\cap M'=S^*$ and $S=S^*\<y\>$. In particular, $\L=O^2(\L)S=O^2(\L)\<y\>\neq O^2(\L)$. As $V\leq M'=M^*$ and $Y_M\not\leq M^*$, we have also $V=Y_M\cap M^*$. 

\smallskip

Set now $N^*:=N\cap O^2(\L)$. Then $y\not\in N^*$ and $S^*\leq N^*$. Thus, $N^*\cap S=S^*$. By Lemma~\ref{L:pResidualLoc}, we have $O^2(N)\leq N^*$, so $N^*$ has index $2$ in $N$. Note also that $Y_M\cap N^*=Y_M\cap S^*=Y_M\cap M^*=V$ is normal in $N$. Since $Y_M=C_\L(V)=C_N(V)$ is the kernel of the action of $N$ on $V$, it follows that $C_{N^*}(V)=V$ and so $N^* / V$ is isomorphic to a subgroup of $\SL_3(2)$. Since $(M^* / V) \le (N^* / V)$, we get $N^*/V \cong \SL_3(2)$, yielding $|N^*| = |M^*|$ and therefore $|N| = |M|$. This shows $M=N=\tilde{M}$. In particular, $B=(M\cap \tilde{C})(L\cap \tilde{M})=M\cap \tilde{C}$ and the proof of (a) is complete.

\smallskip

\textbf{(b)} Note that $Y_M\cap Q=V$, as $Y_M\not\leq Q$ and $V\leq Q$ by Hypothesis~\ref{MainHyp} and since $V$ has index $2$ in $Y_M$. Moreover, $B$ acts irreducibly on $QY_M/Y_M$ by Lemma~\ref{Lemma2.1}(g) and Proposition~\ref{Proposition2.16}(d). Hence, $QY_M=[Q,B]Y_M$ and $Q=[Q,B](Q\cap Y_M)=[Q,B]V\leq M'=M^*\leq O^2(\L)$. In particular, $Q\leq \tilde{C}^*$. Since $S^*$ is a Sylow $2$-subgroup of $\tilde{C}^*$ and has index $2$ in $S$ by (a), and as $O^2(\tilde{C})\leq \tilde{C}^*$ by Lemma~\ref{L:pResidualLoc}, it follows that $\tilde{C}^*$ has index $2$ in $\tilde{C}$. By Proposition~\ref{Proposition2.16}(b),(c) $Q\cong Q_8\circ Q_8$ and $\tilde{C}/Q\cong S_3\times S_3$. As $Y_M$ centralizes $V\leq Q$, it follows that $y\in Y_M\backslash V$ interchanges the two subgroups of $Q$ isomorphic to $Q_8$ and thus centralizes a subgroup of $O^2(\tilde{C}/Q)$ of order $3$. As $Y_M\not\leq \tilde{C}^*$ by (a), it follows that $\tilde{C}^*/Q\cong \Sym(3)\times C_3$. 

\smallskip

\textbf{(c)} Note that $Q\cong Q_8 \circ Q_8$ is generated by involutions and $Q/V\cong C_2\times C_2$. Furthermore, $S^*/V\cong D_8$ and all involutions of $M^*/V$ are conjugate in $M^*/V$. Therefore, there is an involution in $S^*\backslash Q$.

\smallskip

\textbf{(d)} According to (c), we may pick an involution $t\in S^*\backslash Q$ and have then $S^*=Q\<t\>$. Assume now that the two subgroups of $Q$ which are isomorphic to $Q_8$ are not normal in $S^*$ and thus not normalized by $t$. Then $[Q,t]=C_Q(t)$ is elementary abelian of order $8$ and contains $Z=Z(Q)$. Hence, $A:=\<t^Q\>=[Q,t]\<t\>$ is elementary abelian of order $2^4$. Using that the subgroup $V$ is the unique elementary abelian subgroup of $Q$ of order $8$ which is centralized by a given element $y\in Y_M\backslash V$, one sees that $C_{\tilde{C}}(A)=A$. Since $Z\leq A$ and $C_\L(Z)\subseteq \tilde{C}$ as $\tilde{C}$ is large, we have indeed $C_\L(A)=C_{\tilde{C}}(A)=A$. 

\smallskip

Note that $A/A\cap V$ is isomorphic to the image of $A=[Q,t]\<t\>$ in $M^*/V$ and has thus order $4$. So $A\cap V$ has also order $4$ and equals $C_V(A)$. From this one sees that $N_{M^*}(A)$ acts transitively on $(A\cap V)^\#$ and $(A/A\cap V)^\#$, and that $N_{M^*}(A)/A\cong C_2\times \Sym(3)$. By (b), $\tilde{C}^*/Q\cong \Sym(3)\times C_3$. As $t$ is an involution swapping the two subgroups of $Q$ which are isomorphic to $Q_8$, one can see that $N_{\tilde{C}^*}(A)/A\cong \Alt(4)$ and that $N_{\tilde{C}^*}(A)$ acts transitively on $A\cap Q=[Q,t]$.  Note here that  
\[R:=N_{O^2(\L)}(A)\]
is a group, as $A$ contains $Z\in \Delta$ and is thus an element of $\Delta$. As $Z\leq A\cap V< A\cap Q<A$, we can conclude from the above that $R$ acts transitively on $A^\#$. Using that $C_\L(Z)=\tilde{C}$ as $Q$ is large and $Z=Z(Q)$ is central in $\tilde{C}$, we see moreover that
\[R_2:=C_R(Z)=N_{\tilde{C}^*}(A).\]
Thus, $R_2/A\cong \Alt(4)$ has order $12$. Now by the orbit stabilizer theorem, we have $|R:R_2|=|A^\#|$ and so $|R/A|=|R_2/A|\cdot |A^\#|= 12\cdot 15=180$. Note that $R_2\leq N_R(S^*)$ as $S^*=Q\<t\>=QA$. At the same time, since $C_A(S^*)=Z$, we have $N_R(S^*)\leq R_2$. Thus, $N_R(S^*)=R_2$. This shows that $R/A$ is a group of order $180$, in which the normalizer of a Sylow $2$-subgroup is isomorphic to $\Alt(4)$. However, by Lemma~\ref{L:GroupOrder180}, no such group exists and we have thereby obtained a contradiction. This proves (d).

\smallskip

\textbf{(e)} Recall that $|S^*/Q|=2$ and note that any elementary abelian subgroup of $Q\cong Q_8\circ Q_8$ has order at most $2^3$. Since $C_S(Q)\leq Q$ and $C_{\Aut(Q)}(Q/Z)=\Inn(Q)$, every element of $S\backslash Q$ acts non-trivially on $Q/Z$. Thus, any involution in $S\backslash Q$ which centralizes an elementary abelian subgroup of $Q$ of order $2^3$ cannot normalize the two subgroups of $Q$ which are isomorphic to $Q_8$. Hence, it follows from (d) that $S^*$ cannot contain an elementary abelian subgroup of order $2^4$.
\end{proof}

\begin{rmk}\label{R:GapMStr}
There is a small gap in the proof \cite[Theorem~1]{MStr:2008}, but corrections were kindly provided to us by Ulrich Meierfrankenfeld. Since these corrections are still unpublished, we include a remark on them here. The proof for groups is similar so far to what has been done by us in localities. Then Meierfrankenfeld and Stroth invoke Aschbacher's result \cite{Aschbacher:2002} to show that a certain normal subgroup $G^*$ of $G$ is isomorphic to $\G_2(3)$. Here $G$ is a finite group having similar properties as our locality $\L$, and similar notation for subgroups $M$, $\tilde{C}$, $S$ etc. is used (see the statement of Theorem~1 and the notation introduced at the beginning of Section~2 in \cite{MStr:2008}). It is easy to see that the normal subgroup $G^*$ is in fact equal to $O^2(G)$. One can similarly define $M^*:=M\cap G^*$, $\tilde{C}^*:=\tilde{C}\cap G^*$ and $S^*=S\cap G^*=S\cap M^*$. To prove that the hypothesis of \cite{Aschbacher:2002} holds, one needs to see that $\tilde{C}^*$ has two normal subgroups $H_1$ and $H_2$ with $H_1\cong H_2\cong \SL_2(3)$ which form a central product. It is easy to construct subgroups of $\tilde{C}^*$ which are candidates for $H_1$ and $H_2$, but what is missing in \cite{MStr:2008} is an argument to show that these two subgroups are normal in $\tilde{C}^*$. To see this, one only needs to check that the two subgroup of $Q$ which are isomorphic to $Q_8$ are normal in $S^*$. This can be shown using a similar argument as given above in the proof of Lemma~\ref{L:M*C*}(c),(d). Indeed, this is essentially the argument provided to us by Meierfrankenfeld.
\end{rmk}

Unlike Meierfrankenfeld and Stroth \cite{MStr:2008}, we are not able to use Aschbacher's result \cite{Aschbacher:2002}, since this would presuppose the existence of a finite group $G$ with $\L=\L_\Delta(G)$. Thus, we will need to do a certain amount of extra work in order to determine the structure of $\tilde{M}=N_\L(V)$, of $\tilde{C} = N_\L(Q)$ and then of the fusion system $\F$.




\begin{lemma}\label{L:DT}
 Let $D$ be a Sylow $3$-subgroup of $\tilde{C}$ and set $T:=N_S(D)$. Then the following hold.
 \begin{itemize}
  \item [(a)] $O^2(\tilde{C}) =D \ltimes Q$ and $D$ acts fixed-point freely on $Q/Z(Q)$;
  \item [(b)] $S=QT$ and $\tilde{C}=O^2(\tilde{C})T$;
  \item [(c)] $T\cong D_8$ and $T\cap O^2(\tilde{C})=T\cap Q=Z=Z(T)$;
  \item [(d)] $Y_M\cap T\cong C_2\times C_2$ and $S^*\cap T\cong C_2\times C_2$ are both not contained in $Q$.
 \end{itemize}
\end{lemma}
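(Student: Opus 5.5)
Everything needed is already in Proposition~\ref{Proposition2.16} and Lemma~\ref{L:M*C*}. By Proposition~\ref{Proposition2.16}(b),(c), $Q\cong Q_8\circ Q_8$ and $\tilde{C}/Q\cong \Sym(3)\times\Sym(3)$. So $|\tilde{C}|=2^7\cdot 3^2$ and a Sylow $3$-subgroup $D$ has order $9$. Since $O^2(\tilde{C}/Q)\cong C_3\times C_3$, we get $O^2(\tilde{C})=DQ$ with $D\cap Q=1$, and hence $O^2(\tilde{C})=D\ltimes Q$.

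For the fixed-point-freeness in (a), I would use that $\Out(Q)\cong \O_4^+(2)\cong \Sym(3)\wr C_2$. Here $Q/Z$ is the orthogonal module, which is the tensor product of the two natural $\SL_2(2)$-modules attached to the two $Q_8$-factors. The group $D$ embeds into $O^2(\Out(Q))\cong C_3\times C_3$, with one $C_3$ acting on each $Q_8$-factor and fixed-point-freely on its natural module. Hence every nontrivial element of $D$ acts nontrivially on at least one tensor factor, so it acts fixed-point-freely on the tensor product $Q/Z$. The elements $(a,b)$ with both $a,b\neq 1$ also have no eigenvalue $1$ on a tensor product of two $2$-dimensional modules over $\mathbb{F}_4$ on which each has eigenvalues $\omega,\omega^2$: the products $\omega^{\pm1}\omega^{\pm1}$ include $1$. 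This is the subtle point. For $ab$ with $a=\omega$ on both, the eigenvalues are $\omega^{i}\omega^{j}$, $i,j\in\{1,2\}$, which include $1$. So I need to recheck which $C_3\times C_3$ is meant. I expect the correct reading to be the factorwise one, where $Q/Z=(Q_1/Z)\oplus(Q_2/Z)$ as a direct sum over the two $Q_8$-subgroups, each $C_3$ acting on its own summand. Then $D\cong C_3\times C_3$ embeds in $\Aut(Q_8)\times\Aut(Q_8)$, and an element acting trivially on one summand is not fixed-point-free. So fixed-point-freeness of all of $D$ would be false under that reading, and "$D$ acts fixed-point-freely" must mean $C_{Q/Z}(D)=1$. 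That I can prove: by Lemma~\ref{Lemma2.2}(f) and Proposition~\ref{Proposition2.16}(d), $C_{Q/Z}(L)=1$ since $W=Q$ and $[W,L]=W$ with coprime action, and $L\le O^2(\tilde{C})$. Consequently $C_{Q/Z}(D)=1$ and $C_Q(D)=Z$.

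For (b), apply the Frattini argument to $O^2(\tilde{C})\unlhd\tilde{C}$ with $D\in\Syl_3(O^2(\tilde{C}))$. This gives $\tilde{C}=O^2(\tilde{C})N_{\tilde{C}}(D)$, and a Sylow $2$-subgroup of $N_{\tilde{C}}(D)$ can be taken inside $S$, namely $T=N_S(D)$ after choosing $D$ suitably (or by conjugation). Comparing orders then gives $S=QT$, and $\tilde{C}=O^2(\tilde{C})T$.

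For (c), note $T\cap Q=C_Q(D)$ because $[T\cap Q,D]\le Q\cap D=1$. By (a) this equals $Z$, so $T\cap O^2(\tilde{C})=T\cap DQ=T\cap Q=Z$, and $|T|=|S/Q|\cdot 2=8$. Further, $T/Z$ acts faithfully on $D$ as $\tilde{C}/O^2(\tilde{C})\cdot\ldots\cong C_2\times C_2$ inside $\Aut(D)$, inverting the two factors separately. To identify $T\cong D_8$ rather than $C_2^3$, $C_4\times C_2$ or $Q_8$, I would use Lemma~\ref{L:M*C*}. First, $Y_M\cap S$ contains an element $y\in Y_M\setminus V$ which swaps the two $Q_8$'s; since $Y_M$ is elementary abelian, $Y_M\cap T$ contains an involution outside $Q$. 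Second, by Lemma~\ref{L:M*C*}(c),(d) there is an involution $t\in S^*\setminus Q$ normalizing both $Q_8$'s, which can be conjugated into $T$ (via a Sylow argument in $N_{\tilde{C}^*}$). Then $T$ contains noncommuting involutions, because the swap and a factor-preserving inversion do not commute modulo $Z$ in their action on $D$. So $T$ is nonabelian of order $8$ with more than one involution, giving $T\cong D_8$ and $Z(T)=Z$.

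For (d), $Y_M\cap T$ and $S^*\cap T$ each contain $Z$ and an involution outside $Q$, as above, and they have order $4$ since $|T/Z|=4$ and $Y_M,S^*$ are proper there. They are Klein four-groups and not contained in $Q$. The main obstacle is pinning down the precise conjugation into $T$ of these involutions; I would handle it by choosing $D$ normalized by the chosen involutions, using Sylow's theorem in $C_{\tilde{C}}(y)$ and $N_{\tilde{C}^*}(\langle t\rangle Q)$.
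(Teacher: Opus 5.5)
Your parts (a) and (b) are essentially fine. You are right that ``fixed-point freely'' can only mean $C_{Q/Z}(D)=1$, since $D_1$ centralizes $P_2/Z$. Deriving this from $C_{Q/Z}(L)=1$ works; just add that $Q=W=[W,L]\le O^2(\tilde{C})$ before writing $O^2(\tilde{C})=DQ$.

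The genuine gap is your argument in (c) that $T$ is nonabelian. The claim that the swap $y$ and the factor-preserving involution $t$ ``do not commute modulo $Z$ in their action on $D$'' is false. First, $T/Z\cong TQ/Q=S/Q$ is a Sylow $2$-subgroup of $\tilde{C}/Q\cong\Sym(3)\times\Sym(3)$, hence elementary abelian of order $4$, as you note yourself. Second, $Z$ acts trivially on $D$, so the action on $D$ cannot distinguish $[y,t]=1$ from $[y,t]\neq 1$. If $t$ inverts $D$, as you assume, it even commutes with everything in $\Aut(D)$. So with your ingredients, $T\cong C_2\times C_2\times C_2$ is not excluded. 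The missing input is \cite[Lemma~1.1(c)]{MStr:2008}, i.e.\ Lemma~\ref{ActionOnYM}: $C_{Y_M}(x)\le V$ for every $x\in S\setminus Y_M$. Applied to $y\in(T\cap Y_M)\setminus V$ and any $x\in T\setminus Y_M$, it gives $[x,y]\neq 1$. Since $T\cap Y_M$ is a fours group, $T$ is not quaternion, so $T\cong D_8$ and $Z(T)=Z$.

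Second, you never actually produce the involutions in $T$ on which (c) and (d) rest, as you acknowledge. That $Y_M$ is elementary abelian says nothing about $Y_M\cap N_S(D)$. Sylow's theorem in $C_{\tilde{C}}(y)$ only yields a $3$-subgroup of order at most $3$ centralized by $y$, not a Sylow $3$-subgroup of $\tilde{C}$ normalized by $y$.

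The paper goes through $B$ instead. Sylow $3$-subgroups of $B$ are $Q$-conjugate into $D$ and $B/Y_M\cong\Sym(4)$, so $D\cap B$ has order $3$. As $T\le N_S(D\cap B)$, one gets $|TY_M/Y_M|\le 2$. On the other hand $T\cap V\le T\cap Q=Z$ gives $|T\cap Y_M|\le 4$. Hence $T\cap Y_M$ is a fours group not contained in $V=Y_M\cap Q$.

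For $S^*\cap T$, note that $y\notin S^*$ because $Y_M\cap S^*=V$, so $|S^*\cap T|=4$. To find an involution outside $Q$, the paper uses Lemma~\ref{L:M*C*}(c) and Gasch\"utz's theorem to obtain a complement $K\cong\Sym(3)$ to $Q$ in $B\cap M^*$. This gives a Sylow $3$-subgroup of $B\cap M^*$ normalized by an involution of $S^*\setminus Q$. After conjugating by $Q$ and comparing orders, one finds that $S^*\cap T=N_{S^*}(D\cap B)$ contains such an involution. Your ``Sylow argument in $N_{\tilde{C}^*}$'' does not supply this step.
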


\begin{proof}
\textbf{(a)} As $Q$ is large, we have $C_{\tilde{C}}(Q)\leq Q$. Moreover, by Proposition~\ref{Proposition2.16}(b), $O^2(\tilde{C}/Q)\cong C_3\times C_3$ and $|S/Q|=4$. It follows now from the structure of $\Aut(Q)$ that $D$ acts fixed-point freely on $Q/Z(Q)$, $Q=[Q,D]\leq O^2(\tilde{C})$ and $O^2(\tilde{C})=D \ltimes Q$. 

\smallskip

\textbf{(b)} As $[D,N_Q(D)]\leq D\cap Q=1$, it follows that $N_Q(D)=C_Q(D)=Z$. Note that $\Syl_3(\tilde{C})=\Syl_3(O^2(\tilde{C}))$. So $O^2(\tilde{C})$ acts transitively on $\Syl_3(\tilde{C})$ and thus $Q$ acts transitively on $\Syl_3(\tilde{C})$. The general Frattini argument \cite[3.1.4]{Kurzweil/Stellmacher:2004} yields therefore that $S=TQ=QT$. This implies that $\tilde{C}=O^2(\tilde{C})S=O^2(\tilde{C})T$.

\smallskip

\textbf{(c)} Since $S=TQ$, $|S/Q|=4$ and $|T\cap Q|=|Z|=2$, it follows that $T$ has order $8$. By Lemma~\ref{Lemma2.1}(b) and Proposition~\ref{Proposition2.16}(a), we have $B/Y_M\cong S_4$. In particular, a Sylow $3$-subgroup $D_0$ of $B$ has order $3$. Now such a Sylow $3$-subgroup $D_0$ of $B$ is conjugate into $D\cap B$ under $Q\leq B$, since all Sylow $3$-subgroups of $\tilde{C}$ are conjugate under $Q$. Thus, $D\cap B$ has order $3$. As $B/Y_M\cong S_4$, we have thus $|N_S(D\cap B)Y_M/Y_M|\leq 2$.

\smallskip

Note that $T\leq N_S(D\cap B)$. Moreover, $T\cap V=Z$, as $T\cap Q=N_Q(D)=Z$ and $Z\leq V\leq Q$. In particular, $|T\cap Y_M|\leq 4$. Since $T$ has order $8$, it follows that
\[TY_M=N_S(D\cap B)Y_M,\;|T/T\cap Y_M|=2\mbox{ and }|T\cap Y_M|=4.\]
As $T\cap Y_M$ is elementary abelian, we see in particular that $T$ is not quaternion. As $T\cap V=Z$, there exists an element in $(T\cap Y_M)\backslash V$. It follows thus from \cite[Lemma~1.1(c)]{MStr:2008} that $C_S(T\cap Y_M)\leq Y_M$ and so $T$ acts non-trivially on $T\cap Y_M$. Thus, $T\cong D_8$ and $|Z(T)|=2$. Since $Z=Z(S)\leq Z(T)$, it follows in particular that $Z=Z(T)$. As $O^2(\tilde{C})/Q$ is a $3$-group, we have moreover $ O^2(\tilde{C})\cap T\leq Q\cap T=Z$. The converse inclusion is clear and so (c) holds.

\smallskip

\textbf{(d)} Since $T\cap Q=Z$ has order $2$, it is sufficient to show that $T\cap Y_M$ and $T\cap S^*$ are fours groups. Indeed, we have seen already in the proof of (c) that $T\cap Y_M$ is elementary abelian of order $4$. It remains thus to show that $S^*\cap T\cong C_2\times C_2$.

\smallskip

As seen in the proof of (c) there exists an element $y\in (Y_M\cap T)\backslash V$. Such an element $y$ centralizes $V\leq Q$ and swaps thus the two subgroups of $Q$ which are isomorphic to $Q_8$. Hence, it follows from Lemma~\ref{L:M*C*}(d) that $S^*\cap T$ is properly contained in $T$. Since $S^*$ has index $2$ in $S$, we can conclude that $S^*\cap T$ has in fact index $2$ in $T$ and  thus order $4$. As $Z=T\cap Q\leq S^*\cap T$, it remains to argue that there is an involution in $(S^*\cap T)\backslash Z$.

By Lemma~\ref{L:M*C*}(c), there is an involution in $S^*\backslash Q$. Since $|S^*/Q|=2$, this means that $Q$ has a complement in $S^*$. Hence, by a Theorem of Gaschu{\"u}tz \cite[]{Kurzweil/Stellmacher:2004}, $Q$ has a complement $K$ in $B\cap M^*$. By Lemma~\ref{L:M*C*}(a), $M^*/V\cong \SL_3(2)$, $V=Y_M\cap M^*$ is a natural $\SL_3(2)$-module for $M^*/V$ and $Q\leq M^*$. Hence, it follows from Lemma~\ref{Lemma2.1}(b) that $(B\cap M^*)/V\cong \Sym(4)$ and $Q=O_2(B\cap M^*)$. Thus, $K\cong (B\cap M^*)/Q\cong \Sym(3)$. Thus, there is a Sylow $3$-subgroup of $B\cap M^*$ which is normalized by an involution in $S^*\backslash Q$. We have seen above that $D\cap B$ is a Sylow $3$-subgroup of $B$. As $M^*\cap B$ has index $2$ in $B$, it follows that $D\cap B$ is also a Sylow $3$-subgroup of $B\cap M^*$. As $Q$ acts transitively on $\Syl_3(B\cap M^*)$, it follows that there is an involution in $N_{S^*}(D\cap B)\backslash Q$. As $Z< S^*\cap T= N_{S^*}(D)\leq N_{S^*}(D\cap B)$ and $N_{S^*}(D\cap B)/Z\cong N_{S^*}(D\cap B)V/V$ has order at most $2$, it follows that $S^*\cap T=N_{S^*}(D\cap B)$ has order $4$ and contains an involution outside of $Q$. This shows the assertion.  
\end{proof}

We fix now $D\in \Syl_3(\tilde{C})$ and set $T:=N_S(D)$. Let $P_1$ and $P_2$ the two different subgroups of $Q$ which are isomorphic to $Q_8$. Thus, 
\[Q=P_1\circ P_2\mbox{ and } O^2(\tilde{C})=D\ltimes Q,\]
where $D$ acts faithfully on $Q$. Then $D=D_1\times D_2$ where $D_i\cong C_3$ acts non-trivially on $P_i$ and centralizes $P_{3-i}$ for $i=1,2$. Set
\[H_i:=D_iP_i\mbox{ for }i=1,2.\]

\begin{lemma}\label{L:StructuretildeC}
\begin{itemize}
 \item [(a)] $H_i=D_i\ltimes P_i\cong \SL_2(3)$ for $i=1,2$ and $O^2(\tilde{C})=H_1\circ H_2$.
 \item [(b)] The structure of $\tilde{C}$ is uniquely determined up to isomorphism. More precisely, if $D_1=\<d_1\>$, $y\in (Y_M\cap T)\backslash Q$ and $t\in (T\cap S^*)\backslash Q$ is an involution, then $t$ inverts $D=D_1\times D_2$ and there exist $i,j\in P_1$ such that
 \[P_1=\<i,j\>,\;P_2=\<i^y,j^y\>,D_2=\<d_1^y\>.\]
 and $t$ swaps $i$ and $j$ as well as $i^y$ and $j^y$.
\end{itemize}
\end{lemma}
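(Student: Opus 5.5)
For (a), I will work inside the group $O^2(\tilde{C})=D\ltimes Q$ from Lemma~\ref{L:DT}(a). Recall that $Q=P_1\circ P_2$ with $P_1\cong P_2\cong Q_8$, and that $D=D_1\times D_2$, where $D_i$ acts non-trivially on $P_i$ and centralizes $P_{3-i}$. Then $H_i=D_iP_i$ is a group of order $24$ in which $D_i\cong C_3$ acts faithfully on the normal subgroup $P_i\cong Q_8$. Hence $H_i=D_i\ltimes P_i\cong Q_8\rtimes C_3\cong\SL_2(3)$. Next, $[H_1,H_2]=1$: indeed $[P_1,P_2]=1$, $[D_1,P_2]=1=[D_2,P_1]$, and $[D_1,D_2]=1$. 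Moreover $H_1\cap H_2=Z$, since this intersection is a $2$-group contained in $P_1\cap P_2=Z$. Comparing orders ($24\cdot 24/2=288=9\cdot 32$) gives $O^2(\tilde{C})=H_1\circ H_2$.

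For (b), the plan is first to fix a presentation of $S$ acting on $O^2(\tilde{C})$, and then to observe that $\tilde{C}=O^2(\tilde{C})T$ is determined by $O^2(\tilde{C})$, by the action of $T\cong D_8$ (Lemma~\ref{L:DT}(b),(c)), and by $T\cap O^2(\tilde{C})=Z$. Concretely, $\tilde{C}$ is a quotient of $O^2(\tilde{C})\rtimes T$ by the diagonal copy of $Z$, so it suffices to pin down the action of the generators $y$ and $t$ of $T$.

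\emph{Action of $y$.} Take $y\in(Y_M\cap T)\setminus Q$. By the argument in Lemma~\ref{L:M*C*}(b), $y$ swaps $P_1$ and $P_2$, and since $y$ normalizes $D$ it swaps $D_1$ and $D_2$. Fix $d_1$ with $D_1=\<d_1\>$, so that $D_2=\<d_1^y\>$.

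\emph{Action of $t$.} Take an involution $t\in(T\cap S^*)\setminus Q$, which exists by Lemma~\ref{L:DT}(d). By Lemma~\ref{L:M*C*}(d), $t$ normalizes $P_1$ and $P_2$, hence also $D_1$ and $D_2$. I claim $t$ inverts each $D_i$. If $t$ centralized $D_i$, then $[P_i,t]$ would be $D_i$-invariant; since $D_i$ acts fixed-point-freely on $P_i/Z$, this forces $t$ to act on $P_i/Z$ either trivially or fixed-point-freely. Here $T\cap O^2(\tilde{C})=Z$ and $\tilde{C}/Q\cong\Sym(3)\times\Sym(3)$ with $S^*/Q=\<tQ\>$, so $t$ maps to an element of $\tilde{C}/Q$ that normalizes $DQ/Q$ and lies outside $O^2$. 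One has to check that $t$ then inverts both factors $D_iQ/Q$, rather than only one of them. I plan to get this from Lemma~\ref{L:M*C*}(b): $\tilde{C}^*/Q\cong\Sym(3)\times C_3$, and $t$ normalizes each $P_i$.

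\emph{Main obstacle.} This last step is where I expect the difficulty. The description $\Sym(3)\times C_3$ suggests that $t$ inverts only one $D_i$, yet the statement asserts that $t$ inverts all of $D$. I will reconcile this by composing with $y$ (note that $y$ and $t$ commute modulo $Z$ in $T\cong D_8$), or by analysing more carefully which factor $\tilde{C}^*$ contains. I would first check directly in $\Out(Q)\cong\Sym(3)\wr C_2$ which elements normalize both $P_i$ and have order $2$. The element $t$ acts on $P_i$ as an automorphism of order dividing $2$ that is not inner modulo $Z$, since $C_{\Aut(Q)}(Q/Z)=\Inn(Q)$. In $\Aut(Q_8)\cong\Sym(4)$, such an involution inverts the normalized $C_3$.

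\emph{Choice of $i,j$.} Once $t$ inverts $D_1$, the involution induced by $t$ on $P_1$ lies in $\Sym(4)\setminus\Alt(4)$ and is an involution outside the inner automorphisms. It is therefore a transposition, which swaps two of the three cyclic subgroups of order $4$ and fixes the third. Choosing generators $i,j$ of $P_1$ appropriately, I can arrange $i^t=j$ and $j^t=i$.

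\emph{Transport to $P_2$.} Since $y$ maps $P_1$ to $P_2$ and $[y,t]\in Z(T)=Z$, we get $(i^y)^t=i^{ty[y,t]'}$. The correction lies in $Z$, which is central, so $t$ swaps $i^y$ and $j^y$ (possibly after replacing $j$ by $j^{-1}$, i.e.\ multiplying by $z$, which I will track).

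\emph{Conclusion.} All relations among $d_1,i,j,y,t$ are now determined: the relations of $\SL_2(3)\circ\SL_2(3)$, the action of $y$ and $t$, and $T=\<y,t\>\cong D_8$ with $Z(T)=Z$. Hence $\tilde{C}$ is determined up to isomorphism.
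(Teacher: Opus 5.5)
Your proof of (a) and the skeleton of (b) follow the paper's proof. You reduce to the generators $y,t$ of $T\cong D_8$ using $\tilde{C}=O^2(\tilde{C})T$ and $T\cap O^2(\tilde{C})=Z$. Then $y$ swaps $P_1,P_2$ because it centralizes $V$, and $t$ normalizes each $P_i$ by Lemma~\ref{L:M*C*}(d). Next, $t$ induces a transposition in $\Aut(P_i)\cong\Sym(4)$, which inverts the normalized $D_i$ and swaps suitable generators $i,j$. Finally you transport to $P_2$ via $[y,t]\in Z$.

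\textbf{Gap.} You have not justified that $t$ is non-inner on each $P_i$ separately. From $C_{\tilde{C}}(Q)\le Q$ and $C_{\Aut(Q)}(Q/Z)=\Inn(Q)$ you only get that $t$ acts non-trivially on $Q/Z=P_1/Z\times P_2/Z$, i.e.\ on at least one factor. You must insert the symmetry that you mention only as a possible ``reconciliation''. Since $[y,t]\in Z=Z(Q)$, we have $(x^y)^t=(x^t)^y$ for all $x\in Q$. So the action of $t$ on $P_2=P_1^y$ is the $y$-transport of its action on $P_1$, and $t$ is non-trivial on both $P_i/Z$. This is exactly how the paper argues before invoking $\Sym(4)$.

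Both of your routes to the inversion depend on this fact: the $\Sym(4)$ argument, and your abandoned ``if $t$ centralized $D_i$'' argument. The latter does close once the fact is available, since an involution cannot act fixed-point-freely on the $\mathbb{F}_2$-space $P_i/Z$. The same identity shows that $t$ swaps $i^y$ and $j^y$ exactly, so no replacement of $j$ by $j^{-1}$ is needed.

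\textbf{The $\Sym(3)\times C_3$ route.} Discard the plan of deriving the inversion from $\tilde{C}^*/Q\cong\Sym(3)\times C_3$; the tension you noticed is genuine. By Lemma~\ref{L:DT}(c), $t\notin O^2(\tilde{C})$, so $\tilde{C}^*=O^2(\tilde{C})\langle t\rangle$. Once $t$ inverts $D$, the quotient $\tilde{C}^*/Q$ is $(C_3\times C_3)\rtimes C_2$ with $t$ acting by inversion. This group has trivial centre, so it is not isomorphic to $\Sym(3)\times C_3$. Hence the isomorphism type recorded at the end of Lemma~\ref{L:M*C*}(b) is incompatible with the statement you are proving. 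In fact $\Sym(3)\times C_3$ is what one obtains when the involution swaps $P_1$ and $P_2$, which is precisely the case ruled out in Lemma~\ref{L:M*C*}(d). The paper's proof of the present lemma does not use that description, and yours should not either.
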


\begin{proof}
\textbf{(a)} Clearly, $H_i=D_i\ltimes P_i\cong \SL_2(3)$. Note that $[P_1,P_2]=1$ and that $[D_1,D_2]=1$. Moreovoer, $D_i$ is chosen such that $[D_i,P_{3-i}]=1$ for $i=1,2$. Hence, $H_1$ and $H_2$ centralize each other. In particular, $H_1\cap H_2\leq Z(H_1)=Z$. Clearly also $Z\leq P_1\cap P_2\leq H_1\cap H_2$ and so $H_1\cap H_2=Z$. This proves that $O^2(\tilde{C})=H_1\circ H_2$. 

\smallskip

\textbf{(b)} To determine the structure of $\tilde{C}$, we use now that, by Lemma~\ref{L:DT}(b),(c), $\tilde{C}=O^2(\tilde{C})T$ and $T\cong D_8$ with $Z(T)=Z=O^2(\tilde{C})\cap T=Q\cap T$. In particular, it is enough to determine the action of $T$ on $O^2(\tilde{C})$. By Lemma~\ref{L:DT}(d), $T\cap Y_M$ and $T\cap S^*$ are the two fours groups in $T$, and these are not contained in $Q$. So the elements $y$ and $t$ always exist, and they generate $T$. Thus, we only need to determine the actions of $y$ and $t$ on $O^2(\tilde{C})$.

\smallskip

As $y$ centralizes with $V$ an elementary abelian subgroup of $Q$ of order $8$, $y$ must swap $P_1$ and $P_2$. Thus, $y$ swaps also $D_1=C_D(P_2)$ and $D_2=C_D(P_1)$. So with $d_1$ as in (b), we have $D_2=\<d_1^y\>$.

\smallskip

By Lemma~\ref{L:M*C*}(d), the involution $t$  normalizes $P_1$ and $P_2$. As $T\cong D_8$, we have $[y,t]\leq Z(T)=Z=Z(O^2(\tilde{C}))$. Thus, the action of $t$ on $O^2(\tilde{C})$ commutes with the action of $y$ on $O^2(\tilde{C})$. Since $\Inn(Q)=C_{\Aut(Q)}(Q/Z(Q))$ and $C_{\tilde{C}}(Q)\leq Q$, $t$ acts non-trivially on $Q/Z=Q/Z(Q)$. Hence, $t$ acts also non-trivially on $P_1/Z$ and $P_2/Z$. Thus, we may pick generators $i,j$ of $P_1$ which are swapped by $t$. Then $P_2=P_1^y=\<i^y,j^y\>$. Moreover, as the action of $t$ on $Q$ commutes with the action of $y$ on $Q$, the involution $t$ interchanges also $i^y$ and $j^y$ by conjugation.

\smallskip

As $\Aut(P_i)\cong S_4$ and $O_2(\Aut(P_i))=\Inn(P_i)=C_{\Aut(P_i)}(P_i/Z)$ for $i=1,2$, it follows that the automorphism of $P_i$ induced by $t$ inverts a $3$-Sylow subgroup of $\Aut(P_i)$, and such a $3$-Sylow subgroup has order $3$. Since $t$ normalizes $D_i$, it follows that $t$ must invert $D_i$ for $i=1,2$.  Hence, $t$ inverts $D=D_1\times D_2$. 
\end{proof}

We now consider the amalgam $(M,\tilde{C},B,i,j)$, where $i\colon B\hookrightarrow M$ and $j\colon B\hookrightarrow \tilde{C}$ are the inclusion maps. Indeed, we will show that this amalgam is isomorphic to an amalgam found in $\Aut(\G_2(3))$. We introduce this amalgam and a suitable locality containing it in the following lemma.

\begin{lemma}\label{L:G23}
 Let $G\cong \Aut(\G_2(3))$. Then $G$ has a large $2$-subgroup $Q_G$ with $Q_G\leq F^*(G)$, 
 \[\tilde{C}_G=N_G(Q_G)\mbox{ is a maximal subgroup of $G$},\]
and $Q_G=O_2(\tilde{C}_G)$. Moreover, there exists a maximal subgroup $M_G$ of $G$ such that 
 \[O_2(M_G)=Y_{M_G}\mbox{ has order }2^4,\;M_G/O_2(M_G)\cong \SL_3(2)\] 
and $V_G:=[Y_{M_G},M_G]$ is a natural $\SL_3(2)$-module for $M_G/O_2(M_G)$. Furthermore, $M_G$ and $Q_G$ can be chosen such that there exists $S_G\in\Syl_2(G)$ with 
\[S_G\leq M_G\cap \tilde{C}_G.\]
Then $Y_{M_G}\not\leq Q_G$ and $V_G\leq Q_G$. In particular, setting 
\[\Delta_G:=\{P\leq S_G\colon N_G(P)\mbox{ is of characteristic $2$}\}\] 
and $\L_G:=\L_{\Delta_G}(G)$, Hypothesis~\ref{MainHyp} is fulfilled with $(\L_G,\Delta_G,S_G,M_G,V_G,Q_G,\tilde{C}_G)$ in place of $(\L,\Delta,S,M,V,Q,\tilde{C})$. Moreover, $\F_{S_G}(\L_G)=\F_{S_G}(G)$.  
\end{lemma}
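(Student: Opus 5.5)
We will rely on known facts about the maximal subgroups of $\G_2(3)$ and $\Aut(\G_2(3))$, as listed in the ATLAS or in \cite[Section~4.3]{Wilson:2009}. Then we check the hypothesis piece by piece; the locality statements at the end will follow formally from Example~\ref{E:DeltaSGLarge}.

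\emph{Step 1 (the subgroup $Q_G$).} Start from the maximal subgroup $X\cong(\SL_2(3)\circ\SL_2(3)):2$ of $F^*(G)=\G_2(3)$ from Example~\ref{Ex:G23G2q}. Set $Q_G:=O_2(X)\cong Q_8\circ Q_8$, so $N_{\G_2(3)}(Q_G)=X=C_{\G_2(3)}(Z(Q_G))$. The graph automorphism of $\G_2(3)$ does not fuse the two classes of such maximal subgroups. Instead, $\Aut(\G_2(3))$ has a novelty maximal subgroup $\tilde C_G$ of shape $(\SL_2(3)\circ\SL_2(3)).2^2$ with $\tilde C_G\cap\G_2(3)=X$; we will verify this from the list of maximal subgroups of $\Aut(\G_2(3))$. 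Then $\tilde C_G=N_G(Q_G)$, and $Q_G=O_2(\tilde C_G)$ because $\tilde C_G/Q_G\cong \Sym(3)\times\Sym(3)$. Since $|Z(Q_G)|=2$, the group $N_G(Z(Q_G))$ contains $\tilde C_G$, so maximality gives $N_G(U)=\tilde C_G$ for the only nontrivial $U\le Z(Q_G)$; this is \eqref{Q!}. Self-centralization $C_G(Q_G)\le Q_G$ holds because $C_G(Q_G)\le C_G(Z(Q_G))\le\tilde C_G$ and $\Out(Q_G)\cong\O^+_4(2)$ contains the faithful image $\tilde C_G/Q_G$. Hence $Q_G$ is large.

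\emph{Step 2 (the subgroup $M_G$).} The group $\G_2(3)$ has two classes of maximal subgroups $2^3\cdot\SL_3(2)$, and these are swapped by the graph automorphism. Consequently $\Aut(\G_2(3))$ has a maximal subgroup $M_G$ of shape $2^4.\SL_3(2)$, namely the normalizer of a suitable elementary abelian $2^4$ containing a $2^3$ which is normal in a $2^3\cdot\SL_3(2)$. This is the subgroup $2^{3+1}.L_3(2)$ listed in the ATLAS. We need to show:
\begin{itemize}
\item $O_2(M_G)$ is elementary abelian of order $2^4$;
\item $V_G=[O_2(M_G),M_G]$ is the natural module, and $O_2(M_G)/V_G$ is trivial;
\item $C_{O_2(M_G)}(M_G)=1$, so $O_2(M_G)$ is $2$-reduced and hence $Y_{M_G}=O_2(M_G)$.
\end{itemize}
The structure of the module would be read off from the fact that $M_G$ is non-split over $V_G$ inside $\G_2(3)$ while $O_2(M_G)\not\le\G_2(3)$. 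We then choose $S_G\in\Syl_2(M_G\cap\tilde C_G)$ of order $2^7=|G|_2$. Both maximal subgroups have even index-free $2$-parts, so after conjugating we may assume $S_G$ is a Sylow $2$-subgroup of both, with $Z(S_G)=Z(Q_G)\le V_G$.

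\emph{Step 3 (the inclusions $V_G\le Q_G$ and $Y_{M_G}\not\le Q_G$).} We have $Y_{M_G}\not\le\G_2(3)\supseteq Q_G$, which gives $Y_{M_G}\not\le Q_G$. For $V_G\le Q_G$: the group $V_G$ is normal in $S_G$ and lies in $\G_2(3)$. Every involution of $V_G$ is conjugate to the $2$-central involution, and a $2$-central elementary abelian $2^3$ normal in $S_G\cap \G_2(3)$ must lie in $O_2(X)=Q_G$. We would verify this from the structure of $C_{\G_2(3)}(z)=X$, using that $V_G\le C(z)$ with $z\in Z(S_G)$.

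\emph{Step 4 (the remaining hypotheses and conclusion).} By Example~\ref{E:DeltaSGLarge}, $(\L_G,\Delta_G,S_G)$ is a $Q_G$-replete linking locality over $\F_{S_G}(G)$ in which $Q_G$ is large. In particular $\F_{S_G}(\L_G)=\F_{S_G}(G)$. Since $Q_G\in\Delta_G$, we get $N_{\L_G}(Q_G)=\tilde C_G$, and this is a $\K$-group because all its composition factors are cyclic. Moreover $M_G\le N_G(V_G)$ with $V_G\in\Delta_G$, so $M_G$ is a subgroup of $\L_G$ containing $S_G$. Conditions (i)--(iii) were established above.

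The main obstacle is Step 2 and Step 3. They require precise knowledge of the $2^{3+1}.\SL_3(2)$ subgroup of $\Aut(\G_2(3))$ and of how it meets $\tilde C_G$. If the citation of the maximal subgroups is not enough, we would fall back on an explicit construction inside $\G_2(3)\rtimes\langle\tau\rangle$, using Lemma~\ref{L:DetermineMfromM*} to identify $M_G$ with $\Aut(M_G^*)$ for $M_G^*=2^3\cdot\SL_3(2)$.
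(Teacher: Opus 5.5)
\textbf{There is a genuine gap: Steps~2 and~3 are not proved, and part of what you write there is wrong.}

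Your outline does follow the paper: largeness from maximality of $\tilde C_G=N_G(Z(Q_G))$, then identifying $Y_{M_G}$, then placing $V_G$ inside $Q_G$. Step~4 is fine, and citing Example~\ref{E:DeltaSGLarge} is shorter than the paper, which re-derives repleteness and $\F_{S_G}(\L_G)=\F_{S_G}(G)$ by hand. But Steps~2 and~3 are the real content of the lemma. The problems there are as follows.
\begin{itemize}
\item The fact you base $M_G$ on is false. $\G_2(3)$ has a \emph{single} class of maximal subgroups $2^3\cdot\SL_3(2)$. Likewise there is a single class $2^{1+4}_+:3^2.2$, so $\tilde C_G=N_G(X)$ is not a novelty. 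If two classes were swapped by the graph automorphism, each member would be self-normalizing in $G$, so it could not have index $2$ in any subgroup of $G$. The subgroup $M_G\cong 2^3\cdot\SL_3(2):2$ exists precisely because the unique class is $\Aut(\G_2(3))$-stable.
\item $C_Y(M_G)=1$ does not imply that $Y$ is $2$-reduced. For example, the natural $\SL_3(2)$-module restricted to a hyperplane stabilizer $\Sym(4)$ has no nonzero fixed vector, yet $O_2(\Sym(4))$ acts faithfully. What you need is $C_{M_G}(Y)=Y$ together with $O_2(M_G/Y)=1$.
\item Non-splitness of $M_G\cap\G_2(3)$ over $V_G$ tells you nothing about $O_2(M_G)$.
\item Your fallback via Lemma~\ref{L:DetermineMfromM*} is circular. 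That lemma already assumes $O_2$ is elementary abelian with $C_Y(G)=1$ and $Y\cap O^2(G)=V$.
\end{itemize}

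\textbf{The missing argument for Step~2 is short.} Put $M_G^*:=O^2(M_G)$ and $V_G:=O_2(M_G^*)$. Since $M_G^*/V_G\cong\SL_3(2)\cong\Aut(V_G)$, the module $V_G$ is natural and $M_G=M_G^*Y$ with $Y:=C_{M_G}(V_G)$. As $Y\cap M_G^*=V_G$, we get $|Y:V_G|=2$. Hence $Y$ is abelian and normal in $M_G$, and $\Phi(Y)\le V_G$ has order at most $2$. By irreducibility of $V_G$, $\Phi(Y)$ is trivial. So $Y=O_2(M_G)=Y_{M_G}$ is elementary abelian of order $2^4$, with $[Y,M_G]=V_G$.

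\textbf{For Step~3 you need no claim about normal $2^3$'s in $S_G\cap\G_2(3)$.} Your qualifier ``$2$-central'' is vacuous anyway, since $\G_2(3)$ has one class of involutions.
\begin{itemize}
\item $Y\not\le Q_G$ holds because $Q_8\circ Q_8$ has no elementary abelian subgroup of order $2^4$.
\item The paper phrases the next step as $|YQ_G/Q_G|=2$, i.e.\ $|Y\cap Q_G|=2^3$. One way to see it: take $\langle z\rangle=Z(S_G)\le V_G$. Then $C_{M_G}(z)\le N_G(\langle z\rangle)=\tilde C_G$, so $Q_G\unlhd C_{M_G}(z)$. Thus $O_2(C_{M_G}(z))$, which has order $2^6$, contains both $Q_G$ and $Y\not\le Q_G$. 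This forces $YQ_G=O_2(C_{M_G}(z))$ and $|Y\cap Q_G|=2^3$.
\item $Y\cap\G_2(3)=V_G$, since it is a normal $2$-subgroup of $M_G\cap\G_2(3)\cong 2^3\cdot\SL_3(2)$ containing $V_G$.
\item Since $Q_G\le\G_2(3)$, we get $Y\cap Q_G\le V_G$, and comparing orders gives $V_G=Y\cap Q_G\le Q_G$.
\end{itemize}
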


\begin{proof}
According to the ATLAS, a Sylow $2$-subgroup of $G$ has order $2^7$, and there are maximal subgroups $\tilde{C}_G$ and $M_G$ of $G$ such that $\tilde{C}_G\cong 2^{1+4}.(S_3\times S_3)$ and $M_G\cong 2^3.L_3(2)\colon 2$. So we may choose $\tilde{C}_G$ and $M_G$ such that they contain a common Sylow $2$-subgroup $S_G$ of $G$. Setting $Q_G:=O_2(\tilde{C}_G)$, we have $N_G(Q_G)=\tilde{C}_G$ because of the maximality of $\tilde{C}_G$. Note that $Z_G:=Z(Q_G)$ has order $2$ and $\tilde{C}_G\leq N_G(Z_G)$. Hence, the maximality of $\tilde{C}_G$ yields also that $N_G(Z_G)=\tilde{C}_G=N_G(Q_G)$. This proves that $Q_G$ is a large subgroup of $G$ which is extraspecial of order $2^{1+4}$. 
 
\smallskip
 
As $M_G\cong 2^3.L_3(2)\colon 2$, setting $M_G^*:=O^2(M_G)$ and $V_G:=O_2(M_G^*)$, we have $M_G^*/V_G\cong L_3(2)$ and $V_G$ is elementary abelian of order $2^3$. As $\Aut(V_G)\cong L_3(2)$, it follows that $V_G$ is a natural $L_3(2)$-module and $M_G=M_G^*Y$ for $Y:=C_{M_G}(V_G)$. As $Y\cap M_G^*=C_{M_G^*}(V_G)=V_G$, we have $|Y:V_G|=2$ and $|Y|=2^4$. In particular, $Y$ is abelian, since $V_G$ is centralized by $Y$. The fact that $[M_G^*,Y]\leq C_{M_G^*}(V_G)=V_G\leq Y$ implies also that $Y$ is normal in $M_G$. Thus, $Y=O_2(M_G)$. Since $Y$ contains an elementary abelian subgroup of order $2^3$, $\Phi(Y)\leq V_G$ has order at most $2$ and is then trivial since $V_G$ is an irreducible $M_G$-module.  So $Y$ is elementary abelian and thus equal to $Y_{M_G}$. Clearly, $V_G=[Y,M_G]$ is a natural $L_3(2)$-module for $M_G/Y\cong M_G^*/V_G\cong L_3(2)$. 

\smallskip

Note that $Q_G=[Q_G,O^2(\tilde{C}_G)]\leq O^2(G)=F^*(G)$ and $V_G\leq M_G^*\leq O^2(G)$. Moreover, $S_G/Q_G\cong C_2\times C_2$. As $Y$ is elementary abelian and not contained in $O^2(G)$, it follows that $|YQ_G/Q_G|=2$ and $Y\cap Q_G=Y_G\cap O^2(G)$ has order $2^3$. Since $V_G\leq Y\cap O^2(G)$, we can conclude that $V_G=Y\cap Q_G\leq Q_G$. 

\smallskip

By \cite[Lemma~10.2]{Henke:2015}, $\Delta_G$ is closed under passing to $\F_{S_G}(G)$-conjugates and overgroups in $S$. As $N_G(Z_G)=\tilde{C}_G$ is of characteristic $2$, we have $C_{S_G}(Q_G)=Z(Q_G)=Z_G\in\Delta_G$. Thus, property (ii) of Lemma~\ref{L:QReplete} holds and $(\L_G,\Delta_G,S_G)$ is $Q_G$-replete. By Lemma~\ref{L:QRepleteSaturated}, we have thus $\F_{S_G}(\L_G)^c\subseteq \Delta_G$. By definition of $\Delta_G$, for every $P\in\Delta_G$, $N_{\L_G}(P)=N_G(P)$ is of characteristic $2$. Hence, $(\L_G,\Delta_G,S_G)$ is a linking locality. Moreover, as $\F_{S_G}(\L_G)\subseteq \F_G:=\F_{S_G}(G)$ are both fusion systems over $S_G$, we have $\F_G^c\subseteq \F_{S_G}(\L_G)^c\subseteq \Delta$. Hence, it follows from Alperin's fusion theorem that $\F_G=\F_{S_G}(\L_G)$. As $Q_G$ is large in $G$, it is large in $\L_G$ by Example~\ref{E:LargeGLargeLDeltaG}. Thus, Hypothesis~\ref{MainHyp} is indeed fulfilled. 
\end{proof}

From now on we pick $G\cong \Aut(\G_2(3))$ as well as the subgroups $Q_G$, $S_G$, $\tilde{C}_G$, $M_G$, $V_G$ as in  
Lemma~\ref{L:G23}. Moreover, set 
\[B_G:=\tilde{C}_G\cap M_G.\]
Write 
\[i\colon B\rightarrow M,\;j\colon B\rightarrow \tilde{C},\;i_G\colon B_G\rightarrow M_G,\;j_G\colon B_G\rightarrow \tilde{C}_G\]
for the inclusion maps. Set 
\[M_G^*:=O^2(M_G)\mbox{ and }S_G^*:=M_G^*\cap S_G.\]

\smallskip

Our goal will be to show that the amalgam $(B,M,\tilde{C},i,j)$ is isomorphic to the amalgam $(B_G,M_G,\tilde{C}_G,i_G,j_G)$. That amounts to finding isomorphisms
\[
\alpha_B \colon B \longrightarrow B_G, \qquad
\alpha_M \colon M \longrightarrow M_G, \qquad
\alpha_C \colon \wde{C} \longrightarrow \wde{C}_G
\]
that make the two squares in the following diagram commute.
\begin{center}
	\begin{tikzpicture}
		\node (B) at (0,0) {$B$};
		\node (M) at (1.7,1.3) {$M$};
		\node (C) at (2.5,-0.7) {$\wde{C}$};
		\node (BG) at (5,0.5) {$B_G$};
		\node (MG) at (6.7,1.8) {$M_G$};
		\node (CG) at (7.5,-0.2) {$\wde{C}_G$};
		
		\draw[->] (B) to node[above] {$i$} (M);
		\draw[->] (B) to node[below] {$j$} (C);
		\draw[->] (BG) to node[pos=0.3,right] {$\; i_G$} (MG);
		\draw[->] (BG) to node[pos=0.7,above] {$j_G$} (CG);
		\draw[->,dashed] (B) to node[above] {$\alpha_B$} (BG);
		\draw[->,dashed] (M) to node[above] {$\alpha_M$} (MG);
		\draw[->,dashed] (C) to node[below] {$\alpha_C$} (CG);
	\end{tikzpicture}
\end{center}
Since the solid arrows are inclusions, all we need is a pair of isomorphisms $(\alpha_M,\alpha_C)$ as above such that their restrictions to $B$ coincide. The strategy will consist in starting with isomorphisms $\alpha_M \colon M \longrightarrow M_G$ and $\beta: \wde{C} \longrightarrow \wde{C}_G$ and show that there exists a replacement of $\beta$ by an isomorphism that agrees with $\alpha_M$ on $B$, the main tool being Lemma~\ref{L:AutBExtend}.

\begin{lemma}\label{L:IsoC}
 There is an isomorphism $\tilde{C}\rightarrow \tilde{C}_G$ which takes $V$ to $V_G$, $S^*$ to $S_G^*$, and $B$ to $B_G$.
 \end{lemma}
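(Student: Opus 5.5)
The plan is to use the explicit presentation of $\tilde{C}$ from Lemma~\ref{L:StructuretildeC}(b) and read off $V$, $S^*$ and $B$ inside it. By Lemma~\ref{L:G23}, $(\L_G,\Delta_G,S_G,M_G,V_G,Q_G,\tilde{C}_G)$ satisfies Hypothesis~\ref{MainHyp}. Hence every result proved under that hypothesis applies to $G$ as well, in particular Proposition~\ref{Proposition2.16}, Lemmas~\ref{L:M*C*}, \ref{L:DT} and \ref{L:StructuretildeC}. Note that $O^2(\L_G)\cap S_G=\hyp(\F_{S_G}(G))=S_G\cap O^2(G)=S_G^*$ by Proposition~\ref{P:pResidualLocFus}, and $M_G\cap O^2(\L_G)=O^2(M_G)$ by Lemma~\ref{L:M*C*}(a). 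So the subgroups $S_G^*$, $M_G^*$ defined here agree with those of the general setup applied to $G$.

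First I will choose data $D$, $T$, $d_1$, $y$, $t$, $i$, $j$ in $\tilde{C}$ as in Lemma~\ref{L:StructuretildeC}(b), and corresponding data $D_G$, $T_G$, $d_{1,G}$, $y_G$, $t_G$, $i_G$, $j_G$ in $\tilde{C}_G$. Since $\tilde{C}=(D\ltimes Q)T=\langle d_1,i,j,y,t\rangle$ and the lemma fixes the action of $y$ and $t$ on $O^2(\tilde{C})=H_1\circ H_2$, I want to define a map
\[d_1\mapsto d_{1,G},\quad i\mapsto i_G,\quad j\mapsto j_G,\quad y\mapsto y_G,\quad t\mapsto t_G.\]
Some care is needed: the action of $d_1$ on $P_1=\langle i,j\rangle$ must match, but this can be arranged by replacing $d_1$ by $d_1^{-1}$ if necessary. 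The relation $T\cong D_8$ with $y^2=t^2=1$ and $(yt)^2\in Z$ must also match; here $Z=\langle i^2\rangle$, and $(yt)^2$ is the nontrivial element of $Z$ in both groups. The most robust route is to build first an isomorphism $O^2(\tilde{C})\to O^2(\tilde{C}_G)$ sending $(d_1,i,j,d_1^y,i^y,j^y)$ to the corresponding elements. Then I extend it to the semidirect-type products with $T$ using that $T\cap O^2(\tilde{C})=Z$, together with Lemma~\ref{AutSemidirect}-style reasoning to handle the square of generators. This yields an isomorphism $\beta\colon\tilde{C}\to\tilde{C}_G$ with $\beta(y)=y_G$ and $\beta(t)=t_G$.

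Next, I check that $\beta$ maps the three subgroups correctly, characterising each intrinsically in $\tilde{C}$.
\begin{itemize}
\item For $S^*$: it equals $Q\langle t\rangle$, since $S^*=Q(S^*\cap T)$ with $S^*\cap T=\langle Z,t\rangle$. It is therefore sent to $Q_G\langle t_G\rangle=S_G^*$.
\item For $V$: it is the unique elementary abelian subgroup of $Q$ of order $8$ centralized by $y$, since it is centralized by $Y_M\ni y$. This fact was used in Lemma~\ref{L:M*C*}(d). So $\beta(V)=V_G$.
\item For $B$: by Lemma~\ref{L:M*C*}(a), $B=M\cap\tilde{C}=N_{\tilde{C}}(V)$, since $N_\L(V)=M$. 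Hence $\beta(B)=N_{\tilde{C}_G}(V_G)=B_G$.
\end{itemize}

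The main obstacle is the second step: making sure that the presentation data genuinely determine $\tilde{C}$. This means verifying that the map on generators respects all relations, in particular the action of $t$ on $D$ (inversion), the action of $y$, and the value of $(yt)^2$. I expect to settle this by checking both groups inside $\Aut(Q)\cong\O^+_4(2)$-type structure with $C_{\tilde{C}}(Q)\leq Q$, possibly adjusting the choices of $t$ or $y$ by elements of $Z$.
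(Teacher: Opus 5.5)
Your overall route is the paper's. You transfer everything to $G$ via Lemma~\ref{L:G23} and obtain an isomorphism $\tilde{C}\to\tilde{C}_G$ from the description in Lemma~\ref{L:StructuretildeC}(b), with $y\mapsto y_G$ and $t\mapsto t_G$. You then recognise $S^*=Q\langle t\rangle$, $V=C_Q(y)$ as the unique elementary abelian subgroup of $Q$ of order $2^3$ centralised by $y$, and $B=N_{\tilde{C}}(V)$. For $B=N_{\tilde C}(V)$ the paper cites Lemma~\ref{Lemma2.3}(b) with Proposition~\ref{Proposition2.16}(d), and uses maximality of $M_G$ on the $G$-side; your appeal to Lemma~\ref{L:M*C*}(a) on both sides is equally valid. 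The paper leaves the construction of the isomorphism implicit in Lemma~\ref{L:StructuretildeC}(b). Your extension from $O^2(\tilde{C})=H_1\circ H_2$ to $\tilde{C}=O^2(\tilde{C})T$ is fine, using $T=\langle y,t\rangle\cong D_8$ and $T\cap O^2(\tilde{C})=Z=\langle (yt)^2\rangle$.

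One step fails as stated. You cannot in general match the action of $d_1$ on $P_1=\langle i,j\rangle$ with that of $d_{1,G}$ on $\langle i_G,j_G\rangle$ just by replacing $d_1$ by $d_1^{-1}$. Once $i,j$ are fixed, $c_t|_{P_1}$ is a transposition in $\Aut(P_1)\cong S_4$, and it normalises two distinct Sylow $3$-subgroups of $\Aut(P_1)$. Since $t$ inverts $d_1$ and swaps $i,j$, the automorphism induced by $d_1$ is one of four: $i\mapsto j^{-1}$, $j\mapsto (ij)^{\pm1}$, or the inverse of one of these. Inverting $d_1$ never moves you from one of these Sylow $3$-subgroups to the other. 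Your fallback of modifying $t$ or $y$ by elements of $Z$ changes nothing, since $Z\le Z(\tilde{C})$. The missing freedom is to replace $(i,j)$ by $(i^{-1},j^{-1})$ when necessary. This pair still satisfies every condition in Lemma~\ref{L:StructuretildeC}(b). On $P_1$ the change is conjugation by $ij$, which commutes with $c_t|_{P_1}$ and interchanges the two Sylow $3$-subgroups. Together with the choice of generator $d_1$, this realises all four actions, so the data in $\tilde{C}$ and $\tilde{C}_G$ can be chosen compatibly. The actions of $y$ on $O^2(\tilde{C})$ and of $t$ on $D$ and $P_2$ then match automatically, and the rest of your argument goes through.
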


\begin{proof}
We use that, by Lemma~\ref{L:G23}, there exists a locality $(\L_G,\Delta_G,S_S)$ such that Hypothesis~\ref{MainHyp} is fulfilled with $(\L_G,\Delta_G,S_G,M_G,V_G,Q_G,\tilde{C}_G)$ in place of $(\L,\Delta,S,M,V,Q,\tilde{C})$. In particular, everything we have proved for $S$, $M$, $V$, $Q$ and $\tilde{C}$ applies also to $S$, $M$, $V$, $Q$, $\tilde{C}$. Lemma~\ref{L:M*C*}(a) gives thus also that $O^2(M_G)=M_G\cap O^2(\L_G)$, and so everything we proved for $M^*$ and $S^*$ applies similarly to $M_G^*$ and $S_G^*$. Moreover, Lemma~\ref{L:StructuretildeC}(b) gives us the existence of an isomorphism $\tilde{C}\rightarrow \tilde{C}_G$. 

\smallskip

Any isomorphism $\tilde{C}\rightarrow \tilde{C}_G$ will take $Q=O_2(\tilde{C})$ to $Q_G=O_2(\tilde{C}_G)$ and thus $Z=Z(Q)$ to $Z(Q_G)=Z(S_G)$. Using the notation as in Lemma~\ref{L:StructuretildeC}, we have $S^*=Q\<t\>$ and $V=Z\<ii^y,jj^y\>$, as $V$ is the unique elementary abelian subgroup of $Q$ of order $2^3$ which is centralized by $y$. Again, similar results hold accordingly for $S_G^*$ and $V_G$. Thus, Lemma~\ref{L:StructuretildeC} allows us indeed to choose an isomorphism $\tilde{C}\rightarrow \tilde{C}_G$ which takes $V$ to $V_G$ and $S^*$ to $S_G^*$.

\smallskip

By Lemma~\ref{Lemma2.3}(b) and Proposition~\ref{Proposition2.16}(d), we have $B=N_{\tilde{C}}(V)$. Recall that $M_G$ is a maximal subgroup of $G$, and $V_G$ is normal in $M_G$, but not in $G$. Hence, $M_G=N_G(V)$ and $B_G=\tilde{C}_G\cap M_G=N_{\tilde{C}}(V_G)$. Thus, an isomorphism $\tilde{C}\rightarrow \tilde{C}_G$ which takes $V$ to $V_G$ will also take $B$ to $B_G$.
\end{proof}

\begin{lemma}\label{L:McongMG}
 We have $M\cong M_G$.
\end{lemma}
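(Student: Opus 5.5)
The strategy is to pin down $M$ using Lemma~\ref{L:DetermineMfromM*}, which identifies $G$ with $\Aut(O^2(G))$ from the stated data. First I check its hypotheses for $M$:
\begin{itemize}
\item By Proposition~\ref{Proposition2.16}(a), $Y_M=Q_M=O_2(M)$ is elementary abelian of order $2^4$.
\item By Hypothesis~\ref{MainHyp}(ii), $M/Y_M\cong \SL_3(2)$ and $V=[Y_M,M]$ is natural.
\item By Hypothesis~\ref{MainHyp}(iii), $V\neq Y_M$.
\item By Lemma~\ref{Lemma2.1}(a), $C_{Y_M}(M)\leq Z(M)=1$.
\item By Lemma~\ref{L:M*C*}(a), $Y_M\cap O^2(M)=Y_M\cap M^*=V$.
\end{itemize}
Hence $M\cong \Aut(M^*)$ with $M^*=O^2(M)$. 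The same applies to $M_G$: Lemma~\ref{L:G23} says Hypothesis~\ref{MainHyp} holds there, so all the earlier results transfer and $M_G\cong\Aut(M_G^*)$. It therefore suffices to show $M^*\cong M_G^*$.

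The groups $M^*$ and $M_G^*$ are extensions of a natural module $V\cong 2^3$ by $\SL_3(2)$. By the cohomology $H^2(\SL_3(2),V)\cong \bF_2$ for the natural module, there are exactly two such extensions: the split extension $2^3\colon \SL_3(2)$ and the non-split one $2^3\cdot\SL_3(2)$. To tell them apart I use Lemma~\ref{L:M*C*}(e): $S^*$, a Sylow $2$-subgroup of $M^*$, has no elementary abelian subgroup of order $2^4$.

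In the split extension, the Sylow $2$-subgroup is $V\rtimes D_8$, and I exhibit an elementary abelian subgroup of order $2^4$ in it. Take a fours group $F$ in the $D_8$ which is the unipotent radical of a hyperplane stabilizer, i.e.\ $F$ acts as transvections with a common axis $A=C_V(F)$ of order $4$. Then $A\times F$ is elementary abelian of order $2^4$. So $M^*$ must be the non-split extension.

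The same argument applies verbatim to $M_G^*$, since Lemma~\ref{L:M*C*}(e) also holds in the $G$-setting; alternatively, the ATLAS structure $2^3\cdot L_3(2)$ confirms it. Thus $M^*\cong M_G^*$, and therefore
\[M\cong\Aut(M^*)\cong\Aut(M_G^*)\cong M_G.\]

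The main obstacle is the classification of extensions, that is, the fact that $H^2(\SL_3(2),V)$ has order $2$ for the natural module, giving exactly one non-split class up to isomorphism. I would cite this as a known result (Dempwolff's characterization of the non-split extension $2^3\cdot\SL_3(2)$) rather than prove it. The one point needing care is that the identification is independent of which natural module (the module or its dual) $V$ is. This is handled by the inverse-transpose automorphism of $\SL_3(2)$, which interchanges the two natural modules.
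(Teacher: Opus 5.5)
Your proof is correct. The first half is exactly what the paper does: you reduce to $M^*\cong M_G^*$ via Lemma~\ref{L:DetermineMfromM*}, and you check its hypotheses more explicitly than the paper, using Proposition~\ref{Proposition2.16}(a), Lemma~\ref{Lemma2.1}(a), Hypothesis~\ref{MainHyp}(ii),(iii) and Lemma~\ref{L:M*C*}(a).

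For the key step $M^*\cong M_G^*$ you take a genuinely different route. The paper never identifies the extension class. It takes the isomorphism $S^*\to S_G^*$ carrying $V$ to $V_G$ supplied by Lemma~\ref{L:IsoC}, which ultimately comes from the explicit description of $\tilde{C}$ in Lemma~\ref{L:StructuretildeC}. It then notes that both fusion systems, transported to $S_G^*$, have $V_G$ normal and centric with full automorphism group $\Aut(V_G)$. Finally it invokes the model theorem for constrained fusion systems \cite[Theorem~III.5.10]{Aschbacher/Kessar/Oliver:2011} to align the fusion systems and conclude that the models are isomorphic.

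You instead cite Dempwolff's computation $H^2(\SL_3(2),V)\cong\bF_2$ and detect the nonzero class with Lemma~\ref{L:M*C*}(e). Your witness is the elementary abelian subgroup $C_V(F)\times F$ of order $2^4$ in the split extension, where $F$ is the group of transvections with a common axis; this is correct.

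What each approach buys:
\begin{itemize}
\item Your argument is shorter and shows explicitly that $M^*$ is the non-split $2^3{\cdot}\SL_3(2)$. However, it imports an external cohomology result.
\item The paper's argument is cohomology-free and applies whenever the Sylow data match; the injectivity of restriction to a Sylow subgroup is absorbed into the uniqueness of models. The Sylow isomorphism it needs is available anyway from Lemma~\ref{L:IsoC}, which the paper uses again for the amalgam.
\end{itemize}

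Your concern about the natural module versus its dual is harmless but unnecessary. Since $M^*/V$ acts faithfully on $V$ of order $8$, it is identified with $\GL(V)$ through that action, and any linear isomorphism $V\to V_G$ already matches the two module structures.
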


\begin{proof}
Lemma~\ref{Lemma2.1}(a) allows us to apply Lemma~\ref{L:DetermineMfromM*} with $M$ in place of $G$ to conclude that $M\cong \Aut(O^2(M))=\Aut(M^*)$. By Lemma~\ref{L:G23} we have then similarly $M_G\cong \Aut(M_G^*)$. Thus, it is sufficient to argue that $M^*\cong M_G^*$. 

\smallskip

By Lemma~\ref{L:IsoC}, there exists an isomorphism $\gamma\colon S^*\rightarrow S_G^*$ with $V\gamma=V_G$. As $V$ is normal and centric in $\F_{S^*}(M^*)$, $V_G=V\gamma$ is also centric and normal in  $\F_{S^*}(M^*)^\gamma$. Note here that $\F_{S^*}(M^*)^\gamma$ is a fusion system over $S_G^*$. Moreover, $V_G\unlhd M_G^*$, $C_{M_G^*}(V_G)\leq V_G$ and $S_G^*\in\Syl_2(M_G^*)$. As $\Aut(V_G)=\Aut_{M_G^*}(V_G)$ and $\Aut(V)=\Aut_{M^*}(V)$, we have moreover $\Aut_{\F_{S^*}(M^*)^\gamma}(V_G)=\Aut_{M^*}(V)^\gamma=\Aut(V)^\gamma=\Aut(V_G)=\Aut_{M_G^*}(V_G)$. Hence, it follows from \cite[Theorem~III.5.10(b)]{Aschbacher/Kessar/Oliver:2011} that there exists $\beta\in\Aut(S_G^*)$ with $\F_{S_G^*}(M_G^*)=(\F_{S^*}(M^*)^\gamma)^\beta=\F_{S^*}(M^*)^{\gamma\beta}$ and $\beta|_{V_G}=\id$. 

\smallskip

Choose a set $M^+$ containing $S_G^*$ such that $|M^+|=|M^*|$. We can extend $\gamma\beta$ to a bijection $\alpha\colon M^*\rightarrow M^+$. There is now a unique way to turn $M^+$ into group such that $\alpha$ becomes a group isomorphism. Then
\[\F_{S_G^*}(M_G^*)=\F_{S^*}(M^*)^{\gamma\beta}=\F_{S_G^*}(M^+).\]
As $V\unlhd M^*$ and $C_{M^*}(V)=V$, the subgroup $V_G=V_G\beta=V\gamma\beta=V\alpha$ is normal and self-centralizing in $M^+$. Using \cite[Theorem~III.5.10(c)]{Aschbacher/Kessar/Oliver:2011}, one can conclude now that $M_G^*\cong M^+\cong M^*$. As argued above, this implies the assertion. 
\end{proof}

\begin{lemma}\label{ActionOnYM}
 We have $C_{Y_M}(x)\leq V$ for any $x\in S\backslash Y_M$. 
\end{lemma}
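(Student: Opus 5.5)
The plan is to show that every $y\in Y_M\backslash V$ satisfies $C_S(y)\leq Y_M$. The statement then follows at once: if $x\in S\backslash Y_M$ centralized some $y\in C_{Y_M}(x)\backslash V$, then $x\in C_S(y)\leq Y_M$, a contradiction.

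To get $C_S(y)\leq Y_M$, I would study the action of $M$ on the set $Y_M\backslash V$. We collect the facts available at this point.
\begin{itemize}
\item By Proposition~\ref{Proposition2.16}(a) and Lemma~\ref{Lemma2.1}(b), $Y_M=Q_M$, $|Y_M|=2^4$ and $|Y_M/V|=2$.
\item By the proof of Lemma~\ref{L:M*C*}(a), $C_\L(Y_M)=Y_M$. Hence $M$ acts on $Y_M$ with kernel exactly $Y_M$, and $M/Y_M\cong \SL_3(2)$ has order $168$.
\item The set $Y_M\backslash V$ has $8$ elements and is $M$-invariant, since $V\unlhd M$.
\end{itemize}
In the proof of Lemma~\ref{L:M*C*}(a) it was observed that an element of order $7$ in $M$ centralizes some $y_0\in Y_M\backslash V$. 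This is coprime action, with the element of order $7$ centralizing $Y_M/V$.

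Next consider $K:=C_M(y_0)\geq Y_M$. Its index $|M:K|$ is the size of the $M$-orbit of $y_0$, so it is at most $8$, and it divides $168/7=24$ because $7$ divides $|K/Y_M|$. Every subgroup of $\SL_3(2)$ has index $1$, $7$, $8$, $14$, $21$, $24$, $28$, $42$, $56$, $84$ or $168$; the only values that are at most $8$ and divide $24$ are $1$ and $8$. Index $1$ is impossible: it would give $y_0\in C_{Y_M}(M)\leq Z(M)$, using that $Y_M$ is abelian and $M=\<Y_M, M\>$-centralization, and $Z(M)=1$ by Lemma~\ref{Lemma2.1}(a). Hence $|M:K|=8$. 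This has two consequences.
\begin{itemize}
\item $M$ acts transitively on $Y_M\backslash V$.
\item $|K/Y_M|=21$ is odd.
\end{itemize}
So $Y_M$ is a Sylow $2$-subgroup of $C_M(y_0)$, and by transitivity $Y_M$ is a Sylow $2$-subgroup of $C_M(y)$ for every $y\in Y_M\backslash V$. Since $Y_M\unlhd M$, it lies in every Sylow $2$-subgroup of $C_M(y)$ and is therefore the unique one. As $C_S(y)$ is a $2$-subgroup of $C_M(y)$, it follows that $C_S(y)\leq Y_M$, which is what we need.

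The main point to get right is the orbit and index computation: we must make sure that $C_M(y_0)Y_M/Y_M$ really has index $8$ rather than $1$, which is exactly where $Z(M)=1$ enters. If needed, this could instead be cross-checked against \cite[Lemma~1.1(c)]{MStr:2008}, which was already used in the proof of Lemma~\ref{L:DT}(c) in the form $C_S(y)\leq Y_M$ for $y\in Y_M\backslash V$.
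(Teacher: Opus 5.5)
Your proof is correct, but it takes a different route from the paper. The paper disposes of the lemma in one line. It uses $Z(M)=1$ from Lemma~\ref{Lemma2.1}(a), so that $Y_M$ has no non-trivial $M$-fixed points. It then quotes the general module-theoretic \cite[Lemma~1.1(c)]{MStr:2008} for $\SL_3(2)$-modules $Y$ with $[Y,M]$ natural, $|Y/[Y,M]|=2$ and $C_Y(M)=1$.

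You instead reprove the needed part of that fact directly in the present situation:
\begin{itemize}
\item The element of order $7$ centralizing some $y_0\in Y_M\backslash V$ forces $|M:C_M(y_0)|$ to divide $24$.
\item The orbit bound on $Y_M\backslash V$ forces this index to be at most $8$.
\item Since $\SL_3(2)$ is simple of order divisible by $7$, it has no proper subgroup of index less than $7$. This is all you actually need from your list of subgroup indices. Together with $Z(M)=1$ it gives $|M:C_M(y_0)|=8$.
\item Hence $M$ is transitive on $Y_M\backslash V$ and $|C_M(y)/Y_M|=21$ for every $y\in Y_M\backslash V$. So the normal subgroup $Y_M$ is the unique Sylow $2$-subgroup of $C_M(y)$.
\end{itemize}

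What each route buys: your argument is self-contained, using only facts already proved in the paper plus elementary facts about $\SL_3(2)$. It also yields a little more, namely transitivity on $Y_M\backslash V$ and $C_S(y)\le Y_M$ for $y\in Y_M\backslash V$, which is the form of \cite[Lemma~1.1(c)]{MStr:2008} invoked in Lemma~\ref{L:DT}(c). The paper's citation is shorter and covers the general module situation that is reused at several other points.

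One small cleanup: excluding index $1$ needs nothing beyond $y_0\in Y_M\le M$. If $C_M(y_0)=M$, then $1\neq y_0\in Z(M)=1$, a contradiction. The garbled remark about abelianness and centralization there can simply be deleted.
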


\begin{proof}
This follows from Lemma~\ref{Lemma2.1}(a) and \cite[Lemma~1.1(c)]{MStr:2008}.
\end{proof}

\begin{lemma}\label{L:AutBExtend}
 Every element of $\Aut(B)$ extends to an automorphism of $\tilde{C}$.
\end{lemma}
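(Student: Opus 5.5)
We need to show that every automorphism of $B$ extends to $\tilde{C}$. The structure is known: $\tilde{C}=O^2(\tilde{C})T$ with $O^2(\tilde{C})=H_1\circ H_2$, $H_i\cong\SL_2(3)$, $|\tilde C|=2^7\cdot 9$. By Lemma~\ref{L:M*C*}(a) and Lemma~\ref{Lemma2.1}(b), $B=M\cap\tilde{C}=C_M(Z)$ with $B/Y_M\cong \Sym(4)$. So $|B|=2^7\cdot 3$, $S\le B$, and $|\tilde C:B|=3$. Also $B=N_{\tilde C}(V)$ by Lemma~\ref{Lemma2.3}(b) and Proposition~\ref{Proposition2.16}(d).

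The plan is to compare $\Aut(B)$ with the stabilizer of $V$ in $\Aut(\tilde C)$. We show that the restriction map $\rho\colon N_{\Aut(\tilde C)}(B)\to\Aut(B)$ is injective and that $|\Aut(B)|\le|N_{\Aut(\tilde C)}(B)|$.

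\textbf{Step 1: characteristic subgroups.} First identify characteristic subgroups of $B$. Here $O_2(B)=QY_M=S\cap O^2(B)S$ has index $3$ in $B$, and $Y_M$ is characteristic in $B$. One way to see this: $Y_M$ is the unique elementary abelian normal subgroup of order $2^4$, since $Q$ contains none by Lemma~\ref{L:M*C*}(e) together with Lemma~\ref{ActionOnYM}. Next, $Q$ is characteristic in $B$. For this, use $Q=\langle$ involutions of $O_2(B)$ acting on $Y_M$ as transvections$\rangle$, or more concretely $Q=C_{O_2(B)}(C_{O_2(B)}(\ldots))$. I would try to characterize $Q$ as the unique normal subgroup of $B$ isomorphic to $Q_8\circ Q_8$, using that $O_2(B)/Y_M\cong C_2\times C_2$ and $QY_M=O_2(B)$. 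Since $Q\cap Y_M=V$, any other candidate $Q'$ would satisfy $Q'Y_M=O_2(B)$, and a counting argument via Lemma~\ref{ActionOnYM} should give $Q'=Q$. Then $Z$, $V=Q\cap Y_M$ and $C_B(Q)=Z$ are characteristic as well.

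\textbf{Step 2: reduction to $Q$.} Given $\alpha\in\Aut(B)$, $\alpha$ normalizes $Q$ and hence induces an automorphism of $Q$. Since $C_{\tilde C}(Q)\le Q$, the group $\tilde C/Z$ embeds into $\Aut(Q)$. More precisely, $\tilde{C}$ is the preimage in $\Aut(Q)$ of a subgroup of $\Out(Q)\cong \O_4^+(2)\cong \Sym(3)\wr C_2$ of index $2$, namely $\Sym(3)\times\Sym(3)$. The subgroup $B$ corresponds to $\Sym(3)\times C_2$ or similar. The key point is to show that $\Aut(B)$ acts on $Q$ through $N_{\Aut(Q)}(\Aut_B(Q))$, and that conjugation by this normalizer is induced from $N_{\Aut(Q)}(\Aut_{\tilde C}(Q))$. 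Concretely, $\Aut_{\tilde C}(Q)$ is normal of index $2$ in $\Aut(Q)$. In $\Out(Q)$, $\Aut_B(Q)$ maps to $\Sym(3)\times C_2$, whose normalizer in $\Sym(3)\times\Sym(3)$ is itself.

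We then need every automorphism of $B$ to be determined on $Q$ up to a small kernel. The kernel of $\Aut(B)\to\Aut(Q)$ consists of automorphisms trivial on $Q$, which are computed using $C_B(Q)=Z$ and $B/Q$. Such automorphisms correspond to $\mathrm{Hom}(B/Q,Z)$-type maps, i.e.\ central automorphisms, together with conjugation by $Z$. Since $B/Q\cong \Sym(3)\times C_2$ has abelianization $C_2\times C_2$, there are central automorphisms $b\mapsto b\lambda(b)$ with $\lambda\colon B\to Z$. These must be realized by central automorphisms of $\tilde C$: $\tilde C/O^2(\tilde C)\cong T/Z\cong C_2\times C_2$ has the same abelianization, so $\mathrm{Hom}(\tilde C,Z)\to\mathrm{Hom}(B,Z)$ is a bijection. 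This step is essentially Lemma~\ref{AutSemidirect}-style.

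\textbf{Step 3: lifting and gluing.} For $\alpha\in\Aut(B)$, let $\alpha_Q$ be its restriction to $Q$. By Step 2, $\alpha_Q$ normalizes $\Aut_{\tilde C}(Q)$ and so equals $c_g|_Q$ composed with an element of $\Aut(Q)$ normalizing $\Aut_{\tilde C}(Q)$. Since $\tilde C/Z\cong\Aut_{\tilde C}(Q)$ and the extension $Z\to\tilde C\to\Aut_{\tilde C}(Q)$ is determined by it, this yields $\hat\alpha\in\Aut(\tilde C)$ with $\hat\alpha|_Q=\alpha_Q$. The ambiguity in extending an automorphism of $\tilde C/Z$ to $\tilde C$ lies in $\mathrm{Hom}(\tilde C/Q,Z)$ up to inner, and the isomorphism type is pinned down by Lemma~\ref{L:StructuretildeC}(b). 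Then $\hat\alpha$ normalizes $B=N_{\tilde C}(V)$, because $V=Q\cap Y_M$ is preserved; if not, adjust $\hat\alpha$ by an inner automorphism from $D$, noting that $V\alpha=V$. Finally, $\alpha^{-1}\hat\alpha|_B$ is trivial on $Q$, hence central by Step 2, and is corrected by a central automorphism of $\tilde C$.

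\textbf{Main obstacle.} I expect the main obstacle to be Step 1, proving $Q$ char $B$, together with showing that $\hat\alpha|_Q$ can be chosen normalizing $V$. I would attempt Step 1 by counting involutions in $O_2(B)\setminus Y_M$ using Lemma~\ref{ActionOnYM} and Lemma~\ref{L:DT}.
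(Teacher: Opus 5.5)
Your Step~3 has a genuine gap. Since $\Aut_{\tilde C}(Q)$ has index $2$ in $\Aut(Q)$, saying that $\alpha_Q:=\alpha|_Q$ normalizes $\Aut_{\tilde C}(Q)$ carries no information. The inference ``$\tilde C/Z\cong\Aut_{\tilde C}(Q)$ and the extension is determined by it, hence $\alpha_Q$ lifts to some $\hat\alpha\in\Aut(\tilde C)$'' is not valid. An automorphism of the quotient $\tilde C/Z$ need not lift to the central extension $\tilde C$, and Lemma~\ref{L:StructuretildeC}(b) only determines $\tilde C$ up to isomorphism.

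Here the lifting in fact fails for every element of $\Aut(Q)\setminus\Aut_{\tilde C}(Q)$. After composing with inner automorphisms of $\tilde C$, such a lift would normalize $D$ and $T=N_S(D)$. It would induce on $D$ an automorphism interchanging $C_D(y)$ and $C_D(yt)$, so it would map the involution $y$ into $ytZ$, whose elements have order $4$ in $T\cong D_8$.

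Your Step~2 does not close this gap. You computed the normalizer of $B/Q$ only inside $\tilde C/Q\cong\Sym(3)\times\Sym(3)$, whereas $\alpha_Q$ a priori lives in all of $\Out(Q)\cong\O_4^+(2)\cong\Sym(3)\wr C_2$. Likewise, the counting plan announced at the start, $|\Aut(B)|\le|N_{\Aut(\tilde C)}(B)|$, is never carried out, and that inequality is the whole content of the lemma.

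The missing idea is that $\alpha_Q$ already lies in $\Aut_B(Q)$, so no lift is needed.
\begin{itemize}
\item $\alpha_Q$ normalizes $\Aut_B(Q)$.
\item The unique Sylow $3$-subgroup of $B/Q$ is the image of $D\cap B=C_D(y)=\langle d_1d_1^y\rangle$, because $[y,D\cap B]\le V\cap D=1$.
\item The stabilizer of this subgroup in $\Out(Q)$ is exactly $\tilde C/Q$. Indeed, an automorphism of $Q$ acting on $P_1$ as an odd element of $\Aut(P_1)\cong\Sym(4)$ and trivially on $P_2$ maps it to the image of $\langle d_1^{-1}d_1^y\rangle$.
\item $N_{\tilde C}(B)\le N_{\tilde C}(V)=B$, once $V=[Y_M,B]$ is known to be characteristic in $B$.
\end{itemize}
Hence $N_{\Aut(Q)}(\Aut_B(Q))=\Aut_B(Q)$. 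After composing $\alpha$ with a suitable inner automorphism of $B$, it is trivial on $Q$. As $C_B(Q)=Z$, it is then a central automorphism $x\mapsto x\lambda(x)$ with $\lambda\in\Hom(B/Q,Z)$. Your extension via $B/(D\cap B)Q\cong\tilde C/O^2(\tilde C)$ then finishes the argument.

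With this replacement your route works and is shorter than the paper's. The paper instead normalizes $Y_M$, uses $\Aut(\Sym(4))=\Inn(\Sym(4))$, the three-subgroup lemma and Lemma~\ref{A4Module}, and removes the remaining central parts with Lemma~\ref{AutSemidirect}.

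Step~1 is easy rather than the main obstacle:
\begin{itemize}
\item $Y_M$ is characteristic by Lemma~\ref{ActionOnYM}, not by Lemma~\ref{L:M*C*}(e).
\item $O_2(B)/V=Q/V\oplus Y_M/V$ is elementary abelian, with $Q/V$ a non-trivial irreducible $B$-module and $Y_M/V$ trivial. So $Q/V$ is its only $B$-invariant subgroup of order $4$, and $Q$ is characteristic in $B$.
\item $|B:O_2(B)|=6$, not $3$.
\end{itemize}
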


\begin{proof}
 Let $\alpha\in\Aut(B)$. We will show that there exists an automorphism of $\tilde{C}$ whose restriction is $\alpha$. We use throughout that, by Lemma~\ref{Lemma2.1}(b) and Proposition~\ref{Proposition2.16}(a), we have $B/Y_M\cong S_4$ and $O_2(B)=QY_M$, In particular, $\alpha$ normalizes $QY_M$ and thus also $Y_M$, as $Y_M$ is the unique elementary abelian subgroup of $QY_M$ of order $2^4$. Hence, $\alpha$ induces an automorphism of $B/Y_M\cong S_4$. As $\Aut(S_4)=\Inn(S_4)$, replacing $\alpha$ by $\alpha$ composed with a suitable inner automorphism of $B$, we may assume 
 \[[B,\alpha]\leq Y_M.\]
 Notice that $\alpha$ normalizes $V=[Y_M,B]$ and $Z=Z(B)$. Thus, $\alpha$ induces an automorphism of $V/Z$, which is an irreducible $B$-module. As $[B,\alpha]\leq Y_M=C_B(V)$, it follows that the automorphism of $V/Z$ induced by $\alpha$ is a non-zero element of $\End_{B}(V/Z)\cong\mF_2$. Thus, $[V,\alpha]\leq Z$ and so $[V,\alpha,B]=1$. Notice also $[\alpha,B,V]\leq[Y_M,V]=1$. Therefore, the Three-Subgroup-Lemma gives $[B,V,\alpha]=1$. As $[V,B]=V$, this shows
\[[V,\alpha]=1.\]
In particular, $|[Y_M,\alpha]|=|Y_M/C_{Y_M}(\alpha)|\leq 2$. As $[B,\alpha]\leq Y_M$, $B$ normalizes $[Y_M,\alpha]$ and hence, $[Y_M,\alpha]\leq Z(B)=Z$. Let $1\neq z\in Z$ and $y\in Y_M\backslash V$. Then $y^\alpha=y$ or $y^\alpha=yz$. As $\tilde{C}/O^2(\tilde{C})\cong C_2\times C_2$ and $y$ is an involution, there exists a subgroup $C_0\leq\tilde{C}$ of index $2$ with $\tilde{C}=\<y\>\ltimes C_0$. Hence, by Lemma~\ref{AutSemidirect}, there exists $\beta\in\Aut(\tilde{C})$ such that $[C_0,\beta]=1$ and $y^\beta=yz$. Notice that $[\tilde{C},\beta]\leq Z\leq B$ so $B^\beta=B$ and $[B,\alpha\beta|_B]\leq Y_M$. Hence, if $y^\alpha=yz$ we may replace $\alpha$ by $\alpha\beta|_B$ and can so assume in any case that $y^\alpha=y$. So we may suppose that
\[[Y_M,\alpha]=1.\]
Recall $[B,\alpha]\leq Y_M$, so an element in $\<\alpha\>$ of odd order will act trivially on every Sylow $2$-subgroup of $B$ and thus on $B=O^{2^\prime}(B)$. Hence, $\alpha$ is a $2$-element and $U:=\<\alpha\>\times Y_M$ is an abelian $2$-subgroup of $\<\alpha\>\ltimes B$ on which $B/Y_M\cong S_4$ acts. Note also $[U,O^2(B)]=V$ as $[U,B]\leq Y_M$ and $[Y_M,B]\leq V=[V,O^2(B)]$. Moreover, $C_{Y_M}(t)\leq V$ for every involution $t\in O^2(B/Y_M)$ by Lemma~\ref{ActionOnYM}. So applying Lemma~\ref{A4Module} with $G=O^2(B/Y_M)$ gives $U=Y_MC_U(O^2(B))$. Every element of $Y_M$ induces by conjugation an automorphism of $\tilde{C}$ centralizing $B/Y_M$. So we may assume that $\alpha$ centralizes $O^2(B)$. Then $\alpha$ centralizes $O^2(B)Y_M$ and $[B,\alpha]\leq C_{Y_M}(O^2(B))=Z$. Since $B/O_2(B) = C_M(Z) / O_2(B) \cong \Sym(4)$ and $Y_M \norm B$, we may pick an involution $t\in B\backslash O^2(B)Y_M$. Then $[t,\alpha]\leq Z$, i.e. $t^\alpha=t$ or $t^\alpha=tz$. If $t^\alpha=t$ the $\alpha=\id_B$ and we are done. So we may assume $t^\alpha=tz$. Note that $\tilde{C}=\<t\>\ltimes (O^2(\tilde{C})Y_M)$, so by Remark~\ref{AutSemidirect}, there exists an automorphism $\beta\in\Aut(\tilde{C})$ centralizing $O^2(\tilde{C})Y_M$ with $t^\beta=tz$. Then $B^\beta=B$ and $\beta|_B=\alpha$, so the assertion follows. 
\end{proof}

\begin{prop}\label{P:AmalgamsIso}
 The amalgam $(B,M,\tilde{C},i,j)$ is isomorphic to the amalgam $(B_G,M_G,\tilde{C}_G,i_G,j_G)$. 
\end{prop}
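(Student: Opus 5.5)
The plan is the strategy announced before Lemma~\ref{L:IsoC}: start from an isomorphism $\alpha_M\colon M\to M_G$ and an isomorphism $\beta\colon\tilde{C}\to\tilde{C}_G$, and correct $\beta$ so that it agrees with $\alpha_M$ on $B$. Once $\alpha_M$ and $\alpha_C$ with $\alpha_M|_B=\alpha_C|_B$ and $B\alpha_M=B_G$ are found, set $\alpha_B:=\alpha_M|_B$. Both squares of the diagram then commute, since all the solid maps are inclusions.

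\textbf{Step 1: choose $\alpha_M$ mapping $B$ onto $B_G$.} By Lemma~\ref{L:McongMG} there is an isomorphism $\alpha\colon M\to M_G$. By Lemma~\ref{L:M*C*}(a) we have $B=M\cap\tilde{C}$, and by Lemma~\ref{Lemma2.1}(b) this equals $C_M(Z)$ with $Z=\Omega_1Z(S)$ of order $2$. The same holds for $B_G$ inside $M_G$, using Lemma~\ref{L:G23} and the fact that everything proved under Hypothesis~\ref{MainHyp} applies to $G$. Then $B\alpha$ is the centralizer in $M_G$ of $Z\alpha$, the centre of the Sylow $2$-subgroup $S\alpha$. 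By Sylow's theorem there is $g\in M_G$ with $(S\alpha)^g=S_G$. Composing $\alpha$ with $c_g$ gives $\alpha_M$ with $S\alpha_M=S_G$, hence $Z\alpha_M=Z_G$ and $B\alpha_M=C_{M_G}(Z_G)=B_G$.

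\textbf{Step 2: compare with $\tilde{C}$.} By Lemma~\ref{L:IsoC} there is an isomorphism $\beta\colon\tilde{C}\to\tilde{C}_G$ with $B\beta=B_G$. Then $\gamma:=\alpha_M|_B\circ(\beta|_B)^{-1}$ is an automorphism of $B$. Here compositions are written left to right, matching the paper's right-action convention.

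\textbf{Step 3: correct $\beta$.} By Lemma~\ref{L:AutBExtend}, $\gamma$ extends to some $\hat\gamma\in\Aut(\tilde{C})$. Put $\alpha_C:=\hat\gamma\beta$. This is an isomorphism $\tilde{C}\to\tilde{C}_G$, and its restriction to $B$ is $\gamma\,\beta|_B=\alpha_M|_B$. This completes the construction of the amalgam isomorphism.

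\textbf{Main obstacle.} The only delicate point is Step 1: we must make sure $\alpha_M$ maps $B$ exactly onto $B_G$. This requires identifying $B_G$ with $C_{M_G}(Z_G)$, which I would verify via Lemma~\ref{L:G23} together with Lemma~\ref{Lemma2.1}(b) and Lemma~\ref{L:M*C*}(a) applied in $\L_G$. Everything else is formal, because the real work was done in Lemma~\ref{L:AutBExtend}.
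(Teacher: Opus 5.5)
Your proposal is correct and follows the paper's proof essentially step for step. You normalize $\alpha_M$ via Sylow's theorem so that $S\alpha_M=S_G$, and you use $B=M\cap\tilde{C}=C_M(Z)$ (and likewise in $G$) to get $B\alpha_M=B_G$. You then correct the isomorphism $\beta$ from Lemma~\ref{L:IsoC} by an extension of $\alpha_M|_B(\beta|_B)^{-1}$ provided by Lemma~\ref{L:AutBExtend}.
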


\begin{proof}
By Lemma~\ref{L:McongMG} and Sylow's Theorem, we can choose an isomorphism $\alpha_M\colon M\rightarrow M_G$ with $S\alpha=S_G$. Note that $\alpha$ will take $Z=Z(S)$ to $Z(S_G)$. 

\smallskip

Applying first Lemma~\ref{L:M*C*}(a) and then Lemma~\ref{Lemma2.1}(b), we see that  $B=M\cap \tilde{C}=N_M(Z)$. Similarly, Lemma~\ref{L:G23} allows us to apply Lemma~\ref{Lemma2.1}(b) to conclude $B_G=N_{M_G}(Z(S_G))$. Thus, $B\alpha_M=B_G$ and we may define $\alpha\colon B\rightarrow B_G$ to be the isomorphism which is obtained as the restriction of $\alpha_M$.

\smallskip

By Lemma~\ref{L:IsoC}, there is an isomorphism $\beta\colon \tilde{C}\rightarrow \tilde{C}_G$ with $B\beta=B_G$. Note now that $\alpha\beta^{-1}\in\Aut(B)$. Thus, by Lemma~\ref{L:AutBExtend}, $\alpha\beta^{-1}$ extends to an automorphism $\gamma\in\Aut(\tilde{C})$. Then 
\[\alpha_C:=\gamma\beta\colon \tilde{C}\rightarrow \tilde{C}_G\] 
is an isomorphism which restricts to $\alpha$. 

\smallskip

As $i\colon B\rightarrow M$, $j\colon B\rightarrow \tilde{C}$, $i_G\colon B_G\rightarrow M_G$ and $j_G\colon B_G\rightarrow \tilde{C}_G$ are defined as the inclusion maps, we obtain $i\alpha_M=\alpha_M|_B=\alpha i_G$ and $j\alpha_C=\alpha_C|_B=\alpha j_G$. This shows the assertion.
\end{proof}

\subsection{Generation of the fusion system and essential subgroups}

Set $\F:=\F_S(\L)$. Note that $\F$ is saturated as $(\L,\Delta,S)$ is a linking locality. Moreover, $Q$ is large in $\F$ by Lemma~\ref{L:LargeLF}(a). The goal of this section is to prove the following proposition.

\begin{prop}\label{Generate}
 $\F=\<N_\F(Q),N_\F(V)\>$.
\end{prop}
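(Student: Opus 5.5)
We plan to use Alperin's fusion theorem: $\F$ is generated by $\Aut_\F(S)$ together with $\Aut_\F(R)$ for $R$ essential in $\F$. Since $Q\unlhd S$ is weakly closed, $\Aut_\F(S)\subseteq N_\F(Q)$. It therefore suffices to show that every essential subgroup $R$ of $\F$ satisfies $\Aut_\F(R)\subseteq N_\F(Q)$ or $\Aut_\F(R)\subseteq N_\F(V)$. Here $N_\F(Q)=\F_S(\tilde{C})$ and $N_\F(V)=\F_S(M)$ by Lemma~\ref{L:NormalizerLocFus}, since $Q,V\in\Delta$ and $M=\tilde{M}$ by Lemma~\ref{L:M*C*}(a).

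Let $R$ be essential. We first dispose of the easy cases.
\begin{itemize}
\item If $Q\leq R$, then $1\neq Z(R)\leq C_S(Q)=Z(Q)=Z$, so $Z(R)=Z$. Hence $\Aut_\F(R)\subseteq N_\F(Z)\subseteq N_\F(Q)$ by \eqref{Q!F}.
\item If $Y_M\leq R$, then $Z(R)\leq C_S(Y_M)=Y_M$ by Lemma~\ref{ActionOnYM}, together with $Q_M=Y_M$ and $C_M(Y_M)=Y_M$.
\end{itemize}
In the second case we try to show that $Y_M$ is characteristic in $R$, or at least $\Aut_\F(R)$-invariant. For this we use that $Y_M$ is the unique elementary abelian subgroup of order $2^4$ in suitable overgroups; compare Lemma~\ref{L:AutBExtend} and Lemma~\ref{L:M*C*}(e), which says that $S^*$ has no elementary abelian subgroup of order $2^4$. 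Once $Y_M$ is $\Aut_\F(R)$-invariant, Lemma~\ref{L:M*C*}(a) gives $N_\L(Y_M)=M$, so $\Aut_\F(R)\subseteq N_\F(V)$.

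The main step is to exclude essential subgroups $R$ with $Q\not\leq R$ and $Y_M\not\leq R$. Since $|S|=2^7$ and the structure of $S$ is explicit ($S=QT$ with $T\cong D_8$, by Lemma~\ref{L:DT}), this is a finite problem. Since $R$ is centric and $R<N_S(R)$, we have $C_S(R)\leq R$. We would argue as follows.
\begin{itemize}
\item First, $R\cap Q$ must be large, for example containing $Z$ with $|R\cap Q|\geq 2^3$.
\item Next, apply Lemma~\ref{FF} with $A=N_Q(R)$ or $A=N_{Y_M}(R)$. Use the quadratic and small-commutator action of $Q$ and $Y_M$ ($[Y_M,Q]=V$, and $Q$ extraspecial) to verify the offender inequality $|\ov{R}/C_{\ov{R}}(A)|\leq|A/A\cap R|$.
\item This yields $O^{2'}(\Out_\F(R))\cong \SL_2(q)$, with a natural module on $R/\Phi(R)$ and $|R/\Phi(R)|\geq |N_S(R)/R|^2$.
\item Combining this with Lemma~\ref{CentralSeries}, applied to the series $1\leq Z\leq R\cap V\leq R\cap Q\leq R$ or to a series through $R\cap Y_M$ (suitably made $\Aut_\F(R)$-invariant), should force $QR=R$ or $Y_MR=R$, a contradiction.
\end{itemize}

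The hard part is checking that the chosen series is $\Aut_\F(R)$-invariant without already knowing $\F$. For that I would first try to show that $Z$ is $\Aut_\F(R)$-invariant, using that all involutions of $Q$ are $\F$-conjugate to $z$, whereas elements of $Y_M\setminus V$ are not, as in the proof of Lemma~\ref{L:M*C*}(a). If $Z$ is invariant, then again $\Aut_\F(R)\subseteq N_\F(Z)\subseteq N_\F(Q)$ directly, and we are done. If it is not invariant, then the subgroup generated by the $\Aut_\F(R)$-orbit of $Z$ yields a characteristic elementary abelian subgroup on which the structure of $\SL_2(q)$ gives a contradiction, via the order bound $|S|=2^7$ and Lemma~\ref{L:M*C*}(e).
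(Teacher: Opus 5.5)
Your reduction is correct: by Alperin--Goldschmidt it suffices to treat essential $R$. You also have $\Aut_\F(S)\subseteq N_\F(Z)\subseteq N_\F(Q)$, and the case $Q\leq R$ works, since $1\neq Z(R)\leq C_S(Q)=Z$. If $Y_M$ is $\Aut_\F(R)$-invariant you are done, since $N_\F(Y_M)=N_\F(V)$. Beyond this point, however, the proposal is a list of intentions, and what it leaves open is exactly the content of the proof.

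\textbf{The case $Y_M\leq R$.} You give no argument that $Y_M$ is $\Aut_\F(R)$-invariant. Uniqueness of an elementary abelian subgroup of order $2^4$ is not available. In the centralizer $C_S(U)$ of a hyperplane $U$ of $V$ one only gets \emph{at most two} such subgroups. One then needs the further argument that $O^2(\Aut_\F(R))$ normalizes each of them while $\Aut_S(R)$ normalizes $Y_M$. The other overgroups of $Y_M$ are not discussed at all; there one uses $Z(R)\leq C_S(Y_M)=Y_M$ and Lemma~\ref{ActionOnYM} to get $Z(R)=C_V(R)$.

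\textbf{The residual case.} Applying Lemma~\ref{CentralSeries} to $1\leq Z\leq R\cap V\leq R\cap Q\leq R$ presupposes that $Z$ is $\Aut_\F(R)$-invariant, and then you are already finished. So everything rests on the case where $Z$ is moved, and for that you give no mechanism.
\begin{itemize}
\item Knowing that all involutions of $Q$ are $\F$-conjugate to $z$ gives no control over whether $\Aut_\F(R)$ fixes $Z$. Automorphisms of genuine essential subgroups do move $z$: for $U$ the $S$-invariant hyperplane, $C_S(U)=O_2(N_M(U))$ is essential, and $N_M(U)$ acts as $\GL(U)$ on $U=Z(C_S(U))$.
\item The claims $|R\cap Q|\geq 2^3$ and the offender inequality for $A=N_Q(R)$ or $A=N_{Y_M}(R)$ are unjustified. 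They can only be checked once $|R|$, $\Phi(R)$ and $R\cap Y_M$ are pinned down.
\item The subgroup generated by the $\Aut_\F(R)$-orbit of $z$ lies in $\Omega_1(Z(R))$. In the critical configuration this has order $4$ and contradicts nothing.
\item That $|S|=2^7$ makes the problem ``finite'' does not help, since $\F$ is unknown.
\end{itemize}

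\textbf{The missing idea.} The paper organizes everything by the image $RY_M/Y_M$ in $S/Y_M\cong D_8$, a Sylow $2$-subgroup of $M/Y_M\cong \SL_3(2)$.
\begin{itemize}
\item Lemma~\ref{CentralSeries} with the series $1\leq V\cap R\leq R$ and $X=V$ shows that $V\cap R$ is not invariant for a counterexample.
\item Lemma~\ref{ActionOnYM} then controls $Z(R)$, $R\cap Y_M$ and $C_V(R)$, and gives $C_V(R)=Z$. The case $Z<C_V(R)$, where $R$ lies in a hyperplane centralizer, is killed by the ``at most two'' count combined with Lemma~\ref{FF} for $A=Y_M$. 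Hence $|RY_M/Y_M|\geq 4$.
\item A cyclic image of order $4$ forces $R$ abelian of order at most $2^3$, so $\Aut(R)$ is a $2$-group.
\item The transvection group to the point $Z$ forces $R$ elementary abelian of order at most $2^3$, while Lemma~\ref{FF} with $A=V$ gives $|R|\geq 2^4$.
\item $RY_M=S$ forces $|R|=2^5$, $\Phi(R)=Z(R)\cong C_2\times C_2$ and $\Out_\F(R)\cong S_3$. The preimage of $[R/\Phi(R),\Out_\F(R)]$ is then an elementary abelian subgroup of order $2^4$ inside $\hyp(\F)=S^*$, contradicting Lemma~\ref{L:M*C*}(e).
\end{itemize}
None of these computations is present in your proposal or replaced by something equivalent, so as it stands it does not prove the statement.
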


As $\Aut_\F(S)$ normalizes $Z=Z(S)$ and since $Q$ is large in $\F$, we have $\Aut_\F(S)\subseteq N_\F(Z)\subseteq N_\F(Q)$. Moreover, by the Alperin--Goldschmidt Fusion Theorem \cite[Theorem 3.5]{Aschbacher/Kessar/Oliver:2011}, the fusion system $\F$ is generated by the $\F$-automorphism groups of $S$ and of all the essential subgroups of $\F$. Hence, in order to prove Proposition~\ref{Generate}, it is enough to prove the following lemma.

\begin{lemma}\label{EssentialSubgroups}
 For every essential subgroup $R$ of $\F$, we have $\Aut_\F(R)\subseteq N_\F(Q)$ or $\Aut_\F(R)\subseteq N_\F(V)$.
\end{lemma}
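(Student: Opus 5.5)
The plan is to sort essential subgroups $R$ by whether they contain $Q$, and to use the very small group $S$ of order $2^7$ whose structure is now explicit (Proposition~\ref{Proposition2.16}, Lemma~\ref{L:DT}). Let $R$ be essential in $\F$. Then $R$ is $\F$-centric and fully normalized, so $Z=\Omega_1Z(S)\leq R$ and $R\in\Delta$ (since $\Delta\supseteq\F^c$ by Lemma~\ref{L:LargeLF}(a)). Moreover $\Aut_\F(R)=\Aut_{N_\L(R)}(R)$ by Lemma~\ref{L:NormalizerLocFus}. Recall that $Q$ is weakly closed in $\F$ by Lemma~\ref{L:LargeSatFusMain}(a), and that $Q$ and $Y_M$ are normal in $S$.

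\textbf{Case 1: $Q\leq R$.} Then $1\neq Z(R)\leq C_S(Q)=Z(Q)=Z$, so $Z(R)=Z$. Every element of $\Aut_\F(R)$ normalizes $Z(R)$, so $\Aut_\F(R)\subseteq N_\F(Z)\subseteq N_\F(Q)$ by \eqref{Q!F}. This settles the case.

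\textbf{Case 2: $Y_M\leq R$ but $Q\not\leq R$.} I would show that $Y_M$ is characteristic in $R$, or at least $\Aut_\F(R)$-invariant. If so, $\Aut_\F(R)\subseteq N_\F(Y_M)$, and $N_\L(Y_M)=M=N_\L(V)$ by Lemma~\ref{L:M*C*}(a), so $\Aut_\F(R)\subseteq N_\F(V)$. To get the invariance, I would use Lemma~\ref{ActionOnYM}: for $x\in R\backslash Y_M$ we have $C_{Y_M}(x)\leq V$, and $Y_M$ is the unique elementary abelian subgroup of order $2^4$ in $QY_M$. From this I expect that $Y_M$ is the unique abelian subgroup of its order in $R$, or can be recovered as $C_R(V)$ once $V$ is shown to be characteristic.

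\textbf{Case 3: $R$ contains neither $Q$ nor $Y_M$.} This is the main obstacle, and I expect to rule it out via Lemma~\ref{FF}. Take $A=Q$ or $A=Y_M$; both normalize $R$ because they are normal in $S$. Aim for the bound $|\ov{R}/C_{\ov{R}}(A)|\leq|A/A\cap R|$ with $\ov{R}=R/\Phi(R)$. For $A=Y_M$ this uses that $Y_M$ is abelian, so $[R,Y_M]\leq Y_M\cap R$, and the commutator count with $|Y_M/Y_M\cap R|$ should give the inequality. Then Lemma~\ref{FF} forces $|R/\Phi(R)|\geq|N_S(R)/R|^2$ and $O^{2'}(\Out_\F(R))\cong\SL_2(q)$ acting on a natural module. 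Since $|S|=2^7$, $Q\cong Q_8\circ Q_8$ and $S/Q\cong C_2\times C_2$, only a handful of candidates for $R$ remain, and I would eliminate them directly.

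The fallback for Case 3 is to use weak closure more sharply. The subgroup $QR$ normalizes $R$, and $N_S(R)$ properly contains $R$, so one tries to show that $R\cap Q$ or $C_R(R\cap Q)$ is $\Aut_\F(R)$-invariant. One then applies Lemma~\ref{CentralSeries} with a central series built from $Z\leq R\cap V\leq R\cap Q\leq R$ and $X=Q$ or $X=Y_M$, concluding $X\leq R$, a contradiction. Verifying that this series is $\Aut_\F(R)$-invariant is the delicate point. For that I would invoke the transitivity statements on involutions from the proof of Lemma~\ref{L:M*C*}(a), namely that all involutions of $Q$ and $S\cap M'$ are $\F$-conjugate to $z$ while $y\in Y_M\backslash V$ is not, so that $\Aut_\F(R)$ preserves $R\cap S^*$ and its involution classes.
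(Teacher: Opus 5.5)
Case~1 is correct; it is essentially the paper's first reduction. Your trichotomy is also sound. An essential $R$ with $\Aut_\F(R)$ contained in $N_\F(Q)$ (respectively $N_\F(V)$) is centric radical in that normalizer subsystem. That subsystem has $O_2$ equal to $Q$ (respectively $O_2(M)=Y_M$), so $R$ contains $Q$ or $Y_M$. Hence Case~3 genuinely has to be shown empty.

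Case~2 is right in spirit, but uniqueness can fail for $R=C_S(U)$, where $U$ is the $S$-invariant hyperplane of $V$. This group may contain two elementary abelian subgroups of order $2^4$. What works is the following: there are at most two such subgroups, so odd-order elements of $\Aut_\F(R)$ fix $Y_M$, and $\Aut_\F(R)=\Aut_S(R)O^2(\Aut_\F(R))$.

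The genuine gap is Case~3, which is the entire content of the lemma. There are three problems.

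\emph{The normalization claim is false.} $Q,Y_M\unlhd S$ means $R$ normalizes $Q$ and $Y_M$, not that $Q$ or $Y_M$ lies in $N_S(R)$. So $[R,Y_M]\leq Y_M\cap R$ is unjustified, and you have no general offender inequality to feed into Lemma~\ref{FF}. The paper obtains an offender only in special positions: $[Y_M,R]\leq C_V(R)\leq R$ when $R$ centralizes a hyperplane of $V$, or $A=V$ when $[V,R]\leq R$.

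\emph{Lemma~\ref{FF} does not give contradictions by itself.} Where it applies, it returns locally consistent configurations. One is $R$ elementary abelian of order $2^4$ inside $C_S(U)$ with $O^{2'}(\Aut_\F(R))\cong\SL_2(4)$. The other is $|R|=2^5$ with $RY_M=S$ and $\Out_\F(R)\cong S_3$. Killing these needs global input.
\begin{itemize}
\item In the first, extend an element of order $3$ of $N_{\Aut_\F(R)}(\Aut_S(R))$ to $C_S(U)$. It must normalize $Y_M$, one of only two such subgroups, and then acts on $Y_M$ through $\Aut_M(Y_M)$. This contradicts its irreducibility on $C_S(U)/R$.
\item In the second, one shows that the preimage $R_0$ of $[R/\Phi(R),O^2(\Out_\F(R))]$ is elementary abelian of order $2^4$ and lies in $\hyp(\F)=S^*$. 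This contradicts Lemma~\ref{L:M*C*}(e), whose proof rests on the order-$180$ argument.
\end{itemize}
``Eliminate the candidates directly'' skips exactly this.

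\emph{The fallback is circular.} If $Z$ were $\Aut_\F(R)$-invariant, you would be done at once by $(Q!\F)$. For a would-be counterexample neither $Z$ nor $R\cap V$ is invariant, so no series through them can be used in Lemma~\ref{CentralSeries}. Involution classes do not rescue this, since all involutions of $Q$ are $\F$-conjugate to the one in $Z$.

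What is missing is the paper's organization by the image $RY_M/Y_M$ in $S/Y_M\cong D_8$. One shows $C_V(R)=Z$ and $|RY_M/Y_M|\geq 4$. Then one excludes, one by one, the cyclic subgroup of order $4$, the two fours groups, and $RY_M=S$.
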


We prove Lemma~\ref{Generate} in a series of lemmas. For that let $R$ be a counterexample to Lemma~\ref{EssentialSubgroups}.

\begin{property}\label{L0}
 We have $Z\leq R$ and $Z$ is not $\Aut_\F(R)$-invariant.
\end{property}

\begin{proof}
As $R$ is essential, we have $R\in\F^c$ and thus $Z\leq C_S(R)\leq R$. Since $Q$ is large, we have moreover $N_\F(Z)\subseteq N_\F(Q)$. Therefore, the assertion follows from $R$ being a counterexample.
\end{proof}

\begin{property}\label{L01}
If $Y_M\leq R$ then $Y_M$ is not $\Aut_\F(R)$-invariant. In particular, $R\not\leq Y_M$.
\end{property}

\begin{proof}
 By Lemma \ref{L:M*C*} $N_\F(V)=N_\F(Y_M)$. So the first part holds since $R$ is a counterexample to Lemma~\ref{EssentialSubgroups}. If $R\leq Y_M$ then $R=Y_M$ as $R\in\F^c$ and $Y_M$ is abelian. This proves the assertion.
\end{proof}

\begin{property}\label{L1}
$V\cap R$ is not $\Aut_\F(R)$-invariant.
\end{property}

\begin{proof}
As $V\unlhd S$ and $V$ is abelian, we have $[R,N_V(R)]\leq V\cap R$ and $[V\cap R,N_V(R)]=1$. Assuming $V\cap R$ is $\Aut_\F(R)$-invariant, it follows from Lemma~\ref{CentralSeries}(b) that $V\leq R$. Then $V$ is $\Aut_\F(R)$-invariant and $R$ is not a counterexample.  
\end{proof}

\begin{property}\label{L2}
 Suppose $Z(R)\not\leq Y_M$ and $RY_M/Y_M$ is abelian. Then either $R^\prime=1$ or $R\cap Y_M=R\cap V=C_V(R)$ has order $4$ and $RY_M/Y_M$ is elementary abelian.
\end{property}

\begin{proof}
Assume $R^\prime\neq 1$. As $RY_M/Y_M$ is abelian, $R^\prime\leq Y_M\cap R\leq C_{Y_M}(Z(R))\cap R\leq V\cap R$, where the last inclusion uses Lemma~\ref{ActionOnYM}. Now $R^\prime=R\cap V$ contradicts \ref{L1}, hence, $1\neq R^\prime<R\cap V$. As $Z(R)\not\leq Y_M$, $R\cap V\leq C_V(Z(R))<V$. Hence, $|R\cap V|=4$ and $|R^\prime|=2$. As $|(V\cap R)/Z|=2$, we have $[V\cap R,R]\leq Z$. If $[V\cap R,R]\neq 1$ then, $Z=[V\cap R,R]\leq R^\prime$ and $Z=R^\prime$ as $|Z|=2=|R'|$. This contradicts \ref{L0}. Hence, $V\cap R\leq C_V(R)$, and order considerations give $V\cap R=C_V(R)$. Then $RY_M/Y_M$ is contained in the transvection group belonging to a hyperplane in $V$ and thus $RY_M/Y_M$ is elementary abelian.  
\end{proof}

\begin{property}\label{L5}
 $RY_M/Y_M$ is not cyclic of order $4$.
\end{property}

\begin{proof}
Suppose this is false. Then $RY_M/Y_M$ does not lie in the transvection group belonging to a hyperplane of $V$ and thus $C_V(R)=Z$. If $Z(R)\leq Y_M$, then it follows from Lemma~\ref{ActionOnYM} and $R\in\F^c$ that $Z(R)=C_{Y_M}(R)=C_V(R)=Z$ contradicting \ref{L0}. Hence, $Z(R)\not\leq Y_M$. Thus, $R$ is abelian by \ref{L2}. 

\smallskip

Now $R\cap Y_M=R\cap V\leq C_V(R)=Z$, where we use Lemma~\ref{ActionOnYM}. Hence, $R$ is isomorphic to $C_4$, $C_8$ or $C_2\times C_4$. Thus, $\Aut(R)$ is a $2$-group, contradicting the assumption that $R$ is essential.      
\end{proof}

\begin{property}\label{L6a}
 Let $X$ be the centralizer in $S$ of a hyperplane of $V$. Then there exist at most two elementary abelian subgroups of $X$ of order $2^4$.
\end{property}

\begin{proof}
Let $U$ be a hyperplane of $V$ such that $X=C_S(U)$. Note that $Y_M\leq X$ is elementary abelian of oder $2^4$ and $X/Y_M\cong C_2\times C_2$. Moreover, as $C_M(V)=Y_M$, we have $C_V(x)=U$ for all $x\in X\backslash Y_M$.

\smallskip

Assume now that there is a subgroup $W\neq Y_M$ of $X$, which is also elementary abelian of order $2^4$. As $W\not\leq Y_M$, we have $W\not\leq Y_M$ and thus, by Lemma~\ref{ActionOnYM}, $W\cap Y_M\leq C_{Y_M}(W)=C_V(W)=U$. As $|W|=2^4$, $|U|=4$ and $|W/W\cap Y_M|=|WY_M/Y_M|\leq |X/Y_M|=4$, it follows indeed that 
\[W\cap Y_M=C_{Y_M}(W)=U\mbox{ and }X=WY_M.\]
If $w\in W\backslash Y_M$, Lemma~\ref{ActionOnYM} gives us $C_{Y_M}(w)=C_V(w)=U$. For an arbitrary $w\in W$ and $y\in Y_M$, the product $wy$ is an involution if and only if $[w,y]=1$. For this to be the case, we must either have $w\in Y_M$ or $w\not\in Y_M$ and $y\in C_{Y_M}(w)=U\leq W$. Thus, every involution of $X=WY_M$ lies either in $Y_M$ or in $W$. This shows that $Y_M$ and $W$ are the only elementary abelian subgroups of $X$ of order $2^4$. 
\end{proof}

\begin{property}\label{L6}
 We have $C_V(R)=Z$. In particular, $|RY_M/Y_M|\geq 4$.
\end{property}

\begin{proof}
Note that every involution acting on $V$ acts quadratically on $V$ and thus centralizes a hyperplane of $V$. If $C_V(R)=Z$, we can thus conclude that $|RY_M/Y_M|\geq 4$. 

\smallskip

Assume now $Z<C_V(R)$. It follows from \ref{L01} and $C_M(V)=Y_M$ that $C_V(R)<V$. Hence, $|C_V(R)|=4$ and $RY_M/Y_M$ lies in the transvection group belonging to the hyperplane $C_V(R)$ in $V$, i.e. $R\leq X:=C_S(C_V(R))=C_M(C_V(R))$. 
In particular, by \cite[Lemma~1.1(b)]{MStr:2008}, $[Y_M,R]\leq [Y_M,X]=[V,X]=C_V(R)\leq V\cap R$ where the latter inclusion uses $R\in\F^c$.

\smallskip

Suppose first that $Z(R)\leq Y_M$. Using $R\in\F^c$ and Lemma~\ref{ActionOnYM}, we see that $Z(R)=C_{Y_M}(R)=C_V(R)\geq [R,Y_M]$. Moreover, as $Y_M$ is abelian, we have $[Z(R),Y_M]=1$. Hence, by Lemma~\ref{CentralSeries}(b), $Y_M\leq R$. By Lemma~\ref{L6a}, there are at most two elementary abelian subgroups of $X$ of order $2^4$. Thus, $R$ has at most two elementary abelian subgroups of order $2^4$, and one of them is $Y_M$. Hence, every element of $\Aut_\F(R)$ of odd order normalizes $Y_M$. Recall that $Y_M\unlhd S$. As $R\in\F^f$, we have $\Aut_S(R)\in\Syl_2(\Aut_\F(R))$ and thus $\Aut_\F(R)=\Aut_S(R)O^2(\Aut_\F(R))$ normalizes $Y_M$, contradicting \ref{L01}. This shows that $Z(R)\not\leq Y_M$.

\smallskip

As $Z(R)\not\leq Y_M$, it follows from \ref{L2} that $R^\prime=1$ or $R\cap Y_M=R\cap V=C_V(R)$. If $R$ is abelian then also $R\cap Y_M=C_{Y_M}(R)=C_V(R)=V\cap R$ by Lemma~\ref{ActionOnYM} and as $R\in\F^c$. So in any case, $R\cap Y_M=R\cap V=C_V(R)$ has order $4$ and $|R|\leq 2^4$. As $X/Y_M$ is elementary abelian, $\Phi(R)\leq Y_M\cap R=V\cap R$. By \ref{L1}, $\Phi(R)<Y_M\cap R$. We have seen above that $[Y_M,R]\leq V\cap R$, so in particular, $Y_M\leq N_S(R)$. Note that $Y_M$ centralizes $Y_M\cap R$, so that for $\ov{R} := R / \Phi(R)$ we get $(Y_M \cap R) /\Phi(R) \le C_{\ov{R}}(Y_M)$ and so  $|\ov{R} / C_{\ov{R}}(Y_M)| \le |R/(Y_M\cap R)|\leq |X/Y_M|=4=|Y_MR/R|$. Hence Lemma~\ref{FF} yields $|R|=2^4$, $\Phi(R)=1$, $\Aut_\F(R)=\Out_\F(R)$, $O^{2^\prime}(\Aut_\F(R))\cong \SL_2(4)$, $R$ is a natural $\SL_2(4)$-module for $O^{2^\prime}(\Aut_\F(R))$ and $N_S(R)=Y_MR=X$. 

\smallskip

It follows now from the structure of $\SL_2(4)$ that there exists $\alpha_0\in N_{\Aut_\F(R)}(\Aut_S(R))$ of order $3$ and such $\alpha_0$ acts irreducibly on $\Aut_S(R)\cong C_2\times C_2$. As $R\in\F^f$ and $F$ is saturated, $\alpha_0$ extends to an element $\alpha\in\Aut_\F(X)$ of order $3$ acting irreducibly on $X/R=Y_MR/R$. It follows now from \ref{L6a} that there are precisely two elementary abelian subgroups of $X$ of order $2^4$, and these are $R$ and $Y_M$. So $\alpha$ normalizes $Y_M$. As $\Aut_\F(Y_M)=\Aut_M(Y_M)$, it follows $[Y_M,\alpha]\leq [Y_M,M]=V$, so $\alpha$ leaves $VR/R < X/R$ invariant, contradicting the fact that $\alpha$ is irreducible on $Y_MR/R$.  
\end{proof}

\begin{property}\label{L4}
We have $R\not\leq C_S(V/Z)$.
\end{property}

\begin{proof}
Set $X:=C_S(V/Z)$. By the structure of the natural $\SL_3(2)$-module, $X/Y_M$ is elementary abelian of order $4$ and $C_V(X)=Z$. So by Lemma~\ref{ActionOnYM}, $C_{Y_M}(X)=C_V(X)=Z$. 

\smallskip

Assume $R\leq X$. Recall $|RY_M/Y_M|\geq 4$ by \ref{L6}. Hence, $X=Y_MR$. Assume $Z(R)\leq Y_M$. Using  Lemma~\ref{ActionOnYM} and $R\in\F^c$, we see then that $Z(R)=C_{Y_M}(R)=C_V(R)=C_V(X)=Z$ contradicting \ref{L0}.  This shows $Z(R)\not\leq Y_M$. As $C_V(R)=C_V(X)$ has order $2$, it follows in particular from \ref{L2} that $R$ is abelian. 

\smallskip

By Lemma~\ref{ActionOnYM}, we have $R\cap Y_M=R\cap V=C_V(R)=Z$ as $C_S(R)\leq R$. So $|R|\leq 2^3$. Note that $\Phi(R)\leq Y_M\cap R=Z$ and so, by \ref{L0}, $\Phi(R)=1$. As $R\leq X$, we have $[R,V]\leq Z\leq R$. Thus $V\leq N_S(R)$ and $V$ induces transvections on $R$. Thus by Lemma~\ref{FF}, $2^3\geq |R|\geq |N_S(R)/R|^2\geq |V/(V\cap R)|^2=|V/Z|^2=2^4$, a contradiction. This completes the proof.
\end{proof}

\begin{property}\label{L3}
 We have $S\neq RY_M$.
\end{property}

\begin{proof}
 Assume $S=RY_M$. Then $Z(R)\cap Y_M \le C_{Y_M}(R)\leq Z(S)=Z$ and that $R\cap Y_M\unlhd R$ and so $Z(R)\cap Y_M\neq 1$ as $R$ is a $p$-group. Since $|Z|=2$ it follows $Z(R)\cap Y_M=Z$ and thus $Z(R)\not\leq Y_M$ by \ref{L0}. In particular, by Lemma~\ref{ActionOnYM}, $V\cap R=Y_M\cap R\leq C_V(Z(R))$ has order at most $4$.

 \smallskip
 
 As $S/Y_M$ is dihedral of order 8, it follows that
 \[C_2\cong Z(R)Y_M/Y_M=Z(S/Y_M)=\Phi(S/Y_M)\geq \Phi(R) Y_M/Y_M.\] 
In particular, $\Phi(R)\leq Z(R)Y_M\cap R=Z(R)(Y_M\cap R)=Z(R)(V\cap R)$. As $Z(R)\cap Y_M=Z$, we see now that $|Z(R)|=4$ and $|C_V(Z(R))|=4$. Lemma~\ref{CentralSeries}(b) applied to the series $1 < Z(R) < R$ and to $C_V(Z(R))$ in place of $X$ yields $C_V(Z(R)) \le R$. We conclude that $C_V(Z(R)) = R \cap V$ has order $4$ and 
\[
|Z(R)(V\cap R)|=2\cdot |V\cap R|= 2^3.
\] 

\smallskip

If $Z(R)$ were cyclic of order $4$, then $Z \le Z(R)$ would be $\Aut_\F(R)$-invariant, in contradiction to \ref{L0}. Thus $Z(R) \cong C_2 \times C_2$.

\smallskip

If $\Phi(R)\not\leq Z(R)$, then we obtain $\Phi(R) Z(R)=Z(R)(V\cap R)$ and $1\neq [\Phi(R),R]=[V\cap R,R]\leq Z$. As $|Z|=2$ this would yield $Z=[\Phi(R),R]$, a contradiction to \ref{L0}. Hence, $\Phi(R)\leq Z(R)$. Since $|V\cap R/Z|=2$, we have $1\neq [V\cap R,R]\leq Z$ and thus $Z=[V\cap R,R]\leq R^\prime\leq \Phi(R)$. Hence, \ref{L0} yields $\Phi(R)=Z(R)\cong C_2\times C_2$.

\smallskip

As $R \cap V = R \cap Y_M$ and $8 = |RY_M / Y_M| = |R / (R \cap Y_M)|$, it follows $|R| = 2^5$. Thus,
\[\ov{R}:=R/\Phi(R)=R/Z(R)\mbox{ has order }2^3.\]

\smallskip

Since $|V / (V \cap R)| = 2$ we also get $[V,R] \le V \cap R$ and so $V \le N_S(R)$. Note that $|V/V\cap R|=2=|\ov{R\cap V}|\geq |\ov{[R,V]}|= |[\ov{R},V]|=|\ov{R}/C_{\ov{R}}(V)|$, where the last equality uses \cite[8.4.1]{Kurzweil/Stellmacher:2004} and the fact that $V$ acts as an involution on $\ov{R}$ as $|V/V\cap R|=2$. Hence, by Lemma~\ref{FF},
 \[
 H:=O^{2^\prime}(\Out_\F(R))\cong \SL_2(2) \cong S_3
 \]
 and $\ov{R}/C_{\ov{R}}(H)$ is a natural $\SL_2(2)$-module. Then
 \[
 \ov{R}=[\ov{R},H]\times C_{\ov{R}}(H).
 \]
 Let $R_0$ and $R_1$ be the preimages of $[\ov{R},H]$ and of $C_{\ov{R}}(H)$ respectively. Our goal is now to show that $R_0$ is elementary abelian and obtain a contradiction to Lemma \ref{L:M*C*}(e).
 
 \smallskip
 
 Observe that $\ov{R_0}$ is a natural $\SL_2(2)$-module for $H$. In particular, every element of $C:=C_{\Out_\F(R)}(H)$ acts on $\ov{R_0}$ as a non-zero scalar of $\mathbb{F}_2$ and thus as the identity. As $|\ov{R_1}|=2$, $C$ centralizes also $\ov{R_1}$ and thus $C$ centralizes $\ov{R}$. Hence, by Lemma~\ref{ActionRModPhiR} $C=1$. As $\Aut(S_3)=\Inn(S_3)$, it follows $\Out_\F(R)=H\cong S_3$. 
 
 \smallskip
 
 Note that $H=\Out_\F(R)$ acts also on $Z(R)$. As $R$ is fully $\F$-normalized, $\Aut_S(R)$ is a Sylow $2$-subgroup of $\Aut_\F(R)$. If $O^2(H)$ acts trivially on $Z(R)$, then $O^2(\Aut_\F(R))$ acts also trivially on $Z(R)$ and thus $\Aut_\F(R)=\Aut_S(R)O^2(\Aut_\F(R))$ centralizes $Z$, contradicting \ref{L0}. Hence, as $Z(R)\cong C_2\times C_2$, it follows that $Z(R)=[Z(R),H]$ is a natural $\SL_2(2)$-module for $H$.  
 
 \smallskip
 
 As $1\neq [\ov{R},\ov{V}]\leq [\ov{R},H]=\ov{R_0}$ and $V$ is elementary abelian, there is a coset in $\ov{R_0}$ which contains an involution outside of $Z(R)$. Thus, as $H$ acts transitively on $\ov{R_0}^\sharp$, every coset in $\ov{R_0}^\sharp$ contains an involution outside of $Z(R)$. As $Z(R)\cong C_2\times C_2$ is in the centre of $R_0$, it follows that every element of $R_0$ is an involution and $R_0$ is elementary abelian. 
 
 \smallskip
 
As in Subsection~\ref{SS:Amalgam}, set $S^*:=S\cap O^2(\L)$. By Lemma~\ref{P:pResidualLocFus}, we have $S^*=\hyp(\F)$. Since both $R_0/Z(R)$ and $Z(R)$ are natural $S_3$-modules for $\Out_\F(R)$, it follows that $R_0=[R_0,O^2(\Aut_\F(R))]\leq S^*$. This contradicts Lemma~\ref{L:M*C*}(e), which states that $S^*$ does not contain any elementary abelian subgroup of order $2^4$.
\end{proof}

\begin{proof}[Proof of Lemma~\ref{EssentialSubgroups}]
We show that we reach a contradiction if $R$ is a counterexample to Lemma~\ref{EssentialSubgroups}. By \ref{L6} and \ref{L3}, $RY_M/Y_M$ is a subgroup of $S/Y_M$ of order $4$ and $C_V(R)=Z$. Note that $S/Y_M$ is dihedral of order $8$ and so the only subgroups of $S/Y_M$ of order $4$ are a cyclic group of order $4$ and two fours groups. The two fours groups are the transvection group belonging to a hyperplane in $V$ and the transvection group belonging to the point $Z$ in $V$. As $C_V(R)=Z$, $R$ is not contained in the transvection group belonging to a hyperplane, and by \ref{L5}, $RY_M/Y_M$ is not cyclic of order $4$. Hence, $RY_M/Y_M$ is the transvection group belonging to the point $Z$, which contradicts \ref{L4}. 
\end{proof}

\subsection{The proofs of Theorem~\ref{unique local} and Theorem~\ref{mainMS}}

\begin{proof}[Proof of Theorem~\ref{unique local}]
As before we consider the amalgam $(M,\tilde{C},B,i,j)$ where $i\colon B\rightarrow M$ and $j\colon B\rightarrow \tilde{C}$ are the inclusion maps, and also, for $G\cong \Aut(\G_2(3))$, the amalgam $(M_G,\tilde{C}_G,B_G,i_G,j_G)$ of subgroups of $G$ with inclusion maps, where $M_G$ and $\tilde{C}_G$ are as introduced in Lemma~\ref{L:G23} and $B_G=M_G\cap \tilde{C}_G$. Let 
\[X=M*_B\tilde{C}\mbox{ and }X_G=M_G*_{B_G}\tilde{C}_G\]
be the corresponding free amalgamated products. We identify $M$ and $\tilde{C}$ with subgroups of $X$, and $M_G$ and $\tilde{C}_G$ with subgroups of $X_G$ in the natural way. 

\smallskip

By Proposition~\ref{P:AmalgamsIso}, the amalgams $(M,\tilde{C},B,i,j)$ and $(M_G,\tilde{C}_G,B_G,i_G,j_G)$ are isomorphic. Hence, also the groups $X$ and $X_G$ are isomorphic. We may choose $S\in \Syl_2(B)$ and $S_G\in \Syl_2(B_G)$ such that $S$ is mapped onto $S_G$ under an isomorphism $X\rightarrow X_G$, and we obtain then  $\F_S(X)\cong \F_{S_G}(X_G)$. 

\smallskip

Now by \cite[Theorem~3.1]{Clelland/Parker}, we have 
\[\F_S(X)=\<\F_S(M),\F_S(\tilde{C})\>\mbox{ and }\F_S(X_G)=\<\F_{S_G}(M_G),\F_{S_G}(\tilde{C}_G)\>.\]
By Lemma~\ref{L:M*C*}(a), we have $M=\tilde{M}=N_\L(V)$. Recall also that $\tilde{C}=N_\L(Q)$ by definition. As $Q,V\in\Delta$, it follows thus from Lemma~\ref{L:NormalizerLocFus} that $\F_S(M)=N_\F(V)$ and $\F_S(\tilde{C})=N_\F(Q)$. Hence, by Proposition~\ref{Generate}, $\F=\<N_\F(V),N_\F(Q)\>=\F_S(X)$. If we use Lemma~\ref{L:G23} and choose the notation as in that lemma, then it follows similarly that we can apply Lemma~\ref{L:M*C*}(a), Lemma~\ref{L:NormalizerLocFus} and Lemma~\ref{Generate} with $(\F_{S_G}(G),\L_G,\Delta_G,S_G,Q_G,V_G)$ in place of $(\F,\L,\Delta,S,Q,V)$ to obtain $\F_{S_G}(G)=\F_{S_G}(X_G)$. Hence, $\F=\F_S(X)\cong \F_{S_G}(X_G)=\F_S(G)$.  
\end{proof}

\begin{proof}[Proof of Theorem~\ref{mainMS}]
 Let $\F$ be a fusion system over a finite $2$-group $S$ fulfilling the hypothesis of Theorem~\ref{mainMS} with $Q$ and $M$ chosen as in the theorem. Let $(\L,\Delta,S)$ be the subcentric locality over $\F$. Set $\tilde{C}:=N_\L(Q)$, $R:=O_2(M)$ and 
 \[\hat{M}:=\{g\in N_\L(R)\colon c_g|_R\in\Aut_M(R)\}.\] 
 We will verify Hypothesis~\ref{MainHyp} now with $\hat{M}$ in place of $M$. 
 
 \smallskip
 
 By Lemma~\ref{L:LargeSatFusMain}(b), the subcentric locality $(\L,\Delta,S)$ is $Q$-replete and so, by Lemma~\ref{L:LargeLF}(b), $Q$ is large in $(\L,\Delta,S)$.  In particular, $Q\in\Delta$. Therefore, $N_\L(Q)$ is a group of characteristic $2$ and thus a model for $N_\F(Q)$ by Lemma~\ref{L:NormalizerLocFus}. Hence, it follows from \cite[Theorem~III.5.10]{Aschbacher/Kessar/Oliver:2011} that $O_2(N_\F(Q))=O_2(\tilde{C})$. Thus, as we assume that $O_2(N_\F(Q))=Q$, we have $O_2(\tilde{C})=Q$ and (i) of Hypothesis~\ref{MainHyp} holds.

\smallskip

Since $M$ is of characteristic $2$ and contains $S$, we have $C_S(R)\leq R$ and $R\unlhd S$ is centric in $\F$. In particular, $R\in \Delta$ and $N_\L(R)$ is a group. Observe that $\hat{M}$ is a subgroup of $N_\L(R)$ containing $S$ as a Sylow $2$-subgroup. In particular, $\hat{M}$ is a group of characteristic $2$ in which $R$ is normal. As $C_S(R)\leq R$, it follows thus from the Schur--Zassenhaus Theorem \cite[3.3.1]{Kurzweil/Stellmacher:2004} that $C_{\hat{M}}(R)=Z(R)\times O_{2^\prime}(C_{\hat{M}}(R))$. As $O_{2^\prime}(C_{\hat{M}}(R))\leq O_{2^\prime}(\hat{M})=1$, we can conclude that $C_{\hat{M}}(R)=Z(R)$. 

\smallskip

Note that $\Aut_{\hat{M}}(R)=\Aut_M(R)$ by definition of $\hat{M}$ and since $N_\L(R)$ realizes $N_\F(R)$. As $C_M(R)=Z(R)$, using assumption (I) in Theorem~\ref{mainMS}, we see that
\[\SL_3(2)\cong M/R\cong (M/Z(R))/(R/Z(R))\cong \Aut_M(R)/\Inn(R)\]
and $V$ is a natural $\SL_3(2)$-module for $\Aut_M(R)/\Inn(R)$. Since $C_{\hat{M}}(R)=Z(R)$, one ses similarly that $\hat{M}/R\cong \Aut_{\hat{M}}(R)/\Inn(R)=\Aut_M(R)/\Inn(R)\cong \SL_3(2)$ and $V$ is a natural $\SL_3(2)$-module for $\hat{M}/R$. In particular, $\hat{M}/R$ is simple and so $R=O_2(\hat{M})$.

\smallskip

By Lemma~\ref{L:pReduced}, $Y_M$ is the largest $2$-reduced $\Aut_M(R)$-submodule of $\Omega_1(Z(R))$, and $Y_{\hat{M}}$ is the largest $2$-reduced $\Aut_{\hat{M}}(R)$-submodule of $\Omega_1(Z(R))$. As $\Aut_M(R)=\Aut_{\hat{M}}(R)$, it follows that $Y_M=Y_{\hat{M}}$ and $V=[Y_M,M]=[Y_M,\Aut_M(R)]=[Y_M,\Aut_{\hat{M}}(R)]=[Y_{\hat{M}},\hat{M}]$. Recall also that assumption (II) in Theorem~\ref{mainMS} gives us $Y_M\not\leq Q$ and $V\leq Q$. Thus, (ii) and (iii) of Hypothesis~\ref{MainHyp} hold with $\hat{M}$ in place of $M$. 
\end{proof}

\bibliographystyle{amsplain-nomr}
\bibliography{rep-coh-local}

\providecommand{\bysame}{\leavevmode\hbox to3em{\hrulefill}\thinspace}
\providecommand{\MR}{\relax\ifhmode\unskip\space\fi MR }
\providecommand{\MRhref}[2]{%
  \href{http://www.ams.org/mathscinet-getitem?mr=#1}{#2}
}
\providecommand{\href}[2]{#2}
\begin{thebibliography}{10}

\bibitem{AschbacherChermak2010}
M.~Aschbacher and A.~Chermak, \emph{A group-theoretic approach to a family of
  2-local finite groups constructed by {L}evi and {O}liver}, Ann. of Math. (2)
  \textbf{171} (2010), no.~2, 881--978.

\bibitem{Aschbacher/Kessar/Oliver:2011}
M.~Aschbacher, R.~Kessar, and B.~Oliver, \emph{{Fusion systems in algebra and
  topology}}, London Math.\ Soc.\ Lecture Note Series, vol. 391, Cambridge
  University Press, 2011.

\bibitem{Aschbacher:2002}
Michael Aschbacher, \emph{Finite groups of ${G}_2(3)$-type}, Journal of Algebra
  \textbf{257} (2002), no.~2, 197--214.

\bibitem{Aschbacher:2011}
\bysame, \emph{{The generalized Fitting subsystem of a fusion system}},
  \textbf{209} (2011), no.~986, vi+110.

\bibitem{BMO}
C.~Broto, J.~M. M{\o}ller, and B.~Oliver, \emph{Equivalences between fusion
  systems of finite groups of {L}ie type}, J. Amer. Math. Soc. \textbf{25}
  (2012), no.~1, 1--20.

\bibitem{BMO2}
\bysame, \emph{{Automorphisms of fusion systems of finite simple groups of Lie
  type}}, Mem. Amer. Math. Soc. \textbf{262} (2019), no.~1267, 1--117.

\bibitem{BrotoMollerOliverRuiz2023}
Carles Broto, Jesper~M. M{\o}ller, Bob Oliver, and Albert Ruiz,
  \emph{Realizability and tameness of fusion systems}, Proc. Lond. Math. Soc.
  (3) \textbf{127} (2023), no.~6, 1816--1864.

\bibitem{Carter:1972a}
R.~W. Carter, \emph{{Simple groups of Lie type}}, Pure and Applied Mathematics,
  vol.~28, John Wiley \& Sons, London-New York-Sydney, 1972.

\bibitem{Chermak:2013}
A.~Chermak, \emph{Fusion systems and localities}, Acta Math. \textbf{211}
  (2013), no.~1, 47--139.

\bibitem{Chermak:2022}
\bysame, \emph{{Finite localities I}}, Forum of Mathematics, Sigma \textbf{10}
  (2022), e43.

\bibitem{Chermak/Henke}
A.~Chermak and E.~Henke, \emph{Fusion systems and localities -- a dictionary},
  Adv. Math. \textbf{410} (2022), Paper No. 108690, 92.

\bibitem{Clelland/Parker}
M.~Clelland and C.W. Parker, \emph{Two families of exotic fusion systems}, J.
  Algebra \textbf{323} (2010), 287--304.

\bibitem{Diaz/Glesser/Mazza/Park:2009}
A.~Diaz, A.~Glesser, N.~Mazza, and S.~Park, \emph{{Glauberman's and Thompson's
  theorems for fusion systems}}, Proc.\ Amer.\ Math.\ Soc. \textbf{137} (2009),
  no.~2, 495--503.

\bibitem{GlaubermanLynd2016}
George Glauberman and Justin Lynd, \emph{Control of fixed points and existence
  and uniqueness of centric linking systems}, Invent. Math. \textbf{206}
  (2016), no.~2, 441--484.

\bibitem{Henke:2010}
E.~Henke, \emph{{Recognizing $SL_2(q)$ in Fusion Systems}}, J. Group Theory
  \textbf{13} (2010), 679--702.

\bibitem{Henke:2013}
\bysame, \emph{Products in fusion systems}, J. Algebra \textbf{376} (2013),
  300--319.

\bibitem{Henke:2015}
\bysame, \emph{Subcentric linking systems}, Trans. Amer. Math. Soc.
  \textbf{371} (2019), no.~5, 3325--3373.

\bibitem{Henke:2021}
\bysame, \emph{Extensions of homomorphisms between localities}, Forum of
  Mathematics, Sigma \textbf{9} (2021), e63.

\bibitem{Henke:Regular}
\bysame, \emph{Commuting partial normal subgroups and regular localities}, Mem.
  Amer. Math. Soc. \textbf{311} (2025), no.~1575, v+112.

\bibitem{HenkeLynd2022}
E.~Henke and J.~Lynd, \emph{Fusion systems with {B}enson-{S}olomon components},
  Duke Math. J. \textbf{171} (2022), no.~3, 673--737.

\bibitem{HenkeLynd2026}
\bysame, \emph{Components and realizability of fusion systems},
  arXiv:2504.19826 [math.GR] (2025), 8 pp.

\bibitem{Kurzweil/Stellmacher:2004}
H.~Kurzweil and B.~Stellmacher, \emph{The theory of finite groups},
  Universitext, Springer-Verlag, New York, 2004.

\bibitem{LeviOliver2002}
R.~Levi and B.~Oliver, \emph{Construction of 2-local finite groups of a type
  studied by {S}olomon and {B}enson}, Geom. Topol. \textbf{6} (2002), 917--990.

\bibitem{Lynd2014}
Justin Lynd, \emph{The {T}hompson-{L}yons transfer lemma for fusion systems},
  Bull. Lond. Math. Soc. \textbf{46} (2014), no.~6, 1276--1282.

\bibitem{MS:2009}
U.~Meierfrankenfeld and B.~Stellmacher, \emph{{$F$}-stability in finite
  groups}, Trans. Amer. Math. Soc. \textbf{361} (2009), no.~5, 2509--2525.

\bibitem{MSS}
U.~Meierfrankenfeld, B.~Stellmacher, and G.~Stroth, \emph{The local structure
  theorem for finite groups with a large $p$-subgroup},  \textbf{242} (2016),
  no.~1147.

\bibitem{MStr:2008}
U.~Meierfrankenfeld and G.~Stroth, \emph{A characterization of
  {$\Aut(G_2(3))$}}, J. Group Theory \textbf{11} (2008), 479--494.

\bibitem{Oliver2004}
B.~Oliver, \emph{Equivalences of classifying spaces completed at odd primes},
  Math. Proc. Cambridge Philos. Soc. \textbf{137} (2004), no.~2, 321--347.

\bibitem{Oliver2006}
\bysame, \emph{Equivalences of classifying spaces completed at the prime two},
  Mem. Amer. Math. Soc. \textbf{180} (2006), no.~848, vi+102.

\bibitem{Oliver:2013}
\bysame, \emph{Existence and uniqueness of linking systems: {C}hermak's proof
  via obstruction theory}, Acta Math. \textbf{211} (2013), no.~1, 141--175.

\bibitem{Salati:PhD}
E.~Salati, \emph{{How local is the Local Structure Theorem for finite groups
  with a large $p$-subgroup?}}, {Ph.\ D.\ Dissertation}, TU Dresden, 2025.

\bibitem{Weibel:1994a}
C.~A. Weibel, \emph{{Introduction to homological algebra}}, Cambridge Studies
  in Advanced Mathematics, vol.~38, Cambridge University Press, 1994.

\bibitem{Weir:1955}
A.~J. Weir, \emph{Sylow p-subgroups of the general linear group over finite
  fields of characteristic p}, Proc.\ Amer.\ Math.\ Soc. \textbf{6} (1955),
  no.~3, 454--464.

\bibitem{Wilson:2009}
R.~A. Wilson, \emph{The finite simple groups}, Graduate Texts in Mathematics,
  Springer London, 2009.

\bibitem{Winter:1972}
D.~L. Winter, \emph{The automorphism group of an extraspecial p-group}, The
  Rocky Mountain Journal of Mathematics \textbf{2} (1972), no.~2, 159--168.

\end{thebibliography}

\end{document}